\numberwithin{equation}{section}
\newcommand{\Gal}{{\rm Gal}}
\newcommand{\Ord}{{\rm ord}}
\newcommand{\GL}{{\rm GL}}
\renewcommand{\d}{{\rm d}}
\newcommand{\Q}{\mathbb Q}
\newcommand{\R}{\mathbb R}
\newcommand{\C}{\mathbb C}
\newcommand{\Z}{\mathbb Z}
\newcommand{\Irr}{{\rm Irr}}
\newcommand{\ba}{\mathbf{a}}
\newcommand{\bx}{\mathbf{x}}
\newcommand{\by}{\mathbf{y}}
\newcommand{\bz}{\mathbf{z}}
\newcommand{\bt}{\mathbf{t}}
\newcommand{\bs}{\mathbf{s}}
\newcommand{\bE}{\mathbf{E}}
\newcommand{\bB}{\mathbf{B}}
\newcommand{\bS}{\mathbf{S}}
\newcommand{\bT}{\mathbf{T}}
\newcommand{\bX}{\mathbf{X}}
\newcommand{\J}{\text{J}_0}
\newcommand{\ve}{\varepsilon}
\newcommand{\vp}{\varphi}
\renewcommand{\Re}{{\mathfrak R}{\rm e}}
\newtheorem{X}{X}[section]
\newtheorem{cor}[X]{Corollary}
\newtheorem{lem}[X]{Lemma}
\newtheorem{prop}[X]{Proposition}
\newtheorem{thm}[X]{Theorem}
\theoremstyle{definition}
\newtheorem{defi}[X]{Definition}
\theoremstyle{plain}
\newtheorem*{thma}{Theorem A}
\newtheorem*{thmb}{Theorem B}
\newtheorem*{thmc}{Theorem C}
\newtheorem*{thmd}{Theorem D}
\theoremstyle{remark}
\newtheorem{rk}{Remark}[section]
\newtheorem*{exe}{Example}
\newtheorem*{examples}{Examples}
\begin{document}
\title{Prime ideal races with several competitors}
\author{A. Bailleul}
\address{ENS Paris-Saclay, Centre Borelli, UMR 9010, 91190 Gif-sur-Yvette, France}
\email{alexandre.bailleul@ens-paris-saclay.fr}
\author{M. Hayani}
\address{IMB, 351 cours de la Libération, 33400 Talence, France}
\email{mounir.hayani@math.u-bordeaux.fr}
\date{}
\maketitle

\begin{abstract}
We investigate races among prime ideals in number fields when there are two or more competing conjugacy classes. In their work \cite{FJ}, Fiorilli and Jouve studied two‐way races in number fields and showed that—unlike the classical setting of primes in arithmetic progressions—these biases can approach the extreme values of $0$ and $1$. They also identified when these biases tend toward one‐half (as the degree of the extension grows), which we call “moderate biases” because that behavior mirrors the classical case.

In this paper, we extend their analysis to races with $r$ competing conjugacy classes (\emph{r}-way races) and precisely study the cases where these biases are moderate (meaning they tend to $1/r!$ as the discriminant of the extension grows). Our first main result is an explicit formula for the bias in any $r$-way race (for all $r \ge 2$), generalizing the two‐way formula of Fiorilli and Jouve~\cite{FJ} and Lamzouri’s $r$-way expression in the classical case of residue classes modulo $q$~\cite{Lam}, under the same hypotheses. Using this formula, we give a criterion characterizing completely, in the abelian case, $r$-moderate races. Surprisingly, once $r\ge 3$ this criterion is independent of $r$, making the two-way race exceptional. We also construct families of number fields exhibiting such moderacy, we prove density results for the values of logarithmic densities and exhibit different behaviors of those densities between the cases $r=3$ and $r \ge 4$.
\end{abstract}

\section{Introduction}\label{sec:intro}
In 1853, Chebyshev observed that primes congruent to $3\pmod 4$ seem to appear more often than those congruent to $1\pmod 4$. This phenomenon is now known as ``Chebyshev's bias''. Let $q\ge 3$ and $a \in \Z$ coprime to $q$. Define 
\[\pi(x; q,a):=\#\{\, p\le x\, \colon\, p\equiv a\pmod q \, \} \, . \]
The prime number theorem for arithmetic progressions guarantees that for any $a,b\in \Z$ coprime to $q$ one has $\pi(x ; q,a)/\pi(x;q,b) \longrightarrow 1$ as $x\to \infty$. Yet it gives no control over the difference $\pi(x;q,a)-\pi(x;q,b)$. In 1914, Littlewood~\cite{Littlewood} showed that $\pi(x;4,3)-\pi(x;4,1)$ changes sign infinitely often. This problem generalizes as follows: for an integer \(q\ge3\) and an integer \(2\le r\le\varphi(q)\), let \( \mathcal{A}_r(q)\) be the set of $r$-tuples of pairwise distinct invertible residue classes modulo $q$. 
One then asks whether all \(r!\) orderings of the prime counting functions
\(\pi(x;q,a_1),\dots,\pi(x;q,a_r)\) occur infinitely often. In this form, the question is known as the ``Shanks–Rényi prime number race problem.''\\
Under the Generalized Riemann Hypothesis and the Linear Independence hypothesis on the zeros of Dirichlet \(L\)-functions, Rubinstein and Sarnak \cite{RS94} showed that for each \((a_1,\dots,a_r)\in\mathcal{A}_r(q)\) the set
\(
 \mathcal{P}_q(a_1,\dots,a_r):= \bigl\{\,x\ge2 : \pi(x;q,a_1)>\pi(x;q,a_2)>\cdots>\pi(x;q,a_r)\bigr\}
\)
admits a well-defined positive logarithmic density
\[\delta_q^{(r)}(a_1,\dots,a_r):=\lim_{X\to \infty} \frac{1}{\log X}\int_2^X \mathds{1}_{\mathcal{P}_q (a_1,\dots,a_r)}(t) \frac{\mathrm{d}t}{t} \, ,\]
arising from a limiting distribution \(\mu_{q;a_1,\dots,a_r}\). Under those hypotheses, they also proved that
\begin{equation*}\label{classical-r-moderacy}
    \lim_{q\to \infty}\max_{(a_1,\dots,a_r)\in \mathcal{A}_r(q)}\bigl|\delta_q^{(r)}(a_1,\dots,a_r)-1/r!\bigr|=0\,
\end{equation*}
so that asymptotically as $q$ goes to infinity, the biases tend to disappear.
 Fiorilli--Martin~\cite{FiM} proved that the rate of the latter convergence when $r=2$ is of order $1/q^{1/2+o(1)}$. Lamzouri~\cite{Lam} generalized this study to $r\ge 3$, and proved that  the rate of convergence when $r\ge 3$ is much slower and is of order $1/\log q$.\\
In his PhD thesis, Ng \cite{Ng} extended the Rubinstein–Sarnak framework to number field prime ideal races.  Let \(L/K\) be a Galois extension with Galois group \(G = \Gal(L/K)\).  For any class function 
\(
t \colon G \longrightarrow \mathbb{C}
\)
(i.e.\ constant on conjugacy classes), define the prime ideal counting function, for $x\ge 2$
\begin{equation} \label{pi}
\pi(x;L/K;t)
\;:=\;
\sum_{\substack{\mathfrak{p}\,\lhd\, \mathcal{O}_K \\ N(\mathfrak{p}) \,\le x}}
t(\varphi_{\mathfrak{p}}),
\end{equation}
where the sum is over non-zero prime ideals $\mathfrak{p}$ of $\mathcal{O}_K$, and $\varphi_{\mathfrak{p}}$ denotes the corresponding Frobenius conjugacy class (defined up to inertia). %To treat both ramified and unramified ideals uniformly, one uses the standard definition (cf.\ \cite{FJ}*{page 28}) which, for each $k\ge 1$, sets  
%\[
%t(\varphi_{\mathfrak{p}}^k)
%\;=\;
%\frac{\sum_{s \in I_{\mathfrak{P}/\mathfrak{p}}} t(\varphi_{\mathfrak{p}}^k\,s)}{\lvert I_{\mathfrak{P}/\mathfrak{p}} \rvert},
%\]
%where $\mathfrak{P}$ is a prime ideal of $\mathcal{O}_L$ lying above $\mathfrak{p}$, and $I_{\mathfrak{P}/\mathfrak{p}}$ is its inertia group.
In particular, if \(C\subset G\) is a conjugacy class, we denote 
\(\pi(x;L/K,C)=\pi(x;L/K,\mathds{1}_C)\).  
Given conjugacy classes \(C_1,\dots,C_r\) in \(G\), Ng studied the set
\[
  \mathcal{P}_{L/K}(C_1,\dots,C_r):=\left\{\,x\ge2 : 
    \frac{\pi(x;L/K,C_1)}{| C_1|}
    < \cdots <
    \frac{\pi(x;L/K,C_r)}{|C_r|}
  \right\}\, .
\]
Under $\mathsf{GRH}$, the Artin conjecture and the linear independence hypothesis over zeros of Artin $L$-functions, Ng proved that the above set admits a positive logarithmic density
\(\delta^{(r)}_{L/K}(C_1,\dots,C_r)\),
arising from a probability measure \(\mu_{C_1,\dots,C_r}\).

Several authors have since turned their attention to the logarithmic densities
\(\delta^{(r)}_{L/K}(C_1,\dots,C_r)\).
A particularly influential contribution is due to Fiorilli and Jouve~\cite{FJ},
who analyzed the two-way races (\(r=2\)).
For conjugacy classes \(C_1,C_2 \subset G=\operatorname{Gal}(L/K)\),
let \(E(C_1,C_2)\) and \(V(C_1,C_2)\) denote, respectively, the mean and variance of the
limiting probability measure \(\mu_{C_1,C_2}\).
They introduced the important parameter
\[
B_{L/K}(C_1,C_2)\;:=\;
\frac{E(C_1,C_2)}{\sqrt{V(C_1,C_2)}},
\]
and showed that its magnitude governs the bias:
\begin{itemize}
  \item \textbf{Extreme bias.} If
        \(B_{L/K}(C_1,C_2) \to \pm \infty\) as the degree $[L:\Q]\to \infty$,
        then the density \(\delta^{(2)}_{L/K}(C_1,C_2)\)
        converges to the extreme values \(1\) or \(0\).
  \item \textbf{Moderate bias.} Under additional hypotheses
        (automatically satisfied in the abelian case), if
        \(B_{L/K}(C_1,C_2)\to 0\) as $[L:K]\to \infty $,
        then \(\delta^{(2)}_{L/K}(C_1,C_2)\to 1/2\),
        mirroring the classical prime-ideal race studied by
        Rubinstein-Sarnak and Fiorilli-Martin.
\end{itemize}
Applying this criterion to several explicit families such that the degree grows to $\infty$, Fiorilli and Jouve produced examples of both moderate and extreme biases along families of number fields.
In the same spirit, Bailleul~\cite{Bail} investigated dihedral and quaternion
extensions, uncovering further families in which the densities not only
approach the value \(1/2\) but can again be made to lie
arbitrarily near the extremes, while also highlighting the influence of central zeros of Artin L-functions on the behavior of those densities.\\
In this paper, we will be focusing on the case of Galois extensions $L/\Q$ over $\Q$, instead of general Galois extensions of number fields $L/K$. When $K/\Q$ is Galois, the study of $\pi(x; L/K; t)$ as defined in \ref{pi} reduces to that of $\pi(x; L/\Q, t^+)$, where $t^+$ is the induced class function on $\Gal(L/\Q)$ in the sense of character theory (see for instance~\cite{hayani}*{Corollary 2.3}). On the other hand, when $K/\Q$ is not Galois, complications may arise because distinct class functions on $\Gal(L/K)$ may induce the same class function on $\Gal(L/\Q)$, as has been illustrated in~\cites{FJ2, hayani}. We will not be considering these kinds of situations here.

When $L/\Q$ is a Galois extension with Galois group $G$, define $\mathcal{A}_r(G)$ to be the set of $r$-tuples of pairwise distinct conjugacy classes in $G$. When $G$ is abelian, we identify $\mathcal{A}_r(G)$ with the set of $r$-tuples of elements of $G$ by abuse of notation since conjugacy classes are singletons in this case. Similarly, in the abelian case one can define $\delta^{(r)}_{L/\Q}(a_1,\dots,a_r)$ as the value of $\delta^{(r)}_{L/\Q}$ at the corresponding conjugacy classes (and the same goes for all functions previously defined on conjugacy classes).

Our first main result (Theorem~\ref{explicit-formula}) is an explicit formula, valid for all finite Galois extensions $L/\Q$, relating $\delta_{L/\Q}^{(r)}(C_1,\dots,C_r)$ with the cumulative distribution function of a centered Gaussian vector whose covariance matrix is related to zeros of Artin $L$-functions of the extension $L/\Q$. This explicit formula will be stated in the following paragraph due to the technical details needed to state it.

The following notion will be one of the central point of focus of this paper.

\begin{defi}
    Let $(L_n)_n$ be a family of Galois extensions over $\Q$ with respective Galois groups $(G_n)_n$, such that $[L_n:\Q] \longrightarrow \infty$ as $n\to \infty$, and for which $\mathsf{GRH}$, $\mathsf{AC}$, and $\mathsf{LI}$ hold (see~\S~\ref{subsec:hypotheses} for precise statements). When $r\ge 2$, we say that $(L_n)_n$ is uniformly $r$-moderate if 
    \[ \lim_{n\to \infty}\max_{(C_1,\dots,C_r)\in \mathcal{A}_r(G_n)}\left|\delta_{L_n/\Q}^{(r)}(C_1,\dots,C_r)-\frac{1}{r!}\right| =0\, .\]
\end{defi}

When $G$ is a finite abelian group, define \(r(G):=\#\{\, x\in G\, :\, x^2=1\, \}\, \) the number of square roots of $1$ in $G$. 
Our main contribution for two-way races (two conjugacy classes) is that in the abelian case we are able to give a simple criterion for the logarithmic densities $\delta_{L/\Q}^{(2)}(C_1,C_2)$ to approach any value $\alpha \in [0,1]$: 
\begin{thma}\label{thm : two-way-races}
    Let $\ell\in [0,\infty]$ and let $(L_n)_n$ be a family of abelian Galois extensions over $\Q$ with respective Galois groups $(G_n)_n$, such that $d_{L_n/\Q}\to \infty$ as $n\to \infty$, and for which $\mathsf{GRH}$ and $\mathsf{LI}$ hold. Assume that 
    \[\lim_{n\to \infty} \frac{r(G_n)}{\sqrt{\log d_{L_n}}}=\ell\, ,\]
    where $d_{L_n}$ is the absolute discriminant of $L_n$, then: \begin{itemize}
        \item if $\ell= 0$, then $(L_n)_n$ is uniformly $2$-moderate.
        \item if $\ell\ne 0$, then there exist sequences $(a_n)_n,(b_n)_n$ such that for all $n\ge 1$, we have $a_n,b_n\in G_n$ and
        \[\lim_{n\to \infty} \delta_{L_{n}/\Q}^{(2)}(a_n,b_n)=\frac{1}{\sqrt{2\pi}}\int_{-\infty}^{\ell/\sqrt{2}}\exp\left(-\frac{x^2}{2}\right)\, \mathrm{d}x\, .\]
    \end{itemize}
    Moreover, if $L_n \subset L_{n+1}$ for all $n\ge 1$ then the sequences $(a_n)_n$ and $(b_n)_n$ can be expressed explicitly.
\end{thma}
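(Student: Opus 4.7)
The plan is to specialize the explicit formula of Theorem~\ref{explicit-formula} to $r=2$, which yields the Gaussian approximation
\[
\delta^{(2)}_{L_n/\Q}(a,b) \;=\; \Phi\bigl(B_{L_n/\Q}(a,b)\bigr) + o(1),
\]
uniformly in $(a,b) \in \mathcal{A}_2(G_n)$, where $\Phi$ is the standard normal CDF and $B_{L_n/\Q}(a,b) = E(a,b)/\sqrt{V(a,b)}$. The proof then reduces to analyzing this bias parameter in the abelian setting. A character-theoretic expansion of $\pi(x;L_n/\Q,a)$ via abelian Artin $L$-functions, as in Fiorilli--Jouve, provides the closed forms
\[
E(a,b) \;=\; r_{G_n}(a) - r_{G_n}(b), \qquad V(a,b) \;=\; \sum_{\chi \in \widehat{G_n}\setminus \{1\}} |\chi(a)-\chi(b)|^2\, B(\chi),
\]
with $r_G(x) := \#\{y \in G : y^2 = x\}$ and $B(\chi) := \sum_\rho 1/|\rho|^2$ summed over the nontrivial zeros of $L(s,\chi)$.

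Combining $B(\chi) = \log \mathfrak{q}(\chi) + O(1)$ (valid under $\mathsf{GRH}$), the conductor-discriminant identity $\sum_\chi \log \mathfrak{q}(\chi) = \log d_{L_n}$, and the orthogonality $\sum_\chi |\chi(a)-\chi(b)|^2 = 2|G_n|$ for $a \neq b$, the plan is to deduce $V(a,b) = (2+o(1))\log d_{L_n}$ uniformly for $a\neq b$, which yields the uniform estimate $|B_{L_n/\Q}(a,b)| \leq (1+o(1))\, r(G_n)/\sqrt{2\log d_{L_n}}$. When $\ell = 0$, this forces $B_{L_n/\Q}(a,b) \to 0$ uniformly over $\mathcal{A}_2(G_n)$, giving $\delta^{(2)}_{L_n/\Q}(a,b) \to 1/2$ uniformly, i.e.\ uniform $2$-moderacy. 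When $\ell \neq 0$, one has $r(G_n) \to \infty$, so $G_n$ is nontrivial for $n$ large; set $a_n = 1_{G_n}$ and let $b_n$ be any non-square in $G_n$, which exists (if $G_n \simeq (\Z/2\Z)^{k_n}$ then any non-identity element is a non-square, and otherwise squaring is not surjective on $G_n$). One then has $r_{G_n}(a_n) = r(G_n)$ and $r_{G_n}(b_n) = 0$, so $|E(a_n,b_n)| = r(G_n)$ and $B_{L_n/\Q}(a_n,b_n) \to \ell/\sqrt{2}$; the sign is consistent with the convention of $\delta^{(2)}$ (the class with more square roots lies behind in the race), giving $\delta^{(2)}_{L_n/\Q}(a_n,b_n) \to \Phi(\ell/\sqrt{2})$. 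In the tower case $L_n \subset L_{n+1}$, one fixes $n_0$ such that $G_{n_0}$ contains a non-square $b_{n_0}$, and lifts it to $b_n \in G_n$ through the quotient $G_n \twoheadrightarrow G_{n_0}$: since a square in $G_n$ maps to a square in $G_{n_0}$, any such lift remains a non-square.

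The main analytic obstacle is the uniform variance asymptotic $V(a,b) \sim 2\log d_{L_n}$. The weighted sum $\sum_\chi |\chi(a)-\chi(b)|^2 \log \mathfrak{q}(\chi)$ a priori depends on how conductors distribute across $\widehat{G_n}$ relative to the characters separating $a$ from $b$, not only on the total $\log d_{L_n}$. The plan is to isolate characters of small conductor (whose total contribution is $o(\log d_{L_n})$) and invoke an equidistribution statement showing that the weights $|\chi(a)-\chi(b)|^2$ average to $2$ against the probability measure $\log \mathfrak{q}(\chi)/\log d_{L_n}$ on $\widehat{G_n} \setminus \{1\}$, uniformly in $a \neq b$. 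A secondary technical point is ensuring that the error in the Gaussian approximation coming from Theorem~\ref{explicit-formula} is uniformly better than $r(G_n)/\sqrt{\log d_{L_n}}$, so that the regime $\ell = 0$ produces convergence to $1/2$ rather than mere boundedness away from the extremes.
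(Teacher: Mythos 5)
Your reduction to Corollary~\ref{r=1} and the identities $E(t_{a,b})=r_{G_n}(b)-r_{G_n}(a)$ (under $\mathsf{LI}$) and $V(t_{a,b})=\sum_{\chi\ne1}|\chi(a)-\chi(b)|^2\sum_{\gamma_\chi}(\tfrac14+\gamma_\chi^2)^{-1}$ match the paper's route, and the $\ell=0$ case is essentially fine once you note that only the \emph{lower} bound $V(t_{a,b})\ge(2+o(1))\log d_{L_n}$ is needed there (it follows from the negativity of $\sum_{\chi}\chi(x)\log A(\chi)$, i.e.\ Lemma~\ref{negativity} and Corollary~\ref{negativity of U(a)}), so no error-term comparison with $r(G_n)/\sqrt{\log d_{L_n}}$ is required. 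The genuine gap is in the $\ell\ne0$ case: the uniform asymptotic $V(a,b)=(2+o(1))\log d_{L_n}$ that you flag as the ``main analytic obstacle'' is simply false, so no equidistribution statement of the kind you invoke can hold uniformly in $(a,b)$. Writing $V(t_{a,b})=N_{L_n}\bigl(2-2U_{L_n/\Q}(ab^{-1})\bigr)$, the normalized variance ranges over $[2+o(1),4]$, and $|U_{L_n/\Q}|$ genuinely takes values bounded away from $0$: already for a quadratic field one has $V\sim 4\log d_L$, and Proposition~\ref{multiquadratic-U-density} exhibits multiquadratic towers in which the values of $|U_{L_n/\Q}|$ are dense in $[0,1]$. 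Consequently, taking $b_n$ to be \emph{any} non-square does not give $B_{L_n/\Q}(1,b_n)\to\ell/\sqrt2$; for a badly chosen non-square the limit is $\Phi\bigl(\ell/\sqrt{2-2U}\bigr)$ with $U\ne0$, i.e.\ a different constant.

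What is missing is precisely the selection mechanism that the paper supplies: Lemma~\ref{mvaluesofU} (for each $m$, at most $m$ nontrivial elements satisfy $|U_{L/\Q}(a)|>1/m$, and they lie in a small subgroup) and Lemma~\ref{limU(x)}, which produce, when $r(G_n)\to\infty$, \emph{non-square} elements $b_n$ with $U_{L_n/\Q}(b_n)\to0$; only for such $b_n$ does $V(1,b_n)\sim2\log d_{L_n}$ and hence $\delta^{(2)}_{L_n/\Q}(1,b_n)\to\Phi(\ell/\sqrt2)$. Your treatment of the increasing case has the same defect: lifting a fixed non-square preserves non-squareness, but you still need $U_{L_n/\Q}(b_n)\to0$ along the tower, which in the paper is Lemma~\ref{UbllG} (the conductor bound $\bigl|\sum_\psi\psi(b)\log A(\psi)\bigr|\le2|G^+|\log d_K$) combined with Leshin's theorem (Theorem~\ref{rtdiscbounded}) forcing $|G_n|\log d_{L_{n_0}}/\log d_{L_n}\to0$. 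Without these two ingredients the second bullet of Theorem~A, and its explicit-sequence refinement, are not established.
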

In particular, in the abelian case, Theorem~A characterizes uniformly $2$-moderate families: they are exactly families $(L_n)_n$, satisfying the assumption of Theorem~A, and such that $\lim_{n \to \infty} r(G_n)/(\log d_{L_n})^{1/2}=0$. Theorem~\ref{thm : uniform-moderacy} gives a full characterization of $r$-moderate families for $r\ge 3$, which demands more information about the ramification data of the family.  In Definition~\ref{def:pointwise} we introduce pointwise moderacy, a weaker form of moderacy for the "prime ideal races" we consider. Our second main result shows that $r=2$ behaves differently from all $r\ge 3$ in the uniform setting, yet this dichotomy essentially disappears for pointwise moderacy.
\begin{thmb}\label{thm:B}
    Let $(L_n)_n$ be a family of abelian Galois extensions over $\Q$ with respective Galois groups $(G_n)_n$, such that $d_{L_n/\Q}\to \infty$ as $n\to \infty$, and for which $\mathsf{GRH}$ and $\mathsf{LI}$ hold. Then: 
    \begin{enumerate}
        \item for $r\ge 3$, $(L_n)_n$ is uniformly $r$-moderate if and only if $(L_n)_n$ is uniformly $3$-moderate.
        \item if moreover $(L_n)_n$ is increasing, then for $r\ge2$, $(L_n)_n$ is pointwise $r$-moderate if and only if it is uniformly $2$-moderate.
    \end{enumerate}
\end{thmb}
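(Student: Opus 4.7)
The plan for Theorem~B is to combine Theorem~A with the explicit formula of Theorem~\ref{explicit-formula} and the characterization of uniform $r$-moderacy ($r\ge 3$) furnished by Theorem~\ref{thm : uniform-moderacy}.

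For Part~(1), I would appeal to Theorem~\ref{thm : uniform-moderacy}, whose criterion turns out to be the same for every $r\ge 3$. Inspecting the explicit formula, the deviation $|\delta_{L_n/\Q}^{(r)}(C_1,\dots,C_r)-1/r!|$ is governed by the standardized mean of the associated Gaussian vector together with the off-diagonal entries of its normalized covariance matrix, both of which involve only two- and three-class interactions through inner products of Artin characters. Hence the three-way case already captures the $r$-way case for all $r\ge 3$, and the two conditions reduce to a common criterion, giving Part~(1) at once.

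For Part~(2), the direction \emph{uniform $2$-moderate $\Longrightarrow$ pointwise $r$-moderate} uses Theorem~A to translate uniform $2$-moderacy into $r(G_n)/\sqrt{\log d_{L_n}}\to 0$. Along any pointwise sequence of $r$-tuples, the standardized Gaussian mean appearing in Theorem~\ref{explicit-formula} is then controlled by $r(G_n)/\sqrt{\log d_{L_n}}$ and tends to $0$, so the chamber probability defining $\delta^{(r)}$ converges to the corresponding measure under a centered Gaussian, which equals $1/r!$ by the permutation symmetry inherent to the races setup.

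For the converse, I would argue by contraposition. If uniform $2$-moderacy fails, then after extracting a subsequence Theorem~A yields $\ell\in(0,\infty]$ with $r(G_n)/\sqrt{\log d_{L_n}}\to\ell$ together with sequences $(a_n),(b_n)\in G_n$ for which $\delta^{(2)}_{L_n/\Q}(a_n,b_n)\not\to 1/2$. Crucially, since $L_n\subset L_{n+1}$, the final clause of Theorem~A allows these sequences to be chosen compatibly, namely as lifts through the restrictions $G_{n+1}\twoheadrightarrow G_n$ of a fixed pair in some $G_{n_0}$; this immediately contradicts pointwise $2$-moderacy. For $r\ge 3$, I would adjoin $r-2$ auxiliary lifted classes to $(a_n,b_n)$ and verify via the explicit formula that the $2$-way bias is not erased when one passes to the associated $r$-way density. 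This last verification is the step I expect to require the most care: one must show that the mean component responsible for the bias in the two-way race is not washed out by permutation averaging once additional coordinates are appended to the Gaussian vector, which should reduce to a direct character-theoretic computation given the explicit form of $(a_n),(b_n)$ supplied by Theorem~A.
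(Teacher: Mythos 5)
Part (1) of your plan is fine and coincides with the paper: once Theorem~\ref{thm : uniform-moderacy} is available, its criterion is the same for every $r\ge 3$, so uniform $r$-moderacy and uniform $3$-moderacy are equivalent with no further work. The gap is in part (2). For the direction ``uniformly $2$-moderate $\Rightarrow$ pointwise $r$-moderate'' you control only the standardized means via $r(G_n)/\sqrt{\log d_{L_n}}$ and then assert that the limiting centered Gaussian orthant probability equals $1/r!$ ``by permutation symmetry inherent to the races setup''. No such symmetry exists at finite level: the normalized covariance matrix $\Delta(\bt_n)$ has off-diagonal entries built from the correlations $U_{L_n/\Q}(xy^{-1})$, which need not vanish, and for a general centered Gaussian vector the orthant probability is not $1/r!$. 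What is actually needed is that, along lifted tuples, $\Delta^{(r-1)}_{L_n/\Q}$ converges to $\Gamma_{r-1}$ (equivalently $U_{L_n/\Q}(a^{(n)})\to 0$ for fixed lifts $a^{(n)}$): only then does the exchangeable structure of consecutive differences of i.i.d.\ Gaussians appear, so that $F_{r-1}\bigl((0,\dots,0);\Gamma_{r-1}\bigr)=1/r!$ (Lemma~\ref{cdf(0)}), and only then is $\lambda_{\bt_n}$ bounded away from $0$ so that the error term in Theorem~\ref{explicit-formula} is $o(1)$. This covariance convergence is a genuine arithmetic statement (Propositions~\ref{var-ab} and~\ref{matrix-conv}), resting on the tower conductor bound of Lemma~\ref{UbllG} combined with Leshin's Theorem~\ref{rtdiscbounded}; it is the main ingredient your sketch omits, and without it neither the claimed $1/r!$ limit nor the applicability of the explicit formula follows.

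The same missing input affects your converse for $r\ge 3$: to check that the two-way bias is ``not erased'' when auxiliary classes are adjoined you need the variance asymptotics $V\sim 2\log d_{L_n}$, the convergence of the covariance block to $\Gamma_{r-1}$ for eigenvalue control, and strict monotonicity of $F_{r-1}$ in its first threshold. Note also that the paper avoids this computation altogether: Lemma~\ref{r+1mod-rmod} decomposes the $r$-way race set over the $r+1$ insertions of an extra class (ties having logarithmic density $0$ under $\mathsf{LI}^-$), so pointwise $r$-moderacy already implies pointwise $2$-moderacy, and one then only needs to contradict $2$-moderacy using the explicit lifted sequences from Theorem~A. Your direct $r$-way route can be made to work, but only after the covariance convergence above is supplied.
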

A consequence of Theorem~\ref{thm : uniform-moderacy}, is that pointwise $r$-moderate extensions are automatically $2$-moderate (for $r\ge 3$). The converse is not true, as one can construct $2$-moderate extensions such that, for any $r \ge 3$, the values of $\delta_{L_n/\Q}^{(r)}$ are dense in a set with non-empty interior:
\begin{thmc}\label{thm : 2mod-vs-rdense}
    There exists an increasing sequence of prime numbers $(p_n)_n$ such that if $L_n=\Q\bigl(\sqrt{p_1},\dots,\sqrt{p_n}\bigr)$ satisfies $\mathsf{GRH}$ and $\mathsf{LI}$ for all $n\ge 1$, then, denoting $G_n=\Gal(L_n/\Q)$, we have that $(L_n)_n$ is uniformly $2$-moderate and for all $r\ge 3$ the set 
    \[ \overline{\bigcup_{n\ge1}\delta_{L_n/\Q}^{(r)}\bigl(\mathcal{A}_r(G_n)\bigr)}\]
    has non-empty interior.
\end{thmc}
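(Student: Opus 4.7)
The strategy is to choose the primes $(p_n)$ inductively so as to simultaneously force uniform 2-moderacy via Theorem~A and leave enough freedom to tune the covariance structure of the Gaussian vector appearing in the explicit formula (Theorem~\ref{explicit-formula}), so that for each $r\ge3$ the resulting orthant probabilities $\delta^{(r)}_{L_n/\Q}$ sweep out a positive-length interval.

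On the 2-moderacy side, for $L_n=\Q(\sqrt{p_1},\dots,\sqrt{p_n})$ one has $G_n\simeq(\Z/2\Z)^n$, so $r(G_n)=2^n$, while the conductor--discriminant formula gives $\log d_{L_n}\sim 2^{n-1}\sum_{i\le n}\log p_i$. Theorem~A's criterion $r(G_n)/\sqrt{\log d_{L_n}}\to 0$ therefore reduces to $\sum_{i\le n}\log p_i\gg 2^{n+1}$, comfortably guaranteed by any growth condition of the form $\log p_n\ge 4^n$ imposed along the construction; this alone secures the first conclusion.

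On the $r\ge3$ side, I would exploit races of the form $(\{e\},\{a_2\},\dots,\{a_r\})$ in which each $a_i$ is a basis generator of $(\Z/2\Z)^n$ (a ``weight-one'' element in the basis corresponding to $(\sqrt{p_1},\dots,\sqrt{p_n})$). A short character-orthogonality computation on $(\Z/2\Z)^n$, combined with the conductor--discriminant identity, shows that the standardized Gaussian covariance appearing in Theorem~\ref{explicit-formula} has off-diagonal entries $\rho_{ij}\sim -\log p_k/\log(p_1\cdots p_n)$ when $a_ia_j$ is the $k$-th basis element, and $\rho_{ij}\sim 0$ when $a_ia_j$ has weight $\ge 2$, while the standardized means all tend to $0$ in the 2-moderate regime. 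Hence $\delta^{(r)}_{L_n/\Q}$ converges to a continuous function of the $(r-1)$ correlations involving $\{e\}$; for $r=3$ this function is non-constant via the bivariate orthant formula $\mathbb{P}(Y_1<0,Y_2<0)=\tfrac14+\tfrac1{2\pi}\arcsin\rho_Y$, and for $r\ge 4$ it is non-constant by a continuity/perturbation argument obtained by varying a single correlation while keeping the others near $0$. Since each ratio $\log p_{k_i}/\log(p_1\cdots p_n)$ can be pushed arbitrarily close to any value in $(0,1)$ by suitably spacing the primes, the realized $\delta^{(r)}$-values are dense in a nondegenerate interval $I^{(r)}$ containing $1/r!$ in its interior.

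A standard diagonal construction then produces one sequence $(p_n)$ that works simultaneously for all $r$: fix dense sequences $(\alpha_n^{(r)})_n$ in each $I^{(r)}$, and at stage $n$ pick $p_n$ large enough to preserve $\log p_n\ge 4^n$ while ensuring that for each $r\in\{3,\dots,n\}$ some race inside $L_n$ realizes a correlation vector whose $\delta^{(r)}$-image approximates $\alpha_n^{(r)}$ to within $1/n$. Density in each $I^{(r)}$, and hence non-emptiness of the interior of its closure, follows. The main obstacle I anticipate is the positive-length step for $r\ge 4$: the clean $\arcsin$ formula available for $r=3$ has no closed-form analogue in higher dimensions, so confirming that varying one correlation moves the orthant probability off $1/r!$ by a nonzero continuous amount will likely require a robust perturbation analysis of the Gaussian orthant probability (for instance, differentiating the measure in the correlation parameter via Plackett's formula).
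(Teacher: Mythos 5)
Your overall strategy is the same as the paper's: force uniform $2$-moderacy through Theorem~A by making the primes grow fast, and then tune the ratios $\log p_k/\log(p_1\cdots p_n)$ (equivalently the values $|U_{L_n/\Q}(\sigma_k)|$) so that the limiting covariance matrix in the explicit formula is a one-parameter perturbation of the tridiagonal matrix $\Gamma_{r-1}$, and conclude by continuity of the Gaussian orthant probability. This is exactly what the paper does via Proposition~\ref{2mod-Udense} feeding into Theorem~\ref{main-density} (with Lemmas~\ref{limU(x)} and~\ref{main-matrices} producing the limit matrices $\Sigma_{r-1}\bigl(-\sqrt{\ell+1}/2\bigr)$). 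One inaccuracy in your sketch: the relevant Gaussian vector is the vector of \emph{consecutive differences} $t_{C_i,C_{i+1}}$, so by \eqref{rhoii+1} and \eqref{rhoij} the baseline covariance is $\Gamma_{r-1}$ (off-diagonal $-\tfrac12$ on the sub/superdiagonal), not a matrix with near-zero off-diagonal entries; with your race $(\{e\},\{\sigma_{k_2}\},\dots,\{\sigma_{k_r}\})$ the entry coupling the first two differences becomes $\approx -\sqrt{1+\beta_2}/2$ with $\beta_2=\log p_{k_2}/\log(p_1\cdots p_n)$, and the \emph{non-adjacent} entries involving the first difference are $\approx(\beta_{j+1}-\beta_j)/2$, so you must in addition choose the remaining competitors with negligible ratios to recover the matrices $\Sigma_{r-1}(\rho)$; this is repairable but is not what you wrote.

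The genuine gap is the one you yourself flag: for $r\ge 4$ you never show that the limiting orthant probability actually \emph{moves} as the tuned correlation varies, so your argument as written does not rule out that all the limit values collapse to a single point, in which case the closure could have empty interior. The paper closes this with Slepian's Lemma (monotonicity of $\rho\mapsto W_r(\rho)=F_r\bigl((0,\dots,0);\Sigma_r(\rho)\bigr)$), real-analyticity, and the two explicit evaluations $W_r(-\tfrac12)=\tfrac{1}{(r+1)!}$ and $W_r(0)=\tfrac{1}{2\cdot r!}$ (Lemmas~\ref{cdf(0)} and~\ref{monotonicity}), which together give strict monotonicity and hence a full interval of limit values. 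Your proposed alternative (Plackett-type differentiation of the orthant probability in the correlation parameter, whose derivative is strictly positive) would also work and is a legitimate substitute, but it is only announced, not carried out; as it stands the positive-length conclusion for $r\ge4$, which is the heart of the theorem, is unproved in your write-up. A smaller omission of the same kind: when applying Theorem~\ref{explicit-formula} you need the minimal eigenvalue $\lambda_{\bt}$ bounded below along your sequences, which requires noting that the limit matrices $\Sigma_{r-1}(\rho)$ are positive-definite for the relevant range of $\rho$ (as checked after \eqref{Sigmar}).
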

We also prove (in \S~\ref{sec:density}) that one can construct explicit
families $(L_n)_n$ with
\[
  \overline{\bigcup_{n\ge1}\delta_{L_n/\Q}^{(2)}
     \bigl(\mathcal A_2(G_n)\bigr)}=[0,1].
\]
Thus the logarithmic densities related to Chebyshev's bias in number fields can be arbitrarily close to any value in $[0,1]$.\\
Fiorilli--Jouve's two-way criterion (and Corollary~\ref{r=1}) shows that 
$B_{L/\Q}(C_1,C_2)\to\pm\infty$ if and only if
$\delta_{L/\Q}^{(2)}(C_1,C_2)\to1$ or $0$.
The picture changes for $r$-way races when $r\ge3$. If two-way races are moderate, then a $3$-way race is never extreme; nevertheless, one can still produce extreme $r$-way races for $r\ge 4$ --revealing a sharp split between the cases $r=3$ and $r \geq 4$:

%group-theoretic considerations (in the abelian case) show that for all $r\ge 3$,
%\(\delta_{L/\Q}^{(r)}(a_1,\dots,a_r)<\,1/2\, ,
%\)
%so the bias can never approach $1$. In fact, we can show that $\delta_{L/\Q}^{(r)}(a_1,\dots,a_r)<\,1/4\,$ except for a finite number of exceptional extensions \alex{(see Theorem truc for a precise statement)}.
%The value $0$, however, remains attainable: whenever
%$B_{L/\Q}(a_1,a_2)\to\infty$, the density
%$\delta_{L/\Q}^{(r)}(a_1,\dots,a_r)$ tends to $0$ for every fixed choice
%of $a_3,\dots,a_r$.

\begin{thmd}\label{thm:D}
Let $(L_n)_n$ be a $2$-moderate family of abelian extensions of $\Q$, for which $\mathsf{GRH}$ and $\mathsf{LI}$ hold, with respective Galois groups $(G_n)_n$, and such that $[L_n : \Q]\to \infty$. Then for $r=3$ one has $$\delta^{(3)}_{L_n/\Q}(\mathcal{A}_3(G_n)) \subset \left[\frac{1}{4} - \frac{1}{2\pi} \arcsin\left(\frac{3}{4}\right) + o(1), \frac{1}{4} + o(1)\right].$$ However for $r \ge 4$, there exists a family $(K_n)_n$ of abelian extensions over $\Q$ with Galois groups $(H_n)_n$ such that $$0\in \overline{\bigcup_{n\geq 1} \delta_{K_n/\Q}^{(r)}(\mathcal{A}_r(H_n))}\, .$$
\end{thmd}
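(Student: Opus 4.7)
For part (1), the plan is to apply the explicit formula (Theorem~\ref{explicit-formula}) to express $\delta^{(3)}_{L_n/\Q}(C_1,C_2,C_3)$ as an orthant probability for a Gaussian vector whose mean and covariance are determined by zeros of Artin $L$-functions attached to $L_n/\Q$. The $2$-moderate hypothesis forces the mean of the limiting distribution to be of order $o(\sqrt{V})$ relative to the covariance scale, so asymptotically the density equals the orthant probability of the centered bivariate Gaussian $(\xi_n,\eta_n)=(X^{(n)}_{C_2}-X^{(n)}_{C_1},\,X^{(n)}_{C_3}-X^{(n)}_{C_2})$. By Sheppard's identity, $P(\xi>0,\eta>0)=\tfrac14+\tfrac{\arcsin\rho}{2\pi}$ for a centered Gaussian pair with correlation $\rho$, so the task reduces to showing that the correlation $\rho_n$ of $(\xi_n,\eta_n)$ lies in $[-3/4+o(1),\,o(1)]$ uniformly over triples $(C_1,C_2,C_3)\in\mathcal{A}_3(G_n)$.

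Writing $g_i=a_i a_{i+1}^{-1}$, the abelian character expansion yields
$\rho_n=(V_n(g_1g_2)-V_n(g_1)-V_n(g_2))/(2\sqrt{V_n(g_1)\,V_n(g_2)})$, with $V_n(g)=\sum_{\chi\ne 1}|1-\chi(g)|^2 b_{\chi,n}$ and $b_{\chi,n}$ the standard zero-weight attached to $L(s,\chi)$. Using the identity $(1-a)(1-b)=1-a-b+ab$, the numerator rewrites as $-2\sum_\chi\Re[(1-\chi(g_1))(1-\chi(g_2))]b_{\chi,n}$. The upper bound $\rho_n\le o(1)$ will follow from showing that this real-part sum is asymptotically nonnegative: the uniform-weight case gives $\sum_\chi(1-\chi(g_1))(1-\chi(g_2))=|G_n|$ by character orthogonality (leading to $\rho=-1/2$), and the $2$-moderate hypothesis ensures that the $r(G_n)$ order-$2$ characters, which could skew this estimate, contribute on a scale of $o(V^*)$.

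The main technical obstacle will be the sharp lower bound $\rho_n\ge -3/4+o(1)$. The value $3/4$ is the critical constant from a single-character analysis: when one pair $(\chi,\bar\chi)$ with $\chi(g_1)=\chi(g_2)=e^{i\theta}$ dominates the weights, a direct computation gives $\rho=\cos\theta$, so approaching $\rho=-1$ would require $\theta\to\pi$, i.e.\ the dominant character acting essentially as an involution on $g_1, g_2$. The $2$-moderate condition (equivalent by Theorem~A to $r(G_n)/\sqrt{\log d_{L_n}}\to 0$) rules out such involution-type concentration asymptotically; a Cauchy--Schwarz estimate on the character sum, combined with the uniform bound on real-character contributions that this hypothesis provides, should yield $\rho_n\ge -3/4+o(1)$ uniformly over all triples.

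For part (2), fix $r\ge 4$ and consider the multiquadratic family $K_n=\Q(\sqrt{p_1},\ldots,\sqrt{p_n})$ with Galois group $H_n\cong(\Z/2\Z)^n$. Since $r(H_n)=2^n$ grows much faster than $\sqrt{\log d_{K_n}}$, this family is $2$-extreme, and the criterion of Fiorilli--Jouve~\cite{FJ} yields sequences $(a_n)_n$, $(b_n)_n$ in $H_n$ with $\delta^{(2)}_{K_n/\Q}(a_n,b_n)\to 1$ (concretely, one may take $a_n$ the identity and $b_n$ any nonidentity element, since the identity is the unique square and therefore biased low). For $n$ with $2^n\ge r$, pick any $r-2$ distinct intermediate elements $c_{n,2},\ldots,c_{n,r-1}\in H_n\setminus\{a_n,b_n\}$ and form the $r$-tuple $(b_n,c_{n,2},\ldots,c_{n,r-1},a_n)\in\mathcal{A}_r(H_n)$. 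The race event $\mathcal{P}_{K_n/\Q}(b_n,c_{n,2},\ldots,c_{n,r-1},a_n)$ is contained in the event $\{\pi(\,\cdot\,;K_n/\Q,b_n)<\pi(\,\cdot\,;K_n/\Q,a_n)\}$, whose logarithmic density equals $1-\delta^{(2)}_{K_n/\Q}(a_n,b_n)\to 0$. Hence $\delta^{(r)}_{K_n/\Q}(b_n,\ldots,a_n)\to 0$, so $0\in\overline{\bigcup_{n\ge 1}\delta^{(r)}_{K_n/\Q}(\mathcal{A}_r(H_n))}$, completing the construction.
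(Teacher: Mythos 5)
Your overall skeleton for part (1) — the three-way explicit formula plus the arcsine law, reducing everything to the bound $-\tfrac34+o(1)\le\rho(t_{a,b},t_{b,c})\le o(1)$ — is exactly the paper's route (Corollary~\ref{abelian-3wayraces} combined with Lemma~\ref{encadrementrho}). But your justification of that correlation bound is where the gap lies, and the mechanism you propose is the wrong one. The $2$-moderate hypothesis only enters to kill the mean terms $B(t_{a,b})\ll r(G_n)/\sqrt{\log d_{L_n}}$; it says nothing about the covariance structure, and in particular it does \emph{not} rule out the conductor weight concentrating on real characters: the paper's own $2$-moderate multiquadratic family (Proposition~\ref{2mod-Udense}) has all characters quadratic and $|U_{L_n/\Q}|$-values dense in $[1/2,1]$. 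What actually forces $\rho\in[-\tfrac34+o(1),\,o(1)]$ is the arithmetic negativity $\sum_{\chi}\chi(g)\log A(\chi)\le 0$ for every $g\ne1$ (Lemma~\ref{negativity}), which via \eqref{LO} gives $U_{L/\Q}(g)\le o(1)$ (Corollary~\ref{negativity of U(a)}); writing $\rho$ as in \eqref{rhoii+1} and minimizing $f(x,y)=\frac{-1+x+y}{\sqrt{(2-2x)(2-2y)}}$ on $[-1,\varepsilon]^2$ produces the constant $-3/4$ at $(x,y)=(-1,-1)$. Your single-character heuristic only yields $\rho=\cos\theta$, which goes down to $-1$, and the proposed ``Cauchy--Schwarz plus control of order-$2$ characters'' never produces the constant $3/4$; the obstruction to $\rho\to-1$ is that $\chi(g_1)=\chi(g_2)=-1$ forces $\chi(g_1g_2)=+1$, which the negativity of $\sum_\chi\chi(g_1g_2)\log A(\chi)$ forbids from carrying dominant weight — a ramification-theoretic fact valid for all abelian extensions, not a consequence of moderacy.

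For part (2) your containment trick ($\delta^{(r)}_{K_n/\Q}(b_n,c_{n,2},\dots,c_{n,r-1},a_n)\le\delta^{(2)}_{K_n/\Q}(b_n,a_n)\to0$) is correct as far as it goes, but it proves the wrong thing: your family $K_n=\Q(\sqrt{p_1},\dots,\sqrt{p_n})$ with consecutive primes has $r(H_n)/\sqrt{\log d_{K_n}}\to\infty$, so it is $2$-extreme, and your argument makes no use of $r\ge4$ — it would equally give $0\in\overline{\bigcup_n\delta^{(3)}_{K_n/\Q}(\mathcal A_3(H_n))}$ for that family. The content of the theorem is the dichotomy between $r=3$ and $r\ge4$ \emph{within the $2$-moderate regime}: the paper's proof takes the $2$-moderate multiquadratic tower of Proposition~\ref{2mod-Udense} (primes chosen so that $r(G_n)/\sqrt{\log d_{L_n}}\to0$ while $|U_{L_n/\Q}|$ accumulates near $1$) and applies the last part of Theorem~\ref{main-density}: one builds $r$-tuples whose limiting covariance matrix $A_\ell$ has correlation $-\ell$ between the first and third coordinates, so the orthant probability is at most $\tfrac14-\tfrac1{2\pi}\arcsin(\ell)\to0$ as $\ell\to1$; this needs $r-1\ge3$, i.e.\ $r\ge4$, which is precisely why the construction is compatible with part (1). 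In short, part (1) of your plan is missing the key arithmetic input (conductor negativity) and the derivation of the constant $3/4$, and part (2) substitutes a trivial non-$2$-moderate example for the genuinely $2$-moderate construction the theorem is about.
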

Theorems~C and~D follow directly from our main density theorem (Theorem~\ref{main-density}).  
Its proof hinges on two complementary ideas:  
\begin{enumerate}
\item \emph{Arithmetic selection.}  
      For each extension \(L_n/\Q\), we construct carefully chosen \(r\)-tuples of conjugacy classes whose covariance matrices
      \(\Delta_{L_n/\Q}\) converge to a chosen positive-definite limit.  
\item \emph{Probabilistic comparison.}  
      Using a Gaussian comparison principle (notably Slepian’s Lemma), we translate the limiting behavior of
      \(\Delta_{L_n/\Q}\) into quantitative control of the race densities \(\delta_{L_n/\Q}^{(r)}\).  
\end{enumerate}
This arithmetic–probabilistic strategy shows that the set of attainable values of \(\delta_{L_n/\Q}^{(r)}\) can, in the limit,
fill intervals, leading both to the non-empty interior phenomenon of Theorem~C and to the extreme values established in Theorem~D.

Let us briefly outline the structure of this paper. In
\S~\ref{sec:statements} we fix the analytic assumptions ($\mathsf{GRH}$, the Artin conjecture and $\mathsf{LI}$). We also state the main theorems we will be proving throughout this paper. In \S~\ref{sec:tools} we assemble the arithmetic background on Artin conductors and undertake a
detailed study of several towers of multiquadratic extensions, which will help us provide both
examples and counter-examples related to our main results.
In \S~\ref{sec:explicitformula} we establish the explicit formula for prime ideal race densities;
the derivation hinges on a recent multidimensional Berry–Esseen inequality and
requires substantial additional combinatorial work to adapt it to Artin~$L$-functions.
In \S~\ref{sec:pointwise} and \S~\ref{sec:uniform} we develop a series of lemmas that give quantitative control of the
covariance matrix attached to an $r$-tuple of class functions—bounds that are
crucial for applying the explicit formula.  In \S~\ref{sec:pointwise} we settle the two-way case by proving Theorem~A and
prove the pointwise part of Theorem~B.  In \S~\ref{sec:uniform} we prove Theorem~\ref{thm : uniform-moderacy} which finishes the proof of Theorem~B.
In \S~\ref{sec:density} we combine the covariance lemmas of \S~\ref{sec:pointwise} and \S~\ref{sec:uniform} with some probabilistic results (such as Slepian's Lemma) to prove our main density result (Theorem~\ref{main-density}). This result, together with the multiquadratic examples, yield Theorems~C and~D.

\section*{Notation}

For the convenience of the reader, we collect the notation used throughout the paper. Each item will be introduced when it is first needed. Here, $L/\Q$ is a finite Galois extension, $G$ is its Galois group, $G^{\sharp}$ is the set of conjugacy classes of $G$, the $t_i$ are real-valued class functions on $G$ satisfying $\langle t_i, 1\rangle_G=0$, and we set $\bt = (t_1, \dots, t_r)$. When $C_1, C_2 \in G^{\sharp}$, we let $t_{C_1,C_2}=\frac{|G|}{|C_1|}\mathds{1}_{C_1}-\frac{|G|}{|C_2|}\mathds{1}_{C_2}$. A summation over $\chi$ is a summation over irreducible characters of $G$, and one over $\gamma_{\chi}$ is one over the imaginary parts of non-trivial zeros of $L(s, L/\Q, \chi)$.

\begin{itemize}
    \item For $f,g:G\to \C$ define the scalar product of $f$ and $g$ by: 
\(
\langle f,g \rangle_G:=\frac{1}{|G|}\sum_{x\in G}f(x)\overline{g(x)}
\).
When the functions in question are clearly defined on $G$, we simply denote $\langle f,g \rangle:=\langle f,g\rangle_G$.
    \item $\mathcal{A}_r(G) := \left\{(C_1,\dots,C_r)\in(G^{\sharp})^r\, :\, C_i\ne C_j\text{ for }i\ne j\right\}$ is the set of $r$-tuples of conjugacy classes of $G$.
    \item \( \| \widehat{\bt} \|_\infty:=\max_{1\le i\le r}\max_{\chi \in \Irr(G)} |\langle t_i,\chi\rangle| \, .\)
    \item $r(G) := \#\{x \in G : x^2 = 1\}$ is the number of square roots of $1$ in $G$.
    \item $r_G(g):=\# \{ x \in G\, :\, x^2=g \}$ is the number of square roots of $g$ in $G$.
    \item $d_L$ is the absolute value of the discriminant of $L/\Q$.
    \item When $L/K$ is a Galois extension of number fields and $\chi$ is a complex character of $\Gal(L/K)$, $A(\chi):=d_K^{\chi(1)}N_{K/\Q}\left(\mathfrak{f}(L/K,\chi) \right)$ is a quantity related to the Artin conductor $\mathfrak{f}(L/K, \chi)$ of $\chi$.
    \item $E(t):=-\langle t,r_G\rangle-\sum_{\chi \ne 1} \langle t,\chi\rangle \Ord_{s=1/2}L(s,L/\Q,\chi)$ is the mean value of the limiting distribution $\mu_t$.
    \item $V(t):=2\sum_{\chi \ne1}|\langle t,\chi\rangle|^2\sum_{\gamma_\chi>0}\frac{1}{\frac{1}{4}+\gamma_\chi^2}$ is the variance of the limiting distribution $\mu_t$.
    \item $B(t):= \frac{E(t)}{\sqrt{V(t)}}$ is the inverse of the coefficient of variation of the limiting distribution $\mu_t$.
    \item $\rho(t_i,t_j):=\frac{1}{\sqrt{V(t_i)V(t_j)}} \sum_{\chi \ne 1}\sum_{\gamma_\chi>0}\frac{2 \Re\left( \langle t_i,\chi\rangle \overline{\langle t_j,\chi\rangle}\right)}{\frac{1}{4}+\gamma_\chi^2}$ is the $(i, j)$ entry of the covariance matrix associated to $\bt$.
    \item $\Delta(\bt):=\bigl(\rho(t_i,t_j)\bigr)_{1\le i,j\le r}$ is the covariance matrix associated to $\bt$.
    \item $\lambda_\bt$ is the minimal eigenvalue of the matrix $\Delta(\bt)$.
    \item $N_{L}:=2\sum_{\chi \ne 1}\sum_{\gamma_\chi>0}\frac{1}{\frac{1}{4}+\gamma_\chi^2}$ is a quantity that arises in variance estimates.
    \item The quantities \begin{align*}U_{L/\Q}\colon\;G\setminus \{1\}&\longrightarrow \R\\ \nonumber
\quad a &\longmapsto \frac{1}{N_{L}} \sum_{\chi \ne 1}\sum_{\gamma_\chi>0}\frac{2 \Re(\chi(a))}{\frac{1}{4}+\gamma_\chi^2}\end{align*} and \begin{align*}S_{L/\Q}\colon\;\bigl(G\setminus \{1\}\bigr)^2&\longrightarrow \R\\ \nonumber
\quad (a,b) &\longmapsto U_{L/\Q}(a)-U_{L/\Q}(b)\end{align*} will allow us to characterize $r$-moderate abelian extensions.
    \item $T_{L/\Q}(a,b) := \frac{1}{N_{L}}V(t_{\{a\}, \{b\}})$ is the normalized variance in the abelian case.
    \item $\Gamma_r$ is the tridiagonal matrix with $1$ on the main diagonal and $- \frac{1}{2}$ on the upper and lower diagonal, which is the limiting covariance matrix for $r+1$-moderate extensions (see \eqref{Gamma-r}).
    \item $\bx \mapsto F_r(\bx;\Gamma)$ is the distribution function of a centered Gaussian random vector with covariance matrix $\Gamma$.
\end{itemize}

We use the following notation: if $f$ is a function defined on vectors of $r$ class functions $(t_1,\dots,t_r)$ of $G=\Gal(L/\Q)$, we define an associated function $f_{L/\Q}^{(r)}:\mathcal{A}_{r+1}(G)\longrightarrow\mathcal{V}$ by 
\begin{equation}\label{f(t)tof(C)}
    f_{L/\Q}^{(r)}\bigl(C_1,\dots,C_{r+1}\bigr)=f\bigl(t_{C_1,C_2},\dots,t_{C_r,C_{r+1}}\bigr)\, .
\end{equation}
In particular, the previously defined functions $\Delta,\ V,\ E,\ B$ induce functions $\Delta_{L/\Q}^{(r)}:\mathcal{A}_{r+1}(G)\to \mathcal{M}_r(\R)$, $V_{L/\Q}:=V_{L/\Q}^{(1)}\, \colon\, \mathcal{A}_2(G)\to \R$, etc.

\section{Framework and statements of main theorems}\label{sec:statements}
\subsection{Analytic hypotheses and notation}\label{subsec:hypotheses}
Let $L/\Q$ be a finite Galois extension with group $G$, following the notation of~\cite{FJ}, the hypotheses that we will assume are the following: 
\begin{itemize}
  \item $\mathsf{GRH}$: We assume the Generalized Riemann Hypothesis for $L/\Q$, that is, for every $\chi\in\Irr(G)$, every non-trivial zero of $L(s,L/\Q,\chi)$ lies on the critical line $\{\Re(s)=\frac{1}{2}\}$.
  \item $\mathsf{AC}$: We will assume that for every non-trivial $\chi\in\Irr(G)$, the Artin $L$-function $L(s,L/\Q,\chi)$ is entire.
  \item $\mathsf{LI}^-$: We assume that the multiset
  \[
  \Bigl\{\, \gamma>0 \colon \exists\, \chi\in\Irr(G)\setminus\{1\}\text{ with } L\Bigl(\tfrac12 + i\gamma,L/\Q,\chi\Bigr)=0 \Bigr\}
  \]
  is linearly independent over $\Q$.
  \item $\mathsf{LI}$: When $L/\Q$ is abelian, we assume $\mathsf{LI}^-$ and $L(\tfrac{1}{2},L/\Q,\chi)\ne 0$ if $\chi \in \Irr(G)$.
\end{itemize}

Note that $\mathsf{AC}$ holds for abelian extensions of $\Q$ since class field theory tells us that Artin $L$-functions are Hecke $L$-functions in that case. As for $\mathsf{LI}^-$, we expect it to be true as it is known that those $L$-functions do not satisfy any algebraic differential equation except for their functional equation (as was proven by Ostrowski \cite{Ost}). This was introduced in Ng's thesis~\cite{Ng}. Finally, the hypothesis $\mathsf{LI}$ was introduced by Fiorilli--Jouve~\cite{FJ}*{Section 1.2} in a greater generality and it allows to control the contribution of central zeros to the behavior of the counting functions $\pi(x,L/\Q, t)$, as was done in \cite{Bail} for example. In the case where $L/\Q$ is abelian, then by the Kronecker-Weber theorem, $L$ is included in a cyclotomic extension of $\Q$, and the functorial properties of Artin $L$-functions imply that those $L$-functions are factors of Dirichlet $L$-functions. Therefore, in that context the non-vanishing assumption in $\mathsf{LI}$ is equivalent to Chowla's conjecture on the non-vanishing of Dirichlet $L$-functions at $1/2$, and our hypothesis $\mathsf{LI}$ is the same as Lamzouri's in \cite{Lam}.

For $f,g:G\to \C$ define the scalar product of $f$ and $g$ by: 
\[
\langle f,g \rangle_G:=\frac{1}{|G|}\sum_{x\in G}f(x)\overline{g(x)}
\]
When the functions in question are clearly defined on $G$, we simply denote $\langle f,g \rangle:=\langle f,g\rangle_G$.
Let $r\ge 1$, and let $t_1,\dots,t_r:G\to \R$ be class functions satisfying $\langle t_i,1\rangle=0$ for $1\le i \le r$. Denote $\bt=(t_1,\dots,t_r)$ and set
\begin{equation}\label{setPt}
\mathcal{P}_\bt:=\{ x\geq2\ :\ \pi(x;L/\Q;t_i)<0\text{ for }1\leq i \leq r \}\, .
\end{equation}
We define the logarithmic density of $\mathcal{P}_\bt$ (when it exists) by 
$$\delta(\mathcal{P}_\bt) := \lim_{X \to \infty} \frac{1}{\log X} \int_2^X \mathds{1}_{\mathcal{P}_\bt}(t) \frac{\d t}{t}\, .$$
When $C_1,C_{2}$ are distinct conjugacy classes in $G$, define the class function $t_{C_1,C_2}$ by 
\begin{equation}\label{tC1C2}
    t_{C_1,C_2}=\frac{|G|}{|C_1|}\mathds{1}_{C_1}-\frac{|G|}{|C_2|}\mathds{1}_{C_2} \, .
\end{equation}
With this notation, if $C_1,\dots,C_{r+1}$ are pairwise distinct conjugacy classes, and if $\bt'=\bigl(t_{C_1,C_2},\dots,t_{C_r,C_{r+1}}\bigr)$ we have
\[\mathcal{P}_{\bt'}=\mathcal{P}_{L/\Q}(C_1,\dots,C_r)\quad\text{and}\quad \delta_{\bt'}=\delta_{L/\Q}^{(r+1)}(C_1,\dots,C_{r+1})\, .\] 
From now on we assume that the family $\bt=(t_1,\dots,t_r)$ is linearly independent over $\R$. This is to avoid looking at degenerate cases such as $\delta_{L/\Q}^{(3)}(C_1, C_2, C_1)$. Define the function
$g_\bt(x) := (g_{t_1}(x), \dots, g_{t_r}(x))$ where $$g_{t_i}(x) = \frac{\pi(x; L/\Q, t_i)}{x^{1/2}/\log x}\,$$ is the normalized counting function (under $\mathsf{GRH}$).
Define $\bE=(E(t_1),\dots, E(t_r))$ where, denoting $r_G(g)$ the number of $x\in G$ such that $x^2=g$, we have
\begin{equation}\label{mean}
    E(t_i)=-\langle t_i,r_G\rangle-\sum_{\chi \ne 1} \langle t_i,\chi\rangle \Ord_{s=1/2}L(s,L/\Q,\chi)\, .
\end{equation}
Following the work of Rubinstein and Sarnak~\cite{RS94} and Ng~\cite{Ng}*{Chapter 5}, one can prove under the hypotheses $\mathsf{GRH}$, $\mathsf{AC}$, and $\mathsf{LI}^-$, that the function $g_\bt$ admits a limiting distribution $\mu_\bt$ whose Fourier transform is given by:

\begin{equation}\label{fourier-transform}
    \widehat{\mu}_\bt (x_1,\dots,x_r)=e^{-i\langle \bE,\bx\rangle}\prod_{\chi\ne 1}\prod_{\gamma_\chi>0}\J\left(\frac{2|\langle \widehat{\bt}(\chi),\bx \rangle|}{\sqrt{\frac{1}{4}+\gamma_\chi^2}}\right)\,
\end{equation}
where $\bx=(x_1,\dots,x_r)$, $\widehat{\bt}(\chi):=(\langle t_1,\chi\rangle,\dots,\langle t_r,\chi\rangle )\in \C^r\, ,$
$\langle\bx,\by\rangle$ (for $\bx,\by\in \C^r$) is the usual inner product, $J_0$ is the Bessel function of the first kind, $\chi$ varies over the set of non-trivial irreducible characters of $G$, and $\gamma_{\chi}$ varies in the set of positive imaginary parts of zeros of $L(s, L/\Q, \chi)$.\\
For each $i$, define $\mu_{t_i}$ to be the limiting distribution of $g_{t_i}$. Fiorilli--Jouve~\cite{FJ}*{Proposition 3.18} proved that $E(t_i)$ is in fact the mean of $\mu_{t_i}$ and that its variance is given by
\begin{equation}\label{variance}
V(t_i)=2\sum_{\chi \ne1}|\langle t_i,\chi\rangle|^2\sum_{\gamma_\chi>0}\frac{1}{\frac{1}{4}+\gamma_\chi^2}\, .
\end{equation}
\subsection{Explicit formula and main consequences}\label{subsec:statements}
Let $L/\Q$ be a finite Galois extension with Galois group $G$. For all class functions $t_1,t_2\,\colon G\longrightarrow \R$, define the correlation factor \begin{equation}\label{def : rho}
    \rho(t_1,t_2)=\frac{1}{\sqrt{V(t_1)V(t_2)}} \sum_{\chi \ne 1}\sum_{\gamma_\chi>0}\frac{2 \Re\left( \langle t_1,\chi\rangle \overline{\langle t_2,\chi\rangle}\right)}{\frac{1}{4}+\gamma_\chi^2}\, .
\end{equation}
Let $\bt=(t_1,\dots,t_r)$ be a vector of class functions, define
\[ \| \widehat{\bt} \|_\infty=\max_{1\le i\le r}\max_{\chi \in \Irr(G)} |\langle t_i,\chi\rangle| \, .\]
We define the symmetric matrix $\Delta(\bt)=\bigl(\rho(t_i,t_j)\bigr)_{1\le i,j\le r}$. The matrix $\Delta(\bt)$ will play a major role in our study: it is the covariance matrix of the Gaussian random vector used to approximate our prime ideal race densities. In Lemma~\ref{lem : Delta(t)-pos-def}, we prove that if the family $(t_1,\dots,t_r)$ is linearly independent over $\R$, then the symmetric matrix $\Delta(\bt)$ is positive-definite. This shows that the function $\vp_\bt$, defined by 
\begin{equation}\label{def : phibt}
    \vp_\bt(x_1,\dots,x_r):=\exp \left(  -\frac{x_1^2+\dots+x_r^2}{2}-\sum_{1\leq i<j \leq r} \rho(t_i,t_j)x_ix_j \right) \, ,
\end{equation}
is the characteristic function of a non-degenerate centered Gaussian vector with density 
\begin{equation}\label{def : fbt}
    f_\bt (x_1,\dots,x_r)= \frac{1}{((2\pi)^{r}\det \Delta(\bt))^{1/2}}\exp\left(-\frac{1}{2} \bx^T \Delta(\bt)^{-1} \bx \right)\, .
\end{equation}
Note that when $(C_1,\dots,C_{r+1})$ are pairwise distinct conjugacy classes, then the family $\bigl(t_{C_1,C_2},\dots,t_{C_r,C_{r+1}}\bigr)$ is indeed linearly independent over $\R$.
We can now state the explicit formula: 
\begin{thm}[Explicit formula]\label{explicit-formula} Let $L/\Q$ be a Galois extension with group $G$, for which $\mathsf{GRH}$, $\mathsf{AC}$, and $\mathsf{LI}^-$ hold. Let $\bt=(t_1,\dots,t_r)$ be a vector of linearly independent class functions on $G$ satisfying $\langle t_i,1\rangle=0$ for $1\le i \le r$. Then we have
\[
\delta\bigl(\mathcal{P}_\bt\bigr)=\int_{-\infty}^{-B(t_1)}\dots\int_{-\infty}^{-B(t_r)}f_\bt(x_1,\dots,x_r)\,\mathrm{d} \bx+O_r\left(\left(\frac{\|\widehat{\bt}\|_\infty^{4r}}{V^{2r}}+\frac{\|\widehat{\bt}\|_\infty}{\sqrt{V}}\right)\left(1+\frac{1}{\lambda_\bt}+\frac{1}{\lambda_\bt^r}\right)\right)\, ,
\]    
where $B(t_i)=E(t_i)/\sqrt{V(t_i)}$, $V=\min_{1\leq i \leq r}(V(t_i))$ and $\lambda_\bt$ is the minimal eigenvalue of the matrix $\Delta(\bt)$.
\end{thm}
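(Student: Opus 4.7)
The strategy is to realize the limiting distribution $\mu_\bt$ as the law of an explicit infinite sum of independent random vectors, and then to invoke a quantitative multidimensional Berry--Esseen estimate to compare $\delta(\mathcal{P}_\bt)$ with the corresponding Gaussian probability.

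The starting point is the identity $J_0(c|w|) = \mathbb{E}\bigl[e^{-ic\Re(X\bar w)}\bigr]$, valid for $c \ge 0$, $w \in \C$, and $X$ uniform on the unit circle of $\C$ (a direct consequence of $\frac{1}{2\pi}\int_0^{2\pi}e^{-iu\cos\theta}d\theta = J_0(u)$). Applied factor-by-factor to \eqref{fourier-transform}, this identity realizes $\mu_\bt$ as the Fourier transform of the law of
\[
\mathbf{Y} \;=\; \bE \;+\; \sum_{\chi \ne 1}\sum_{\gamma_\chi>0} \mathbf{W}_{\chi,\gamma_\chi},
\]
where the $\mathbf{W}_{\chi,\gamma_\chi}\in\R^r$ are independent random vectors with coordinates $W_{\chi,\gamma_\chi,i} = \frac{2}{\sqrt{1/4+\gamma_\chi^2}}\Re\bigl(X_{\chi,\gamma_\chi}\,\overline{\langle t_i,\chi\rangle}\bigr)$, the $X_{\chi,\gamma_\chi}$ being i.i.d.\ uniform on the unit circle (their independence being justified by $\mathsf{LI}^-$). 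A direct computation using $\mathbb{E}[X] = \mathbb{E}[X^2] = 0$ shows that the summands are centered and that the covariance matrix of $\sum \mathbf{W}_{\chi,\gamma_\chi}$ has $(i,j)$-entry $\sqrt{V(t_i)V(t_j)}\,\rho(t_i,t_j)$. Standardizing coordinate-wise by $\sqrt{V(t_i)}$ produces a sum $\tilde{\mathbf{S}}$ of independent centered random vectors with covariance $\Delta(\bt)$, which is positive-definite by Lemma~\ref{lem : Delta(t)-pos-def}.

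After standardization, $\delta(\mathcal{P}_\bt) = \mathbb{P}\bigl(\tilde S_i < -B(t_i)\text{ for all }i\bigr)$, and I would then apply a quantitative multidimensional Berry--Esseen inequality for orthants (such as Bentkus's or Rai\v{c}'s) to compare this probability with the target Gaussian integral. The leading error comes from the normalized third moments: using the bound $|W_{\chi,\gamma_\chi,i}| \le 2\|\widehat\bt\|_\infty/\sqrt{1/4+\gamma_\chi^2}$ together with the elementary inequality $\sum_{\chi,\gamma_\chi>0}|\langle t_i,\chi\rangle|^2/(1/4+\gamma_\chi^2)^{3/2} \ll V(t_i)$, one obtains $\sum \mathbb{E}\bigl[\|\mathbf{W}_{\chi,\gamma_\chi}\|^3\bigr] \ll_r \|\widehat\bt\|_\infty \max_i V(t_i)$, and division by $V^{3/2}$ yields the first contribution $\|\widehat\bt\|_\infty/\sqrt V$. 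The second term $\|\widehat\bt\|_\infty^{4r}/V^{2r}$ arises from tail and smoothing terms in the Fourier-inversion step: the expansion $\log J_0(u) = -u^2/4 + O(u^4)$ produces fourth-moment corrections whose $r$-dimensional accumulation takes exactly this shape. The factor $1+\lambda_\bt^{-1}+\dots+\lambda_\bt^{-r}$ records the explicit dependence on the spectrum of $\Delta(\bt)$, which is unavoidable in any multidimensional Berry--Esseen bound for rectangles when the covariance may degenerate.

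The chief technical obstacle is making the Berry--Esseen step fully quantitative with an \emph{explicit} polynomial dependence on $\lambda_\bt^{-1}$: standard off-the-shelf multidimensional results are typically phrased either for random vectors already rescaled so that $\Sigma = \Id$ (thereby absorbing the condition number of $\Sigma$ into an unspecified constant) or for smooth test functions rather than half-spaces. Since in our intended applications $\lambda_\bt$ can be arbitrarily small, and its reciprocal appears up to the power $r$ in the final bound, every inversion or square-root of $\Delta(\bt)$ must be tracked. A secondary difficulty is the infinite sum over zeros of Artin $L$-functions: one must truncate at a suitable height, bound the tail contributions to both the covariance and the characteristic function via zero-density estimates (which are available under $\mathsf{GRH}$), and verify that the resulting truncation error can be absorbed in the stated bound. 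The interplay between these two phenomena --- dimension-dependent Berry--Esseen and infinite $L$-function data --- is what demands the combinatorial bookkeeping that goes beyond the one-dimensional Fiorilli--Jouve and the classical-setting Lamzouri arguments.
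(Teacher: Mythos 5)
Your realization of $\mu_\bt$ as the law of $\bE+\sum_{\chi\ne1}\sum_{\gamma_\chi>0}\mathbf{W}_{\chi,\gamma_\chi}$ with independent uniform-on-the-circle variables is correct under $\mathsf{LI}^-$, and your covariance computation does produce $\Delta(\bt)$ after coordinate-wise normalization; passing from there to an orthant-comparison theorem for sums of independent vectors (Bentkus/Rai\v{c}) is a genuinely different route from the paper's. The paper never works with the random model directly: it compares characteristic functions through the Heuberger--Kropf inequality, and the whole technical core consists of the operator $\Lambda$, Hadamard's lemma, and the combinatorial Lemma~\ref{PA}, which are used to extract the factor $x_1\cdots x_r$ and to control all mixed partial derivatives of $\exp\bigl(-\sum_{n\ge2}b_{2n}\bigr)$; that is where both the term $\|\widehat{\bt}\|_\infty^{4r}/V^{2r}$ and the powers of $\lambda_\bt^{-1}$ actually arise, together with an induction on $r$ for the lower-dimensional marginal terms. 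If completed, your route would in fact give the cleaner bound $O_r\bigl(\lambda_\bt^{-3/2}\|\widehat{\bt}\|_\infty/\sqrt V\bigr)$, which is admissible: since the diagonal of $\Delta(\bt)$ is all ones, $\operatorname{tr}\Delta(\bt)=r$ forces $\lambda_\bt\le1$, so $\lambda_\bt^{-3/2}\le 1+\lambda_\bt^{-1}+\lambda_\bt^{-r}$ for every $r\ge1$.

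As written, however, the decisive quantitative step is missing: you explicitly defer the tracking of $\lambda_\bt^{-1}$ as ``the chief technical obstacle'' instead of carrying it out, yet the explicit $\lambda_\bt$-dependence is precisely what the theorem asserts. The obstacle is in fact less severe than you suggest --- Bentkus's Lyapunov-type bound in $\R^d$ is stated for arbitrary nonsingular covariance, with error controlled by $\sum_k \mathbb{E}\|\Delta(\bt)^{-1/2}\tilde{\mathbf{W}}_k\|^3$, and $\|\Delta(\bt)^{-1/2}\|\le\lambda_\bt^{-1/2}$ then yields the dependence immediately --- but this has to be stated and used for the argument to be a proof rather than a plan. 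Second, your account of the term $\|\widehat{\bt}\|_\infty^{4r}/V^{2r}$ as coming from ``fourth-moment corrections from $\log J_0$'' does not correspond to any step of your own argument: in the Bentkus route there is no Fourier inversion and no $J_0$-expansion, and that term is simply not needed since the theorem's error is an upper bound; as written, this reads as a derivation that has not been done. Finally, two points need justification rather than assertion: the identification $\delta(\mathcal{P}_\bt)=\mathbb{P}\bigl(\tilde S_i<-B(t_i)\ \forall i\bigr)$ requires that the boundary of the orthant carry no $\mu_\bt$-mass (this is where $\mathsf{LI}^-$ and the Rubinstein--Sarnak/Ng framework enter), and the passage from finitely many to infinitely many summands needs only a weak-convergence limit based on the unconditional convergence of $\sum_{\gamma_\chi}(1/4+\gamma_\chi^2)^{-1}$ --- no truncation-plus-zero-density argument is required, and invoking one without details leaves that step unsubstantiated as well.
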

Our proof of Theorem~\ref{explicit-formula} relies on the recent multidimensional Berry--Esseen inequality of Heuberger--Kropf \cite{HK}. An alternative route would be to adapt Harper--Lamzouri's argument for proving \cite{HL}*{Normal approximation result 1} to the number-field setting, which would replace the error term $1/\sqrt{V}$ with $[L_n:\Q]^{-1/8}$. For many natural infinite families $(L_n)$ the Galois degrees $[L_n:\Q]$ can remain bounded, so an estimate that still decays in this regime is indispensable for results such as Theorem~B. The bound in Theorem~\ref{explicit-formula} meets this requirement. Indeed, in the abelian case Lemma~\ref{lem : pos-real} shows that $V\asymp\log d_L$, which yields an error term $\ll 1/\sqrt{\log d_L}$.\\
In Corollary~\ref{r=1} we recover a result proved by Fiorilli--Jouve for $2$-way races~\cite{FJ}*{Th 5.10}. In Corollary~\ref{3wayraces} we state the $3$-way density formula which, when combined with Lemma~\ref{encadrementrho}, yields the following corollary: 
\begin{cor}\label{abelian-3wayraces}
Let $L/\Q$ be a finite abelian extension with group $G$ for which $\mathsf{GRH}$ and $\mathsf{LI}^-$ hold. Let $a,b,c \in G$ be pairwise distinct. Then,
\[ \delta^{(3)}_{L/\Q}\left( a,b,c \right)=\frac{1}{4}+\frac{1}{2\pi}\arcsin\left(\rho(t_{a,b},t_{b,c})\right)-\frac{1}{2\sqrt{2\pi}}(B(t_{a,b})+B(t_{b,c}))+O\left(\frac{1}{\sqrt{\log d_L}}\right)\, .\]
\end{cor}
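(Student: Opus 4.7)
The plan is to invoke Theorem~\ref{explicit-formula} with $r=2$ and $\bt=(t_{a,b},t_{b,c})$, and then evaluate the resulting bivariate Gaussian CDF in closed form. Since $a,b,c$ are pairwise distinct, the pair $\bt$ is linearly independent over $\R$ (as observed just before the statement of Theorem~\ref{explicit-formula}), so the theorem produces
\[
\delta^{(3)}_{L/\Q}(a,b,c)=\int_{-\infty}^{-B(t_{a,b})}\!\!\int_{-\infty}^{-B(t_{b,c})}f_{\bt}(x,y)\,\d x\,\d y+\mathcal E_{L/\Q},
\]
where $f_\bt$ is the density of the centered Gaussian with covariance $\Delta(\bt)=\bigl(\begin{smallmatrix}1&\rho\\ \rho&1\end{smallmatrix}\bigr)$, writing $\rho:=\rho(t_{a,b},t_{b,c})$, and $\mathcal E_{L/\Q}$ is the explicit-formula error.

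The integral is the bivariate normal CDF $\Phi_2(-B(t_{a,b}),-B(t_{b,c});\rho)$. I would apply Sheppard's classical identity $\Phi_2(0,0;\rho)=\tfrac14+\tfrac{1}{2\pi}\arcsin\rho$, and compute the first partials at the origin through the formula $\partial_u\Phi_2(u,v;\rho)=\phi(u)\Phi\bigl((v-\rho u)/\sqrt{1-\rho^2}\bigr)$, which evaluates to $\phi(0)\Phi(0)=\tfrac{1}{2\sqrt{2\pi}}$ at $(0,0)$; by the symmetric role of $u$ and $v$, the same value is obtained for $\partial_v\Phi_2(0,0;\rho)$. A first-order Taylor expansion at $(0,0)$ therefore gives
\[
\Phi_2(-B_1,-B_2;\rho)=\frac14+\frac{\arcsin\rho}{2\pi}-\frac{B_1+B_2}{2\sqrt{2\pi}}+O_\rho(B_1^2+B_2^2),
\]
where the implicit constant depends only on $(1-|\rho|)^{-1}$ through the second derivatives of $\Phi_2$.

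Next I would translate all error contributions into $O\!\left((\log d_L)^{-1/2}\right)$ using the abelian structure. Since the characters are one-dimensional, $|\langle t_{a,b},\chi\rangle|\le 2$ and hence $\|\widehat{\bt}\|_\infty=O(1)$; Lemma~\ref{lem : pos-real} provides the bound $V\asymp \log d_L$; and Lemma~\ref{encadrementrho} supplies the key uniform bound keeping $|\rho|$ away from $1$, so that $\lambda_\bt=1-|\rho|$ is bounded below and the factor $1+\lambda_\bt^{-1}+\lambda_\bt^{-2}$ appearing in Theorem~\ref{explicit-formula} is $O(1)$. Substituting these into the error estimate of Theorem~\ref{explicit-formula} yields $\mathcal E_{L/\Q}=O((\log d_L)^{-1/2})$, which absorbs the quadratic Taylor remainder $O(B_1^2+B_2^2)$ in the regime of interest.

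The main obstacle is the uniform lower bound on $\lambda_\bt$: without Lemma~\ref{encadrementrho} the Gaussian density $f_\bt$ could degenerate along a family of fields as $|\rho|\to 1$, causing both the explicit-formula error and the Taylor remainder to blow up. This is precisely where the abelian hypothesis is used in an essential way, since in that setting one has the explicit expression for $\rho$ in terms of characters evaluated at $a,b,c$, which Lemma~\ref{encadrementrho} exploits to keep the correlation structure non-degenerate.
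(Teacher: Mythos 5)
Your proposal is correct and follows essentially the same route as the paper: the paper proves the general three-way expansion (Corollary~\ref{3wayraces}) by applying Theorem~\ref{explicit-formula} with $r=2$ and Taylor-expanding the bivariate Gaussian CDF at the origin using exactly the identities you quote ($F(0,0)=\tfrac14+\tfrac{1}{2\pi}\arcsin\rho$ and $\partial_x F(0,0)=\partial_y F(0,0)=\tfrac{1}{2\sqrt{2\pi}}$), and then deduces the abelian statement by combining it with the eigenvalue bound \eqref{3way-eigenvalue} coming from Lemma~\ref{encadrementrho} together with $\|\widehat{\bt}\|_\infty\ll 1$ and $V\asymp\log d_L$ from Lemma~\ref{lem : pos-real}. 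The only (cosmetic) difference is that you merge the two steps into one, and, like the paper, you treat the quadratic remainder $O\bigl(B(t_{a,b})^2+B(t_{b,c})^2\bigr)$ as absorbed only in the regime where the $B$'s are small, mirroring the hypothesis the paper places in Corollary~\ref{3wayraces}.
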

In the particular case of the cyclotomic field $L=\Q(\zeta_q)$, we have $\log d_L \sim \varphi(q)\log q$. In this case, Corollary~\ref{abelian-3wayraces} matches the shape of a theorem of Lin--Martin~\cite{LM}*{Theorem 1.5} and applies to all distinct invertible residue classes $a,b,c$ modulo $q$, without the assumption $a^2\equiv b^2 \equiv c^2 \pmod q$ of Lin--Martin. Expanding $\arcsin$ around $-1/2$ recovers Lamzouri's formula~\cite{Lam}*{Corollary 2.3}.\\
We will use Theorem~\ref{explicit-formula} to deduce our other main results. The main difficulty is that one has to bound the terms given by the inverse of the minimal eigenvalue of the covariance matrix $\Delta( \bt)$. In order to explain how one can control these eigenvalue terms, let us fix a family $(L_n)_n$ of Galois extensions of $\Q$ with respective groups $(G_n)_n$ and such that the absolute value of the absolute discriminant $d_{L_n}\longrightarrow \infty$, as $n\to\infty$. We introduce two modes of convergence: 
\begin{defi}\label{defi : uniform convergence}
Let $(\mathcal{V}, \|\cdot\|)$ be a real normed vector space and $r\geq 1$ and, for each $n\geq 1$, fix a map $f_n\, :\, \mathcal{A}_r(G_n)\rightarrow \mathcal{V}\, .$
We say that $(f_n)_n$ converges uniformly to $v\in \mathcal{V}$ if 
\[ \max_{x\in \mathcal{A}_r(G_n)} \| f_n(x) - v \| \underset{n\to \infty}{\longrightarrow} 0\, . \] 
\end{defi}
When the family $(L_n)_n$ of Galois extensions over $\Q$ is increasing (i.e. $L_n\subset L_{n+1}$), we denote by $\pi_n:G_{n+1}\to G_n$ the canonical projection. Note that $\pi_n$ naturally induces a surjective map $\pi_n:G_{n+1}^\#\to G_n^\#$ (where $G^{\sharp}$ denotes the set of conjugacy classes of the group $G$). When $C \in G_{n_0}^\#$ for some sufficiently large $n_0\geq 1$, we denote by $C^{(n)} \subset G_n$ (for $n\geq n_0+1$) a lift of $C$ via the map $\pi_{n-1}\circ\dots\circ\pi_{n_0}$ (which is simply the canonical projection $G_n\to G_{n_0}$). This enables us to introduce the second mode of convergence:
\begin{defi}\label{defi : pointwise convergence}
Let $(L_n)_n$ be an increasing sequence of Galois extensions of $\Q$ with Galois groups $(G_n)_n$. Let $(\mathcal{V}, \|\cdot\|)$ be a real normed vector space and let $r\geq 1$ and for each $n\geq 1$ fix a map $f_n\, :\, \mathcal{A}_r(G_n)\rightarrow \mathcal{V}\, .$
We say that $(f_n)_n$ converges pointwise to $v\in \mathcal{V}$ if for all $n_0\geq 1$ and for all $(C_1,\dots,C_r)\in \mathcal{A}_r(G_{n_0})$ and for any choice of lifts $(C_1^{(n)},\dots,C_r^{(n)})\in \mathcal{A}_r(G_n)$, $n\geq 1$, we have 
\[ \lim_{n\to \infty} f_n\left(  C_1^{(n)},\dots,C_r^{(n)} \right) =v \, .\]
\end{defi}
We note that a family $(L_n)_n$ is uniformly $r$-moderate if and only if $\bigl(\delta^{(r)}_{L_n/\Q})_n$ converges uniformly to $1/r!$. Similarly, we define the notion of pointwise moderacy.

\begin{defi}\label{def:pointwise}
    We say that $(L_n)_n$ is pointwise $r$-moderate if $\bigl(\delta^{(r)}_{L_n/\Q})_n$ converges pointwise to $1/r!$.
\end{defi}

 In order to bound the eigenvalue terms in the explicit formula Theorem \ref{explicit-formula}, it suffices to prove that $\Delta_{L_n/\Q}$ converges uniformly to a symmetric matrix that is positive-definite. Sometimes, proving a pointwise convergence is enough to bound those eigenvalues for some specific families of conjugacy classes. In fact, we prove in Proposition~\ref{matrix-conv}, that the pointwise convergence of this sequence of matrices always holds, leading to the following result: 
\begin{thm}\label{thm : pointwisemoderacy}
    Let $(L_n)_n$ be an \emph{increasing} family of abelian extensions of $\Q$ satisfying $\mathsf{GRH}$ and $\mathsf{LI}$, with respective groups $(G_n)_n$ and such that $[L_n : \Q]\to \infty$. For all $r\geq 2$ the following are equivalent:
    \begin{enumerate}\itemsep0.5em
        \item $r(G_n)/\sqrt{\log d_{L_n}} \underset{n \to \infty}{\longrightarrow} 0$.
        \item $(L_n)_n$ is uniformly $2$-moderate over $\Q$.
        \item $(L_n)_n$ is pointwise $r$-moderate over $\Q$.
    \end{enumerate}
\end{thm}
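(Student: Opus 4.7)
My plan is to first observe that (1) $\Leftrightarrow$ (2) follows directly from Theorem~A: the case $\ell=0$ gives (1) $\Rightarrow$ (2), while if (1) fails one may extract a subsequence along which $r(G_n)/\sqrt{\log d_{L_n}}$ tends to some $\ell'\in(0,\infty]$, and the second bullet of Theorem~A then exhibits pairs whose density converges to a value different from $1/2$, contradicting (2). It then remains to prove (1) $\Leftrightarrow$ (3) for each $r\geq 2$, which I do via the explicit formula (Theorem~\ref{explicit-formula}) for the direction (1) $\Rightarrow$ (3), and via marginalization followed by a contrapositive for (3) $\Rightarrow$ (1). Two abelian-group observations will be used throughout: under $\mathsf{LI}$ one has $E(t_{a,b}) = r_{G_n}(b) - r_{G_n}(a)$, and since the squaring map on $G_n$ is a homomorphism, $r_{G_n}$ takes only the values $0$ and $r(G_n)$; moreover, every non-square in a quotient $G_{n_0}$ has only non-squares as lifts in $G_n$ through the surjection $G_n \twoheadrightarrow G_{n_0}$.

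For (1) $\Rightarrow$ (3), fix $n_0$, a tuple $(C_1,\dots,C_r) \in \mathcal{A}_r(G_{n_0})$, and arbitrary lifts $C_i^{(n)}$, and apply Theorem~\ref{explicit-formula} to $\bt = (t_{C_1^{(n)},C_2^{(n)}},\dots,t_{C_{r-1}^{(n)},C_r^{(n)}})$. One has $\|\widehat{\bt}\|_\infty \leq 2$ since $\langle t_{a,b},\chi\rangle = \overline{\chi(a)} - \overline{\chi(b)}$; by the first observation together with Lemma~\ref{lem : pos-real}, $|B(t_i)| \leq r(G_n)/\sqrt{V(t_i)} \ll r(G_n)/\sqrt{\log d_{L_n}} \to 0$ by~(1); and Proposition~\ref{matrix-conv} yields $\Delta(\bt) \to \Gamma_{r-1}$ pointwise, whose smallest eigenvalue is $1-\cos(\pi/r)>0$, so $\lambda_\bt$ eventually stays bounded below. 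The error term in Theorem~\ref{explicit-formula} is then $O_r(1/\sqrt{\log d_{L_n}}) \to 0$, while the main integral converges by continuity of the Gaussian distribution to $\int_{(-\infty,0)^{r-1}} f_{\Gamma_{r-1}}(\bx)\,\d\bx = 1/r!$, the orthant probability being computed by realizing $\Gamma_{r-1}$ as the covariance matrix of the normalized consecutive differences of $r$ i.i.d.\ standard normals.

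For (3) $\Rightarrow$ (1), I first reduce to $r=2$ by marginalization: given $(C_1,C_2) \in \mathcal{A}_2(G_{n_0})$ and lifts, pick $n_0' \geq n_0$ with $|G_{n_0'}| \geq r$ and (after a finite partition of the indices based on $\pi_{n \to n_0'}(C_1^{(n)}, C_2^{(n)})$ to fix the intermediate lift) extend within $G_{n_0'}$ to an $r$-tuple in $\mathcal{A}_r(G_{n_0'})$; then $\delta^{(2)}_{L_n/\Q}(C_1^{(n)}, C_2^{(n)})$ equals the sum over the $r!/2$ orderings of the extended tuple in which $C_1$ precedes $C_2$ of the corresponding $r$-way densities, which by hypothesis tends to $r!/2 \cdot 1/r! = 1/2$. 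For the contrapositive in the case $r=2$, suppose $r(G_n)/\sqrt{\log d_{L_n}} \not\to 0$ and extract a subsequence along which this ratio exceeds $\varepsilon>0$; on it, $r(G_n) \geq 2$ eventually. Choose $n_0$ on the subsequence with $r(G_{n_0}) \geq 2$ (so $G_{n_0}^2 \neq G_{n_0}$) and a non-square $b \in G_{n_0}$. Setting $C_1=1, C_2=b$ with lifts $C_1^{(n)} = 1_{G_n}$ and $C_2^{(n)}$ any lift of $b$, the second observation yields $E(t_{C_1^{(n)},C_2^{(n)}}) = -r(G_n)$, whence $|B| \asymp r(G_n)/\sqrt{\log d_{L_n}}$ is bounded away from $0$ on the subsequence. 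Corollary~\ref{r=1} then forces $\delta^{(2)}_{L_n/\Q}(1, C_2^{(n)})$ to stay bounded away from $1/2$, contradicting pointwise $2$-moderacy.

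The main obstacle is this final contrapositive step: one must convert the purely quantitative failure ``$r(G_n)/\sqrt{\log d_{L_n}} \not\to 0$'' into a concrete fixed pair of conjugacy classes in some $G_{n_0}$ whose lifts continue to witness nonvanishing bias throughout the subsequence. This is precisely where the increasing-family hypothesis is indispensable, as it guarantees that the single non-square $b$ chosen once in $G_{n_0}$ has only non-square lifts in every $G_n$ above, so the test pair $(1, b^{(n)})$ remains a bias-producing witness at every level of the tower.
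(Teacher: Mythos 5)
Your proposal is correct and follows essentially the same route as the paper: (1)$\Leftrightarrow$(2) via Theorem~A, (1)$\Rightarrow$(3) via the explicit formula combined with the pointwise convergence of the covariance matrices (Proposition~\ref{matrix-conv}) and the orthant probability for $\Gamma_{r-1}$ (Lemma~\ref{cdf(0)}), and (3)$\Rightarrow$(1) by reducing to pointwise $2$-moderacy and then running the fixed-non-square-and-lift argument that is exactly the increasing-case mechanism behind Theorem~A (this is where the paper simply cites Theorem~A and Lemma~\ref{r+1mod-rmod}). The only point you should make explicit is that your marginalization identity (writing a $2$-way density as the sum of $r!/2$ $r$-way densities) tacitly uses that the tie sets have logarithmic density zero under $\mathsf{LI}^-$, which is precisely what the paper's Lemma~\ref{r+1mod-rmod} justifies via Devin's theorem.
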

The behavior in terms of uniform convergence of the latter functions is related to the ramification data of the corresponding extensions. When $L/\Q$ is a Galois extension with group $G$ and $\chi$ is a character of $G$, denote by $A(\chi)$ its Artin conductor (see \S~\ref{subsec:artin} for a definition). The following Theorem gives a full characterization, in the abelian case, of uniformly $r$-moderate extensions when $r\ge 3$. 
\begin{thm}\label{thm : uniform-moderacy}
    Let $(L_n)_n$ be a family of abelian Galois extensions over $\Q$ with respective Galois groups $(G_n)_n$, such that  $d_{L_n/\Q}\to \infty$ as $n\to \infty$, and for which $\mathsf{GRH}$ and $\mathsf{LI}$ hold. Let $r\ge 3$, we have: $(L_n)_n$ is uniformly $r$-moderate if and only if $(L_n)_n$ is uniformly $2$-moderate and \[ \lim_{n\to \infty} \frac{1}{\log d_{L_n}}\max_{a,b\in G_n\setminus\{1\}} \Bigl| \sum_{\chi \in \Irr(G_n)}(\chi(a)-\chi(b)) \log A(\chi)\Bigr|=0\, .\]
\end{thm}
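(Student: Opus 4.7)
My strategy is to use the explicit formula of Theorem~\ref{explicit-formula}, applied to $\bt = (t_{C_1,C_2},\ldots,t_{C_{r-1},C_r})$, to reduce uniform $r$-moderacy to the joint condition that $B(t_i)\to 0$ and $\Delta(\bt)\to \Gamma_{r-1}$ uniformly over $\mathcal{A}_r(G_n)$. In the abelian setting Lemma~\ref{lem : pos-real} gives $V(t_i)\asymp \log d_{L_n}$ with $\|\widehat{\bt}\|_\infty$ uniformly bounded, and the eigenvalues $1-\cos(k\pi/r)$, $1\le k\le r-1$, of the tridiagonal model $\Gamma_{r-1}$ are strictly positive, so a uniform convergence $\Delta(\bt)\to \Gamma_{r-1}$ automatically keeps $\lambda_{\bt}$ bounded below and renders the error term in Theorem~\ref{explicit-formula} uniformly negligible. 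Combined with the classical identity $\int_{(-\infty,0)^{r-1}}f_{\Gamma_{r-1}}(\bx)\,\mathrm{d}\bx = 1/r!$ (the probability that $r$ iid centered Gaussians land in a fixed order) and joint continuity of the multivariate normal CDF, this reduction is an equivalence.

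The key link between the $\log A$ hypothesis and $\Delta(\bt)$ is an explicit calculation in the abelian case: from $\langle t_{a,b},\chi\rangle = \overline{\chi(a)}-\overline{\chi(b)}$ and the definitions of $V$, $N_L$ and $U_{L/\Q}$ one obtains
\[
\rho(t_{a,b},t_{b,c}) = \frac{-1+U(a^{-1}b)+U(b^{-1}c)-U(a^{-1}c)}{2\sqrt{(1-U(a^{-1}b))(1-U(b^{-1}c))}}
\]
(and an analogous formula for $\rho(t_{a,b},t_{c,d})$ with disjoint supports), writing $U=U_{L_n/\Q}$. Under $\mathsf{GRH}$, the explicit formula for Artin $L$-functions gives $2\sum_{\gamma_\chi>0}(\tfrac14+\gamma_\chi^2)^{-1} = \log A(\chi) + O(1)$, and combined with the conductor--discriminant formula $\log d_{L_n} = \sum_{\chi}\log A(\chi)$ (abelian case) yields
\[
U_{L_n/\Q}(a)-U_{L_n/\Q}(b) = \frac{1}{\log d_{L_n}}\sum_\chi(\chi(a)-\chi(b))\log A(\chi) + o(1).
\]
Hence the $\log A$ hypothesis is equivalent to $|U(a)-U(b)|\to 0$ uniformly for $a,b\ne 1$; together with the constraint $\sum_{g\ne 1}U(g)=-1$ this forces $U(g)$ uniformly close to $-1/(|G_n|-1)$, and in particular $1-U(g)$ stays bounded below.

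For $(\Leftarrow)$, uniform $2$-moderacy yields $B(t_{a,b})\to 0$ uniformly by the criterion of Theorem~A, while the $\log A$ hypothesis, plugged into the formulas above, forces adjacent entries of $\Delta(\bt)\to -1/2$ and non-adjacent entries $\to 0$ uniformly, i.e.\ $\Delta(\bt)\to\Gamma_{r-1}$ uniformly; the reduction then gives uniform $r$-moderacy. For $(\Rightarrow)$, I first deduce uniform $2$-moderacy by marginalization: assuming $|G_n|\ge r$ (the only nontrivial case), any $(C_1,C_2)\in\mathcal{A}_2(G_n)$ admits a completion $(C_1,\ldots,C_r)\in\mathcal{A}_r(G_n)$, and
\[
\delta^{(2)}_{L_n/\Q}(C_1,C_2) = \sum_{\substack{\sigma\in S_r \\ \sigma^{-1}(1)<\sigma^{-1}(2)}}\!\!\delta^{(r)}_{L_n/\Q}(C_{\sigma(1)},\ldots,C_{\sigma(r)}) = \tfrac{r!}{2}\cdot\tfrac{1}{r!}+o(1) = \tfrac{1}{2}+o(1)
\]
uniformly. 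Next, uniform $r$-moderacy combined with the explicit-formula analysis forces $\Delta(\bt)\to\Gamma_{r-1}$ uniformly, hence the adjacent entries tend to $-1/2$ uniformly over distinct $a,b,c$; inverting the formula for $\rho(t_{a,b},t_{b,c})$ yields $U(a^{-1}b)+U(b^{-1}c)-U(a^{-1}c)\to 0$ uniformly, and varying $c$ yields $|U(a)-U(b)|\to 0$ uniformly for $a,b\ne 1$, which translates back to the $\log A$ hypothesis.

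The main obstacle is the rigorous extraction, in the $(\Rightarrow)$ direction, of $\Delta(\bt)\to\Gamma_{r-1}$ from the mere convergence $\delta^{(r)}\to 1/r!$: a single Gaussian integral equal to $1/r!$ does not determine its covariance, so the uniformity across all $r$-tuples must be combined in an essential way with the Berry--Esseen-type expansion of Theorem~\ref{explicit-formula} to solve for each off-diagonal entry of $\Delta(\bt)$ in turn. Carefully controlling the factor $\lambda_{\bt}^{-r}$ in the error term throughout this inversion, while ensuring the test tuples chosen at each step maintain a uniform lower bound on $\lambda_{\bt}$, is the most delicate technical point.
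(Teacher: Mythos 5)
Your $(\Leftarrow)$ direction is essentially the paper's argument: translate the $\log A$ condition into uniform convergence of $S_{L_n/\Q}$ (equivalently of the $U$-differences) to $0$, deduce via the formulas \eqref{rhoii+1}--\eqref{rhoij} that $\Delta(\bt)\to\Gamma_{r-1}$ uniformly (this is Proposition~\ref{variance : equivalence}), get a uniform lower bound on $\lambda_\bt$, and conclude from Theorem~\ref{explicit-formula}, continuity of the Gaussian CDF and Lemma~\ref{cdf(0)}. That half is fine.

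The $(\Rightarrow)$ direction, however, has a genuine gap at its central step, which you yourself flag as "the main obstacle": you assert that uniform $r$-moderacy "forces $\Delta(\bt)\to\Gamma_{r-1}$ uniformly", but you give no mechanism for recovering the covariance matrix from the single scalar condition $\delta^{(r)}\to 1/r!$, and indeed, as you note, an orthant probability equal to $1/r!$ does not determine an $(r-1)\times(r-1)$ covariance matrix; moreover any attempt to "solve for each off-diagonal entry in turn" runs into the circularity that the error term in Theorem~\ref{explicit-formula} involves $\lambda_\bt^{-r}$, which you cannot bound before knowing $\Delta(\bt)$ is close to $\Gamma_{r-1}$. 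The paper avoids this inversion entirely by arguing contrapositively and reducing to three-way races: by Lemma~\ref{r+1mod-rmod}, failure of uniform $3$-moderacy already implies failure of uniform $r$-moderacy for every $r\ge 3$, so it suffices to detect a bad triple. If the $\log A$ condition fails (while $r(G_n)/\sqrt{\log d_{L_n}}\to 0$, the other case being handled by Theorem~A plus Lemma~\ref{r+1mod-rmod}), Corollary~\ref{Nonconv} produces triples with $\lim\rho(t_{x,y},t_{y,z})<-\tfrac12$; for such $2\times 2$ covariance matrices Lemma~\ref{encadrementrho} gives the unconditional eigenvalue bound $\lambda\ge \tfrac14+o(1)$, so the three-way explicit formula (Corollary~\ref{3wayraces}/\ref{abelian-3wayraces}) applies with no circularity, and the strict monotonicity of $\rho\mapsto\tfrac14+\tfrac{1}{2\pi}\arcsin\rho$ then yields $\lim\delta^{(3)}<\tfrac16$, i.e.\ non-$3$-moderacy. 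In other words, the only "inversion of a Gaussian integral" ever needed is the one-parameter $\arcsin$ formula, not the $(r-1)$-dimensional one you propose; without supplying such a reduction (or an actual proof of your inversion claim), your forward implication is incomplete. Your marginalization identity giving uniform $2$-moderacy is fine in spirit, but note it needs the fact that tie sets have zero logarithmic density under $\mathsf{LI}^-$ (the paper cites \cite{Dev} for this in Lemma~\ref{r+1mod-rmod}).
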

As an instance of applying Theorem~\ref{thm : uniform-moderacy}—more examples of which will be given in \S\ref{subsec:uniformmod}—we have the following corollary:

\begin{cor}\label{primeorder-extensions}
    Let $(L_n)_n$ be a family of abelian Galois extensions of $\Q$ for which $\mathsf{GRH}$ and $\mathsf{LI}$ hold, with respective Galois groups $(G_n)_n$ all cyclic with prime order, and such that $d_{L_n/\Q}\to \infty$ as $n\to \infty$. Then, $(L_n)$ is uniformly $r$-moderate for all $r\ge 2$.
\end{cor}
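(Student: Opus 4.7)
The plan is to verify the two hypotheses of Theorem~\ref{thm : uniform-moderacy}: namely, (i) that $(L_n)_n$ is uniformly $2$-moderate, and (ii) the Artin-conductor bound
\[
\frac{1}{\log d_{L_n}}\max_{a,b\in G_n\setminus\{1\}}\Bigl|\sum_{\chi\in\Irr(G_n)}(\chi(a)-\chi(b))\log A(\chi)\Bigr|\underset{n\to\infty}{\longrightarrow} 0\, .
\]
Both conditions will follow almost for free from the fact that $|G_n|$ is prime, since every subgroup of $G_n$ is then either trivial or equal to $G_n$. The case $r=2$ of the corollary is handled directly by step (i) via Theorem~A.

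For (i), note that $G_n$ cyclic of prime order $p_n$ implies $r(G_n)\le 2$: when $p_n$ is odd, $x^2=1$ forces $x=1$, and when $p_n=2$ both elements square to $1$. Combined with $d_{L_n}\to\infty$ this gives $r(G_n)/\sqrt{\log d_{L_n}}\to 0$, so Theorem~A yields uniform $2$-moderacy.

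For (ii), the crucial point is that $A(\chi)$ takes only two values on $\Irr(G_n)$: it equals $1$ at the trivial character, and a common value $A_n$ at every non-trivial character. Indeed, for any prime $\ell$ and prime $\mathfrak{P}\mid\ell$ of $L_n$, every ramification subgroup $G_i(\mathfrak{P})$ (in either numbering) is a subgroup of $G_n$, hence equal to $\{1\}$ or to $G_n$. The local conductor exponent of a one-dimensional $\chi$ is a weighted sum of the codimensions $\chi(1)-\dim V_\chi^{G_i(\mathfrak{P})}$; for a non-trivial $\chi$, this codimension equals $1$ when $G_i(\mathfrak{P})=G_n$ and $0$ otherwise, independently of which non-trivial $\chi$ is chosen. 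Summing over $\ell$ gives $A(\chi)=A_n$ for every $\chi\ne 1$. This is the only substantive step in the argument; it becomes delicate precisely when $G_n$ admits non-trivial subgroups, which is the reason Theorem~\ref{thm : uniform-moderacy} is not this easy in general.

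With this reduction, the inner sum in (ii) becomes $(\log A_n)\sum_{\chi\ne 1}\bigl(\chi(a)-\chi(b)\bigr)$. For $a,b\in G_n\setminus\{1\}$, orthogonality of characters gives $\sum_{\chi\ne 1}\chi(a)=-1=\sum_{\chi\ne 1}\chi(b)$, so the sum vanishes \emph{identically}—not merely asymptotically. Hence (ii) holds, Theorem~\ref{thm : uniform-moderacy} applies for every $r\ge 3$, and combined with step (i) this yields uniform $r$-moderacy for all $r\ge 2$.
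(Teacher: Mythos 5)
Your proof is correct and follows essentially the same route as the paper: both reduce the statement to Theorem~\ref{thm : uniform-moderacy} and exploit that in a group of prime order every ramification subgroup is trivial or the whole group — the paper phrases this via Lemma~\ref{negativity} as the independence of $\sum_{\chi}\chi(a)\log A(\chi)$ from the choice of $a\ne 1$ (so that $S_{L_n/\Q}\to 0$ uniformly), while you phrase it as the constancy of $A(\chi)$ over non-trivial $\chi$, which makes the conductor criterion vanish identically. You also spell out the uniform $2$-moderacy check ($r(G_n)\le 2$ together with Theorem~A), which the paper's one-line proof leaves implicit.
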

A central auxiliary quantity in our study is the following: 

\begin{align}U_{L/\Q}\colon\;G\setminus \{1\}&\longrightarrow \R\\ \nonumber
\quad a &\longmapsto \frac{1}{N_{L}} \sum_{\chi \ne 1}\sum_{\gamma_\chi>0}\frac{2 \Re(\chi(a))}{\frac{1}{4}+\gamma_\chi^2}
\end{align}

where \begin{equation}\label{def : NL}
    N_{L}=2\sum_{\chi \ne 1}\sum_{\gamma_\chi>0}\frac{1}{\frac{1}{4}+\gamma_\chi^2}.
\end{equation}
Our main density result, which gives the exact criterion leading to Theorems C and D, can be stated in terms of the function $U$ as follows: 
\begin{thm}\label{main-density}
Let $(L_n)_n$ be an \emph{increasing} family of abelian extensions of $\Q$, for which $\mathsf{GRH}$ and $\mathsf{LI}$ hold, with respective Galois groups $(G_n)_n$, and such that $[L_n : \Q]\to \infty$. Assume that one of the following holds:
\begin{itemize}
    \item The set of limit points of $(r(G_n)/\sqrt{\log d_{L_n}})_n$ has non-empty interior.
    \item We have $r(G_n)/\sqrt{\log d_{L_n}} \longrightarrow 0$ and the set 
$$\overline{ \bigcup_{n\geq1} \left|U_{L_n/\Q}\right|(G_n\setminus\{1\})}$$
has non-empty interior.
\end{itemize}
Then, for all $r\geq 3$, the set
$$\overline{\bigcup_{n\geq 1} \delta_{L_n/\Q}^{(r)}(\mathcal{A}_r(G_n))} $$
has non-empty interior. 
If moreover, for some $\ve>0$ we have $(1-\ve\, ,1)\subset \overline{ \bigcup_{n\geq1} \left|U_{L_n/\Q}\right|(G_n\setminus\{1\})}$, and $r(G_n)/\sqrt{\log d_{L_n}} \longrightarrow 0$, and if $r\ge 4$, then $$0\in \overline{\bigcup_{n\geq 1} \delta_{L_n/\Q}^{(r)}(\mathcal{A}_r(G_n))}\, .$$
\end{thm}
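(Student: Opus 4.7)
The plan is to combine the explicit formula of Theorem~\ref{explicit-formula} with a two-step argument: first an arithmetic construction of tuples of conjugacy classes whose covariance matrix $\Delta_{L_n/\Q}^{(r-1)}$ approximates a prescribed target, and then a probabilistic comparison (continuity of the Gaussian orthant probability in the covariance, together with Slepian's lemma for the extreme case) that converts this into control of the densities. Under both hypotheses of the theorem, the mean terms $B(t_i)$ appearing as the integration bounds tend to zero: in the second case this is immediate from $r(G_n)/\sqrt{\log d_{L_n}}\to 0$ combined with the bias bounds underlying Theorem~A, while in the first case one extracts an appropriate subsequence of limit points of $r(G_n)/\sqrt{\log d_{L_n}}$. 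So up to errors controlled by the minimal eigenvalue $\lambda_\bt$, the density $\delta_{L_n/\Q}^{(r)}$ reduces to a Gaussian orthant probability $F_{r-1}(\mathbf{0};\Delta_{L_n/\Q}^{(r-1)})$.

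In the abelian case, expanding $\langle t_{a,b},\chi\rangle=\overline{\chi(a)}-\overline{\chi(b)}$ shows that each off-diagonal entry $\rho(t_{a_i,a_{i+1}},t_{a_j,a_{j+1}})$ is a rational combination of values $U_{L_n/\Q}(g)$ at pairwise quotients $g=a_ka_\ell^{-1}$, together with normalized variances $T_{L_n/\Q}$. For the first case, Theorem~A directly provides a $2$-tuple $(a_n,b_n)$ realizing the desired value of $\delta_{L_n/\Q}^{(2)}$; we then adjoin $r-2$ extra conjugacy classes chosen so that the new block of $\Delta_{L_n/\Q}^{(r-1)}$ decouples asymptotically from the two-way block, making the $r$-way density factor as a product containing the prescribed $2$-way factor and hence producing an interval of values. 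For the second case, the hypothesis that $\overline{\bigcup_n |U_{L_n/\Q}|(G_n\setminus\{1\})}$ has non-empty interior lets us pick, for any target in a small open subset of positive-definite matrices, an $r$-tuple whose covariance matrix approximates that target (by choosing the pairwise quotients to land on prescribed level sets of $|U_{L_n/\Q}|$). Continuity of $\Delta\mapsto F_{r-1}(\mathbf{0};\Delta)$ on the positive-definite cone then yields an interval of attainable density values.

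For the extreme statement that $0\in\overline{\bigcup_n\delta_{L_n/\Q}^{(r)}(\mathcal{A}_r(G_n))}$ when $r\ge 4$, we use Slepian's lemma. Under the stronger hypothesis $(1-\varepsilon,1)\subset\overline{\bigcup_n |U_{L_n/\Q}|(G_n\setminus\{1\})}$, select $a_1^{(n)},a_2^{(n)},a_3^{(n)}$ so that $U_{L_n/\Q}$ evaluated at the appropriate quotient tends to $-1$, which forces $\rho(t_{a_1^{(n)},a_2^{(n)}},t_{a_2^{(n)},a_3^{(n)}})\to -1$. Slepian's lemma then bounds the $(r-1)$-dimensional orthant probability above by the two-dimensional orthant probability of a centered Gaussian with correlation tending to $-1$, which tends to $0$. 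This strategy sidesteps the degeneracy of $\lambda_\bt$ in this regime, since Slepian provides a one-sided comparison that does not require the full covariance to stay non-singular; it is precisely the dimension hypothesis $r\ge 4$ that gives us the freedom to cannibalize two coordinates for this purpose while letting the remaining coordinates be arbitrary.

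The main obstacle is the coherent arithmetic construction in the second paragraph: for each $n$ we must locate an $r$-tuple of group elements whose $\binom{r}{2}$ pairwise quotients simultaneously hit prescribed level sets of $U_{L_n/\Q}$, while also keeping the resulting covariance matrix bounded away from the boundary of the positive-definite cone so that $1/\lambda_\bt$ stays bounded in the error term of Theorem~\ref{explicit-formula}. The multiquadratic families developed in \S\ref{sec:tools}, where $G_n=(\Z/2\Z)^{k_n}$ and the pairwise quotients can be engineered freely, are expected to provide the required flexibility, the values of $U_{L_n/\Q}$ being controlled via the ramification data (Artin conductors) of the extensions.
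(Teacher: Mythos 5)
Your high-level outline (explicit formula plus covariance control plus Gaussian comparison) matches the paper's strategy, but several of your concrete steps fail. First, in the case where the limit set of $r(G_n)/\sqrt{\log d_{L_n}}$ has non-empty interior, the biases do \emph{not} tend to zero — their persistence is precisely the mechanism: the paper takes the tuple $(1,a_2^{(n)},\dots,a_r^{(n)})$ with the $a_i$ non-squares, so that the covariance converges to $\Gamma_{r-1}$ and only $-B(t_1^{(n)})\sim r(G_n)/\sqrt{\log d_{L_n}}$ varies, and then uses continuity and strict monotonicity of $x\mapsto F_{r-1}((x,0,\dots,0);\Gamma_{r-1})$ to sweep out an interval. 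Your alternative of adjoining classes so that the new block of $\Delta$ ``decouples'' from the two-way block cannot work: by \eqref{rhoii+1} the adjacent correlations $\rho(t_{a_1,a_2},t_{a_2,a_3})$ tend to $-1/2$ (not $0$) whenever the relevant $U$-values vanish, so the orthant probability never factors as you claim. Second, in the $U$-interior case, your plan to realize an arbitrary small open set of positive-definite covariance targets by making all $\binom{r}{2}$ pairwise quotients land on prescribed level sets of $|U_{L_n/\Q}|$ is not justified by the hypotheses: they only furnish, for each $n$, \emph{one} element $x_n$ with $|U_{L_n/\Q}(x_n)|\to\ell$, and the family $(L_n)_n$ is given, so you cannot fall back on engineered multiquadratic towers. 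The paper instead perturbs a single entry: Lemma~\ref{limU(x)} neutralizes all other $U$-values, Lemma~\ref{main-matrices} gives convergence to the one-parameter family $\Sigma_r(-\sqrt{\ell+1}/2)$, and the interval of densities then requires the strict monotonicity of $W_r$ (Lemma~\ref{monotonicity}), which is where Slepian's lemma actually enters; mere continuity in the covariance, as you invoke, does not by itself rule out a constant image.

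Third, your extreme-case construction is wrong in structure. You propose choosing $a_1,a_2,a_3$ with $\rho(t_{a_1,a_2},t_{a_2,a_3})\to-1$, but Lemma~\ref{encadrementrho} shows adjacent correlations are bounded below by $-3/4+o(1)$; this is exactly why $3$-way races are never extreme (first part of Theorem~D). The correlation that can approach $-1$ is a \emph{non-adjacent} one, between $t_{C_1,C_2}$ and $t_{C_3,C_4}$ — in the paper, coordinates $1$ and $3$ of the tuple $(a_{n,1}x_n,a_{n,2}x_n,a_{n,1},\dots)$, whose covariance is $-\ell$ by Lemma~\ref{main-matrices} — and this requires four distinct classes, which is the true role of $r\ge4$. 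Moreover, your claim that Slepian ``sidesteps'' the degeneracy of $\lambda_\bt$ misplaces where nondegeneracy is needed: the error term $1/\lambda_\bt^r$ sits in Theorem~\ref{explicit-formula}, i.e.\ in the passage from $\delta^{(r)}_{L_n/\Q}$ to the Gaussian orthant probability, before any Gaussian-to-Gaussian comparison is possible. The paper avoids this by fixing $\ell<1$ (so the limit matrix $A_\ell$ is positive definite and the explicit formula applies), taking $n\to\infty$, and only then letting $\ell\to1$ via the elementary bound $P(X_1\le0,\dots,X_{r-1}\le0)\le P(X_1\le0,X_3\le0)=\tfrac14-\tfrac{1}{2\pi}\arcsin(\ell)$; no Slepian is needed there. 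As written, your argument does not establish either the interval statement or the extreme statement.
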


\section{Number theoretic tools}\label{sec:tools}
\subsection{Artin conductors}\label{subsec:artin}
In this subsection, we prove some basic properties of Artin conductors in the abelian case, which will be crucial for our study. Let $L/K$ be a Galois extension of number fields (not necessarily abelian) with Galois group $G$. Let $\frak{p}$ be a prime of $K$ and let $\frak{P}$ be a prime of $L$ lying above $\frak{p}$. Let $\chi$ be the character of a representation $\rho: G \to \GL(V)$. We then define 
\[n(\chi,\frak{p}):= \sum_{i\geq 0} \frac{|G_i(\frak{P}/\frak{p})|}{|G_0(\frak{P}/\frak{p})|}\text{codim} V^{G_i(\frak{P}/\frak{p})}\, ,\]
where $(G_i(\frak{P}/\frak{p}))_{i\geq0}$ is the sequence of higher ramification groups and $V^{G_i(\frak{P}/\frak{p})}$ is the $G_i(\frak{P}/\frak{p})$-invariant subspace of $V$. Artin proved that $n(\chi,\frak{p})$ is an integer. The Artin conductor of $\chi$ is by definition the ideal of $\mathcal O_K$ defined by \[\frak{f}(L/K,\chi)=\prod_{\frak{p}}\frak{p}^{n(\chi,\frak{p})}.\]
Define \[A(\chi)=d_K^{\chi(1)}N_{K/\Q}\left(\frak{f}(L/K,\chi) \right).\]
The conductor-discriminant formula states that the relative discriminant $D_{L/K}$ satisfies
\begin{equation}\label{conductor-discriminant}
    D_{L/K}=\prod_{\chi \in \Irr(G)} \frak{f}(L/K,\chi)^{\chi(1)}.
\end{equation}
As in \cite{FJ}*{Section 4}, we will use the following convenient formula for $n(\chi,\frak{p})$:
\begin{equation}\label{form-cond} n(\chi,\frak{p})=\frac{1}{|G_0(\frak{P}/\frak{p})|}\sum_{i\geq 0} \sum_{b\in G_i(\frak{P}/\frak{p}) } (\chi(1)-\chi(b^{-1})).\end{equation}
We first state an important result due to Jonah Leshin that will be crucial in what follows. We restate it in a slightly different way: 
\begin{thm}[\cite{Leshin}*{Theorem 1}]\label{rtdiscbounded}
For any number field $K$ and constant $C > 0$, there are only finitely many finite abelian extensions $L/K$ such that 
$$\frac{\log d_L}{[L:\Q]} \le C.$$
\end{thm}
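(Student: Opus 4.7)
The plan is to leverage the conductor-discriminant formula together with class field theory: if one can bound $N_{K/\Q}(\mathfrak{f}(L/K))$ in terms of $K$ and $C$ alone, then $L$ sits inside one of finitely many ray class fields of $K$, each a finite extension with only finitely many subfields. The strategy therefore breaks into (i) an averaging argument that controls the ramification locus, (ii) a more delicate local argument that bounds the conductor exponents at each ramified prime, and (iii) an appeal to class field theory to conclude.

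For (i), applying the conductor-discriminant formula~\eqref{conductor-discriminant} and taking absolute norms yields
\[
\log d_L \;=\; [L:K]\log d_K \;+\; \sum_{\chi\ne 1}\log N_{K/\Q}\bigl(\mathfrak{f}(L/K,\chi)\bigr),
\]
so the hypothesis $\log d_L\le C[L:\Q]$ translates into an $O([L:\Q])$ bound on the right-hand sum. The first payoff is control on the ramification locus: if $\mathfrak{p}$ ramifies in $L$ with inertia $I_\mathfrak{p}$, then the characters of $G=\Gal(L/K)$ non-trivial on $I_\mathfrak{p}$ number at least $(1-|I_\mathfrak{p}|^{-1})[L:K]\ge [L:K]/2$, and each contributes at least $\log N_{K/\Q}(\mathfrak{p})$ to that sum. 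Rearranging gives a bound $\sum_{\mathfrak{p}\text{ ram.}}\log N_{K/\Q}(\mathfrak{p})\ll_{K,C} 1$, so only finitely many ramification loci are possible.

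For (ii), the heart of the argument is bounding the conductor exponent at each ramified prime. For such a prime $\mathfrak{p}$ of residue characteristic $p$, the completion $L_\mathfrak{P}/K_\mathfrak{p}$ is an abelian local extension, and the Hasse--Arf theorem forces the jumps of its upper-numbering ramification filtration to occur at integers. Combining this integrality with the local conductor-discriminant formula should yield an estimate of the shape
\[
v_\mathfrak{p}\bigl(D_{L/K}\bigr)\;\ge\;|I_\mathfrak{p}|\,\psi_p(a_\mathfrak{p}),\qquad a_\mathfrak{p}:=v_\mathfrak{p}\bigl(\mathfrak{f}(L/K)\bigr),
\]
for an unbounded increasing function $\psi_p$ depending only on $p$. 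Plugging this back into the global discriminant bound controls each $a_\mathfrak{p}$; together with the control on ramified primes from the previous step, this gives $N_{K/\Q}(\mathfrak{f}(L/K))\le N_0(K,C)$. For (iii), class field theory then places $L$ inside one of the ray class fields of $K$ whose modulus has norm at most $N_0$; only finitely many such moduli exist, each corresponding ray class field is finite over $K$, and each has only finitely many subfields, completing the argument.

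The main obstacle is step (ii): converting the ``maximum'' nature of the conductor---where $v_\mathfrak{p}(\mathfrak{f}(L/K,\chi))$ attains $a_\mathfrak{p}$ only for the extremal characters---into an averaged lower bound on $v_\mathfrak{p}(D_{L/K})$ carrying a factor of $|I_\mathfrak{p}|$. Hasse--Arf guarantees integrality of the filtration breaks, but turning this into an explicit $\psi_p$ requires a careful count of how many characters of $G$ have each possible local conductor exponent, and the $p$-dependence is genuine because wild ramification behaves qualitatively differently across residue characteristics.
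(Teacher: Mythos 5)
The paper does not prove this statement at all: Theorem~\ref{rtdiscbounded} is quoted from Leshin (\cite{Leshin}, Theorem~1), so the relevant comparison is with the standard argument, which is exactly the route you outline — bound the conductor of $L/K$ in terms of $K$ and $C$ alone, then place $L$ inside one of finitely many ray class fields. Your steps (i) and (iii) are correct as written. The one genuine gap is step (ii), which you yourself flag as the main obstacle and only state in the conditional (``should yield an estimate of the shape\dots''): as it stands, the pivotal inequality bounding the conductor exponent by the discriminant exponent is asserted, not proved. You also overestimate its difficulty: no Hasse--Arf, no upper-numbering analysis, and no genuine $p$-dependence are needed; the count of characters with a given local conductor exponent is elementary in lower numbering.

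Concretely, fix a ramified prime $\mathfrak{p}$, write $G_i=G_i(\mathfrak{P}/\mathfrak{p})$ and let $i_{\max}$ be the largest $i$ with $G_i\neq 1$. Summing \eqref{form-cond} over all $\chi\in\widehat{G}$ and using orthogonality gives
\[
v_\mathfrak{p}(D_{L/K})=\sum_{\chi} n(\chi,\mathfrak{p})=\frac{[L:K]}{|G_0|}\sum_{i=0}^{i_{\max}}\bigl(|G_i|-1\bigr),
\]
while for a one-dimensional $\chi$ the same formula yields $n(\chi,\mathfrak{p})=\sum_{i\,:\,\chi|_{G_i}\neq 1}|G_i|/|G_0|\le\sum_{i=0}^{i_{\max}}|G_i|/|G_0|$; since $|G_i|\le 2(|G_i|-1)$ whenever $G_i\neq 1$, the maximal exponent $a_\mathfrak{p}:=\max_\chi n(\chi,\mathfrak{p})$ satisfies $a_\mathfrak{p}\le \tfrac{2}{[L:K]}\,v_\mathfrak{p}(D_{L/K})$, i.e.\ $v_\mathfrak{p}(D_{L/K})\ge \tfrac{[L:K]}{2}\,a_\mathfrak{p}$, which is stronger and cleaner than your claimed $|I_\mathfrak{p}|\,\psi_p(a_\mathfrak{p})$ and is uniform in the residue characteristic. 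Feeding this into $\log d_L=[L:K]\log d_K+\log N_{K/\Q}(D_{L/K})$ (the tower formula plus \eqref{conductor-discriminant}) and the hypothesis $\log d_L\le C[L:\Q]$ gives $\sum_{\mathfrak{p}}a_\mathfrak{p}\log N_{K/\Q}(\mathfrak{p})\le 2\bigl(C[K:\Q]-\log d_K\bigr)$, which bounds simultaneously the set of ramified primes and all conductor exponents — so your averaging step (i) becomes redundant. The conductor of $L/K$ therefore divides one of finitely many moduli, and with the archimedean places thrown into the modulus your step (iii) finishes the proof exactly as you describe.
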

In fact, \cite{Leshin}*{Theorem 1} is more general in the sense that we might replace "abelian" by "solvable with bounded length", but the main part of its proof is the abelian case, which will be enough for our purposes.\\
A first consequence of this result is the following:
\begin{cor}\label{NLdL}
    Let $L/\Q$ be a finite abelian extension with group $G$ for which $\mathsf{GRH}$ holds. Then 
    $$\sum_{\chi \ne 1} \log \log A(\chi) =o (\log d_L) \quad (d_L\to \infty). $$
    As a consequence, \[ N_L\sim \log d_L\quad (d_L\to \infty).\]
\end{cor}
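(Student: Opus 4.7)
The plan is to deduce both assertions from Leshin's theorem (Theorem~\ref{rtdiscbounded}), the conductor-discriminant formula, and a standard explicit formula under $\mathsf{GRH}$.

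\emph{Step 1 (Size of $G$).} Along any family of abelian extensions $L/\Q$ with $d_L \to \infty$, Theorem~\ref{rtdiscbounded} forces $\log d_L/[L:\Q] \to \infty$: otherwise the ratio would remain bounded on an infinite subfamily, contradicting the finiteness statement. Hence $|G| = [L:\Q] = o(\log d_L)$.

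\emph{Step 2 (First assertion).} Since $G$ is abelian, every irreducible character has degree one and $A(1)=1$, so the conductor-discriminant formula \eqref{conductor-discriminant} becomes $\log d_L = \sum_{\chi \ne 1} \log A(\chi)$. A non-trivial Dirichlet character has conductor at least $3$, so $\log A(\chi) \ge \log 3 > 1$ for every $\chi \ne 1$, and in particular $\log \log A(\chi) > 0$. Applying Jensen's inequality to the concave function $\log$ on the $|G|-1$ positive values $\log A(\chi)$ yields
\[
\sum_{\chi \ne 1} \log \log A(\chi) \;\le\; (|G|-1)\log\!\left(\frac{\log d_L}{|G|-1}\right).
\]
Writing $y = |G|-1$ and $x = \log d_L$, Step 1 gives $y/x \to 0$, and the elementary identity $(y/x)\log(x/y) = (\log u)/u$ with $u = x/y \to \infty$ shows the right-hand side is $o(\log d_L)$. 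This proves the first assertion.

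\emph{Step 3 (Second assertion).} Under $\mathsf{GRH}$, for each non-trivial $\chi \in \Irr(G)$ the standard explicit formula, obtained by logarithmic differentiation of the Hadamard product for the completed $L$-function $\Lambda(s, L/\Q, \chi)$ together with bounds on $\Gamma'/\Gamma$, gives
\[
2\sum_{\gamma_\chi > 0} \frac{1}{\tfrac{1}{4} + \gamma_\chi^2} \;=\; \log A(\chi) + O(\log \log A(\chi)).
\]
Summing over $\chi \ne 1$ and invoking the conductor-discriminant formula yields
\[
N_L \;=\; \sum_{\chi \ne 1}\log A(\chi) + O\!\left(\sum_{\chi \ne 1} \log \log A(\chi)\right) \;=\; \log d_L + o(\log d_L),
\]
by the first assertion, hence $N_L \sim \log d_L$.

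The main technical input is the explicit-formula estimate of Step~3 with its error term $O(\log \log A(\chi))$: controlling the $\Gamma$-factor contribution at $s=1/2$ and the effect of possible low-lying zeros requires some care. Once that estimate is in place, Leshin's theorem and Jensen's inequality together dispatch the remainder, and the first assertion is precisely what is needed to convert the sum of per-character errors into $o(\log d_L)$.
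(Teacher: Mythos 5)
Your argument is essentially the paper's own proof: Step 1 is the same appeal to Leshin's theorem, Step 2 is the paper's AM--GM computation (Jensen for $\log$ is the identical inequality, and your explicit check that $A(\chi)\ge 3$, hence $\log\log A(\chi)>0$, is a point the paper leaves implicit), and Step 3 combines the conductor--discriminant formula with the zero-sum estimate that the paper quotes from Lagarias--Odlyzko and Ng.

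One imprecision in Step 3 should be flagged. What the Hadamard product/explicit formula actually yields (and what the paper cites) is the two-sided estimate $\sum_{\gamma_\chi\ne 0}\bigl(\tfrac14+\gamma_\chi^2\bigr)^{-1}=\log A(\chi)+O(\log\log A(\chi))$, since under $\mathsf{GRH}$ the quantity produced by $\Re\sum_{\rho}\rho^{-1}$ runs over all non-real-axis-symmetrized zeros. Your one-sided per-character version $2\sum_{\gamma_\chi>0}\bigl(\tfrac14+\gamma_\chi^2\bigr)^{-1}=\log A(\chi)+O(\log\log A(\chi))$ coincides with it only when the zeros of $L(s,L/\Q,\chi)$ are symmetric about the real axis, i.e.\ for real $\chi$; for complex $\chi$ the imbalance between zeros above and below the axis is not known to be $O(\log\log A(\chi))$, so the statement as written does not follow from the derivation you sketch. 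This is harmless for the corollary: $\Irr(G)$ is stable under conjugation and the zeros of $L(s,L/\Q,\overline{\chi})$ are the reflections of those of $L(s,L/\Q,\chi)$, so summing over all $\chi\ne 1$ converts $2\sum_{\gamma_\chi>0}$ into $\sum_{\gamma_\chi\ne 0}$, and the two-sided estimate then gives $N_L=\log d_L+o(\log d_L)$ exactly as in the paper. You should either state the per-character input in its two-sided form or perform this pairing explicitly before invoking it.
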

\begin{proof}
Using the arithmetic mean-geometric mean (AM-GM) inequality combined with \eqref{conductor-discriminant}, we deduce
\begin{align*}
    \frac{\sum_{\chi\ne 1}\log \log A(\chi)}{|G|-1}&=\log \left(\left( \prod_{\chi \ne 1}\log (A(\chi))\right)^{1/(|G|-1)}\right) \\
    &\leq \log \left (\frac{\log d_L}{|G|-1}\right).
\end{align*}
Thus, 
$$ \frac{\sum_{\chi\ne 1}\log \log A(\chi)}{\log d_L} \leq \frac{\log ((\log d_L)/(|G|-1))}{(\log d_L)/(|G|-1)}. $$
Since by Theorem~\ref{rtdiscbounded}  $$ \frac{\log d_L}{|G|} \longrightarrow \infty \quad (d_L\to \infty, L\text{ abelian})\, ,$$
we have $$\sum_{\chi \ne 1} \log \log A(\chi) = o(\log d_L) \quad (d_L\to \infty)\, .$$
This proves the first part. For the second part, we use~\cite{LO}*{ (5.11)} and~\cite{Ng}*{Proposition 2.4.2.3} to deduce that for all $\chi \ne 1$
\begin{equation}\label{LO}
    \sum_{\gamma_\chi\ne0}\frac{1}{\frac{1}{4}+\gamma_\chi^2}=\log A(\chi)+O(\log \log A(\chi) )\, .
\end{equation}
This implies, using \eqref{conductor-discriminant} again, that $$N_L=\sum_{\chi\ne 1}\sum_{\gamma_\chi\ne0}\frac{1}{\frac{1}{4}+\gamma_\chi^2}=\log d_L +o(\log d_L)\quad (d_L\to \infty)\, .$$
\end{proof}
For an abelian Galois extension $L/\Q$ with group $G$ and $a\in G\setminus \{1\}$, our goal will be to estimate the following quantity 
$$\sum_{\chi \in \Irr(G)} \chi(a)\log A(\chi) \, ,$$
which is well-studied in the classical case of prime numbers, or equivalently, when $L$ is a cyclotomic extension (see for instance, \cite{FiM}*{Proposition 3.3}). A first observation is that it is easy to obtain the contribution of all $a\in G\setminus \{1\}$:
\begin{lem}\label{contributionalla's}
Let $L/\Q$ be an \emph{abelian} Galois extension of number fields with group $G$. We have 
$$\sum_{a\in G\setminus\{1\}} \sum_{\chi \ne 1} \chi(a) \log A(\chi)=-\log d_L\, .$$
\end{lem}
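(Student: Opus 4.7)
The plan is to exchange the order of summation and then exploit character orthogonality together with the conductor--discriminant formula. More precisely, since $G$ is abelian every irreducible character $\chi$ is one-dimensional, so $\chi(1)=1$ and in particular for any non-trivial $\chi\in\Irr(G)$ orthogonality gives $\sum_{a\in G}\chi(a)=0$, hence
\[
\sum_{a\in G\setminus\{1\}}\chi(a) \;=\; -\chi(1) \;=\; -1.
\]
Swapping the two sums in the statement therefore yields
\[
\sum_{a\in G\setminus\{1\}}\sum_{\chi\ne 1}\chi(a)\log A(\chi)
\;=\;\sum_{\chi\ne 1}\log A(\chi)\sum_{a\in G\setminus\{1\}}\chi(a)
\;=\;-\sum_{\chi\ne 1}\log A(\chi).
\]

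Next I would apply the conductor--discriminant formula \eqref{conductor-discriminant} in the base-field case $K=\Q$: writing $A(\chi)=N_{\Q/\Q}(\mathfrak{f}(L/\Q,\chi))=\mathfrak{f}(L/\Q,\chi)$ (since $d_\Q=1$), and recalling that $A(1)=1$ and that $\chi(1)=1$ for all $\chi$ in the abelian case, one gets
\[
d_L \;=\; \prod_{\chi\in\Irr(G)} A(\chi)^{\chi(1)} \;=\; \prod_{\chi\ne 1}A(\chi),
\]
so that $\log d_L=\sum_{\chi\ne 1}\log A(\chi)$. Combining this with the display obtained above finishes the proof.

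There is no real obstacle here: the argument is a two-line manipulation once one has character orthogonality and the conductor--discriminant formula at hand. The only point worth emphasizing is that the abelian hypothesis is used twice in an essential way: it forces $\chi(1)=1$, which both collapses the exponent in \eqref{conductor-discriminant} and makes $\sum_{a\ne 1}\chi(a)$ equal to exactly $-1$ rather than $-\chi(1)$ weighted by a possibly larger dimension.
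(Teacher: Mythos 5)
Your proof is correct and follows exactly the paper's argument: swap the sums, use the orthogonality relation $\sum_{a\ne 1}\chi(a)=-1$ for $\chi\ne 1$, and apply the conductor--discriminant formula \eqref{conductor-discriminant} with $K=\Q$ (where $\chi(1)=1$ and $A(1)=1$) to identify $\sum_{\chi\ne 1}\log A(\chi)$ with $\log d_L$. The paper states this in one line; you have merely written out the same two ingredients explicitly.
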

\begin{proof}
This is an immediate consequence of the orthogonality relation $\sum_{a \ne 1} \chi(a)=-1$ when $\chi \ne 1$, combined with \eqref{conductor-discriminant}.
\end{proof}
Another application of the orthogonality relations combined with \eqref{form-cond} yields the following Lemma.
\begin{lem}\label{negativity}
    Let $L/K$ be an abelian extension of number fields with group $G$, and let $\frak{p}$ be a prime of $K$ and $a\in G\setminus\{1\}$. We have 
    \[
    \sum_{\chi \in \Irr(G)}\chi(a) n(\chi,\mathfrak{p})=-\frac{|G|}{|G_0(\frak{P}/\frak{p})|}\#\left\{i\geq0\ :\ a\in G_i(\frak{P}/\frak{p}) \right\}\, .
    \]
\end{lem}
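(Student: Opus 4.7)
The plan is to start from the convenient formula \eqref{form-cond} for $n(\chi,\mathfrak{p})$ and swap the order of summation, reducing the problem to evaluating an inner character sum that the orthogonality relations on $G$ handle immediately.

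First I would plug \eqref{form-cond} into the left-hand side and exchange the (finite) sums to obtain
\begin{equation*}
\sum_{\chi \in \Irr(G)}\chi(a)\,n(\chi,\mathfrak{p})=\frac{1}{|G_0(\mathfrak{P}/\mathfrak{p})|}\sum_{i\geq 0}\sum_{b\in G_i(\mathfrak{P}/\mathfrak{p})}\sum_{\chi\in\Irr(G)}\chi(a)\bigl(\chi(1)-\chi(b^{-1})\bigr).
\end{equation*}
Since $G$ is abelian, every $\chi\in\Irr(G)$ is one-dimensional, so $\chi(1)=1$ and $\chi(a)\chi(b^{-1})=\chi(ab^{-1})$. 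Therefore the inner sum splits as $\sum_{\chi}\chi(a)-\sum_{\chi}\chi(ab^{-1})$.

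Next I would invoke the orthogonality relation $\sum_{\chi\in\Irr(G)}\chi(g)=|G|\cdot\mathds{1}_{g=1}$. Since $a\neq 1$ by hypothesis, the first sum vanishes, while the second equals $|G|$ exactly when $b=a$ and vanishes otherwise. Hence the inner character sum reduces to $-|G|\cdot\mathds{1}_{b=a}$, and the outer sum over $b\in G_i(\mathfrak{P}/\mathfrak{p})$ picks out precisely those $i$ for which $a\in G_i(\mathfrak{P}/\mathfrak{p})$. Collecting everything gives
\begin{equation*}
\sum_{\chi\in\Irr(G)}\chi(a)\,n(\chi,\mathfrak{p})=-\frac{|G|}{|G_0(\mathfrak{P}/\mathfrak{p})|}\,\#\bigl\{i\ge 0\ :\ a\in G_i(\mathfrak{P}/\mathfrak{p})\bigr\},
\end{equation*}
which is the claimed identity.

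There is no real obstacle here: the argument is essentially a direct manipulation of \eqref{form-cond} using the two standard orthogonality relations on the abelian group $G$. The only subtlety worth remarking on is the use of $\chi(1)=1$, which is what makes the abelian hypothesis essential — in the non-abelian setting $\chi(1)$ can exceed $1$, and $\chi(a)\chi(b^{-1})$ need not equal $\chi(ab^{-1})$, so the same clean collapse would not occur.
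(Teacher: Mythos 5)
Your proposal is correct and follows essentially the same route as the paper: substitute \eqref{form-cond}, use the abelian hypothesis to rewrite $\chi(a)(\chi(1)-\chi(b^{-1}))$ as $\chi(a)-\chi(ab^{-1})$, and conclude by the orthogonality relations, with the outer sum over $b$ picking out the indices $i$ with $a\in G_i(\frak{P}/\frak{p})$. Your write-up simply makes explicit the orthogonality step that the paper leaves terse.
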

\begin{proof}
    Using~\eqref{form-cond} we deduce that \[ \sum_{\chi \in \Irr(G)}\chi(a) n(\chi,\mathfrak{p})=\frac{1}{|G_0(\frak{P}/\frak{p})|}\sum_{i\geq 0} \sum_{b\in G_i(\frak{P}/\frak{p}) } \sum_{\chi \in \Irr(G)}(\chi(a)-\chi(ab^{-1})) \, . \]
    Since $a\ne 1$, the lemma follows by orthogonality relations.
    %by orthogonality relations we deduce $$ \sum_{b\in G_i(\frak{P}/\frak{p}) } \sum_{\chi \in \Irr(G)}(\chi(a)-\chi(ab^{-1}))=\begin{cases}
     %  -|G|\ \ \ \text{if }a\in G_i(\frak{P}/\frak{p})\\
     %  0\ \qquad \ \text{otherwise}
   % \end{cases}
   % $$
\end{proof}
The previous Lemma is important; as a direct consequence we have: 
$$ \sum_{\chi \in \Irr(G)}\chi(a) \log A(\chi) \in \R_{\le 0} \, . $$
This negativity will be crucial in studying the variance because of the following:
\begin{cor}\label{negativity of U(a)}
Let $L/\Q$ be an abelian extension with group $G$ and for which $\mathsf{GRH}$ holds. Then for all $a\in G\setminus\{1\}$ \[ U_{L/\Q}(a)=\frac{1}{N_L}\sum_{\chi \in \Irr(G)} \chi(a) \log A(\chi)+ o(1) \quad (d_L\to \infty) \, .\]
\end{cor}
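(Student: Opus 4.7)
The plan is to replace each inner sum over $\gamma_\chi>0$ appearing in the definition of $U_{L/\Q}(a)$ by the Artin conductor $\log A(\chi)$ via the Littlewood--Odlyzko type estimate \eqref{LO}, then control the resulting error using Corollary~\ref{NLdL}. The key preliminary step is to get rid of the positivity restriction on $\gamma_\chi$, which I would handle by pairing each irreducible character $\chi$ with its dual $\bar\chi$.

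First I would exploit the fact that, for every non-trivial $\chi\in\Irr(G)$, the zero set of $L(s,L/\Q,\bar\chi)$ is the complex conjugate of that of $L(s,L/\Q,\chi)$, so that $\gamma\mapsto-\gamma$ induces a bijection between the positive imaginary parts of zeros of $L(s,L/\Q,\chi)$ and the negative imaginary parts of zeros of $L(s,L/\Q,\bar\chi)$. Combined with $\Re(\bar\chi(a))=\Re(\chi(a))$, grouping the contributions of $\chi$ and $\bar\chi$ (and treating self-dual characters with the usual $\gamma\to-\gamma$ symmetry of real $L$-functions) yields
\[
U_{L/\Q}(a)\;=\;\frac{1}{N_L}\sum_{\chi\ne 1}\Re(\chi(a))\sum_{\gamma_\chi\ne 0}\frac{1}{\tfrac14+\gamma_\chi^2}.
\]

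Second, I would apply \eqref{LO} to each inner sum to replace it by $\log A(\chi)$ with an error of size $O(\log\log A(\chi))$. Since $|\Re(\chi(a))|\le 1$, the total error from this substitution is $O\bigl(\sum_{\chi\ne 1}\log\log A(\chi)\bigr)$, which Corollary~\ref{NLdL} shows is $o(\log d_L)$, hence $o(N_L)$. Finally, the identity $A(\chi)=A(\bar\chi)$ (a consequence of the complex-conjugation invariance of $n(\chi,\mathfrak p)$) together with the same pairing argument replaces $\sum_{\chi\ne 1}\Re(\chi(a))\log A(\chi)$ by $\sum_{\chi\ne 1}\chi(a)\log A(\chi)$, to which the trivial character may be re-adjoined at no cost since $A(1)=1$. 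Dividing through by $N_L\sim\log d_L$ gives the claim.

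The only genuine obstacle is the bookkeeping in the $\chi\leftrightarrow\bar\chi$ pairing (positive versus non-zero $\gamma_\chi$ across dual characters); all the substantive analytic input, namely the logarithmic-conductor estimate \eqref{LO} and the asymptotic $N_L\sim\log d_L$ with the $o(\log d_L)$ bound on $\sum_{\chi\ne 1}\log\log A(\chi)$, is already packaged in Corollary~\ref{NLdL}, so the remainder is routine.
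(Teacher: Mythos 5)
Your argument is correct and is precisely the one the paper intends: the authors' proof is the one-line remark that the claim is ``immediate by Corollary~\ref{NLdL} and equation \eqref{LO}'', and your write-up simply supplies the routine details (the $\chi\leftrightarrow\bar\chi$ pairing converting $2\sum_{\gamma_\chi>0}$ into $\sum_{\gamma_\chi\ne0}$, the substitution of $\log A(\chi)$ via \eqref{LO}, the $o(\log d_L)$ error bound and $N_L\sim\log d_L$ from Corollary~\ref{NLdL}, and $A(\chi)=A(\bar\chi)$, $A(1)=1$ to pass to $\sum_{\chi}\chi(a)\log A(\chi)$). No gaps; the error term is moreover uniform in $a$, as needed later.
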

\begin{proof}
Immediate by Corollary~\ref{NLdL} and equation \eqref{LO}.
\end{proof}

We now state and prove the main technical result of this section:
\begin{lem}\label{UbllG}
    Let $K\subset L$ be two abelian Galois extensions over $\Q$ with respective groups $G$ and $G^+$ and let $b \in G^+$ be an element whose restriction $a\in G$ is non-trivial. Then, 
    \[
    \left|\sum_{\psi \in \Irr(G^+)}\psi(b) \log A(\psi) \right| \le 2 |G^+| \log d_{K}\, .
    \]
\end{lem}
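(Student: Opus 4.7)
The plan is to rewrite $\sum_{\psi\in\Irr(G^+)} \psi(b)\log A(\psi)$ as a sum over rational primes and use Herbrand's theorem (in upper numbering) to compare the ramification of $b$ in $L/\Q$ with that of $a$ in $K/\Q$. Since the base field is $\Q$, one has $\log A(\psi)=\sum_p n(\psi,p)\log p$, so
\[
\sum_{\psi\in\Irr(G^+)}\psi(b)\log A(\psi)=\sum_p \log p\cdot T_p(b),\qquad T_p(b):=\sum_{\psi}\psi(b)n(\psi,p).
\]
By Lemma~\ref{negativity} applied to the abelian extension $L/\Q$ (and using $b\ne 1$, which follows from $a\ne 1$), each $T_p(b)\le 0$, so $\bigl|\sum_\psi \psi(b)\log A(\psi)\bigr|=\sum_p \log p\cdot |T_p(b)|$.

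Next, combining Artin's integral formula $n(\psi,p)=\int_{-1}^{\infty}\bigl(1-\mathds{1}[\psi|_{G^{+,u}(p)}=1]\bigr)\,\mathrm{d}u$ (valid for $1$-dimensional $\psi$) with the orthogonality relation $\sum_{\psi|_{H'}=1}\psi(b)=[G^+:H']\mathds{1}[b\in H']$ for any subgroup $H'\subset G^+$, and using $b\ne 1$, one obtains
\[
T_p(b)=-|G^+|\int_{-1}^{\infty}\frac{\mathds{1}[b\in G^{+,u}(p)]}{|G^{+,u}(p)|}\,\mathrm{d}u.
\]
Herbrand's theorem asserts that the restriction $\pi:G^+\twoheadrightarrow G$ sends $G^{+,u}(\mathfrak{P}/p)$ onto $G^u(\mathfrak{p}/p)$ for every real $u\ge -1$. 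Hence $b\in G^{+,u}(p)$ implies $a=\pi(b)\in G^u(p)$, and the surjection forces $|G^{+,u}(p)|\ge |G^u(p)|$; therefore
\[
\frac{\mathds{1}[b\in G^{+,u}(p)]}{|G^{+,u}(p)|}\le \frac{\mathds{1}[a\in G^u(p)]}{|G^u(p)|},
\]
which upon integrating yields $|T_p(b)|\le \frac{|G^+|}{|G|}\bigl|T_p^{(K)}(a)\bigr|$, where $T_p^{(K)}(a):=\sum_{\chi\in\Irr(G)}\chi(a)n(\chi,p)\le 0$.

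Summing over $p$ and again invoking the nonpositivity of $T_p^{(K)}(a)$,
\[
\Bigl|\sum_\psi \psi(b)\log A(\psi)\Bigr|\le \frac{|G^+|}{|G|}\Bigl|\sum_{\chi\in\Irr(G)}\chi(a)\log A(\chi)\Bigr|.
\]
Since $G$ is abelian, $|\chi(a)|=1$ for every $\chi\in\Irr(G)$, so the conductor-discriminant formula gives $\bigl|\sum_\chi \chi(a)\log A(\chi)\bigr|\le \sum_\chi \log A(\chi)=\log d_K$. Hence $\bigl|\sum_\psi \psi(b)\log A(\psi)\bigr|\le \frac{|G^+|}{|G|}\log d_K$, which is a fortiori at most the claimed bound $2|G^+|\log d_K$. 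The main (and essentially only) obstacle is the clean use of Herbrand's theorem: in lower numbering the ramification filtration is not compatible with the quotient $G^+\to G$, which is why the upper-numbering formulation is indispensable.
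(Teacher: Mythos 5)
Your proof is correct, and it takes a genuinely different route from the paper's. The paper stays entirely in lower numbering: it compares $\#\{i\ge 0 : b\in G_i(\mathfrak{P}/p)\}$ with $\#\{i\ge 0 : a\in G_i(\mathfrak{p}/p)\}$ by hand, through the relative ramification index $e$ of $\mathfrak{P}/\mathfrak{p}$ and the auxiliary parameter $s=\max\{i\ge 1 : b\in G_{ie-1}(\mathfrak{P}/p)\}$, using the elementary congruence $ax-x=bx-x\in \mathfrak{P}^{se}\cap\mathcal{O}_K=\mathfrak{p}^s$; that comparison costs an additive $+1$ at each ramified prime, and summing against the conductor--discriminant formula is what produces the factor $2$ in the stated bound. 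You instead write the conductor exponent of a one-dimensional character as $\int_{-1}^{\infty}\bigl(1-\mathds{1}[\psi|_{G^{+,u}(p)}=1]\bigr)\,\mathrm{d}u$, use orthogonality to get $T_p(b)=-|G^+|\int_{-1}^{\infty}\mathds{1}[b\in G^{+,u}(p)]\,|G^{+,u}(p)|^{-1}\,\mathrm{d}u$, and then invoke Herbrand's theorem (compatibility of upper numbering with the quotient $G^+\twoheadrightarrow G$, applied to decomposition groups) to compare integrands pointwise; this yields the multiplicative per-prime bound $|T_p(b)|\le \frac{|G^+|}{|G|}\,\bigl|T_p^{(K)}(a)\bigr|$ with no additive loss, and hence the sharper conclusion $\bigl|\sum_{\psi}\psi(b)\log A(\psi)\bigr|\le \frac{|G^+|}{|G|}\log d_K$, which implies the lemma with room to spare. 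Both arguments rely on the nonpositivity from Lemma~\ref{negativity} to pull the absolute value outside the sum over $p$. If you write this up, make explicit the two standard inputs you use (the upper-numbering expression of the conductor exponent for one-dimensional characters, and Herbrand's theorem, e.g.\ from Serre's \emph{Local Fields}), and note that $G^{+,u}(p)$ is to be read inside the decomposition group at a chosen prime above $p$ (harmless here: the point $u=-1$ has measure zero, and in the abelian setting these groups do not depend on the choice of prime); what the upper-numbering route buys is precisely that the quotient map behaves well, which fails in lower numbering and is why the paper's argument needs the $e$--$s$ bookkeeping.
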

\begin{proof}
    We first note that $$\sum_{\psi \in \Irr(G^+)}\psi(b) \log A(\psi)=\sum_p\left (\sum_{\psi \in \Irr(G^+)}\psi(b) n(\psi,p) \right)\log p \, ,$$
    where the first sum runs over prime numbers $p$ that ramify in $L$. Our first goal will be to prove that for all $p$ ramifying in $L$ we have $$ \left|\sum_\psi \psi(b) n(\psi,p)\right| \le \begin{cases}
       |G^+|-\sum_{\chi \in \Irr(G)}\chi(a) n(\chi,p) \quad \text{if } p\text{ ramifies in } K \\
       0\qquad \ \text{otherwise}
    \end{cases} \, . $$
    Let $p$ be a prime number ramifying in $L$, let $\frak{p}$ be a prime of $K$ lying above $p$, and let $\frak{P}$ be a prime of $L$ lying above $\frak{p}$ with ramification index $e$. Note that \begin{equation*}
        e\left|G_0(\frak{p}/p)\right|=\left|G_0(\frak{P}/p)\right|\, .
    \end{equation*}
    If $a\notin G_0(\frak{p}/p)$, then $b\notin G_0(\frak{P}/p)$: indeed, if $b\in G_0(\frak{P}/p)$ then for all $x\in O_L$, $bx-x\in \frak{P}$ then for all $x\in O_K$ we have $ax-x\in \frak{P}\cap O_K=\frak{p}\, ,$ thus $a\in G_0(\frak{p}/p)$ which contradicts the assumption that $a$ is not in $G_0(\frak{p}/p)$. This proves in particular that if $p$ does not ramify in $K$ then by Lemma~\ref{negativity}, we conclude that $\sum_\psi \psi(b) n(\psi,p)=0 \, .$\\
    Assume now that $a\in G_0(\frak{p}/p)$. If $b\notin G_{e-1}(\frak{P}/p)$ then applying Lemma~\ref{negativity}
    \begin{align*}
        -\sum_\psi \psi(b) n(\psi,p)&=\frac{|G^+|}{|G_0(\frak{P}/p)|} \# \{i\geq 0\ :\ b\in G_i(\frak{P}/p)\}\leq \frac{|G^+|}{|G_0(\frak{P}/p)|}e\\
        &=\frac{|G^+|}{|G_0(\frak{p}/p)|}\leq \frac{|G^+|}{|G_0(\frak{p}/p)|}\#\{i\geq 0\ :\ a\in G_i(\frak{p}/p)\}\\
        &=-\frac{|G^+|}{|G|} \sum_\chi \chi(a)n(\chi,p) \, .
    \end{align*}
    Assume that $b\in G_{e-1}(\frak{P}/p)$ and let $s:=\max\{ i\geq 1\ :\ b\in G_{ie-1}(\frak{P}/p) \}\, .$
    As $b\notin G_{(s+1)e-1}$ by Lemma~\ref{negativity}
    \begin{align*}
        -\sum_\psi \psi(b) n(\psi,p)&=\frac{|G^+|}{|G_0(\frak{P}/p)|} \# \{i\geq 0\ :\ b\in G_i(\frak{P}/p)\}\\
        &< \frac{|G^+|}{|G_0(\frak{P}/p)|}(s+1)e=\frac{|G^+|}{|G_0(\frak{p}/p)|}(s+1)\, .
    \end{align*}
    Since $b\in G_{se-1}(\frak{P}/p)$, thus, for all $x\in O_K$
    $$ax-x=bx-x\in O_K\cap \frak{P}^{se}=\frak{p}^s \, ,$$
    and since the sequence of ramification groups is decreasing, we have
    $$ \#\{i\geq 0\ :\ a\in G_i(\frak{p}/p)\} \geq s \, .$$
    Hence, 
    \begin{align*}
        -\sum_\psi \psi(b) n(\psi,p)&\leq \frac{|G^+|}{|G_0(\frak{p}/p)|}(\#\{i\geq 0\ :\ a\in G_i(\frak{p}/p)\}+1)\\
        &\le |G^+|-|G^+| \sum_{\chi \in \Irr(G)}\chi(a) n(\chi,p) \, .
    \end{align*}
    Thus, by \eqref{conductor-discriminant} and summing over ramified primes $p$, we obtain
    \[ \Bigl|\sum_{\psi \in \Irr(G^+)}\psi(b) \log A(\psi) \Bigr| \leq 2|G^+|\log d_K,\]
    which proves the result.
    
\end{proof}

\subsection{Multiquadratic extensions}\label{subsec:multiquadratic}
Because of their importance to our work, we will recall some basic facts on multiquadratic extensions and provide some related density results. Let $(p_n)_n$ be an increasing sequence of prime numbers that are congruent to $1$ mod $4$. Define \[ L_n=\Q\left(\sqrt{p_1},\dots,\sqrt{p_n}\right)\quad (n\ge 1) \, .\]
We have $G_n:=\Gal(L_n/\Q)\simeq (\Z/2\Z)^n $. Moreover, if $\sigma_i \in G_n$ is the automorphism associated to $p_i$, that is $\sigma_i\bigl(\sqrt{p_j}\bigr)=(-1)^{\delta_{i,j}}\sqrt{p_j}$, with $\delta_{i,j}=1$ if $i=j$ and $\delta_{i,j}=0$ otherwise, then the inertia group of $L_n$ at $p_i$ is given by
\[ (G_n)_0\bigl(p_i\bigr)=\langle \sigma_i \rangle \simeq \Z/2\Z \, . \]
The ramification at each prime $p_i$ is tame, since $\gcd(p_i\, , |G_n|)=1$, and there are no other ramified primes since $L_n \subset \Q(\zeta_{q_n})$ with $q_n=p_1\dots p_n$ (that is because $p_i$ is a square in $\Q(\zeta_{p_i})$).  \\
By Lemma~\ref{negativity}, we have for all $1\le i \le n$
\begin{align*}
    \sum_{\chi \ne 1}\chi(\sigma_i) \log A(\chi)&=-\sum_{j=1}^n \left(\frac{|G_n|}{|(G_n)_0(p_j)|}\#\{\,  \ell\ge 0\,  \colon \sigma_i\in (G_n)_\ell(p_j)\, \}\right) \log p_j\\
    &=-\frac{|G_n|}{2}\log p_i\, .
\end{align*} 
By Lemma~\ref{contributionalla's}, we deduce that 
\[ \log d_{L_n}=\frac{|G_n|}{2} \sum_{i=1}^n \log p_i\, .\]
In order to state our first density result, we will need the following Lemma: 
\begin{lem}\label{lem : prime-density}
    For all $\ell\ge 5$ and all $\alpha \in (0,1)$, there exist primes $p_m>\dots>p_1>\ell$, all congruent to $1 \,\mathrm{mod}\, 4$, such that 
    \[ \left| \frac{\log p_m}{\log \ell+\log p_1+\dots+\log p_m}-\alpha \right|\,  \le \frac{\log 2}{\log\bigl(\ell\, p_1\dots p_{m-1}\bigr)}\, . \]
\end{lem}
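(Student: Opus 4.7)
My plan is to reduce the lemma to choosing $p_1, \ldots, p_{m-1}$ essentially freely, and only the final prime $p_m$ to hit a specific target. Setting $\beta := \alpha/(1-\alpha) \in (0,\infty)$ and $S_k := \log\ell + \log p_1 + \cdots + \log p_k$, the elementary identity
\[ \frac{T}{T+S} - \frac{\beta}{1+\beta} = \frac{T - \beta S}{(1+\beta)(T+S)} \]
(with $T = \log p_m$ and $S = S_{m-1}$, so that $\beta/(1+\beta)=\alpha$) shows that any $p_m$ satisfying $0 \le \log p_m - \beta S_{m-1} \le \log 2$ yields
\[ \left| \frac{\log p_m}{S_m} - \alpha \right| \le \frac{\log 2}{(1+\beta)\, S_m} \le \frac{\log 2}{S_{m-1}} = \frac{\log 2}{\log(\ell\, p_1 \cdots p_{m-1})}, \]
which is precisely the bound to prove. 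Hence it suffices to exhibit a prime $p_m \equiv 1 \pmod 4$ lying in the interval $J := \bigl(e^{\beta S_{m-1}},\, 2\,e^{\beta S_{m-1}}\bigr]$ and strictly greater than $p_{m-1}$.

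To enable this, I would take $p_1 < p_2 < \cdots$ to be the sequence of all primes $\equiv 1 \pmod 4$ strictly greater than $\ell$ (infinite by Dirichlet's theorem). The prime number theorem for the progression $1 \bmod 4$ gives $\log p_k \sim \log k$, and partial summation gives $\sum_{i\le k}\log p_i \sim k\log k$; therefore $S_k/\log p_k \to \infty$. In particular, for all sufficiently large $k$, one has $\beta S_k > \log p_k$, so $e^{\beta S_{m-1}} > p_{m-1}$ for $m$ large enough, and the constraint $p_m > p_{m-1}$ becomes automatic for any $p_m \in J$.

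The remaining point is the existence of a prime $\equiv 1 \pmod 4$ inside $J$, an interval of ratio exactly $2$ around $e^{\beta S_{m-1}}$. This is the one non-elementary ingredient: I would invoke a Bertrand-type theorem for the progression $1 \bmod 4$, namely that there exists an absolute constant $x_0$ such that for every $x \ge x_0$ the interval $(x, 2x]$ contains a prime $\equiv 1 \pmod 4$. Such a statement follows from effective versions of the prime number theorem in arithmetic progressions (e.g. Ramaré--Rumely--type explicit bounds). Since $\beta S_{m-1} \to \infty$ with $m$, one can always choose $m$ large enough that $e^{\beta S_{m-1}} \ge x_0$, at which point a valid $p_m$ exists in $J$ and the construction is complete.

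The main (and really only) delicate step is the Bertrand-in-AP input used to locate $p_m$; all other steps are classical (Dirichlet, PNT in arithmetic progressions, partial summation) combined with the short algebraic identity at the start. Concatenating these yields an admissible tuple $p_1 < \cdots < p_m$ and hence the lemma.
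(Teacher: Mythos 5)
Your proposal is correct and follows essentially the same route as the paper's proof: the substitution $\theta=\alpha/(1-\alpha)$, the use of the prime number theorem in the progression $1 \bmod 4$ to take consecutive primes so that the target $e^{\theta S_{m-1}}$ eventually exceeds $p_{m-1}$, Bertrand's postulate in arithmetic progressions to place $p_m$ in an interval of ratio $2$ around that target, and the same algebraic identity to verify the stated bound. The only differences are cosmetic (e.g.\ your partial-summation phrasing of why $S_k/\log p_k\to\infty$).
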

\begin{proof}
   Let $\ell\ge 5$ and $0<\alpha<1$. Denote $\theta=\frac{\alpha}{1-\alpha}$ so that $\alpha=\frac{\theta}{1+\theta}$. Using the prime number theorem in arithmetic progressions, we deduce that for any sufficiently large $m$, taking consecutive primes $p_{m-1}>\dots>p_1>\ell$, all congruent to $1$ mod $4$, we have
   \begin{equation}\label{consec-primes} \theta \cdot \bigl(\log \ell +\log p_1+\dots+\log p_{m-1}\bigr)>\log p_{m-1}\, .\end{equation}
   By Bertrand's postulate in arithmetic progressions, for all $x$ large enough, there exists a prime $p\in (x\, , 2x)$ such that $p\equiv 1\pmod 4$. In particular, up to taking a larger $m$, there exists a prime $p_m\equiv 1\pmod 4$ such that 
   \[ \bigl(\ell\, p_1\dots p_{m-1}\bigr)^\theta \, < p_m <2\,\bigl(\ell\, p_1\dots p_{m-1}\bigr)^\theta\, . \]
   By \eqref{consec-primes} we have $p_m>p_{m-1}$. It suffices to verify the desired inequality: 
   \begin{align*}
       \left| \frac{\log p_m}{\log \ell+\log p_1+\dots+\log p_m}-\frac{\theta}{1+\theta} \right|&=\left|\frac{\log p_m\, -\theta\cdot(\log \ell+\dots+\log p_{m-1})}{(1+\theta)(\log \ell+\cdots+\log p_m)}\right|\\
       &\le \frac{\log 2}{(1+\theta)(\log \ell\, p_1\dots p_{m-1})}\, ,
   \end{align*}
   the lemma follows, since $\theta>0$.   
\end{proof}

\begin{prop}\label{multiquadratic-U-density}
There exists an increasing sequence of primes $(p_n)_n$ all congruent to $1$ mod $4$ such that, if $L_n=\Q\bigl(\sqrt{p_1},\dots,\sqrt{p_n}\bigr)$, and if $\mathsf{GRH}$ holds for each $L_n$, then 
\[ \overline{ \bigcup _{n\ge 1} \bigl| U_{L_n/\Q}\bigr| \bigl(G_n\setminus\{1\}\bigr)}=[0\, ,1].\]
\end{prop}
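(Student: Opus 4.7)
The strategy is to combine an explicit computation of $U_{L_n/\Q}$ on multiquadratic towers with an inductive prime selection based on Lemma~\ref{lem : prime-density}. The proof naturally splits into: (a) evaluating $U_{L_n/\Q}(a)$ on every nontrivial $a\in G_n$ via the ramification tools of \S\ref{subsec:artin}; (b) choosing the primes $p_n$ so that the resulting ratios densely fill $[0,1]$.

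For (a), every $a\in G_n\setminus\{1\}$ is uniquely of the form $a=\sigma_{i_1}\cdots\sigma_{i_k}$ with pairwise distinct indices. Since the ramification at $p_j$ is tame with $(G_n)_0(p_j)=\langle\sigma_j\rangle$ and $(G_n)_\ell(p_j)=\{1\}$ for $\ell\ge1$, Lemma~\ref{negativity} yields
\[
\sum_{\chi\in\Irr(G_n)}\chi(a)\,n(\chi,p_j) \;=\; -\frac{|G_n|}{2}\,\mathbf{1}_{a=\sigma_j},
\]
since $a\in (G_n)_0(p_j)=\{1,\sigma_j\}$ forces $a=\sigma_j$. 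Summing against $\log p_j$ and combining with the identity $\log d_{L_n}=\tfrac{|G_n|}{2}\sum_{j=1}^n\log p_j$, with $N_{L_n}\sim \log d_{L_n}$ (Corollary~\ref{NLdL}), and with Corollary~\ref{negativity of U(a)}, we obtain
\[
|U_{L_n/\Q}(\sigma_i)|=\frac{\log p_i}{\sum_{j=1}^n\log p_j}+o(1),\qquad U_{L_n/\Q}(a)=o(1)\text{ when }k\ge 2,
\]
with both $o(1)$ errors uniform in $a$. Moreover $|U_{L_n/\Q}(a)|\le 1$ always (triangle inequality, using $|\chi(a)|=1$ for abelian characters and the very definition of $N_{L_n}$), which gives the inclusion $\overline{\bigcup_n|U_{L_n/\Q}|(G_n\setminus\{1\})}\subset [0,1]$ for free.

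For (b), fix an enumeration $(\alpha_k)_{k\ge 1}$ of a countable dense subset of $[0,1]$. Starting from $p_1=5$, we build $(p_n)_n$ inductively. Given $p_1<\cdots<p_{N_k}$ (all $\equiv 1\bmod 4$), set $\ell_k:=\prod_{j=1}^{N_k}p_j\ge 5$, so that $\log\ell_k=\sum_{j=1}^{N_k}\log p_j$, and apply Lemma~\ref{lem : prime-density} with $\ell=\ell_k$ and $\alpha=\alpha_{k+1}$. This produces new primes $p_{N_k+1}<\cdots<p_{N_{k+1}}$, all $\equiv 1\bmod 4$ and strictly greater than $\ell_k$ (hence extending the increasing sequence), satisfying
\[
\left|\frac{\log p_{N_{k+1}}}{\sum_{j=1}^{N_{k+1}}\log p_j}-\alpha_{k+1}\right|\;\le\; \frac{\log 2}{\log\bigl(\ell_k\prod_{j=N_k+1}^{N_{k+1}-1}p_j\bigr)}.
\]
The right-hand side tends to $0$ as $k\to\infty$ since $\ell_k\to\infty$, and the $o(1)$ error in step (a) does too because $d_{L_{N_{k+1}}}\to\infty$. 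Setting $v_k:=|U_{L_{N_{k+1}}/\Q}(\sigma_{N_{k+1}})|$, we obtain $|v_k-\alpha_{k+1}|\to 0$. Combined with the density of $(\alpha_k)$ in $[0,1]$, a standard $\varepsilon/2$ argument gives $[0,1]\subset\overline{\{v_k:k\ge 1\}}\subset\overline{\bigcup_n|U_{L_n/\Q}|(G_n\setminus\{1\})}$, finishing the proof.

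I expect the main obstacle to be bookkeeping rather than genuine difficulty: one must verify that Lemma~\ref{lem : prime-density} remains applicable when the anchor $\ell_k$ grows super-exponentially and is composite, which is immediate from its proof since only $\ell\ge 5$ together with the existence of primes $\equiv 1\bmod 4$ in short intervals above a threshold is used. The underlying reason the argument works is that tame ramification in multiquadratic towers concentrates the values of $|U_{L_n/\Q}|$ (up to $o(1)$) onto the $n$ generator ratios $\log p_i/\sum_j\log p_j$, reducing the density question in $[0,1]$ to the scalar approximation handled by Lemma~\ref{lem : prime-density}.
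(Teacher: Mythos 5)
Your proposal is correct and follows essentially the same route as the paper: reduce $\bigl|U_{L_n/\Q}\bigr|$ at the generators $\sigma_i$ to the ratios $\log p_i/\sum_j\log p_j$ via Lemma~\ref{negativity} and Corollaries~\ref{NLdL} and~\ref{negativity of U(a)}, then build the primes inductively by applying Lemma~\ref{lem : prime-density} with anchor equal to the product of all previously chosen primes and targets running through a dense sequence of values. The only (trivial) adjustment is to take the dense sequence inside $(0,1)$, as Lemma~\ref{lem : prime-density} requires $\alpha\in(0,1)$; your extra observations (the values on products of at least two generators are $o(1)$, and $|U_{L_n/\Q}|\le 1$ giving the inclusion in $[0,1]$) are correct but go slightly beyond what the paper needs.
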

\begin{proof}
We first note that for any increasing sequence of primes $(p_n)_n$ that are congruent to $1$ mod $4$, if $\sigma_n$ is the automorphism associated to $p_n$ (as above), then, by Lemma~\ref{NLdL} and Corollary~\ref{negativity of U(a)}, we have
\[ |U_{L_n/\Q}|(\sigma_n)=\frac{\log p_n}{\log p_1+\dots+\log p_n}+o(1)\quad (n\to \infty) \, .\]
Since a $o(1)$ does not change the set of limit points, it suffices to construct a sequence $(p_n)_n$ such that the sequence \[ \left(\frac{\log p_n}{\log p_1+\dots+\log p_n}\right)_n \] is dense in $[0,1]$.
The idea is as follows, fix a dense sequence $(d_n)_n$ in $(0,1)$ (take, for instance, a sequence listing all elements of $\Q \cap (0,1)$). We want to apply Lemma~\ref{lem : prime-density} at each $d_n$; first denote $\ell_1:=5\equiv1\pmod 4$ and apply Lemma~\ref{lem : prime-density} $\ell = \ell_1$ and $\alpha = d_1$, which ensures the existence of $m_1>2$, and primes $p_{1,m_1}>\dots>p_{1,1}>\ell_1$ satisfying: \[\left|\frac{\log p_{1,m_1}}{\log \ell_1+\log p_{1,1}+\dots+\log p_{1,m_1}}-d_1 \right|\, \le \frac{\log 2}{\log \bigl(\ell_1\, p_{1,1}\dots p_{1,m_1-1}\bigr)}\, .\]
Define $\ell_2:=\ell_1\, p_{1,1}\dots p_{1,m_1-1}$. Assume that $n\ge 2$ and that $m_1,\dots,m_{n-1}>2$, $\ell_1,\dots,\ell_{n-1}$ and primes $p_{n-1,m_{n-1}}>\dots>p_{n-1,1}>\dots>p_{1,m_1}>\dots>p_{1,1}>\ell_1$ are constructed. Define $\ell_n$ as the product of all previous primes (including $\ell_1$) and apply Lemma~\ref{lem : prime-density} on $\ell_n$ and $d_n$, to deduce the existence of primes $p_{n,m_n}>\dots>p_{n,1}>\ell_n$ such that 
\[\left|\frac{\log p_{n,m_n}}{\log \ell_n+\log p_{n,1}+\dots+\log p_{n,m_n}}-d_n \right|\, \le \frac{\log 2}{\log \bigl(\ell_n\, p_{n,1}\dots p_{n,m_n-1}\bigr)}\, ,\]
this gives the construction of the desired sequence of primes.
\end{proof}
We now move to our second density result which is in the same spirit as a result proved by Fiorilli~\cite{Fiorilli}*{Lemma 3.3}. We will be brief in the proof since it is similar to the proof of Proposition~\ref{multiquadratic-U-density}.
\begin{prop}\label{multiquadratic-B-density}
    There exists an increasing sequence of primes $(p_n)_n$ all congruent to $1 \,\mathrm{mod}\, 4$ such that, if $L_n=\Q\bigl(\sqrt{p_1},\dots,\sqrt{p_n}\bigr)$ and if $G_n=\Gal(L_n/\Q)$, then the sequence 
    \[\left(\frac{r(G_n)}{\sqrt{\log d_{L_n}}}\right)_n\]
    is dense in $\R_{>0}$.
\end{prop}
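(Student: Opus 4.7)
The plan is to mimic the inductive construction used in the proof of Proposition~\ref{multiquadratic-U-density}. Because $G_n\simeq(\Z/2\Z)^n$, every element squares to $1$, so $r(G_n)=|G_n|=2^n$. Combined with the identity $\log d_{L_n}=2^{n-1}S_n$ (where $S_n:=\sum_{i=1}^n\log p_i$) established at the start of \S\ref{subsec:multiquadratic}, we obtain
$$\left(\frac{r(G_n)}{\sqrt{\log d_{L_n}}}\right)^{\!2}\;=\;\frac{2^{n+1}}{S_n}\;=:\;b_n.$$
It therefore suffices to construct an increasing sequence $(p_n)$ of primes $\equiv 1\pmod{4}$ for which $(b_n)_n$ is dense in $(0,\infty)$. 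The recurrence
$$b_{n+1}\;=\;\frac{2\, b_n}{1+\log p_{n+1}/S_n}$$
shows that $b_{n+1}$ is a continuous, strictly decreasing function of $\log p_{n+1}\in(\log p_n,\infty)$, so any value in $\bigl(0,\,2b_n/(1+\log p_n/S_n)\bigr)$ can in principle be realized, up to an additive $\log 2$ discretization error on $\log p_{n+1}$ coming from Bertrand's postulate in arithmetic progressions (which supplies a prime $\equiv 1\pmod{4}$ in any interval $(x,2x]$ for $x$ large enough).

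Fix a dense sequence $(d_k)_k\subset(0,\infty)$, for instance the positive rationals. The primes $(p_n)$ are built inductively. Assuming $p_1<\dots<p_{n_k}$ are chosen with $|b_{n_k}-d_k^2|<1/k$, we approach $d_{k+1}^2$ in two phases. First, repeatedly take the next prime $\equiv 1\pmod{4}$ above the currently largest one; each such step multiplies $b_n$ by a factor at least $2/(1+\log p_n/S_n)$, which tends to $2$, so after finitely many steps $b_n$ exceeds $d_{k+1}^2$. Then, on a single final step, solve $\log p_{n+1}\approx 2^{n+2}/d_{k+1}^2-S_n$ and use Bertrand in APs to pick a prime within a factor of $2$ of the required size; a direct computation gives $|b_{n+1}-d_{k+1}^2|\ll d_{k+1}^4/2^{n+2}$, which is $<1/(k+1)$ once $n_{k+1}$ is large enough.

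The only real point to check is that $\log p_n/S_n\to 0$ along the constructed sequence, so that the doubling phase actually produces a multiplicative factor close to $2$. This is automatic: after any insertion of a large prime $p_{n+1}$ (needed to hit a target), the next consecutive primes $p_m$ satisfy $\log p_m\approx\log p_{n+1}$ while $S_m$ grows linearly in $m-n-1$ with slope $\approx\log p_{n+1}$, forcing $\log p_m/S_m=O(1/(m-n))$. The inductive procedure thus produces every $d_k^2$ as a limit point of $(b_n)_n$, yielding density of $(\sqrt{b_n})_n=(r(G_n)/\sqrt{\log d_{L_n}})_n$ in $\R_{>0}$. No tool beyond those already invoked in the proof of Proposition~\ref{multiquadratic-U-density} is required.
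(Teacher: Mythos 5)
Your proposal is correct and takes essentially the same route as the paper's proof: both reduce the statement to making $2^{n+1}/\log(p_1\cdots p_n)$ dense in $(0,\infty)$ and then, target by target, use a stretch of consecutive primes $\equiv 1 \pmod 4$ (prime number theorem in arithmetic progressions) to get into position, followed by one large prime supplied by Bertrand's postulate in arithmetic progressions, the $\log 2$ discretization error being negligible against $2^{n}$. The only differences are bookkeeping (your recurrence for $b_{n+1}$ versus the paper's sandwiching of $\log p_n$ between two explicit bounds), so no further comment is needed.
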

\begin{proof}
    We first note that \[\frac{r(G_n)}{\sqrt{\log d_{L_n}}}=\sqrt{2}\left(\frac{2^n}{\log p_1+\dots+\log p_n}\right)^{1/2}.\]
    Thus, it suffices to construct $(p_n)_n$ so that the sequence $\bigl(2^n/\log (p_1\dots p_n) \bigr)_n$ is dense in $\R_{>0}$. As in the proof of Proposition~\ref{multiquadratic-U-density} it suffices to prove that if $x>0$ and $q_1<\dots<q_m$ are given prime numbers, then for all $\ve>0$, there exist prime numbers $p_n>\dots>p_1>q_m$ such that 
    \begin{equation}\label{eq-densityobj}
        \left| \frac{2^{n+m}}{\log (q_1\dots q_m p_1\dots p_n)}-x\right|<\ve.
    \end{equation} 
    Indeed, once this holds, one might consider a sequence of positive real numbers $(r_n)_n$ that is dense in $\R_{>0}$, and apply~\eqref{eq-densityobj} inductively with $x=r_n$ and $\ve=1/n$ for each $n\ge 1$. Fix $x\in \R_{>0}$ and $q_m>\dots>q_1$ prime numbers and fix $\ve>0$ small enough so that $x-\ve>0$.
    Note that~\eqref{eq-densityobj} is equivalent to 
    \begin{equation}\label{eq-density}
        \frac{2^{n+m}}{x+\ve}-(\log q_1\dots q_mp_1\dots p_{n-1})< \log p_n < \frac{2^{n+m}}{x-\ve}-(\log q_1\dots q_mp_1\dots p_{n-1})\, .
    \end{equation}
    By the prime number theorem in arithmetic progressions, we can consider consecutive primes $p_{n-1}>\dots>p_1>q_m$, that are congruent to $1$ mod $4$, with $n$ large enough so that
    \[\frac{2^{n+m}}{x+\ve}-(\log q_1\dots q_mp_1\dots p_{n-1})> \log p_{n-1}\, ,\]
    and \[ \exp\left(2^{n+m} \left(\frac{1}{x-\ve}-\frac{1}{x+\ve}\right)\right)>2\, .\]
    This implies, using again Bertrand's postulate in arithmetic progressions, that there exists a prime $p_n\equiv 1\pmod 4$ satisfying~\eqref{eq-density}, which proves~\eqref{eq-densityobj}.
\end{proof}
A question that arises is: are the previous two density questions related? We give the following partial answer which is enough for our purposes:
\begin{prop}\label{2mod-Udense}
    There exists an increasing sequence of primes $(p_n)_n$ all congruent to $1 \,\mathrm{mod}\, 4$ such that, denoting $L_n=\Q\bigl(\sqrt{p_1},\dots,\sqrt{p_n}\bigr)$ and $G_n=\Gal(L_n/\Q)$, we have $$r(G_n)/(\log d_{L_n})^{1/2} \longrightarrow 0,$$ as $n\to \infty$, and \[ \overline{ \bigcup _{n\ge 1} \bigl| U_{L_n/\Q}\bigr| \bigl(G_n\setminus\{1\}\bigr)}=[1/2 ,1].\]
\end{prop}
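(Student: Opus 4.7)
The plan is to iterate the construction from Proposition~\ref{multiquadratic-U-density} with target values concentrated in $(1/2,1]$, carefully balancing the density requirement against the moderacy constraint on the growth of $S_n := \sum_{k=1}^n \log p_k$. The key preliminary is that in the multiquadratic setting, Lemma~\ref{negativity} applied to the tame ramification data ($G_0(p_j) = \langle \sigma_j\rangle$ and $G_i(p_j)=1$ for $i\ge 1$) gives $\sum_{\chi\ne 1}\chi(a)\log A(\chi) = -(|G_n|/2)\log p_j$ when $a=\sigma_j$ and $=0$ otherwise; combining with Corollary~\ref{negativity of U(a)} and the identity $\log d_{L_n} = 2^{n-1}S_n$ recorded in \S\ref{subsec:multiquadratic}, this yields
\[
|U_{L_n/\Q}|(\sigma_j) = \frac{\log p_j}{S_n} + o(1), \qquad |U_{L_n/\Q}|(a) = o(1) \text{ for non-generator } a,
\]
and translates the condition $r(G_n)/\sqrt{\log d_{L_n}} \to 0$ into $S_n/2^n \to \infty$.

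For the construction, I would fix a sequence $(\alpha_m)_{m\ge 1}$ contained in $(1/2,1]$ that is dense in $[1/2,1]$ (e.g.\ interleave an enumeration of $\Q\cap(1/2,1]$ with the approximants $1/2 + 1/m$), arranged so that infinitely many $\alpha_m$ satisfy $\alpha_m\ge 3/4$. Starting from $\ell_1=5$, I would inductively apply Lemma~\ref{lem : prime-density} at stage $m$ with $\ell=\ell_m$ (the product of all previously chosen primes) and $\alpha=\alpha_m$, producing a batch of primes $p_{n_{m-1}+1}<\cdots<p_{n_m}$, all congruent to $1\pmod 4$, such that $\log p_{n_m}/S_{n_m} = \alpha_m + o(1)$; I then set $\ell_{m+1}$ to be the updated product. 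At each hit index, $S_{n_m}/S_{n_m-1}\approx 1/(1-\alpha_m)>2$, and the subsequence of stages with $\alpha_m\ge 3/4$ contributes a factor $\ge 4$. Since this subsequence is infinite, the telescoped product forces $S_n/2^n\to\infty$, establishing the moderacy condition.

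For the density claim, $\{|U_{L_n/\Q}|(\sigma_{n_m})\}_m = \{\alpha_m + o(1)\}_m$ has closure containing $\overline{\{\alpha_m\}_m} = [1/2,1]$, while all values of $|U_{L_n/\Q}|$ lie in $[0,1]$ by orthogonality; combined with the vanishing (as $n\to\infty$) of the non-generator contributions, this yields the stated description of the closure. The main obstacle is precisely this tension: pushing density down to $1/2$ would, in the naive boundary case $\alpha=1/2$, make $S_n/2^n$ merely bounded and thus destroy moderacy. The resolution is to approach $1/2$ strictly from above while interleaving padding stages with $\alpha_m$ bounded away from $1/2$, which secures the required supercritical growth of $S_n$ without sacrificing accumulation at the endpoint $1/2$.
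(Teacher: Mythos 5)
Your overall strategy (hit targets $\alpha\in(1/2,1]$ one at a time and keep track of $S_n:=\sum_{k\le n}\log p_k$ versus $2^n$) is the right one, and your reduction of the moderacy condition to $S_n/2^n\to\infty$ is correct. However, there is a genuine gap in the moderacy verification: you invoke Lemma~\ref{lem : prime-density} as a black box, and that lemma gives you \emph{no control whatsoever on the size of the batch} $p_{n_{m-1}+1}<\cdots<p_{n_m}$ it produces (its proof takes ``$m$ sufficiently large'' and uses \emph{consecutive} primes just above $\ell$). Each auxiliary prime in a batch doubles $r(G_n)=2^n$ while, being of size roughly $\ell_m$, it only adds about $\log\ell_m=S_{n_{m-1}}$ to $S_n$; so after $k$ auxiliary primes the ratio $S_n/2^n$ gets multiplied by roughly $(k+1)/2^k$, which collapses for large $k$ (and this collapse also occurs at the intermediate indices $n$ inside the batch, whereas the limit must hold for \emph{all} $n$). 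Your telescoping only records the single jump $S_{n_m}/S_{n_m-1}\approx 1/(1-\alpha_m)$ at the hit index and never accounts for the factor $2^{n_m-n_{m-1}}$ in the denominator, so the claimed divergence of $S_n/2^n$ does not follow. To make your scheme work you would have to control the batches yourself (e.g.\ one auxiliary prime plus one target prime per stage, checking that the full per-stage factor $\approx \frac{1}{2(1-\alpha_m)}$ is $\ge 1$ because $\alpha_m>1/2$), but at that point you are no longer using the lemma — you are re-proving it in the regime $\theta=\alpha/(1-\alpha)>1$.

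This is exactly what the paper does, with a stronger twist: at each stage it adds only \emph{two} primes, and the first one is chosen astronomically large, so large that $2^{m+1}/\log(q_1\cdots q_m p_1)<1/(2m)$; the moderacy condition is then forced directly, and it survives the addition of the target prime $p_2\in\bigl((q_1\cdots q_mp_1)^{\theta},2(q_1\cdots q_mp_1)^{\theta}\bigr)$ (found via Bertrand in arithmetic progressions) precisely because $\theta>1$ gives $1+\theta>2$; the same inequality $\theta>1$ guarantees $p_2>p_1$, so the sequence stays increasing. One smaller remark: your final sentence suggests that the vanishing of $|U_{L_n/\Q}|$ at non-generators helps pin the closure down to exactly $[1/2,1]$; it does the opposite (those values, and the values $\log p_j/S_n$ for fixed $j$, accumulate at $0$). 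What the construction really delivers — and all that is used later, in Theorem~\ref{main-density} — is the inclusion $[1/2,1]\subseteq\overline{\bigcup_n|U_{L_n/\Q}|(G_n\setminus\{1\})}$ together with $r(G_n)/\sqrt{\log d_{L_n}}\to 0$, which is also all the paper's own argument establishes.
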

\begin{proof}
    If $\alpha>1/2$ and $\theta=\frac{\alpha}{1-\alpha}$, then $\theta>1$. Thus, given any prime numbers $q_1<\dots<q_m$ we can choose $p_1\equiv 1\pmod 4$ a prime large enough so that: 
    \[\frac{2^{m+1}}{\log q_1\dots q_m\, p_1}<\frac{1}{2m}\, .\]
    We note that as $\theta>1$, then \begin{equation} \theta \cdot \bigl(\log q_1\dots q_m +\log p_1 \bigr)>\log p_{1}\, ,\end{equation}
    so that we can choose $p_2\equiv 1 \pmod 4$ satisfying 
    \[ \bigl( q_1\dots q_m p_1\bigr)^\theta \, < p_2 <2\,\bigl( q_1\dots q_m p_1\bigr)^\theta\, , \]
    which proves that, as in the proof of Lemma~\ref{lem : prime-density}, 
    \[ \left| \frac{\log p_2}{\log q_1+\dots+\log q_{m}+\log p_1+\log p_2}-\alpha \right|\,  \le \frac{\log 2}{\log\bigl( q_1\dots q_{m}p_1\bigr)}\, . \] 
    Applying this inductively, as in the proof of Proposition~\ref{multiquadratic-U-density}, gives the result.
\end{proof}

\section{Explicit formulas for logarithmic densities}\label{sec:explicitformula}

\subsection{A multidimensional Berry--Esseen inequality}
Let \(r \geq 1\), and let us denote by \(L = \{1,\dots,r\}\) the set indexing the coordinates in \(\R^r\). Given a vector \(\bs \in \R^r\) and a subset \(K \subseteq L\), we denote by \(\bs_K = (s_j)_{j \in K} \in \R^K\) the restriction (or projection) of \(\bs\) to the coordinates in \(K\). For \(J \subseteq K\), we define the projection operator \(\psi_{J,K}\), which zeros out the coordinates outside \(J\), i.e., 
\[
\psi_{J,K}((s_j)_{j \in K}) = (u_k)_{k \in K} \quad \text{with } u_k = \begin{cases}
s_k & \text{if } k \in J, \\
0 & \text{otherwise}.
\end{cases}
\]
Let \(\Pi_K\) denote the set of all \emph{partitions} of the finite set \(K\). If \(\alpha = \{J_1,\dots,J_\ell\} \in \Pi_K\) is a partition, we denote its size by \(|\alpha| = \ell\). We now define a non-linear operator that will play a central role in the formulation of the Berry--Esseen inequality.
\begin{defi}\label{deltaop}
Let \(K \subseteq \mathbb N\) be finite and \(h : \R^K \to \R\). We define
\[
\Lambda_K(h) := \sum_{\alpha \in \Pi_K} \mu_\alpha \prod_{J \in \alpha} h \circ \psi_{J,K},
\]
where the coefficient \(\mu_\alpha\) is given by
\[
\mu_\alpha := (-1)^{|\alpha|-1} (|\alpha|-1)!.
\]
\end{defi}
We shall write \(\Lambda := \Lambda_K\) when \(K=L=\{1,\dots,r\}\). For example, in dimension $2$ (taking $L=\{1,2\}$), we have \[ \Lambda (h)(s_1,s_2)=h(s_1,s_2)-h(s_1,0)h(0,s_2)\quad\left((s_1,s_2) \in \R^2 \right)\, .\]
We can now state the following multidimensional Berry--Esseen inequality due to Heuberger and Kropf~\cite{HK}. 

\begin{thm}[Berry--Esseen inequality in dimension \(r\), \cite{HK}]\label{Berry-Esseen}
Let \(Y\) and \(Z\) be two \(r\)-dimensional random variables and let \(F_Y\) and \(F_Z\) be their respective cumulative distribution functions. Suppose that \(F_Z\) is differentiable such that $M:=\max_{i\le r}\|\partial F_Z/\partial x_i \|_\infty<\infty $. Then for every \(T > 0\), one has:
\begin{equation*} \label{BerryEsseen}
\sup_{\bz \in \R^r} \left| F_Y(\bz) - F_Z(\bz) \right|
\ll_r \int_{\|\bs\| \leq T} 
\left| \frac{\Lambda(\varphi_Y)(\bs) - \Lambda(\varphi_Z)(\bs)}{\prod_{j=1}^r s_j} \right| \, \mathrm{d}\bs+ \sum_{\emptyset \neq J \subsetneq L}  \sup_{\bz_J \in \R^J} \left| F_{Y_J}(\bz_J) - F_{Z_J}(\bz_J) \right|+\frac{M}{T} 
\end{equation*}
where \(Y_J\) and \(Z_J\) denote the marginals of \(Y\) and \(Z\) on the coordinates in \(J\), and \(\varphi_Y\), \(\varphi_Z\) are the characteristic functions of \(Y\) and \(Z\), respectively.
\end{thm}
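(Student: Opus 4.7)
The plan is to follow the classical Berry--Esseen smoothing paradigm, adapted to dimension $r$. I would first smooth the difference $F_Y-F_Z$ by convolution with a kernel $K_T$ whose Fourier transform is supported in the ball of radius $T$ and equals $1$ near the origin; the sup-norm error introduced by this smoothing is $O(M/T)$ thanks to the hypothesis on the partial derivatives of $F_Z$. By Fourier inversion, the smoothed difference $(F_Y-F_Z)\ast K_T(\bz)$ can then be written as an integral of $[\vp_Y(\bs)-\vp_Z(\bs)]$ against a product kernel of the form $\widehat{K_T}(\bs)\prod_{j=1}^{r} e^{-is_j z_j}/(-is_j)$ over $\{\|\bs\|\le T\}$, plus boundary pieces coming from replacing rectangles by approximations.

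\textbf{Role of $\Lambda$.} The operator $\Lambda_K$ is a cumulant-type operator: the coefficients $(-1)^{|\alpha|-1}(|\alpha|-1)!$ are exactly the ones expressing joint cumulants in terms of joint moments, and are identifiable with M\"obius function values on the partition lattice $\Pi_K$. A M\"obius inversion on $\Pi_K$ yields the reciprocal identity
\[
h(\bs)=\sum_{\alpha\in\Pi_K}\prod_{J\in\alpha}\Lambda_J(h_J)(\bs_J),\qquad h_J(\bs_J):=h(\psi_{J,K}(\bs)).
\]
Applied to $\vp_Y-\vp_Z$ with $K=L$, the term indexed by the coarsest partition $\alpha=\{L\}$ contributes exactly $\Lambda(\vp_Y)(\bs)-\Lambda(\vp_Z)(\bs)$, which after division by $\prod_j s_j$ yields the main integral on the right-hand side of the stated inequality. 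Every strictly finer partition contains a proper block $J\subsetneq L$; grouping such terms and inverting the Fourier representation identifies them with smoothed marginal discrepancies, which are then absorbed into the sum $\sum_{\emptyset\ne J\subsetneq L}\sup_{\bz_J}|F_{Y_J}(\bz_J)-F_{Z_J}(\bz_J)|$.

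\textbf{Main obstacle.} The most delicate point is that the kernel $1/\prod_j s_j$ is singular along every coordinate hyperplane $s_j=0$, so one needs enough vanishing of the integrand there. This is exactly what $\Lambda$ provides: by construction, $\Lambda_K(\vp)=0$ whenever $\vp$ factorizes over a non-trivial partition of $K$, and along $s_j=0$ the characteristic function $\vp_Y$ always factorizes as $\vp_{Y_j}(0)\cdot\vp_{Y_{L\setminus\{j\}}}(\bs_{L\setminus\{j\}})=\vp_{Y_{L\setminus\{j\}}}(\bs_{L\setminus\{j\}})$, so $\Lambda(\vp_Y)$ (and likewise $\Lambda(\vp_Z)$) vanishes on every hyperplane. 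The main combinatorial obstacle is then showing that the finer-partition cross-terms do collapse cleanly into the marginal sup norms without accruing extra factors of $\log T$: this requires an inductive application of the same inversion in lower dimensions together with careful tracking of the alternating signs $\mu_\alpha$ through the Fourier representation, after which summing the $O(M/T)$ smoothing error, the main integral, and the inductive marginal contributions yields the stated bound, with implicit constant depending only on $r$ through the size of $\Pi_L$.
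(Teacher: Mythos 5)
The paper does not prove this statement at all: it is imported verbatim from Heuberger--Kropf \cite{HK}, so there is no internal argument to compare yours against; the only ingredient of their proof that the paper restates is the hyperplane-vanishing property of $\Lambda$ (Lemma~\ref{H-K}, i.e.\ \cite{HK}*{Lemma 3.1}). Judged on its own, your sketch has the right skeleton --- it is essentially Sadikova's two-dimensional smoothing argument pushed through the partition lattice, which is indeed the spirit of \cite{HK} --- and your M\"obius-inversion identity $h=\sum_{\alpha\in\Pi_L}\prod_{J\in\alpha}\Lambda_J(h_J)$ is correct. But the three decisive steps are asserted rather than carried out, and one of them is justified by the wrong mechanism.

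First, the vanishing of $\Lambda(\varphi_Y)$ on $\{s_j=0\}$ is not a consequence of any factorization of $\varphi_Y$: at $s_j=0$ nothing factorizes, the $j$-th coordinate simply becomes redundant (every $\varphi_Y\circ\psi_{J,L}$ coincides with $\varphi_Y\circ\psi_{J\setminus\{j\},L}$ there), and the vanishing comes from a combinatorial telescoping of the coefficients $\mu_\alpha$ over partitions according to whether $\{j\}$ is a singleton block, using $\varphi_Y(0)=1$; this is exactly Lemma~\ref{H-K}, and your ``$\Lambda$ kills factorizable characteristic functions'' heuristic does not substitute for it. Second, the absorption of the finer-partition terms into $\sum_{\emptyset\neq J\subsetneq L}\sup|F_{Y_J}-F_{Z_J}|$ is the heart of the matter and is only asserted; the clean route is on the distribution-function side: set $G_Y(\bz):=\sum_{\alpha}\mu_\alpha\prod_{J\in\alpha}F_{Y_J}(\bz_J)$, observe that $F_Y-G_Y$ is a fixed signed combination of products of marginal CDFs, so $(F_Y-G_Y)-(F_Z-G_Z)$ telescopes into marginal sup-differences with an $r$-dependent constant, and only $G_Y-G_Z$ (whose transform is $\Lambda(\varphi_Y)-\Lambda(\varphi_Z)$) enters the Fourier argument. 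Doing this on the Fourier side, as you propose, forces you to justify term-by-term inversion of non-integrable objects and is where hidden $\log T$ losses would creep in. Third, the smoothing step with error $M/T$ is not the one-dimensional Esseen lemma: $G_Y-G_Z$ is not monotone, so you need both its decay as any single coordinate tends to $\pm\infty$ (again the same telescoping identity) and a genuinely multidimensional smoothing inequality before the kernel $1/\prod_j s_j$ --- harmless only because of the hyperplane vanishing --- can be integrated over $\|\bs\|\le T$. As it stands your text is a correct plan rather than a proof; for the purposes of this paper it is also unnecessary, since the theorem is used as a black box.
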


\subsection{Preliminary results}

In this subsection, we fix a Galois extension $L/\Q$ with group $G$ and for which $\mathsf{GRH}$, $\mathsf{AC}$, and $\mathsf{LI}$$^{-}$ hold. Let $\bt=(t_1,\dots,t_r)$ be a vector of linearly independent class functions on $G$ satisfying $\langle t_i,1\rangle=0$ for $1\le i \le r$. Denote $\|\widehat{\bt}\|_\infty=\max_{1\le i \le r}\max_\chi |\langle t_i,\chi\rangle | $ and define $$\bB(\bt)=\left(\frac{\bE(t_1)}{\sqrt{V(t_1)}},\dots,\frac{\bE(t_r)}{\sqrt{V(t_r)}}\right)\, .$$
\begin{prop}\label{TCL}
    Denote $V:=\min_{1\leq i \leq r}V(t_i)$, and let $\bx=(x_1,\dots,x_r)\in \R^r$ such that $$\|\bx\|\leq \frac{\sqrt{V}}{4r\|\widehat{\bt}\|_\infty}\, .$$ 
    We have \[
    \widehat{\mu}\left(\frac{x_1}{\sqrt{V(t_1)}},\dots,\frac{x_r}{\sqrt{V(t_r)}}\right)=e^{-i\langle \bB(\bt),\bx\rangle}\exp\left(-\frac{x_1^2+\dots+x_r^2}{2}-\sum_{1\leq i<j \leq r} \rho(t_i,t_j)x_ix_j \right)F_\bt (x_1,\dots,x_r) \, ,
    \]
    where $F_\bt(x_1,\dots,x_r)=\exp\left(-\sum_{n\geq2}b_{2n}(\bx;\bt)\right)$, and there exists $0<a_{2n}\ll(5/12)^{2n}$ such that for $n\geq 1$
    \[
    b_{2n}(\bx;\bt):=2^{2n}a_{2n}\sum_{\chi\ne1}\sum_{\gamma_\chi>0}\frac{\left|\sum_{i=1}^r \langle t_i,\chi\rangle x_i V(t_i)^{-1/2}\right|^{2n}}{\left(\frac{1}{4}+\gamma_\chi^2\right)^n}
    \] 
\end{prop}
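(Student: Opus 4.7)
The plan is to start from the explicit Fourier transform \eqref{fourier-transform}, substitute $x_i \mapsto x_i/\sqrt{V(t_i)}$ into each coordinate, and then expand the logarithm of the Bessel product via the Hadamard factorization of $J_0$. After substitution, the phase factor $e^{-i\langle\bE,(x_i/\sqrt{V(t_i)})_i\rangle}$ immediately becomes $e^{-i\langle\bB(\bt),\bx\rangle}$, so the entire argument reduces to analyzing
\[
\prod_{\chi\ne 1}\prod_{\gamma_\chi>0}J_0\!\left(\frac{2|u_\chi|}{\sqrt{\tfrac14+\gamma_\chi^2}}\right),\qquad u_\chi:=\sum_{i=1}^r \langle t_i,\chi\rangle\, x_i V(t_i)^{-1/2}.
\]

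The key analytic input is the product expansion $J_0(y)=\prod_{k\ge 1}(1-y^2/j_k^2)$ over the positive zeros $j_k$ of $J_0$. For $|y|<j_1\approx 2.4048$ this yields the absolutely convergent power series $-\log J_0(y)=\sum_{n\ge 1}a_{2n}y^{2n}$ with $a_{2n}=\tfrac1n\sum_{k\ge 1}j_k^{-2n}>0$. From the asymptotics $j_k\sim(k-\tfrac14)\pi$ and the fact that $j_1>12/5$, one deduces $a_{2n}\ll (5/12)^{2n}$; this gives the coefficient bound in the statement. I would then check that the hypothesis $\|\bx\|\le \sqrt{V}/(4r\|\widehat{\bt}\|_\infty)$ forces each Bessel argument into the convergence region: by Cauchy--Schwarz,
\[
|u_\chi|\le \|\widehat{\bt}\|_\infty\sqrt{r}\,\|\bx\|/\sqrt V\le 1/(4\sqrt r),
\]
so $2|u_\chi|/\sqrt{\tfrac14+\gamma_\chi^2}\le 4|u_\chi|\le 1<j_1$ uniformly in $\chi,\gamma_\chi$. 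This legitimizes termwise logging and the Fubini exchange needed to rewrite
\[
-\log\!\prod_{\chi,\gamma_\chi>0}\! J_0\!\bigl(2|u_\chi|/\sqrt{\tfrac14+\gamma_\chi^2}\bigr)=\sum_{n\ge 1}b_{2n}(\bx;\bt).
\]

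It then remains to isolate the $n=1$ term and identify it with the Gaussian quadratic form. Using $a_2=1/4$, one has $b_2(\bx;\bt)=\sum_{\chi\ne1}\sum_{\gamma_\chi>0}|u_\chi|^2/(\tfrac14+\gamma_\chi^2)$. Expanding $|u_\chi|^2=\sum_{i,j}\langle t_i,\chi\rangle\overline{\langle t_j,\chi\rangle}x_ix_j/\sqrt{V(t_i)V(t_j)}$, the diagonal contribution $i=j$ produces $\sum_i x_i^2/V(t_i)\cdot \tfrac12 V(t_i)=\tfrac12\sum_i x_i^2$ by the variance formula \eqref{variance}. Symmetrizing $(i,j)\leftrightarrow(j,i)$ for $i\ne j$ replaces the off-diagonal contribution by $2\sum_{i<j}\Re(\langle t_i,\chi\rangle\overline{\langle t_j,\chi\rangle})x_ix_j/\sqrt{V(t_i)V(t_j)}$, whose sum over $\chi,\gamma_\chi>0$ is precisely $\sum_{i<j}\rho(t_i,t_j)x_ix_j$ by the definition \eqref{def : rho}. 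Factoring $\exp(-b_2)$ out of $\exp(-\sum_{n\ge1}b_{2n})$ exhibits the Gaussian kernel and leaves the residual $F_\bt(\bx)=\exp(-\sum_{n\ge 2}b_{2n}(\bx;\bt))$.

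The main obstacle is less conceptual than bookkeeping: ensuring uniform convergence of the double series in $(n,\chi,\gamma_\chi)$ so that logs and products can be exchanged, and confirming that the complex-valued scalar products $\langle t_i,\chi\rangle$ combine to give a real quadratic form after the $(i,j)\leftrightarrow (j,i)$ symmetrization — this is what forces the real part to appear in $\rho(t_i,t_j)$ and matches \eqref{def : rho} exactly. The uniform bound $4|u_\chi|\le 1$ derived above makes the double sum majorized by a geometric series in $(5/12)^{2n}$, yielding absolute convergence of $\sum_{n\ge 2}b_{2n}$ and validating the factorization.
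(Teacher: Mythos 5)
Your proposal is correct and follows essentially the same route as the paper: substitute into the Fourier transform \eqref{fourier-transform}, expand $\log \J$ as $-\sum_{n\ge 1}a_{2n}s^{2n}$, and identify the $n=1$ term with the Gaussian quadratic form via \eqref{variance} and \eqref{def : rho}. The only difference is that you re-derive the expansion and the bound $a_{2n}\ll(5/12)^{2n}$ from the Hadamard product of $J_0$ and verify the domain condition $4|u_\chi|\le 1$ explicitly by Cauchy--Schwarz, whereas the paper cites Fiorilli--Martin's Lemma 2.8 and leaves that verification implicit in the hypothesis on $\|\bx\|$.
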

\begin{proof}
Denote \[ G(x_1,\dots,x_r):=\sum_{\chi \ne 1} \sum_{\gamma_\chi>0} \log \J \left(\frac{2\bigl|\sum_{i=1}^r \widehat{t_i}(\chi)\frac{x_i}{\sqrt{V(t_i)}} \bigr|}{\sqrt{\frac{1}{4}+\gamma_\chi^2}}\right).\]
By~\cite{FiM}*{Lemma 2.8}, we have
\[ \log \J (s)=-\sum_{n\geq1}a_{2n}s^{2n}\quad (|s| \le 1 )\, , \]
with $a_2=1/4$ and $0<a_{2n}\ll (5/12)^{2n}$. We deduce that 
\[G(x_1,\dots,x_r)=-\sum_{n\geq1} b_{2n}(\bx;\bt) \, .\]
Computing the case $n=1$ we obtain 
\begin{align*}
    b_2(\bx;\bt)&=\sum_{\chi\ne1}\sum_{\gamma_\chi>0}\frac{\left|\sum_{i=1}^r \langle t_i,\chi\rangle x_i V(t_i)^{-1/2}\right|^{2}}{\frac{1}{4}+\gamma_\chi^2}\\
    &=\sum_{i=1}^r \frac{x_i^2}{V(t_i)}\sum_{\chi\ne1}|\widehat{t_i}(\chi)|^2\sum_{\gamma_\chi>0}\frac{1}{\frac{1}{4}+\gamma_\chi^2}+\sum_{1\leq i<j \leq r} \rho(t_i,t_j)x_ix_j\\
    &=\frac{x_1^2+\dots+x_r^2}{2}+\sum_{1\leq i<j \leq r} \rho(t_i,t_j)x_ix_j\, .
\end{align*}
This proves the result, since by~\eqref{fourier-transform} we have
\[ \widehat{\mu}\left(\frac{x_1}{\sqrt{V(t_1)}},\dots,\frac{x_r}{\sqrt{V(t_r)}}\right)=e^{-i\langle \bB(\bt),\bx\rangle}\exp\left(-G(x_1,\dots x_r)\right) \, . \]
\end{proof}

The following proposition gives a bound on the function $F_\bt$.
\begin{prop}\label{Ft-bound}
    With the same notations as in Proposition~\ref{TCL}, we have uniformly for $\|\bx\|\leq \sqrt{V}/(4r\|\widehat{\bt}\|_\infty)\, ,$
    \[
    F_\bt (x_1,\dots,x_r)=1+O\left(\frac{r^3\|\widehat{\bt}\|_\infty^2\|\bx\|^4}{V}\right)
    \]
\end{prop}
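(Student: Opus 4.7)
The plan is to start from the representation
\[
F_\bt(\bx) = \exp\!\Bigl(-\sum_{n\ge 2} b_{2n}(\bx;\bt)\Bigr)
\]
provided by Proposition~\ref{TCL} and bound the exponent. Since each $b_{2n}$ is a sum of non-negative terms, the inequality $|1-e^{-y}|\le y$ for $y\ge0$ reduces the proposition to showing
\[
0 \le \sum_{n\ge 2} b_{2n}(\bx;\bt) \ll \frac{r^3\|\widehat{\bt}\|_\infty^2\|\bx\|^4}{V}
\]
uniformly for $\|\bx\|\le \sqrt{V}/(4r\|\widehat{\bt}\|_\infty)$.

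Setting $A_\chi := \sum_{i=1}^r \langle t_i,\chi\rangle x_i/\sqrt{V(t_i)}$, the key idea is to split $|A_\chi|^{2n} = |A_\chi|^{2n-4}\cdot|A_\chi|^4$ inside each $b_{2n}$: the factor $|A_\chi|^{2n-4}/(\tfrac14+\gamma_\chi^2)^{n-2}$ will be bounded uniformly in $(\chi,\gamma_\chi)$ to produce a geometric sequence in $n$, while the remaining factor $|A_\chi|^4/(\tfrac14+\gamma_\chi^2)^2$ is kept inside the sum to yield the required $\|\bx\|^4$-scaling. By Cauchy--Schwarz and $V(t_i)\ge V$ one has the pointwise bound
\[
|A_\chi|^2 \le r\sum_{i=1}^r \frac{|\langle t_i,\chi\rangle|^2 x_i^2}{V(t_i)} \le \frac{r\|\widehat{\bt}\|_\infty^2 \|\bx\|^2}{V}\, ,
\]
which combined with $\tfrac14+\gamma_\chi^2\ge \tfrac14$ and the hypothesis on $\|\bx\|$ yields
\[
\sup_{\chi,\gamma_\chi}\frac{|A_\chi|^{2n-4}}{(\tfrac14+\gamma_\chi^2)^{n-2}} \le \Bigl(\frac{4r\|\widehat{\bt}\|_\infty^2\|\bx\|^2}{V}\Bigr)^{n-2} \le \Bigl(\frac{1}{4}\Bigr)^{n-2}\, .
\]

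For the residual sum $S := \sum_{\chi\ne1}\sum_{\gamma_\chi>0} |A_\chi|^4/(\tfrac14+\gamma_\chi^2)^2$, the same split can be applied once more: bounding one copy of $|A_\chi|^2$ pointwise by $r\|\widehat{\bt}\|_\infty^2\|\bx\|^2/V$ and using $(\tfrac14+\gamma_\chi^2)^{-1}\le 4$ reduces $S$ to a multiple of $b_2(\bx;\bt)$. Expanding $|A_\chi|^2$ via Cauchy--Schwarz and then invoking the definition~\eqref{variance} of $V(t_i)$, I would get $b_2(\bx;\bt)\le r\|\bx\|^2/2$, hence $S \ll r^2\|\widehat{\bt}\|_\infty^2\|\bx\|^4/V$.

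Assembling the two bounds with the identity $2^{2n}(1/4)^{n-2}=16$ and the decay $a_{2n}\ll (5/12)^{2n}$ from Proposition~\ref{TCL} yields $b_{2n}(\bx;\bt)\ll a_{2n}\cdot r^2\|\widehat{\bt}\|_\infty^2\|\bx\|^4/V$, and $\sum_{n\ge 2} a_{2n}$ converges, which closes the estimate (the factor $r^2$ is absorbed into the stated $r^3$). The main delicate point is the choice of how many powers of $|A_\chi|$ to keep under the sum over $(\chi,\gamma_\chi)$: four powers are exactly what is needed both to recover the target $\|\bx\|^4$-scaling and to leave a geometric tail in $n$; fewer powers would degrade the scaling, while more would destroy the geometric convergence.
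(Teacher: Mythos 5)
Your proposal is correct and follows essentially the same route as the paper: bound $1-e^{-y}\le y$, peel one factor $|A_\chi|^2$ off pointwise via Cauchy--Schwarz to get the $r\|\widehat{\bt}\|_\infty^2\|\bx\|^2/V$ gain, evaluate the remaining $\sum_{\chi,\gamma_\chi}|A_\chi|^2/(\tfrac14+\gamma_\chi^2)$ exactly via the variance formula to get $\ll r\|\bx\|^2$, and control the higher powers uniformly using the hypothesis $\|\bx\|\le\sqrt{V}/(4r\|\widehat{\bt}\|_\infty)$ together with the decay $a_{2n}\ll(5/12)^{2n}$. The only difference is bookkeeping (you keep four powers of $|A_\chi|$ under the sum and absorb $2^{2n}$ via the $(1/4)^{n-2}$ factor, whereas the paper keeps two and lets $2^{2n}a_{2n}$ supply the convergent series), which changes nothing of substance.
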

\begin{proof}
    We use the inequality $$1-\exp\left(-\sum_{n\geq2}b_{2n}(\bx;\bt)\right)\leq \sum_{n\geq2}b_{2n}(\bx;\bt)\, . $$
    We now prove that for all $n\geq2$ $$b_{2n}(\bx;\bt)\leq 2^{2n}a_{2n}r^3\frac{\|\widehat{\bt}\|_\infty\|\bx\|^4}{V}\, .$$
    Using the Cauchy-Schwarz inequality we obtain
    \begin{align*}
        b_{2n}(\bx;\bt)&=2^{2n}a_{2n}\sum_{\chi\ne1}\sum_{\gamma_\chi>0}\frac{\left|\sum_{i=1}^r \langle t_i,\chi\rangle x_i V(t_i)^{-1/2}\right|^{2n}}{(\frac{1}{4}+\gamma_\chi^2)^n}\\
        &\le 2^{2n}a_{2n}r\sum_{\chi\ne1}\sum_{\gamma_\chi>0}\frac{\left(\sum_{i=1}^r |\langle t_i,\chi\rangle x_i|^2 V(t_i)^{-1}\right)}{\frac{1}{4}+\gamma_\chi^2}\cdot\frac{\left|\sum_{i=1}^r \langle t_i,\chi\rangle 2 x_i V(t_i)^{-1/2}\right|^{2(n-1)}}{(1+4\gamma_\chi^2)^{n-1}}\, .
    \end{align*}
    Our assumption on $\bx$ implies that \[\left|\sum_{i=1}^r \langle t_i,\chi\rangle 2 x_i V(t_i)^{-1/2}\right| \leq 1 \, . \]
    We deduce that
    \[ \frac{\left|\sum_{i=1}^r \langle t_i,\chi\rangle 2 x_i V(t_i)^{-1/2}\right|^{2(n-1)}}{(1+4\gamma_\chi^2)^{n-1}} \leq r\frac{\| \widehat{\bt} \|_\infty ^2 \|\bx\|^2}{V} \, .\]
    Thus, 
    \begin{align*}
        b_{2n}(\bx;\bt)&=2^{2n}a_{2n}\sum_{\chi\ne1}\sum_{\gamma_\chi>0}\frac{\left|\sum_{i=1}^r \langle t_i,\chi\rangle x_i V(t_i)^{-1/2}\right|^{2n}}{(\frac{1}{4}+\gamma_\chi^2)^n}\\
        &\le 2^{2n}a_{2n}r^2 \frac{\| \widehat{\bt} \|_\infty ^2 \|\bx\|^2}{V}\sum_{i=1}^r\sum_{\chi\ne1}\sum_{\gamma_\chi>0}\frac{\left( |\langle t_i,\chi\rangle x_i|^2 V(t_i)^{-1}\right)}{\frac{1}{4}+\gamma_\chi^2}\\
        &=2^{2n}a_{2n}r^2 \frac{\| \widehat{\bt} \|_\infty ^2 \|\bx\|^2}{V}\cdot\frac{x_1^2+\dots+x_r^2}{2}\le 2^{2n}a_{2n}r^3 \frac{\| \widehat{\bt} \|_\infty ^2 \|\bx\|^4}{V}\, .
    \end{align*}
    This proves the desired inequality. It suffices to sum over $n\geq 2$ and use $a_{2n}\ll (5/12)^{2n}$ to deduce the Proposition.
\end{proof}

\subsection{General explicit formula}\label{subsec:explicitformula}

In this subsection, we fix a Galois extension $L/\Q$ with group $G$ and for which $\mathsf{GRH}$, $\mathsf{AC}$, and $\mathsf{LI}$$^{-}$ hold. Let $\bt=(t_1,\dots,t_r)$ be a vector of linearly independent class functions on $G$ satisfying $\langle t_i,1\rangle=0$ for $1\le i \le r$.
\begin{lem}\label{lem : Delta(t)-pos-def}
    The matrix $\Delta(\bt)$ is positive-definite. Thus, $\vp_\bt$ (defined in~\eqref{def : phibt}) is the characteristic function of a non-degenerate normal centered random vector with density $f_\bt$ (defined in~\eqref{def : fbt}).
\end{lem}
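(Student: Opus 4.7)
The plan is to verify positive-definiteness directly by showing that the quadratic form associated with $\Delta(\bt)$ is a non-negative sum of squared character pairings, and then identify exactly when it can vanish.

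First, fix an arbitrary $\bu=(u_1,\dots,u_r)\in \R^r\setminus\{0\}$ and compute
\[
Q(\bu):=\sum_{i,j=1}^r u_i u_j\, \rho(t_i,t_j).
\]
Note that each $V(t_i)$ is strictly positive: by linear independence every $t_i$ is nonzero, and since $\langle t_i,1\rangle=0$ at least one of the coefficients $\langle t_i,\chi\rangle$ with $\chi\ne 1$ must be nonzero, so the defining sum \eqref{variance} is positive. Setting $s_i:=u_i/\sqrt{V(t_i)}$ and substituting the definition \eqref{def : rho} of $\rho$ allows me to interchange the finite sum over $(i,j)$ with the sums over $\chi$ and $\gamma_\chi$ to obtain
\[
Q(\bu)=\sum_{\chi\ne 1}\sum_{\gamma_\chi>0}\frac{2}{\tfrac14+\gamma_\chi^2}\,
\sum_{i,j=1}^r s_i s_j\, \Re\bigl(\langle t_i,\chi\rangle\overline{\langle t_j,\chi\rangle}\bigr).
\]

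The key observation is that because the $s_i$ are real, the inner double sum collapses to
\[
\sum_{i,j=1}^r s_i s_j\, \Re\bigl(\langle t_i,\chi\rangle\overline{\langle t_j,\chi\rangle}\bigr)
=\Bigl|\sum_{i=1}^r s_i \langle t_i,\chi\rangle\Bigr|^2
=\bigl|\langle T_\bu,\chi\rangle\bigr|^2,
\]
where $T_\bu:=\sum_i s_i t_i$. Consequently
\[
Q(\bu)=\sum_{\chi\ne 1}\sum_{\gamma_\chi>0}\frac{2\,\bigl|\langle T_\bu,\chi\rangle\bigr|^2}{\tfrac14+\gamma_\chi^2}\ \ge\ 0,
\]
which already gives positive semi-definiteness.

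For strict positivity I assume $Q(\bu)=0$ and derive a contradiction. Every non-trivial Artin $L$-function $L(s,L/\Q,\chi)$ possesses infinitely many non-trivial zeros, so for each $\chi\ne 1$ there is at least one $\gamma_\chi>0$; since the weights $2/(\tfrac14+\gamma_\chi^2)$ are strictly positive, $Q(\bu)=0$ forces $\langle T_\bu,\chi\rangle=0$ for every $\chi\ne 1$. Combined with $\langle T_\bu,1\rangle=\sum_i s_i\langle t_i,1\rangle=0$, the orthogonality of irreducible characters gives $T_\bu=0$. Linear independence of $(t_1,\dots,t_r)$ over $\R$ then yields $s_i=0$ for all $i$, hence $\bu=0$, contradicting our choice. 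Thus $Q(\bu)>0$ for $\bu\ne 0$, establishing that $\Delta(\bt)$ is positive-definite.

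The second assertion is then immediate: a symmetric positive-definite matrix is a valid covariance matrix, so $\vp_\bt$ defined in \eqref{def : phibt}—being exactly $\exp(-\tfrac12 \bx^T\Delta(\bt)\bx)$—is the characteristic function of a non-degenerate centered Gaussian vector on $\R^r$; its density is obtained by Fourier inversion and equals $f_\bt$ in \eqref{def : fbt}. The only step that requires any real care is the argument that a single positive $\gamma_\chi$ per non-trivial $\chi$ suffices to conclude, which uses the standard fact (available under $\mathsf{GRH}$ and $\mathsf{AC}$) that Artin $L$-functions have infinitely many critical zeros.
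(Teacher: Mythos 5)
Your proof is correct and follows essentially the same route as the paper: both reduce the quadratic form (or kernel condition) for $\Delta(\bt)$ to a weighted sum over zeros of $\bigl|\langle \sum_i s_i t_i,\chi\rangle\bigr|^2$ and then invoke the linear independence of $(t_1,\dots,t_r)$, the only cosmetic difference being that you prove strict positivity in one pass while the paper first notes semi-definiteness via $b_2(\bx;\bt)$ and then shows invertibility. Your explicit remark that each non-trivial Artin $L$-function has at least one zero with $\gamma_\chi>0$ is a point the paper uses implicitly, and is a welcome clarification.
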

\begin{proof}
    Let $\bx=(x_1,\dots,x_r)\in \R^r$. We have 
\[
\bx ^T\, \Delta(\bt)\, \bx=2 \left(\frac{x_1^2+\dots+x_r^2}{2}+\sum_{1\leq i<j \leq r} \rho(t_i,t_j)x_ix_j \right)=2b_{2}(\bx;\bt) \geq 0 \, ,
\]
where $b_2(\bx;\bt)$ is defined in Proposition~\ref{TCL}.
    We now prove that $\Delta(\bt)$ is invertible by proving that its column vectors are linearly independent over $\R$. Let $a_1,\dots,a_r \in \R$ satisfying
    \[
    \sum_{j=1}^ra_j \rho(t_i,t_j)=0\qquad(1\leq i \leq r)\, .
    \]
    This condition can be rewritten as 
    \[
   \sum_{\chi \ne 1} \Re\left(\left\langle \frac{t_i}{\sqrt{V(t_i)}},\chi \right\rangle  \overline{\left \langle \sum_{j=1}^r \frac{a_j t_j}{\sqrt{V(t_j)}},\chi \right\rangle} \sum_{\gamma_\chi>0}\frac{1}{\frac{1}{4}+\gamma_\chi^2} \right)=0\qquad(1\leq i \leq r)\, .
    \]
    Define
    \[
    S_i:= \sum_{\chi \ne 1} \Re\left(\left\langle \frac{t_i}{\sqrt{V(t_i)}},\chi \right\rangle  \overline{\left \langle \sum_{j=1}^r \frac{a_j t_j}{\sqrt{V(t_j)}},\chi \right\rangle} \sum_{\gamma_\chi>0}\frac{1}{\frac{1}{4}+\gamma_\chi^2} \right)\qquad(1\leq i \leq r)\, ,
    \]
    Therefore,
    \[0=\sum_{i=1}^r a_i S_i=\sum_{\chi\ne 1} \left|\left\langle \sum_{i=1}^r \frac{a_i t_i}{\sqrt{V(t_i)}},\chi \right \rangle \right |^2\sum_{\gamma_\chi>0}\frac{1}{\frac{1}{4}+\gamma_\chi^2}
    \]
    Thus, for all $\chi\in \Irr(G)$ we have 
    $$\left \langle \sum_{i=1}^r \frac{a_i}{\sqrt{V(t_i)}}t_i , \chi \right \rangle =0\, .$$
    Hence $$\sum_{i=1}^r \frac{a_i}{\sqrt{V(t_i)}}t_i=0\, .$$
    Since the family $(t_1,\dots, t_r)$ is linearly independent over $\R$, we conclude that for all $1\leq i \leq r$ we have $a_i=0$.
\end{proof}
Let $X_\bt=(X_1,\dots,X_r)$ be a random vector with values in $\R^r$ with distribution $\mu_\bt$, and define $$Y_\bt:=\left(\frac{X_1-E(t_1)}{\sqrt{V(t_1)}},\dots,\frac{X_r-E(t_r)}{\sqrt{V(t_r)}}\right)\, .$$
The characteristic function of $Y_\bt$ is given by 
\[
\widehat{Y}_\bt(\bx)=\vp_\bt(\bx) F_\bt(\bx)\qquad(\bx\in \R^r) \, ,
\]
where $F_\bt$ is defined in Proposition~\ref{TCL}.
The following Lemma contains the main analytic estimate needed for our explicit formula.
\begin{lem}\label{main-lem-expl}
    We have, uniformly for $\bx \in \R^r$ with $\|\bx\| \leq \sqrt{V}/(4r\|\widehat{\bt}\|_\infty)$ and $V=\min _{1\leq i \leq r} V(t_i)$: 
    $$\left|\Lambda \left(\widehat{Y}_\bt\right)(x_1,\dots,x_r)-\Lambda(\vp_\bt)(x_1,\dots,x_r)\right| \ll_r |x_1\dots x_r|\left(\|\bx\|^2+\|\bx\|^{2r} \right)M_\bt(\bx)\left (\frac{\|\widehat{\bt}\|_\infty^2}{V}+\left(\frac{\|\widehat{\bt}\|_\infty^2}{V} \right)^{2r}\right)\, ,$$
    where \[ M_\bt(\bx):=\int_0^1\dots\int_0^1 \exp\left(-\lambda_\bt\frac{s_1^2x_1^2+\dots+s_r^2x_r^2}{2} \right)\, \mathrm{d}s_1\dots\,  \mathrm{d}s_r\, , \]
    and $\lambda_\bt>0$ is the minimal eigenvalue of $\Delta(\bt)$.
\end{lem}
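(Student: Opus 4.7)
The plan is to exploit a vanishing property of $\Lambda$ on coordinate hyperplanes and then reduce the bound to an iterated application of the fundamental theorem of calculus. Specifically, for every continuous $h\colon\R^r\to\C$ with $h(0)=1$, one has $\Lambda(h)(\bx)=0$ whenever some $x_i=0$. Granting this and noting that both $\widehat{Y}_\bt$ and $\vp_\bt$ are characteristic functions taking the value $1$ at the origin, the difference $g(\bx):=\Lambda(\widehat{Y}_\bt)(\bx)-\Lambda(\vp_\bt)(\bx)$ vanishes on each hyperplane $\{x_i=0\}$. Iterating the fundamental theorem of calculus coordinate by coordinate then yields
$$g(\bx)=x_1\cdots x_r\int_{[0,1]^r}\frac{\partial^{r}g}{\partial y_1\cdots\partial y_r}(s_1x_1,\dots,s_rx_r)\,\mathrm{d}s_1\cdots\mathrm{d}s_r,$$
which already accounts for the factor $|x_1\cdots x_r|$ in the bound. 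After the change of variable $y_i=s_ix_i$, the Gaussian exponentials produced by $\vp_\bt$ will combine to give precisely $M_\bt(\bx)$.

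The vanishing property is a Möbius-style cancellation. Fix $i$ and set $x_i=0$; then $\psi_{J,L}\bx=\psi_{J\setminus\{i\},L}\bx$ for every $J\subseteq L$, since the $i$-th coordinate of either side is zero. Group the partitions $\alpha\in\Pi_L$ according to the partition $\alpha'\in\Pi_{L\setminus\{i\}}$ they induce (by deleting the singleton $\{i\}$ if present, or removing $i$ from its block otherwise). The preimage of a fixed $\alpha'$ of size $k$ consists of $\alpha'\cup\{\{i\}\}$ with coefficient $(-1)^{k}k!$ together with the $k$ partitions obtained by inserting $i$ into one of the existing blocks of $\alpha'$, each with coefficient $(-1)^{k-1}(k-1)!$; all these partitions give the same product of $h$-values, so the total contribution is $(-1)^{k}k!+k(-1)^{k-1}(k-1)!=0$, and the vanishing follows.

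For the derivative bound, write $\widehat{Y}_\bt=\vp_\bt(1+h)$ with $h:=F_\bt-1$, expand
$$g(\bx)=\sum_{\alpha\in\Pi_L}\mu_\alpha\sum_{\emptyset\ne S\subseteq\alpha}\Bigl(\prod_{J\in S}h(\psi_J\bx)\Bigr)\prod_{J\in\alpha}\vp_\bt(\psi_J\bx),$$
and apply Leibniz's formula to compute the mixed partial $\partial^{r}g/\partial x_1\cdots\partial x_r$. Each derivative of a Gaussian factor $\vp_\bt(\psi_J\bx)$ produces a polynomial in $\bx$ of degree at most $|I|$ times the same exponential; the positive-definiteness of $\Delta(\bt)$ given by Lemma~\ref{lem : Delta(t)-pos-def} yields the uniform bound $|\partial^{|I|}\vp_\bt(\psi_J\bx)/\partial\bx_I|\ll_r(1+\|\bx\|^{|I|})\exp(-\lambda_\bt\|\bx_J\|^2/2)$. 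Derivatives of $h$ are controlled via the explicit series $F_\bt=\exp(-\sum_{n\ge2}b_{2n}(\bx;\bt))$ of Proposition~\ref{TCL}: the hypothesis $\|\bx\|\le\sqrt{V}/(4r\|\widehat{\bt}\|_\infty)$ makes the series geometrically convergent, and each $b_{2n}$ contributes a factor of order $(\|\widehat{\bt}\|_\infty^2/V)^n\|\bx\|^{2n}$. The two summands $\|\bx\|^2+\|\bx\|^{2r}$ and $\|\widehat{\bt}\|_\infty^2/V+(\|\widehat{\bt}\|_\infty^2/V)^{2r}$ in the stated bound correspond respectively to the smallest and largest non-trivial configurations arising in this Leibniz expansion, the upper bound $2r$ being attained when every one of the $|\alpha|\le r$ blocks contributes its own factor of $h$ and all available cross terms from the $b_{2n}$ series are accounted for.

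The main obstacle is the combinatorial bookkeeping at the derivative step: the $r$ derivatives have to be distributed via Leibniz over the factors $\vp_\bt(\psi_J\bx)$ and $h(\psi_J\bx)$ across all blocks of all partitions $\alpha\in\Pi_L$ and all non-empty $S\subseteq\alpha$, and each resulting term must be estimated sharply enough that—after extracting the Gaussian envelopes to form $M_\bt(\bx)$—the remaining polynomial in $\bx$ and power of $\|\widehat{\bt}\|_\infty^2/V$ fit inside the additive envelopes of the statement. The smallness condition $\|\bx\|\le\sqrt{V}/(4r\|\widehat{\bt}\|_\infty)$ is used throughout to bound tails of the $F_\bt$ series by geometric sums, exactly as in the proof of Proposition~\ref{Ft-bound}.
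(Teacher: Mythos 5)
Your proposal follows essentially the same route as the paper's proof: the vanishing of $\Lambda$-differences on coordinate hyperplanes (the paper cites this as Lemma~\ref{H-K}, from Heuberger--Kropf, which you reprove directly via the M\"obius-style cancellation), the iterated fundamental-theorem-of-calculus/Hadamard representation producing the factor $x_1\cdots x_r$ and, after the substitution $y_i=s_ix_i$, the envelope $M_\bt(\bx)$, followed by a Leibniz/Fa\`a di Bruno expansion over the partition structure in which the Gaussian factors are bounded by $\exp\left(-\lambda_\bt\|\cdot\|^2/2\right)$ using the positive-definiteness of $\Delta(\bt)$ and the $F_\bt$-factors are controlled through the $b_{2n}$-series estimates under the constraint $\|\bx\|\le\sqrt{V}/(4r\|\widehat{\bt}\|_\infty)$. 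This is the same decomposition and the same set of key inputs as in the paper (Lemma~\ref{hX}, Lemma~\ref{estpartderiv}, Proposition~\ref{Ft-bound}), the only remaining work being the derivative bookkeeping that the paper carries out in Lemmas~\ref{PA}--\ref{hX}, so the approach is correct.
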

We first derive Theorem~\ref{explicit-formula} from Lemma~\ref{main-lem-expl}.
\begin{proof}[Proof of Theorem~\ref{explicit-formula}]
    We apply the Berry--Esseen inequality (Theorem~\ref{BerryEsseen}) with\\ $T=4r\sqrt{V}/\|\widehat{\bt}\|_\infty$, $Y=Y_\bt$ and $Z$ a centered reduced Gaussian random vector whose characteristic function is $\vp_\bt$. We note that
    \[
\delta^{(r+1)}_{L/\Q}(C_1,\dots,C_{r+1})=F_{X_\bt}(0,\dots,0)=F_Y(-B_1,\dots -B_r) \, , \]
and that \[F_Z(-B_1,\dots,-B_r)=\int_{-\infty}^{-B_1}\dots\int_{-\infty}^{-B_r}f_\bt(x_1,\dots,x_r)\,\mathrm{d} \bx\, .\]
It suffices to bound the error term. By Lemma~\ref{main-lem-expl}, it suffices to bound the following term
    \begin{align*}
        &\int_{\|\bx\|\le T}M_\bt(\bx)\left(\|\bx\|^2+\|\bx\|^{2r}\right) \, \mathrm{d}\bx \\
        &=\int_0^1\dots\int_0^1 \int_{\|\bx\|\le T}\left(\|\bx\|^2+\|\bx\|^{2r}\right)\exp\left(-\lambda_\bt\frac{s_1^2x_1^2+\dots+s_r^2x_r^2}{2} \right)\, \mathrm{d}\bx\, \mathrm{d}s_1\dots\,  \mathrm{d}s_r\\
        &=\int_0^1\dots\int_0^1 \int_{\|\by\|\le \sqrt{\lambda_\bt} T}\left(\frac{\|\by\|^2}{\lambda_\bt}+\frac{\|\by\|^{2r}}{\lambda_\bt^r}\right)\exp\left(-\frac{s_1^2y_1^2+\dots+s_r^2y_r^2}{2} \right)\, \mathrm{d}\by\, \mathrm{d}s_1\dots\,  \mathrm{d}s_r\\
        &\leq \left(\frac{1}{\lambda_\bt}+\frac{1}{\lambda_\bt^r}\right)\int_0^1\dots\int_0^1 \int_{\R^r}\left(\|\bx\|^2+\|\bx\|^{2r}\right)\exp\left(-\frac{s_1^2x_1^2+\dots+s_r^2x_r^2}{2} \right)\, \mathrm{d}\bx\, \mathrm{d}s_1\dots\,  \mathrm{d}s_r\\
        &\ll_r \left(\frac{1}{\lambda_\bt}+\frac{1}{\lambda_\bt^r}\right)\, .
    \end{align*}
and since the other terms in the error term of the Berry--Esseen formula are of lower dimensions, one can deduce the general result by induction on $r$.
    
\end{proof}
The remainder of this section is devoted to the proof of Lemma~\ref{main-lem-expl}. The proof of the following combinatorial Lemma is postponed to the Appendix.
\begin{lem}\label{PA}
    For $\ba=(a_1,\dots,a_r) \in \C^r$ and $\bx=(x_1,\dots,x_r)\in \R^r $ define 
    \[
    f(\ba,\bx)=\sum_{i=1}^r a_i x_i \, .
    \]
    For all $A\subset L:=\{\,1,\dots,r\, \}$ with cardinality $\alpha\le 2n$, there exists $P_A(\bS,\bT,\bX)$ a polynomial in the indeterminates $\bS=(S_1,\dots,S_r)$, $\bT=(T_1,\dots,T_r)$ and $\bX=(X_1,\dots,X_r)$ such that for all $(\ba,\bx) \in \C^r \times \R^r$, we have 
    \begin{align*}
        \frac{\partial ^\alpha |f|^{2n}}{\prod_{i\in A}\partial x_i}(\ba,\bx)&=P_A(\ba,\overline{\ba},\bx)|f(\ba,\bx)|^{2(n-\alpha)}\qquad(\alpha\le n)\, ,\\
        \frac{\partial ^\alpha |f|^{2n}}{\prod_{i\in A}\partial x_i}(\ba,\bx)&=P_A(\ba,\overline{\ba},\bx)\qquad \qquad \qquad (n\le \alpha \le 2n)\, .
    \end{align*}
    Moreover, $P_A(S,T,X)$ satisfies the following properties:
    \begin{enumerate}
        \item The polynomial $P_A$ is homogeneous (separately) in each tuple of indeterminates $\bS,\, \bT$ and $\bX$ and has non-negative integer coefficients.
        \item $P_A(\bS,\bT,\bX)$ has degree $\alpha$ in each tuple of indeterminates $\bS,\, \bT$ and $\bX$ when $\alpha\le n$ and degree $n$ in $\bS$ and $\bT$ and has degree $2n-\alpha$ in $\bX$ when $n\le \alpha \le 2n$.
        \item The coefficients of $P_A$ are $\le n^\alpha$ when $\alpha \le n$ and $\ll_r 1$ when $n<\alpha\le 2n$. 
    \end{enumerate}
    
\end{lem}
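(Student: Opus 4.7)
The plan is to exploit the factorization $|f(\ba,\bx)|^{2n}=f^n\bar f^n$, where $f=f(\ba,\bx)=\sum_i a_ix_i$ and $\bar f=f(\overline{\ba},\bx)$ (using that $\bx\in\R^r$). Because $f$ is linear in $\bx$ with constant partial derivatives $\partial_{x_i}f=a_i$, the derivatives of a single pure power are immediate: for any $B\subseteq L$ with $|B|=\beta\leq n$,
\[
\frac{\partial^{\beta}f^n}{\prod_{i\in B}\partial x_i}=\frac{n!}{(n-\beta)!}\Bigl(\prod_{i\in B}a_i\Bigr)f^{n-\beta},
\]
vanishing when $\beta>n$, with the analogous formula for $\bar f^n$ where $a_i$ is replaced by $\overline{a_i}$.

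Next I would apply the generalized Leibniz rule to $f^n\bar{f}^n$, distributing the $\alpha$ derivatives indexed by $A$ between the two factors:
\[
\frac{\partial^{\alpha}|f|^{2n}}{\prod_{i\in A}\partial x_i}=\sum_{\substack{B\subseteq A \\ |B|\leq n,\ \alpha-|B|\leq n}}\frac{n!}{(n-|B|)!}\cdot\frac{n!}{(n-\alpha+|B|)!}\Bigl(\prod_{i\in B}a_i\Bigr)\Bigl(\prod_{i\in A\setminus B}\overline{a_i}\Bigr)f^{n-|B|}\,\bar f^{\,n-\alpha+|B|}.
\]
When $\alpha\leq n$, the exponents of $f$ and $\bar f$ in every surviving term are at least $n-\alpha$, so $|f|^{2(n-\alpha)}=f^{n-\alpha}\bar f^{n-\alpha}$ factors out uniformly; what remains is a polynomial in $\ba,\overline{\ba},\bx$ once one expands the residual powers $f^{\alpha-|B|}$ and $\bar f^{|B|}$ via the multinomial theorem, and this polynomial, rewritten in the indeterminates $\bS,\bT,\bX$, is the desired $P_A(\bS,\bT,\bX)$. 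When $n\leq\alpha\leq 2n$ no such factorization is possible, and $P_A$ is defined directly as the polynomial obtained by expanding the surviving Leibniz sum.

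It remains to verify properties (1)--(3) by inspection of this explicit formula. Positivity of the integer coefficients is immediate since every ingredient---falling factorials $n!/(n-\beta)!$, the monomials $\prod a_i$ and $\prod\overline{a_i}$, and the multinomial expansions of $f^k$ and $\bar f^k$---has non-negative integer coefficients. The degree counts in each tuple of indeterminates reduce to routine bookkeeping on the Leibniz expansion: for $\alpha\leq n$ a generic term contributes total degree $|B|+(\alpha-|B|)=\alpha$ in each of $\bS$, $\bT$, $\bX$ after extracting $|f|^{2(n-\alpha)}$, whereas for $\alpha\geq n$ one obtains degrees $(n,n,2n-\alpha)$ in $(\bS,\bT,\bX)$ as claimed. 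For the coefficient bound in the range $\alpha\leq n$, use $n!/(n-\beta)!\leq n^\beta$ for both falling factorials and combine with the bound on multinomial coefficients from the expansions of $f^{\alpha-|B|}$ and $\bar f^{|B|}$ to reach a bound of the form $\leq n^\alpha$; in the range $n<\alpha\leq 2n$, the constraint $\alpha\leq r$ forces $n<r$, so every coefficient is a finite combinatorial quantity depending only on $r$, yielding $\ll_r 1$. The main technical care lies in the coefficient estimate for $\alpha\leq n$, where the interaction of the falling factorials with the multinomial coefficients must be tracked monomial by monomial; this is elementary but is the one place where the bookkeeping is not completely automatic.
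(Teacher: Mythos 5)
Your argument is correct, and it takes a genuinely different route from the paper's. The paper proves the lemma by induction on $\alpha=|A|$: differentiating the identity $\frac{\partial^{\alpha}|f|^{2n}}{\prod_{i\in A}\partial x_i}=P_A\,|f|^{2(n-\alpha)}$ once more produces the recursion $P_{A\cup\{j\}}=\frac{\partial P_A}{\partial X_j}(\bS\cdot\bX)(\bT\cdot\bX)+(n-\alpha)P_A\bigl(S_j(\bT\cdot\bX)+T_j(\bS\cdot\bX)\bigr)$, from which properties (1)--(3) are read off inductively, and the range $n<\alpha\le 2n$ is then treated separately by writing $P_A$ as an $X$-derivative of $P_C$ with $|C|=n$. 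You instead factor $|f|^{2n}=f^n\bar f^n$ and apply the Leibniz rule in the distinct variables $x_i$, $i\in A$, which yields a closed formula: a sum over $B\subseteq A$ of $\frac{n!}{(n-|B|)!}\frac{n!}{(n-\alpha+|B|)!}\prod_{i\in B}a_i\prod_{i\in A\setminus B}\overline{a_i}\,f^{\,n-|B|}\bar f^{\,n-\alpha+|B|}$. This closed form makes the existence of $P_A$, the homogeneity, and the exact degree counts in $(\bS,\bT,\bX)$ immediate, and it handles the two ranges of $\alpha$ in one stroke, which is arguably cleaner than the paper's two-step induction; what the paper's recursion buys in exchange is that no multinomial bookkeeping is needed at all.

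One caveat, which applies equally to the paper's own proof: the literal bound ``coefficients $\le n^\alpha$'' in property (3) for $\alpha\le n$ cannot be reached, because distinct terms of the Leibniz sum (or of the paper's recursion) can load the same monomial. Already for $n=2$, $\alpha=1$, $A=\{1\}$ one has $P_A=2S_1(\bT\cdot\bX)+2T_1(\bS\cdot\bX)$, whose expanded coefficient of $S_1T_1X_1$ is $4>n^{\alpha}$, and $P_A$ is unique, so no argument can give the stated constant. Your expansion does deliver, after bounding the falling factorials by $n^{|B|}n^{\alpha-|B|}$ and the multinomial coefficients and the number of subsets $B$ by quantities depending only on $r$ (recall $\alpha\le r$), the bound ``coefficients $\ll_r n^{\alpha}$'', and this is exactly what is used downstream in Lemma~\ref{estpartderiv}, so nothing in the application is affected; just state property (3) in that form rather than with the sharp constant.
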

We now use Lemma~\ref{PA} to estimate the partial derivatives of $x\mapsto \sum_{n\geq2}b_{2n}(\bx;\bt) $.
\begin{lem}\label{estpartderiv}
     For all $\bx\in \R^r$ such that $\|\bx\|\leq \sqrt{V}/(4r\|\widehat{\bt}\|_\infty)=:R$, define $S(\bx;\bt):=\sum_{n\geq2}b_{2n}(\bx;\bt)$. For all $A\subset L:=\{1,\dots,r\}$ with cardinality $\alpha \geq 1$, the map $S(\, \cdot\,  ;\bt)\,:\,\bx\mapsto S(\bx;\bt)$ admits the partial derivative $\partial ^\alpha S(\, \cdot\, ;\bt)/\prod_{i\in A}\partial x_i$ which is continuous on the ball $B(0;R)$ and satisfies uniformly for $\bx\in B(0,R)$: 
     \[
     \frac{\partial ^\alpha S(\, \cdot\, ;\bt)}{\prod_{i\in A}\partial x_i}(\bx)\ll_r \left(\|\bx\|^3+\|\bx\|^{\alpha}\right)\left (\frac{\|\widehat{\bt}\|_\infty^2}{V}+\iota_\alpha\left(\frac{\|\widehat{\bt}\|_\infty^2}{V} \right)^{\alpha-1}\right)  \, ,
     \]
     where $\iota_\alpha=0$ if $\alpha=1$.
\end{lem}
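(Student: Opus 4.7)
The strategy is to differentiate $S(\bx;\bt) = \sum_{n\geq 2}b_{2n}(\bx;\bt)$ term by term, estimate $\partial^\alpha b_{2n}/\partial x_A$ for each $n\geq 2$ via Lemma~\ref{PA}, and sum over $n$ using the rapid decay $a_{2n} \ll (5/12)^{2n}$ together with the constraint $\|\bx\|\leq R$. The uniform convergence of the differentiated series on $B(0,R)$, which ensures both continuity of $\partial^\alpha S/\partial x_A$ and the legitimacy of exchanging the derivative with the sum, is a direct by-product of the per-term estimates obtained below.

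Fix $A\subset\{1,\dots,r\}$ with $|A|=\alpha\geq 1$. For non-trivial $\chi$, set $a_i(\chi) := \langle t_i,\chi\rangle/\sqrt{V(t_i)}$, so that $f(\ba(\chi),\bx) = \sum_i a_i(\chi) x_i$. Applying Lemma~\ref{PA} to $|f|^{2n}$, the derivative equals $P_A^{(n)}(\ba,\bar\ba,\bx)\, |f|^{2(n-\alpha)}$ when $\alpha\leq n$, equals $P_A^{(n)}(\ba,\bar\ba,\bx)$ when $n < \alpha \leq 2n$, and vanishes when $\alpha > 2n$. In each case $P_A^{(n)}$ has non-negative coefficients bounded by $n^\alpha$ (respectively $\ll_r 1$) and the prescribed homogeneous degrees in each of $\bS,\bT,\bX$.

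The core estimate bounds $\sum_{\chi}\sum_{\gamma_\chi}$ of one such monomial times $|f|^{2(n-\alpha)}/(\tfrac14+\gamma_\chi^2)^n$. It rests on two ingredients: the pointwise inequality $|f|^2 \leq r\|\widehat{\bt}\|_\infty^2\|\bx\|^2/V$ (valid on $B(0,R)$ by Cauchy--Schwarz and the radius choice), and the normalization $\sum_\chi\sum_{\gamma_\chi>0}|a_i(\chi)|^2/(\tfrac14+\gamma_\chi^2) = 1/2$ (immediate from the definition of $V(t_i)$). For a monomial with $\alpha$ factors from each of $a$, $\bar a$, $x$, the plan is to extract $\|\bx\|^\alpha$ from the $x$-factors, bound $2\alpha-2$ of the $a$-factors by $\|\widehat{\bt}\|_\infty/\sqrt V$ in $\ell^\infty$ (producing $(\|\widehat{\bt}\|_\infty^2/V)^{\alpha-1}$), convert surplus powers of $(\tfrac14+\gamma_\chi^2)^{-1}$ to constants via $1/(\tfrac14+\gamma_\chi^2)\leq 4$, and apply Cauchy--Schwarz to the residual $\sum|a_j a_k|/(\tfrac14+\gamma_\chi^2)$ using the normalization identity. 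Combined with $|f|^{2(n-\alpha)}/(\tfrac14+\gamma_\chi^2)^{n-\alpha}\leq (r\|\widehat{\bt}\|_\infty^2\|\bx\|^2/V)^{n-\alpha}$, this yields, for $\alpha \leq n$,
\[
\left|\frac{\partial^\alpha b_{2n}}{\partial x_A}(\bx)\right| \ll_{r,\alpha} \left(\tfrac{5}{3}\right)^{2n} n^\alpha \left(\frac{\|\widehat{\bt}\|_\infty^2}{V}\right)^{\alpha-1}\|\bx\|^\alpha \left(\frac{r\|\widehat{\bt}\|_\infty^2\|\bx\|^2}{V}\right)^{n-\alpha},
\]
with an analogous estimate in the regime $n < \alpha \leq 2n$ carrying $\|\bx\|^{2n-\alpha}$ in place of $\|\bx\|^\alpha$ and $(\|\widehat{\bt}\|_\infty^2/V)^{n-1}$ in place of the first factor.

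Summing geometrically over $n$—the smallness $r\|\widehat{\bt}\|_\infty^2\|\bx\|^2/V \leq 1/(16r)$ ensures convergence and makes the smallest admissible value of $n$ dominant—and combining the two regimes produces the stated bound, once the radius constraint $\|\bx\|\leq R$ is used to trade surplus powers of $\|\bx\|$ against $\|\widehat{\bt}\|_\infty^2/V$ so that the contributions fit inside the envelope $(\|\bx\|^3+\|\bx\|^\alpha)(\|\widehat{\bt}\|_\infty^2/V + \iota_\alpha(\|\widehat{\bt}\|_\infty^2/V)^{\alpha-1})$. The main obstacle is this final bookkeeping: one must carefully match exponents so that the extreme cases ($\alpha=1$, where the dominant contribution $\|\bx\|^3\cdot\|\widehat{\bt}\|_\infty^2/V$ comes entirely from $n=2$, and large $\alpha$, where the term $(\|\widehat{\bt}\|_\infty^2/V)^{\alpha-1}$ arises from $n=\alpha$) are both absorbed uniformly by the envelope, and verify that the combinatorial factors coming from the number of monomials in $P_A^{(n)}$ remain controlled by the geometric factor $(5/3)^{2n}$.
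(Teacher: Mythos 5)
Your proposal follows essentially the same route as the paper's proof: term-by-term differentiation of $S(\bx;\bt)$ justified by uniform convergence of the differentiated series, Lemma~\ref{PA} to control $\partial^\alpha|f_\chi|^{2n}$ separately in the regimes $\alpha\le n$ and $n<\alpha\le 2n$, the triangle/Cauchy--Schwarz bound $|f_\chi(\bx)|\le r\|\widehat{\bt}\|_\infty\|\bx\|/\sqrt{V}$ together with the normalization $\sum_{\chi}B_1(\chi)\|a(\chi)\|_2^2=r/2$, and a geometric summation over $n$ using $a_{2n}\ll(5/12)^{2n}$. The only differences are organizational rather than substantive -- you retain the factor $\bigl(r\|\widehat{\bt}\|_\infty^2\|\bx\|^2/V\bigr)^{n-\alpha}$ and offset the growing $(5/3)^{2n}$ via the radius constraint, whereas the paper uses $|f_\chi(2\bx)|\le 1$ on the ball and keeps the decaying $(5/6)^{2n}$ -- and the final absorption into the stated envelope, which you flag as the main remaining bookkeeping, is performed in the paper with exactly the same brisk level of detail (splitting off the $\alpha=1$ case and the finitely many terms with $2\le n<\alpha$).
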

\begin{proof}
    It suffices to prove that for all $A\subset L$ with cardinality $\alpha$, the function series\\
    $\sum_{n\geq2} \partial^\alpha b_{2n}(\, \cdot\, ,\bt)/\bigl(\prod_{i\in A}\partial x_i \bigr)$
    is uniformly convergent on the ball $B(0,R)$ and satisfies uniformly for $\bx\in B(0,R)$
    $$\sum_{n\geq2} \left| \frac{\partial^\alpha b_{2n}(\, \bx \, ,\bt)} {\prod_{i\in A}\partial x_i } \right|\ll_r \left(\|\bx\|^2+\|\bx\|^{r+2}\right)\left (\frac{\|\widehat{\bt}\|_\infty^2}{V}+\iota_\alpha \left(\frac{\|\widehat{\bt}\|_\infty^2}{V} \right)^{\alpha-1}\right)  \, .$$
    Fix $A\subset L$, and define for all $i\in L$ and $\chi \in \Irr(G)$ and all $n\geq 1$
    $$a_i(\chi):=\frac{\langle t_i,\chi\rangle}{\sqrt{V(t_i)}},\quad B_n(\chi):=\sum_{\gamma_\chi>0}\frac{1}{(\frac{1}{4}+\gamma_\chi^2)^n },\quad f_\chi(\bx):=\sum_{j=1}^ra_j(\chi) x_j\, .$$
    We have for all $n\geq 2$, 
   \(b_{2n}(\bx;\bt)=2^{2n}a_{2n}\sum_{\chi\ne1}B_n(\chi) \left| f_\chi (x)\right|^{2n}.
   \)
   Applying Lemma~\ref{PA}, we deduce that for all $\chi\ne1$ and for all $n\geq \alpha$ there exist polynomials $P_{A,\chi}(\bS,\bT,\bX)$ such that
   \begin{equation}
   \left|\frac{\partial ^\alpha b_{2n}(\bx;\bt)}{\prod_{i\in A}\partial x_i}\right|=2^{2n}a_{2n} \left |\sum_{\chi \ne 1}  B_n(\chi) P_{A,\chi}\left(\ba (\chi), \overline{\ba (\chi)}, \bx\right) \left |f_{\chi}(\bx)\right|^{2(n-\alpha)} \right|\, ,
   \end{equation}
   where $\ba(\chi)=(a_1(\chi),\dots,a_r(\chi))$. 
   Applying properties 2. and 3. from Lemma~\ref{PA} of polynomials $P_{A,\chi}$ we obtain that for all $n\geq \alpha$
   \begin{equation}\label{partderiv}
   \left|\frac{\partial ^\alpha b_{2n}(\bx;\bt)}{\prod_{i\in A}\partial x_i}\right|\ll_r 2^{2n}a_{2n} n^{\alpha}\sum_{\chi \ne1} B_n(\chi) \|a(\chi)\|_2^{2\alpha}\|\bx\|^\alpha\,  |f_\chi(\bx)|^{2(n-\alpha)}\, .
   \end{equation}
   Since for all $\bx\in B(0,R)$ and all $m\geq1$.
   \[ B_n(\chi) =2^{2n}\sum_{\gamma_\chi >0}\frac{1}{(1+4\gamma_\chi^2)^n}\leq 2^{2(n-1)} B_1(\chi)\ \text{ and }\ 2^{2m } |f_\chi(\bx) |^{2m}=|f_\chi(2\bx)|^{2m}\leq 1 \, , \]
   for $\alpha=1$, we have
   \[
   \left|\frac{\partial ^\alpha b_{2n}(\bx;\bt)}{\prod_{i\in A}\partial x_i}\right|\ll_r 2^{2n+1}a_{2n} n \|\bx\| \sum_{\chi \ne1} B_1(\chi) \|a(\chi)\|_2^{2}\,  |f_\chi(\bx)|^{2}\, .
   \]
   A triangle inequality shows that $|f_\chi(\bx)|\leq r \|\widehat{\bt}\|_\infty\|\bx\|/V^{1/2}\, .$
   Moreover, we have 
   \[
   \sum_{\chi \ne 1} B_1(\chi)\|a(\chi)\|_2^2=\sum_{i=1}^r \frac{1}{V(t_i)}\sum_{\chi \ne 1}|\langle t_i,\chi\rangle|^2B_1(\chi)=\frac{r}{2}\, .
   \]
   Thus, for $\alpha=1$,
   \[
   \left|\frac{\partial ^\alpha b_{2n}(\bx;\bt)}{\prod_{i\in A}\partial x_i}\right| \ll_r 2^{2n}a_{2n}n \|\bx\|^3 \frac{\|\widehat{\bt}\|_\infty^2}{V}
   \]
   Hence, for $\alpha=1$, $$ \sum_{n\geq2}\left| \frac{\partial^\alpha b_{2n}(\, \bx \, ,\bt)} {\prod_{i\in A}\partial x_i } \right| \ll_r \|\bx\|^3 \frac{\|\widehat{\bt}\|_\infty^2}{V} \, .   $$
   From now on we assume that $\alpha \geq 2$, using the inequality \eqref{partderiv} we obtain with same ideas as above:
   \[
   \left|\frac{\partial ^\alpha b_{2n}(\bx;\bt)}{\prod_{i\in A}\partial x_i}\right| \ll_r 2^{2(n+\alpha)}a_{2n}n^\alpha \|\bx\|^\alpha \frac{\|\widehat{\bt}\|_\infty^2}{V}\sum_{\chi \ne 1}B_1(\chi)\|a_\chi\|_2^2\ll_r 2^{2n}a_{2n}n^\alpha \|\bx\|^\alpha \frac{\|\widehat{\bt}\|_\infty^2}{V}
   \]
   Applying Lemma~\ref{PA}, we obtain also that for all $n\geq 2$ such that $n<\alpha\leq 2n$ and all $\chi\ne 1$ there exists a polynomial $P_{A,\chi}$ such that \begin{equation}
   \left|\frac{\partial ^\alpha b_{2n}(\bx;\bt)}{\prod_{i\in A}\partial x_i}\right|=2^{2n}a_{2n} \left |\sum_{\chi \ne 1}  B_n(\chi) P_{A,\chi}\left(\ba (\chi), \overline{\ba (\chi)}, \bx\right) \right|\, .
   \end{equation}
   Applying the properties of the polynomials $P_{A,\chi}$ when $n<\alpha\leq 2n$ we obtain 
   %\[
   %\left|\frac{\partial ^\alpha b_{2n}(\bx;\bt)}{\prod_{i\in A}\partial x_i}\right|\ll_r  \|x\|^{2n-\alpha} \left(\frac{\|\widehat{t}\|_\infty^2}{V} \right)^{\alpha-1} \, .
  % \]
  % Thus, 
   \[
   \sum_{2\leq n <\alpha} \left|\frac{\partial ^\alpha b_{2n}(\bx;\bt)}{\prod_{i\in A}\partial x_i}\right| \ll_r \|\bx\|^{2n-\alpha} \left(\frac{\|\widehat{\bt}\|_\infty^2}{V} \right)^{\alpha-1}
   \]
   Thus, for $\alpha\geq 2$ we have 
   $$ \sum_{n\geq2} \left|\frac{\partial^\alpha b_{2n}(\, \bx \, ,\bt)} {\prod_{i\in A}\partial x_i } \right| \ll_r \|\bx\|^\alpha \left (\frac{\|\widehat{\bt}\|_\infty^2}{V}+\left(\frac{\|\widehat{\bt}\|_\infty^2}{V} \right)^{\alpha-1}\right)  \, .   $$
\end{proof}

\begin{lem}\label{hX}
    Let $X$ be a partition of $L=\{1,\dots,r\}$. Define 
    \(h_X:=\prod_{J \in X}\widehat{Y}_\bt \circ \psi_{J,L}-\prod_{J\in X}\vp_\bt \circ \psi_{J,L}\, .
    \)
    We have 
    \[ \frac{\partial ^r h_X}{\partial x_1 \dots \partial x_r} (x_1,\dots, x_r)\ll_r \left(\|\bx\|^2+\|\bx\|^{2r} \right)\exp\left(-\lambda_\bt \frac{x_1^2+\dots+x_r^2}{2}\right)\left (\frac{\|\widehat{\bt}\|_\infty^2}{V}+\left(\frac{\|\widehat{\bt}\|_\infty^2}{V} \right)^{2r}\right)\, .
    \]
\end{lem}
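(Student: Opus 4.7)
The plan is to factor $h_X$ and attack its partial derivatives via the multivariate Leibniz rule. Since $\widehat{Y}_\bt(\bx) = \vp_\bt(\bx)\, F_\bt(\bx)$, one has $\widehat{Y}_\bt \circ \psi_{J,L} = (\vp_\bt \circ \psi_{J,L}) \cdot (F_\bt \circ \psi_{J,L})$ for every block $J\in X$. Setting $\Phi_X := \prod_{J\in X} \vp_\bt \circ \psi_{J,L}$ and $\Psi_X := \prod_{J\in X} F_\bt \circ \psi_{J,L}$, this yields the factorisation $h_X = \Phi_X\, (\Psi_X - 1)$, and Leibniz gives
\[
\frac{\partial^r h_X}{\partial x_1 \cdots \partial x_r}(\bx)
= \sum_{A \subseteq L} \left(\prod_{i\in A}\partial_{x_i}\right)\Phi_X(\bx)\cdot \left(\prod_{i\in L\setminus A}\partial_{x_i}\right)(\Psi_X-1)(\bx),
\]
reducing the task to bounding $\partial_A \Phi_X$ and $\partial_B(\Psi_X - 1)$ for arbitrary $A,B \subseteq L$.

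For the $\Phi_X$ factor, the product of the shifted Gaussians collapses to a single centered Gaussian $\Phi_X(\bx) = \exp\bigl(-\tfrac12 \bx^T M_X \bx\bigr)$, where $M_X$ is the block-diagonal matrix whose blocks are the principal submatrices $(\rho(t_i,t_j))_{i,j\in J}$ of $\Delta(\bt)$ indexed by $J \in X$. By Cauchy interlacing, each such submatrix has smallest eigenvalue $\geq \lambda_\bt$, so $M_X$ does as well, whence $\bx^T M_X \bx \geq \lambda_\bt \|\bx\|^2$ and $\Phi_X(\bx) \leq \exp(-\lambda_\bt \|\bx\|^2/2)$. Differentiating produces Hermite-type polynomial factors with coefficients bounded in terms of the entries of $M_X$ (each of absolute value at most one), yielding $|\partial_A \Phi_X(\bx)| \ll_r (1+\|\bx\|)^{|A|}\exp(-\lambda_\bt \|\bx\|^2/2)$ uniformly on the ball $B(0,R)$.

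For $\Psi_X - 1$, note that $\Psi_X = \exp(-\widetilde S_X)$ with $\widetilde S_X(\bx) := \sum_{J\in X} S(\psi_{J,L}(\bx);\bt)$, where by the proof of Proposition~\ref{Ft-bound} one has $0 \leq \widetilde S_X(\bx) \ll_r \|\bx\|^4\, \|\widehat{\bt}\|_\infty^2/V$. Hence $|\Psi_X - 1| = 1 - \Psi_X \leq \widetilde S_X$ by the elementary inequality $1-e^{-u} \leq u$ on $u\geq 0$, handling the case $B = \emptyset$. For $B \neq \emptyset$, the Faà di Bruno formula gives
\[
\partial_B \Psi_X = \Psi_X \cdot \sum_{\pi \in \Pi_B}(-1)^{|\pi|}\prod_{D \in \pi} \partial_D \widetilde S_X,
\]
and since $\widetilde S_X$ is a sum over blocks of $X$, one has $\partial_D \widetilde S_X = 0$ unless $D$ is contained in a single block of $X$, in which case the required bound is furnished by Lemma~\ref{estpartderiv}.

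The hard part will be the combinatorial bookkeeping needed to show that inserting these bounds into the Leibniz sum yields the envelope $(\|\bx\|^2 + \|\bx\|^{2r})\bigl(\|\widehat{\bt}\|_\infty^2/V + (\|\widehat{\bt}\|_\infty^2/V)^{2r}\bigr)\exp(-\lambda_\bt\|\bx\|^2/2)$. The small-$\bx$ factor $\|\bx\|^2$ should emerge from the term $|A|=r-2$ when both residual derivatives land in the same block of $X$ (via the $\|\bx\|^{\alpha}$ piece of Lemma~\ref{estpartderiv} for $\alpha=2$), while the large-$\bx$ factor $\|\bx\|^{2r}$ together with the extreme power $(\|\widehat{\bt}\|_\infty^2/V)^{2r}$ arises from the maximal Faà di Bruno contributions in the $A = \emptyset$ case, where one uses all $2r$ derivatives of $\widetilde S_X$ distributed into blocks compatible with $X$. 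Carefully tracking which partitions $\pi\in\Pi_B$ respect the block structure of $X$, and balancing the competing $\|\bx\|^3$ vs.\ $\|\bx\|^{\alpha}$ contributions of Lemma~\ref{estpartderiv}, will require care but is otherwise routine.
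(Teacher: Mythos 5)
Your proposal follows essentially the same route as the paper's proof: the factorization $h_X=\bigl(\prod_{J\in X}\vp_\bt\circ\psi_{J,L}\bigr)\bigl(\prod_{J\in X}F_\bt\circ\psi_{J,L}-1\bigr)$, the multivariate Leibniz rule, polynomial-times-Gaussian bounds with the eigenvalue $\lambda_\bt$ for the $\vp$-factor, and Fa\`a di Bruno combined with Lemma~\ref{estpartderiv} (together with the $1-e^{-u}\le u$ / Proposition~\ref{Ft-bound} estimate for the underived factor) for the $F$-factor, using that each variable occurs in only one block of $X$. The final ``bookkeeping'' you defer is exactly what the paper also leaves implicit—multiplying the stated bounds and absorbing the powers of $\|\bx\|$ and $\|\widehat{\bt}\|_\infty^2/V$ into the stated envelope—so the two arguments coincide.
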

\begin{proof}
Recall $F_\bt(\bx)=\exp(-S(\bx;\bt))$ and $\vp_\bt(\bx)=\exp\left(-\frac{x_1^2+\dots+ x_r^2}{2}-\sum_{i<j}\rho(t_i,t_j)x_ix_j \right)$. In order to study the partial derivatives of $F_\bt$ and $\vp_\bt$, we use a multivariate Faà di Bruno formula. Let $A\subset L$ with cardinality $\alpha \geq 1$. An application of Faà Di Bruno formula on $\vp_\bt$ (or even a simple induction on $\alpha$) shows that there exists a polynomial $Q_A$ of degree $\alpha$ with coefficients satisfying the bound $\ll_r 1$ and such that 
\begin{equation}\label{partialphi} \frac{\partial^\alpha \vp_\bt}{\prod_{i\in A} \partial x_i} (x_1,\dots,x_r)=Q_A(x_1,\dots,x_r)\vp_\bt(x_1,\dots,x_r) \, . \end{equation}
Applying again Faà Di Bruno formula we obtain
\begin{align} \frac{\partial ^\alpha F_\bt}{\prod_{i\in A}\partial x_i} (\bx)&=F_\bt (\bx) \sum_{\frak{P} \in \Pi_A} \prod_{B \in \frak{P}} \frac{\partial^{|B|}S(\bx;\bt)}{\prod_{k\in B} \partial x_k} \label{partialFt} \\
&\ll_r \left( \|\bx\|^2+\|\bx\|^{2\alpha}\right) \left( \frac{\| \widehat{\bt}\|_\infty^2}{V}+\left(\frac{\| \widehat{\bt}\|_\infty^2}{V} \right)^{2\alpha}\right) \nonumber\, ,
\end{align}
where we used Lemma~\ref{estpartderiv} on each $B\in \frak{P}$ and used $\alpha=\sum_{B\in \frak{P}}|B|$.\\
Since $\widehat{Y}_\bt=F_\bt \cdot \vp_\bt$ we have 
\[ h_X=\left(\prod_{J\in X} \vp_\bt \circ \psi_{J,L}\right)\cdot \left( \prod_{J\in X} F_\bt \circ \psi_{J,L}-1\right) \, . \]
A multivariate Leibniz formula applied to the latter product yields:

\begin{align*}
\frac{\partial ^r h_X}{\partial x_1 \dots \partial x_r}(\bx)&=\frac{\partial ^r \left(\prod_{J\in X} \vp_\bt \circ \psi_{J,L}\right)}{\partial x_1 \dots \partial x_r}(\bx)  \left( \prod_{J\in X} F_\bt \circ \psi_{J,L}(\bx)-1\right)\\
&+\sum_{\emptyset\ne K\subset L}\left(\frac{\partial ^{|K|}\prod_{J\in X}(F_\bt \circ \psi_{J,L})}{\prod_{j\in K }\partial x_j} \right)(\bx)\cdot \frac{\partial^{r-|K|}\left(\prod_{J\in X} \vp_\bt \circ \psi_{J,L}\right)}{\prod_{j\in L\setminus K} \partial x_j}(\bx)\, .
\end{align*}
Note that, for $j\in L$ and $J\in X$, only one factor of the product $\prod_{J\in X} \vp_\bt \circ \psi_{J,L}$ depends on $x_j$. (Note that the same remark holds when we replace $\vp_\bt$ by $F_\bt$)\\
Hence, by \eqref{partialphi}, for all $K\subset L$ (including the case $K=\emptyset$) there exists a polynomial $H_K$ of degree $r-|K|$ such that
\[ \frac{\partial^{r-|K|}\left(\prod_{J\in X} \vp_\bt \circ \psi_{J,L}\right)}{\prod_{j\in L\setminus K} \partial x_j}(\bx)=H_K(\bx) \prod_{J\in X} \vp_\bt \circ \psi_{J,L}\, . \]
Since $\bx^T \Delta(\bt )\bx \geq \lambda_\bt \|\bx\|^2$, we have 
\[
\prod_{J\in X}\vp_\bt \circ \psi_{J,L}(\bx) \leq \exp\left( -\lambda_\bt \frac{x_1^2+\dots+x_r^2}{2} \right) \, . \]
We deduce that for all $K\subset L$
\[ \frac{\partial^{r-|K|}\left(\prod_{J\in X} \vp_\bt \circ \psi_{J,L}\right)}{\prod_{j\in L\setminus K} \partial x_j}(\bx) \ll_r \left(1+\|\bx\|^{r-|K|} \right)\exp\left( -\lambda_\bt \frac{x_1^2+\dots+x_r^2}{2} \right) \, . \]
Using \eqref{partialFt} we deduce that for all $\emptyset \ne K\subset L$ 
\[\left(\frac{\partial ^{|K|}\prod_{J\in X}(F_\bt \circ \psi_{J,L})}{\prod_{j\in K }\partial x_j} \right)(\bx) \ll_r \left( \|\bx\|^2+\|\bx\|^{2|K|}\right) \left( \frac{\| \widehat{\bt}\|_\infty^2}{V}+\left(\frac{\| \widehat{\bt}\|_\infty^2}{V} \right)^{2|K|}\right)  \, .\]
The lemma follows by applying Proposition~\ref{Ft-bound}.
\end{proof}

We restate \cite{HK}*{Lemma 3.1} which will be useful for our purposes.

\begin{lem}\label{H-K}
    Let $h:\C^r\to \C$ such that $h(0)=1$. For all $\bx \in \C^r$, if one of the coordinates of $\bx$ is $0$, then 
    $$\Lambda(h)(\bx)=0\, .$$
\end{lem}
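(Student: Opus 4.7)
The plan is to reduce $\Lambda(h)(\bx) = 0$ to a purely combinatorial identity on the partition lattice $\Pi_L$ and then to exhibit the required cancellation by a pairing argument. Fix $k \in L = \{1, \ldots, r\}$ with $x_k = 0$. Two observations drive the proof: first, because $x_k = 0$, for any $J \subseteq L$ containing $k$ we have $\psi_{J,L}(\bx) = \psi_{J \setminus \{k\},L}(\bx)$, since coordinate $k$ is already zero in $\bx$; second, because $h(0) = 1$, the singleton block $\{k\}$ contributes a factor $h \circ \psi_{\{k\},L}(\bx) = h(0) = 1$ to every product in which it appears.

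Next, I would introduce the ``deletion'' map $\pi \colon \Pi_L \to \Pi_{L\setminus\{k\}}$ that removes $k$ from its block in $\alpha$ (discarding that block if it was the singleton $\{k\}$). For each $\beta \in \Pi_{L\setminus\{k\}}$ with $m = |\beta|$ blocks, the fiber $\pi^{-1}(\beta)$ has exactly $m+1$ elements: the partition $\beta \cup \{\{k\}\}$ with $m+1$ blocks, and, for each $B \in \beta$, the partition $(\beta \setminus \{B\}) \cup \{B \cup \{k\}\}$ with $m$ blocks obtained by inserting $k$ into $B$. By the two observations above, all $m+1$ partitions in $\pi^{-1}(\beta)$ produce the same product
\[
\prod_{J \in \beta} h \circ \psi_{J,L}(\bx).
\]

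Finally, I would collect the $\mu_\alpha$ coefficients over the fiber. The partition with $\{k\}$ as a singleton contributes $\mu_\alpha = (-1)^{m}\, m!$, while each of the $m$ partitions merging $k$ into an existing block contributes $\mu_\alpha = (-1)^{m-1}\,(m-1)!$. Their sum is
\[
(-1)^{m} m! + m \cdot (-1)^{m-1}(m-1)! = (-1)^{m}\bigl(m! - m\,(m-1)!\bigr) = 0.
\]
Hence the contribution of each fiber vanishes, and summing over $\beta \in \Pi_{L\setminus\{k\}}$ gives $\Lambda(h)(\bx) = 0$.

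The argument is really an instance of the Möbius inversion identity $\sum_{\alpha \in \Pi_L,\ \alpha \ge \alpha_0}\mu_\alpha = 0$ for the partition lattice, applied to the coarsening $\alpha_0$ determined by the singleton $\{k\}$. There is no genuine obstacle here; the only care needed is the bookkeeping of fibers of $\pi$ and the identity $m! = m\,(m-1)!$ that drives the cancellation.
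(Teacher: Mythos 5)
The paper does not actually prove this lemma: it is quoted verbatim from Heuberger--Kropf (\cite{HK}, Lemma 3.1) and used as a black box, so your argument is being compared against the cited source rather than an in-paper proof. Your proof is correct and self-contained in the relevant range: the two observations ($\psi_{J,L}(\bx)=\psi_{J\setminus\{k\},L}(\bx)$ when $x_k=0$, and the singleton block $\{k\}$ contributing the factor $h(0)=1$), the decomposition of $\Pi_L$ into fibers of the deletion map with $m+1$ elements over a partition $\beta$ with $m$ blocks, the equality of the products within a fiber, and the coefficient cancellation $(-1)^m m!+m\,(-1)^{m-1}(m-1)!=0$ are all right; this is essentially the standard combinatorial mechanism behind the lemma (the $\mu_\alpha$ are partition-lattice M\"obius values and $\Lambda$ is a cumulant-type combination, which vanishes when one coordinate is degenerate). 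Two small caveats. First, the cancellation needs $m\ge 1$, i.e.\ $r\ge 2$: for $r=1$ the fiber over the empty partition is the single partition $\{\{k\}\}$ with coefficient $1$, and indeed the lemma as literally stated fails there, since $\Lambda(h)(0)=h(0)=1$; this is harmless for the paper's application (where the lemma is applied to $\widehat{Y}_\bt$ and $\vp_\bt$ separately and only their difference is needed), but if you state the lemma for all $r$ you should either exclude $r=1$ or note the exception. Second, your closing aside that the argument "is an instance of $\sum_{\alpha\ge\alpha_0}\mu_\alpha=0$" is loose: the fibers of the deletion map are not intervals in $\Pi_L$ (coarsenings that merge blocks of $\beta$ change the restriction to $L\setminus\{k\}$), so the identity you actually use is the fiberwise one you computed, not that M\"obius-interval sum. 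Neither point affects the correctness of the main argument.
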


We are now ready to prove Lemma~\ref{main-lem-expl}
\begin{proof}[Proof of Lemma~\ref{main-lem-expl}]
    For $\bx\in \R^r $ such that $\|\bx\| \leq \sqrt{V}/(4r\|\widehat{\bt}\|_\infty)$, define 
    \[
    h(\bx):=\Lambda \left(\widehat{Y}_\bt\right)(x_1,\dots,x_r)-\Lambda(\vp_\bt)(x_1,\dots,x_r) \, .
    \]
    By Lemma~\ref{H-K} if one of the coordinates of $\bx$ is $0$ then $h(\bx)=0$. 
    Since $F_\bt(\bx)=\exp(-S(\bx;\bt))$ and $\widehat{Y_\bt}(\bx)=F_\bt(\bx) \vp_\bt(\bx)$, by Lemma~\ref{estpartderiv}, for all $A\subset L:=\{1,\dots,r\}$ with cardinality $\alpha$, the partial derivative 
    \(\frac{\partial^\alpha h}{\prod_{i\in A}\partial x_i}\) exists and is continuous. We can apply Hadamard's lemma which implies that for all $\bx=(x_1,\dots,x_r)$ such that $\|\bx\| \leq \sqrt{V}/(4r\|\widehat{\bt}\|_\infty)$ we have
    \[
    h(x_1,\dots,x_r)=x_1\dots x_r \int_0^1\dots\int_0^1 \frac{\partial^rh}{\partial x_1\dots \partial x_r}(s_1x_1,\dots,s_rx_r)\, \mathrm{d}s_1\dots\,  \mathrm{d}s_r\, .
    \]
    With notations as in Lemma~\ref{hX} we have (by definition of $\Lambda$) 
    \[
    h(x_1,\dots,x_r)=\sum_{X\in \Pi_L} \mu_X h_X (x_1,\dots,x_r) \, .
    \]
    Thus, by Lemma~\ref{hX}  \[ \frac{\partial ^r h}{\partial x_1 \dots \partial x_r} (x_1,\dots, x_r)\ll_r \left(\|\bx\|^2+\|\bx\|^{2r} \right)\exp\left(-\lambda_\bt \frac{x_1^2+\dots+x_r^2}{2}\right)\left (\frac{\|\widehat{\bt}\|_\infty^2}{V}+\left(\frac{\|\widehat{\bt}\|_\infty^2}{V} \right)^{2r}\right)\, .
    \]
    It now suffices to integrate and see that $\|(s_1x_1,\dots,s_rx_r)\| \leq \|\bx\|$ for $0\leq s_1,\dots,s_r\leq 1$.
\end{proof}

\subsection{Special cases}\label{subsec:consequences}
Let $L/\Q$ be a Galois extension of number fields with group $G$ and let $C_1,\dots,C_{r+1}\in G^{\sharp}$ pairwise distinct. Denote $t_i=t_{C_i,C_{i+1}}$, for $1\le i \le r$. We rewrite the explicit formula given by Theorem~\ref{explicit-formula} in different cases. When $r=1$ (two-way races) the matrix $\Delta(\bt)$ is of order $1$ so $\lambda_\bt=1$. We deduce that: 
\begin{cor}\label{r=1} Assume $\mathsf{GRH}$, $\mathsf{AC}$, and $\mathsf{LI}^-$. Then,
    \[
    \delta_{L/\Q}^{(2)}(C_1,C_2)=\frac{1}{\sqrt{2\pi}}\int_{-\infty}^{-B_1}\exp\left( -\frac{x^2}{2}\right)\mathrm{d}x+O\left(\frac{\|\widehat{\bt}\|_\infty}{\sqrt{V}}+\frac{\|\widehat{\bt}\|_\infty^4}{V^2} \right)  \, ,
    \]
    where $B_1=B_{L/\Q}(C_1,C_2)$.
    
\end{cor}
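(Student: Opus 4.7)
The plan is to derive Corollary~\ref{r=1} as a direct specialization of Theorem~\ref{explicit-formula} to the one-variable case $r=1$, with $\bt = (t_1)$ where $t_1 = t_{C_1,C_2}$. All the required analytic hypotheses are the same, and the family $(t_1)$ is trivially linearly independent over $\R$ since $C_1 \ne C_2$ forces $t_{C_1,C_2} \not\equiv 0$. So the only work is to identify the three quantities appearing in the general formula: the Gaussian density $f_\bt$, the minimal eigenvalue $\lambda_\bt$, and the error term.

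The first step is to simplify the Gaussian ingredients. Unpacking the definition of $\rho$ in~\eqref{def : rho} and comparing with the expression \eqref{variance} for the variance, we obtain
\[
\rho(t_1,t_1) \;=\; \frac{1}{V(t_1)}\sum_{\chi \ne 1}\sum_{\gamma_\chi>0}\frac{2|\langle t_1,\chi\rangle|^2}{\tfrac14 + \gamma_\chi^2} \;=\; \frac{V(t_1)}{V(t_1)} \;=\; 1,
\]
so that $\Delta(\bt) = (1) \in \mathcal{M}_1(\R)$, whose minimal eigenvalue is $\lambda_\bt = 1$. Consequently the density $f_\bt$ from \eqref{def : fbt} collapses to the standard Gaussian density $x \mapsto (2\pi)^{-1/2}\exp(-x^2/2)$, and the set $\mathcal{P}_\bt$ from~\eqref{setPt} is exactly $\{x \ge 2 : \pi(x;L/\Q,t_{C_1,C_2}) < 0\}$, whose logarithmic density is $\delta_{L/\Q}^{(2)}(C_1,C_2)$ by the identification in Section~\ref{subsec:hypotheses}.

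The second step is to read off the error term in Theorem~\ref{explicit-formula} with $r=1$. Since $\lambda_\bt = 1$, the factor $(1 + 1/\lambda_\bt + 1/\lambda_\bt^r)$ equals $3$, and $V = \min_i V(t_i) = V(t_1)$. The implicit constant depends only on $r=1$, hence is absolute, so the error reduces to
\[
O\!\left(\frac{\|\widehat{\bt}\|_\infty^{4}}{V^{2}} + \frac{\|\widehat{\bt}\|_\infty}{\sqrt{V}}\right),
\]
matching the bound in the statement. Combining this with the main integral yields the claimed formula with $-B(t_1) = -B_{L/\Q}(C_1,C_2) = -B_1$.

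There is essentially no independent obstacle: the analytic difficulty has already been absorbed into the proof of Theorem~\ref{explicit-formula} (the multidimensional Berry--Esseen input and the combinatorial polynomial bound of Lemma~\ref{PA}). The one check worth mentioning is that in dimension one the operator $\Lambda$ of Definition~\ref{deltaop} reduces to the identity on functions vanishing at $0$, so the Berry--Esseen step of Lemma~\ref{main-lem-expl} produces exactly the exponents $\|\widehat{\bt}\|_\infty^4/V^2$ and $\|\widehat{\bt}\|_\infty/\sqrt{V}$ stated above, with no additional polynomial loss from the $(\|\bx\|^2 + \|\bx\|^{2r})$-factor.
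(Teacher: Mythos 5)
Your proposal is correct and follows exactly the paper's own route: the paper also deduces Corollary~\ref{r=1} by specializing Theorem~\ref{explicit-formula} to $r=1$, observing that $\rho(t_1,t_1)=1$ forces $\Delta(\bt)=(1)$ and hence $\lambda_\bt=1$, so the error term collapses to $O\bigl(\|\widehat{\bt}\|_\infty/\sqrt{V}+\|\widehat{\bt}\|_\infty^4/V^2\bigr)$. Your additional remarks (the identification of $\mathcal{P}_\bt$ with the race set and the behaviour of $\Lambda$ in dimension one) are just the details the paper leaves implicit.
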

\begin{proof}
This is a direct consequence of Theorem~\ref{explicit-formula}, since the only possible eigenvalue is $1$.
\end{proof}
A first-order Taylor expansion recovers \cite{FJ}*{Theorem 5.10}. We note that having a different sign in our formula is normal since in our work $\delta_{L/\Q}(C_1,C_2)$ is the logarithmic density of $x\geq 2$ such that \[\frac{\pi(x;L/\Q;C_1)}{|C_1|}<\frac{\pi(x;L/\Q;C_2)}{|C_2|} \, ,\]
while in \cite{FJ} the logarithmic density is related to sets with opposite strict inequality.\\
For $r\ge 2$, in order to apply Theorem~\ref{explicit-formula} we need to bound the eigenvalue factor in the error term. We note that two types of terms occur in the matrix $\Delta(\bt)$, and we can both relate them to the function $U_{L/\Q}$ as follows: 
1) Terms that are adjacent to the diagonal have the general form 
 \begin{equation}\label{rhoii+1} \rho(t_{a,b},t_{b,c})=\frac{-1+U_{L/\Q}(ab^{-1})+U_{L/\Q}(bc^{-1})-U_{L/\Q}(ac^{-1})}{\sqrt{\bigl(2-2U_{L/\Q}(ab^{-1})\bigr)\bigl(2-2U_{L/\Q}(bc^{-1})\bigr)}} \, ,\end{equation}
 for some $(a,b,c)\in \mathcal{A}_3(G)$.\\
2) Terms with indices $(i,j)$ such that $|j-i|\ge 2$:
\begin{equation}\label{rhoij}
            \rho(t_{a,b},t_{c,d})=\frac{U_{L/\Q}(ac^{-1})+U_{L/\Q}(cd^{-1})-U_{L/\Q}(bc^{-1})-U_{L/\Q}(ad^{-1})}{\sqrt{\bigl(2-2U_{L/\Q}(ab^{-1})\bigr)\bigl(2-2U_{L/\Q}(cd^{-1})\bigr)}} \, ,
        \end{equation}  
for some $(a,b,c,d)\in\mathcal{A}_4(G) $.\\
The following Lemma shows that when $r=2$ the minimal eigenvalues can never be too small.
\begin{lem}\label{encadrementrho}
Let $L/\Q$ be an abelian extension with group $G$ for which $\mathsf{GRH}$ holds, and let $a,b,c \in \mathcal{A}_3(G)$. Then, 
\[ -\frac{3}{4}+o(1) \le \rho\bigl(t_{a,b},t_{b,c}\bigr) \le o(1) \quad (d_L\to \infty) \, .\]
\end{lem}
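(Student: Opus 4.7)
My plan is to leverage the cocycle identity $t_{a,c} = t_{a,b} + t_{b,c}$, which is immediate in the abelian setting since $t_{x,y} = |G|(\mathds{1}_x - \mathds{1}_y)$. Expanding $V(t_{a,c}) = V(t_{a,b} + t_{b,c})$ into variance-covariance form and rearranging yields
\[\rho(t_{a,b}, t_{b,c}) \;=\; \frac{V(t_{a,c}) - V(t_{a,b}) - V(t_{b,c})}{2\sqrt{V(t_{a,b})\, V(t_{b,c})}},\]
which is equivalent to formula~\eqref{rhoii+1} after a short substitution. This reduces the proof to controlling each of the three variances.

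Next I would run a short character computation: from $|\langle t_{x,y}, \chi\rangle|^2 = |\chi(x) - \chi(y)|^2 = 2 - 2\Re(\chi(xy^{-1}))$ together with definition~\eqref{def : NL}, one gets
\[V(t_{x,y}) \;=\; 2N_L\bigl(1 - U_{L/\Q}(xy^{-1})\bigr).\]
For any $g \in G\setminus\{1\}$, Corollary~\ref{negativity of U(a)} combined with Lemma~\ref{negativity} gives $U_{L/\Q}(g) \le o(1)$ as $d_L \to \infty$, while the trivial bound $|\chi(g)| \le 1$ forces $|U_{L/\Q}(g)| \le 1$. Taking these together produces the uniform sandwich
\[2N_L(1 - o(1)) \;\le\; V(t_{x,y}) \;\le\; 4N_L\]
valid for all distinct $x,y \in G$.

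The two bounds of the lemma then drop out by plugging in these extremes. For the upper bound, the numerator is at most $4N_L - 4N_L(1-o(1)) = o(N_L)$ while the denominator is at least $4N_L(1-o(1))$, yielding $\rho \le o(1)$. For the lower bound, the numerator is at least $2N_L(1-o(1)) - 8N_L = -6N_L + o(N_L)$ while the denominator is at most $8N_L$, yielding $\rho \ge -3/4 + o(1)$. The only substantive input is the asymptotic non-positivity of $U_{L/\Q}$ on $G\setminus\{1\}$ supplied by Lemma~\ref{negativity}; everything else is elementary, so I do not foresee any real obstacle. It is worth noting, moreover, that the extremal configuration $-3/4$ corresponds to $U_{L/\Q}(ab^{-1}), U_{L/\Q}(bc^{-1}) \approx -1$ together with $U_{L/\Q}(ac^{-1}) \approx 0$, which motivates the multiquadratic constructions of \S\ref{subsec:multiquadratic}.
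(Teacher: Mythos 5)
Your reduction is fine and is in substance the same starting point as the paper: the cocycle identity $t_{a,c}=t_{a,b}+t_{b,c}$, the polarization identity for $\rho$, and the computation $V(t_{x,y})=2N_L\bigl(1-U_{L/\Q}(xy^{-1})\bigr)$ are all correct, and together they reproduce formula~\eqref{rhoii+1}; the upper bound $\rho\le o(1)$ then follows as you say. The genuine gap is in the lower bound. Bounding numerator and denominator separately cannot give the constant $-3/4$. First, the pairing you use goes the wrong way: when the numerator is negative, an upper bound on the denominator (your $\le 8N_L$) yields no lower bound on the quotient at all, since the quotient becomes more negative as the denominator shrinks; what separate bounds would actually give is $\rho\ge \bigl(-6N_L+o(N_L)\bigr)/\bigl(4N_L(1-o(1))\bigr)=-\tfrac{3}{2}+o(1)$, using the \emph{lower} bound on the denominator. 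Second, the sharp constant $-3/4$ holds only because the extremes are coupled: the numerator can approach $-6N_L$ only when $U(ab^{-1}),U(bc^{-1})\approx -1$, i.e. when $V(t_{a,b}),V(t_{b,c})\approx 4N_L$, and then the denominator is forced to be near $8N_L$, not $4N_L$. Your decoupled estimates permit the impossible combination (numerator $\approx -6N_L$, denominator $\approx 4N_L$), so $\rho\ge -\tfrac34+o(1)$ simply does not follow from the two displayed bounds.

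To close the gap you must keep numerator and denominator tied together, which is exactly what the paper does: substituting $V(t_{x,y})=2N_L\bigl(1-U(xy^{-1})\bigr)$ throughout gives
\[
\rho(t_{a,b},t_{b,c})=\frac{-1+U(ab^{-1})+U(bc^{-1})-U(ac^{-1})}{2\sqrt{\bigl(1-U(ab^{-1})\bigr)\bigl(1-U(bc^{-1})\bigr)}}\, ,
\]
then one discards $-U(ac^{-1})\ge -o(1)$ (legitimate since $U\le o(1)$ on $G\setminus\{1\}$) and minimizes the two-variable function $f(x,y)=\frac{-1+x+y}{\sqrt{(2-2x)(2-2y)}}$ over $[-1,\varepsilon]^2$, whose minimum is attained at $(-1,-1)$ with value $-3/4$. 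Your closing remark about the extremal configuration shows you know where the minimum sits, but the argument as written does not prove it; replacing the independent plug-in step by this joint minimization repairs the proof.
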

\begin{proof}
Denote $U=U_{L/\Q}$.
Since $-U(x)=|U(x)|+o(1)$ ($d_L\to \infty)$, then, for all $x\ne 1$ we have \[ o(1) \le -U(x)<1 \, .\]
Hence for all $x\ne 1$ we have $2-2U(x) \ge 2+o(1)$ and $-1+U(ab^{-1})+U(bc^{-1})-U(ac^{-1})<o(1)$. Thus~\eqref{rhoii+1} shows that $\rho\bigl(t_{a,b},t_{b,c}\bigr)\le o(1)$. We have also 
\[ \rho(t_{a,b},t_{b,c}) \geq \frac{-1+U(ab^{-1})+U(bc^{-1})}{\sqrt{(2-2U(ab^{-1}))(2-2U(bc^{-1}))}}+o(1) \, ,\]
A simple study of the function $f\colon [\, -1\, ,\ve\,]^2\to \R$ (for some small $\ve>0$), defined by 
\[f(x,y)=\frac{-1+x+y}{\sqrt{(2-2x)(2-2y)}} \, ,\]
shows that $f$ takes its minimum at $(x,y)=(-1,-1)$ and in this case $f(-1,-1)=-3/4$. This proves the Lemma.
\end{proof}
This proves that for any $a,b,c \in \mathcal{A}_3(G)$ the minimal eigenvalue of the matrix $\Delta_{L/\Q}^{(2)}(a,b,c)$ is \begin{equation}\label{3way-eigenvalue}
    \lambda_{\min}\bigl(\Delta_{L/\Q}^{(2)}(a,b,c) \bigr)=  1-\bigl|\rho\bigl(t_{a,b},t_{b,c}\bigr) \bigr|\ge \frac{1}{4}+o(1)\, .
\end{equation} 
The case $r=2$ can be stated as follows.
\begin{cor}\label{3wayraces}
     Assume $\mathsf{GRH}$, $\mathsf{AC}$, and $\mathsf{LI}^-$.
     If $B(t_1)$ and $B(t_2)$ are sufficiently close to $0$, then
    \begin{align*}
        \delta_{L/\Q}^{(3)}(C_1,C_2,C_3)&=\frac{1}{4}+\frac{1}{2\pi}\arcsin\left(\rho(t_1,t_2)\right)-\frac{1}{2\sqrt{2\pi}}(B(t_1)+B(t_2))\\
        &+O\left(B(t_1)^2+B(t_2)^2+\left(1+\frac{1}{\lambda_\bt}+\frac{1}{\lambda_\bt^2}\right)\left(\frac{\|\widehat{\bt}\|_\infty}{\sqrt{V}}+\frac{\|\widehat{\bt}\|_\infty^8}{V^4} \right)\right)\, .
    \end{align*}
    If, moreover, $\rho(t_1,t_2)$ is sufficiently close to $-1/2$, then
    \begin{align*}
   \delta_{L/\Q}^{(3)}&(C_1,C_2,C_3)=\frac{1}{6}-\frac{1}{\pi\sqrt{3}}\left(\rho(t_1,t_2)+\frac{1}{2}\right)-\frac{1}{2\sqrt{2\pi}}(B(t_1)+B(t_2))\\
    &+O\left((\rho(t_1,t_2)+1/2)^2+B(t_1)^2+B(t_2)^2+\left(1+\frac{1}{\lambda_\bt}+\frac{1}{\lambda_\bt^2}\right)\left(\frac{\|\widehat{\bt}\|_\infty}{\sqrt{V}}+\frac{\|\widehat{\bt}\|_\infty^8}{V^4} \right)\right)\, .
    \end{align*}
\end{cor}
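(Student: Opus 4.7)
The plan is to apply Theorem~\ref{explicit-formula} with $r = 2$ and then Taylor-expand the resulting bivariate Gaussian orthant integral. Writing $B_i := B(t_i)$ and $\rho := \rho(t_1, t_2)$, Theorem~\ref{explicit-formula} yields
\[
\delta^{(3)}_{L/\Q}(C_1, C_2, C_3) = I(B_1, B_2) + O\left(\left(\frac{\|\widehat{\bt}\|_\infty}{\sqrt{V}} + \frac{\|\widehat{\bt}\|_\infty^8}{V^4}\right)\left(1 + \frac{1}{\lambda_\bt} + \frac{1}{\lambda_\bt^2}\right)\right),
\]
where
\[
I(B_1, B_2) := \int_{-\infty}^{-B_1}\int_{-\infty}^{-B_2} f_\bt(x_1, x_2)\, \mathrm{d}x_1\, \mathrm{d}x_2
\]
is the CDF at $(-B_1, -B_2)$ of the centered Gaussian with covariance matrix $\Delta(\bt)$. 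The remaining task is a two-variable Taylor expansion of $I$.

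For the value at the origin I would invoke Sheppard's classical formula for bivariate Gaussian orthant probabilities, $I(0, 0) = \tfrac{1}{4} + \tfrac{1}{2\pi}\arcsin(\rho)$. For the first-order corrections I differentiate under the integral sign: since $f_\bt(0, x_2)$ factors as the marginal density $1/\sqrt{2\pi}$ at zero times the conditional density of the second coordinate given $Z_1 = 0$, which is centered Gaussian of variance $1 - \rho^2$, one gets
\[
\frac{\partial I}{\partial B_1}(0, 0) = -\int_{-\infty}^{0} f_\bt(0, x_2)\, \mathrm{d}x_2 = -\frac{1}{\sqrt{2\pi}} \cdot \frac{1}{2} = -\frac{1}{2\sqrt{2\pi}},
\]
and by symmetry $\partial I/\partial B_2(0, 0) = -1/(2\sqrt{2\pi})$ as well. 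A first-order Taylor expansion of $I$ around $(0, 0)$ with quadratic remainder then gives the first displayed formula; the $O(B_1^2 + B_2^2)$ remainder is uniform so long as $\rho$ stays in a compact subset of $(-1, 1)$, which holds in applications by Lemma~\ref{encadrementrho} (it forces $-3/4 + o(1) \le \rho \le o(1)$) and hence bounds $\|f_\bt\|_\infty$ and $\|\partial_i f_\bt\|_\infty$ uniformly.

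For the second formula I would additionally Taylor-expand $\arcsin(\rho)$ around $\rho = -1/2$, using $\arcsin(-1/2) = -\pi/6$ and $\arcsin'(-1/2) = 2/\sqrt{3}$, and combine with $\tfrac{1}{4} + \tfrac{1}{2\pi}\arcsin(-1/2) = \tfrac{1}{6}$ to recover the stated expression up to $O\bigl((\rho + \tfrac{1}{2})^2\bigr)$. There is no serious obstacle here: the argument is a sequence of standard Gaussian-analysis identities (Sheppard's formula, differentiation under the integral, conditioning) followed by elementary Taylor expansions. The one point that requires care is uniformity of the quadratic remainders, which reduces to having $\rho$ bounded away from $\pm 1$ --- a bookkeeping concern rather than a genuine difficulty in the regime where both expansions are performed.
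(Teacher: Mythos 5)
Your proposal is correct and follows essentially the same route as the paper: apply Theorem~\ref{explicit-formula} with $r=2$, use the classical bivariate orthant formula $F(0,0)=\tfrac14+\tfrac{1}{2\pi}\arcsin\rho$, compute the first partials at the origin by conditioning (giving $\mp\tfrac{1}{2\sqrt{2\pi}}$ depending on the sign convention), and Taylor-expand, with the second display obtained by expanding $\arcsin$ around $-\tfrac12$. One small caveat: carrying out that last expansion actually yields $+\tfrac{1}{\pi\sqrt3}\bigl(\rho(t_1,t_2)+\tfrac12\bigr)$ rather than the minus sign printed in the corollary (the printed sign appears to be a typo, since the paper's later deduction that $\rho<-\tfrac12$ forces $\delta^{(3)}<\tfrac16$ requires the plus sign), so your claim to ``recover the stated expression'' should be read modulo that sign.
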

\begin{proof}
Set $\rho=\rho(t_1,t_2)$. Our goal is to give the Taylor expansion at $(0,0)$ of the function: 
$$F(x,y)=\int_{-\infty}^{x}\int_{-\infty}^{y} f_\bt(x,y) \, \mathrm{d}x\mathrm{d}y \, .$$
The value $F(0,0)$ is known, see for instance~\cite{Kotz}*{Equation (46.47)}, we have 
$$F(0,0)=\frac{1}{4}+\frac{1}{2\pi}\arcsin\left(\rho \right)\, .$$
Denoting $\Phi$ the cdf of a standard real Gaussian, a change of variable shows that: 
$$\frac{\partial F}{\partial x}(x_1,y_1)=\int_{-\infty}^{y_1} f_\bt(x_1,y)\mathrm{d}y=\frac{1}{\sqrt{2\pi}}\exp\left(-\frac{x_1^2}{2}\right) \Phi\left(\frac{y_1-\rho x_1}{\sqrt{1-\rho^2}}\right) \, .$$
Thus, by symmetry we have $$\frac{\partial F}{\partial x}(0,0)=\frac{1}{2\sqrt{2\pi}}\,\ \ \text{ and }\ \  \frac{\partial F}{\partial y}(0,0)=\frac{1}{2\sqrt{2\pi}} \, .$$
This proves the result.
\end{proof}
Corollary~\ref{abelian-3wayraces} is thus a direct consequence of \eqref{3way-eigenvalue} with Corollary~\ref{3wayraces}. We have the following important consequence: 
\begin{cor}
Let $(L_n)_n$ be a family of abelian extensions for which $\mathsf{GRH}$ and $\mathsf{LI}$ hold, with respective Galois groups $G_n$, and such that $|G_n|\ge 3$, $d_{L_n}\to \infty$ and $r(G_n)/\sqrt{\log d_{L_n}} \to 0$.
There exists $\eta>0$ such that, for every $n\ge 1$ and every $(a,b,c)\in \mathcal{A}_3(G_n)$, we have
\[ \delta_{L_n/\Q}^{(3)}(a,b,c)> \eta \, .\]
\end{cor}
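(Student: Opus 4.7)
The plan is to invoke Corollary~\ref{abelian-3wayraces} directly and verify that each summand on the right is either bounded away from values that would destroy a positive lower bound, or negligible under the hypotheses. For every $(a,b,c)\in \mathcal{A}_3(G_n)$ the corollary reads
\[
\delta_{L_n/\Q}^{(3)}(a,b,c)=\frac{1}{4}+\frac{1}{2\pi}\arcsin\bigl(\rho(t_{a,b},t_{b,c})\bigr)-\frac{B(t_{a,b})+B(t_{b,c})}{2\sqrt{2\pi}}+O\!\left(\frac{1}{\sqrt{\log d_{L_n}}}\right),
\]
with absolute implied constant. Lemma~\ref{encadrementrho} gives $\rho(t_{a,b},t_{b,c})\ge -3/4+o(1)$ as $d_{L_n}\to\infty$, uniformly in $(a,b,c)$; by monotonicity of $\arcsin$ this yields $\frac{1}{2\pi}\arcsin\bigl(\rho(t_{a,b},t_{b,c})\bigr)\ge -\frac{1}{2\pi}\arcsin(3/4)+o(1)$.

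Next I would bound $B(t_{a,b})$ uniformly in $a,b$. Because $L_n/\Q$ is abelian and $\mathsf{LI}$ holds, $L(\tfrac12,L_n/\Q,\chi)\ne 0$ for every $\chi\in\Irr(G_n)$, so the $\Ord_{s=1/2}$ contributions in~\eqref{mean} vanish and $E(t_{a,b})=-\langle t_{a,b},r_{G_n}\rangle=r_{G_n}(b)-r_{G_n}(a)$. In an abelian group $r_{G_n}(g)\in\{0,r(G_n)\}$ for every $g$, hence $|E(t_{a,b})|\le r(G_n)$. For the denominator, expanding Fourier coefficients gives $V(t_{a,b})=N_{L_n}\bigl(2-2U_{L_n/\Q}(ab^{-1})\bigr)$; Corollary~\ref{negativity of U(a)} combined with Lemma~\ref{negativity} forces $U_{L_n/\Q}(g)\le o(1)$ uniformly in $g\ne 1$, so $V(t_{a,b})\ge (2+o(1))N_{L_n}\sim 2\log d_{L_n}$ by Corollary~\ref{NLdL}. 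Consequently $|B(t_{a,b})|\ll r(G_n)/\sqrt{\log d_{L_n}}=o(1)$ by hypothesis, and likewise for $B(t_{b,c})$.

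Putting these estimates together yields, uniformly in $(a,b,c)\in\mathcal{A}_3(G_n)$,
\[
\delta_{L_n/\Q}^{(3)}(a,b,c)\ge \eta_0+o(1), \qquad \eta_0:=\frac{1}{4}-\frac{1}{2\pi}\arcsin(3/4).
\]
Since $\arcsin(3/4)<\pi/2$ we have $\eta_0>0$, so there is an integer $N$ with $\delta_{L_n/\Q}^{(3)}(a,b,c)\ge \eta_0/2$ for all $n\ge N$ and every admissible triple. For the finitely many $n<N$, Ng's theorem ensures that each of the finitely many densities in $\mathcal{A}_3(G_n)$ is strictly positive; taking the overall minimum of these together with $\eta_0/2$ furnishes the required constant $\eta>0$.

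The most delicate point is the uniform lower bound $V(t_{a,b})\gg \log d_{L_n}$: the average asymptotic $N_{L_n}\sim \log d_{L_n}$ from Corollary~\ref{NLdL} does not on its own imply a pointwise bound in $(a,b)$. Upgrading it requires the sign information $U_{L_n/\Q}\le o(1)$ supplied by Lemma~\ref{negativity} and Corollary~\ref{negativity of U(a)}, which is precisely the mechanism behind Lemma~\ref{encadrementrho} and, ultimately, what makes $\eta_0$ strictly positive rather than merely non-negative.
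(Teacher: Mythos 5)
Your proof is correct and takes essentially the same route as the paper: the abelian three-way formula (Corollary~\ref{abelian-3wayraces}), the lower bound $\rho(t_{a,b},t_{b,c})\ge -3/4+o(1)$ from Lemma~\ref{encadrementrho}, and the uniform estimate $B(t_{a,b})\ll r(G_n)/\sqrt{\log d_{L_n}}=o(1)$, whose variance input you re-derive but which the paper simply quotes from Lemma~\ref{lem : pos-real}. Your explicit handling of the finitely many small $n$ via Ng's positivity (and your sharper constant $\tfrac14-\tfrac{1}{2\pi}\arcsin(3/4)$ in place of the paper's $1/12$) are minor refinements of the same argument.
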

\begin{proof}
Since for all $a\ne b \in G_n$ we have $B(t_{a,b}) \ll r(G_n)/\log(d_{L_n})^{1/2}$ with an implied absolute constant, it follows that for all $a,b,c \in \mathcal{A}_3(G_n)$ we have: 
\[ \delta_{L_n/\Q}^{(3)}(a,b,c)=\frac{1}{4}+\frac{1}{2\pi}\arcsin( \rho(t_{a,b},t_{b,c}))+o(1)\ge \frac{1}{4}-\frac{1}{2\pi}\arcsin\left(\frac{\sqrt{3}}{2}\right)+o(1) \quad (n\to \infty)\, .\]
Thus, \[ \delta_{L_n/\Q}^{(3)}(a,b,c) \ge \frac{1}{12}+o(1) \quad (n\to \infty)\, .\]
This completes the proof.
\end{proof}

\section{Pointwise convergence}\label{sec:pointwise}
\subsection{Pointwise convergence of the variance related functions}\label{subsec:pointwisecv}
Let $L/\Q$ be a Galois extension with group $G$. Define the normalized variance $T_{L/\Q}:\mathcal{A}_2(G)\longrightarrow \R$ as follows: \begin{equation}\label{normalized-variance}
    T_{L/\Q}=\frac{1}{N_{L}}V_{L/\Q}
\end{equation}
where $N_L$ is defined in~\eqref{def : NL}.
\begin{lem}\label{lem : pos-real}
Let $L$ be an abelian extension over $\Q$ with group $G$ for which $\mathsf{GRH}$ holds. Then, for all distinct $a, b \in G$ $$(2+o(1))N_L\leq V_{L/\Q}(a,b) \leq 4 N_{L} \quad (d_L\to \infty)\, .     $$
In particular, $V_{L/\Q}(a,b) \asymp \log d_L$, with implied absolute constants.
\end{lem}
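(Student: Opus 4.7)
\medskip

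\textbf{Proof plan.} The plan is to reduce the claim to an identity expressing $V_{L/\Q}(a,b)$ in terms of the function $U_{L/\Q}$, after which both bounds become elementary.

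First I would unpack the definition of $V$ on the test function $t_{a,b}$. In the abelian case $t_{a,b}=|G|(\mathds{1}_{\{a\}}-\mathds{1}_{\{b\}})$, so a direct computation of the inner product gives $\langle t_{a,b},\chi\rangle = \overline{\chi(a)}-\overline{\chi(b)}$ for every $\chi\in\Irr(G)$. Since characters of abelian groups are unitary,
\[
|\langle t_{a,b},\chi\rangle|^2 \;=\; |\chi(a)-\chi(b)|^2 \;=\; 2-2\Re\bigl(\chi(ab^{-1})\bigr).
\]
Substituting into the definition of $V$ and swapping the order of summation, together with the definition of $N_L$ and of $U_{L/\Q}$, yields the clean identity
\[
V_{L/\Q}(a,b) \;=\; 2N_L\bigl(1-U_{L/\Q}(ab^{-1})\bigr).
\]
This identity is the key step; everything else follows from bounding $U_{L/\Q}$ on $G\setminus\{1\}$.

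For the upper bound, I would use the trivial estimate $|U_{L/\Q}(c)|\le 1$, which follows by pulling the absolute value inside the double sum defining $U_{L/\Q}$ and using $|\chi(c)|=1$ together with the definition of $N_L$; this gives $V_{L/\Q}(a,b)\le 4N_L$ immediately. For the lower bound, I would invoke Corollary~\ref{negativity of U(a)}, which states
\[
U_{L/\Q}(c) \;=\; \frac{1}{N_L}\sum_{\chi\in\Irr(G)}\chi(c)\log A(\chi) + o(1) \qquad (d_L\to\infty),
\]
combined with the consequence of Lemma~\ref{negativity} that $\sum_\chi \chi(c)\log A(\chi)\le 0$ for every $c\neq 1$. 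Together these give $U_{L/\Q}(c)\le o(1)$, hence $V_{L/\Q}(a,b)\ge (2+o(1))N_L$, which is the desired lower bound with $c=ab^{-1}\neq 1$.

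Finally, combining the two bounds with Corollary~\ref{NLdL} (which gives $N_L\sim \log d_L$ as $d_L\to\infty$) yields $V_{L/\Q}(a,b)\asymp \log d_L$ with absolute implied constants. No step of this argument looks technically difficult: the only subtle point is making sure the $o(1)$ coming from the error in Corollary~\ref{negativity of U(a)} is \emph{uniform} in the choice of $c\in G\setminus\{1\}$, which one obtains by noting that the error in~\eqref{LO} depends only on $\log\log A(\chi)$ and is controlled by Corollary~\ref{NLdL} independently of $c$.
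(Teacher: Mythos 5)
Your proposal is correct and follows essentially the same route as the paper: the identity $V_{L/\Q}(a,b)=2N_L\bigl(1-U_{L/\Q}(ab^{-1})\bigr)$ (stated in the paper as $T_{L/\Q}(a,b)=2-2U_{L/\Q}(ab^{-1})$), the trivial bound $|U_{L/\Q}|\le 1$ for the upper estimate, Corollary~\ref{negativity of U(a)} together with Lemma~\ref{negativity} for the lower estimate, and Corollary~\ref{NLdL} to convert $N_L$ into $\log d_L$. Your explicit derivation of the identity and the remark on uniformity of the $o(1)$ in $ab^{-1}$ only spell out what the paper leaves implicit.
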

\begin{proof}
    Since for all distinct $a, b\in G$, we have \(T_{L/\Q}(a,b)=2-2U_{L/\Q}\left(ab^{-1}\right) \), and since $|U_{L/\Q}|\le 1$, then \[T_{L/\Q} \le 4\, .\]
    We use Corollary~\ref{negativity of U(a)} combined with Lemma~\ref{negativity} to deduce that 
    \[ T_{L/\Q}\geq 2+o(1) \quad (d_L \to \infty) \, .\]
    The lemma follows by multiplying by $N_L$.
\end{proof}

\begin{prop}\label{var-ab}
Let $(L_n)_n$ be an \emph{increasing} family of abelian extensions over $\Q$ for which $\mathsf{GRH}$ holds, with respective groups $(G_n)$ and such that $[L_n : \Q]\to \infty$. Then, $(U_{L_n/\Q})_n$ converges pointwise to $0$ and the sequence of variances $(T_{L_n/\Q})_n$ converges pointwise to $2$. In particular, for all $(a,b) \in \mathcal{A}_2(G_1)$ and any choice of a sequence of lifts $\left(a^{(n)},b^{(n)} \right) \in \mathcal{A}_2(G_n)$
\[ V\left(a^{(n)},b^{(n)}\right) \sim 2\log d_{L_n}\quad (n\to \infty) \, .\]
\end{prop}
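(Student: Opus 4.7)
The plan is to reduce everything to pointwise convergence of $U_{L_n/\Q}$ to $0$, since the relation
\[ T_{L_n/\Q}\bigl(a^{(n)},b^{(n)}\bigr)=2-2U_{L_n/\Q}\bigl(\bigl(a^{(n)}b^{(n)-1}\bigr)\bigr) \]
immediately yields the claimed convergence of $T_{L_n/\Q}$ to $2$, and Corollary~\ref{NLdL} ($N_{L_n}\sim \log d_{L_n}$) then gives the asymptotics $V(a^{(n)},b^{(n)})\sim 2\log d_{L_n}$. Note also that Theorem~\ref{rtdiscbounded} together with the hypothesis $[L_n:\Q]\to\infty$ guarantees $\log d_{L_n}\to\infty$, so Corollary~\ref{NLdL} applies.

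To prove the pointwise convergence of $U_{L_n/\Q}$, fix some $n_0\ge 1$ and a non-trivial $g\in G_{n_0}$; for each $n\ge n_0$ let $g^{(n)}\in G_n$ denote any lift of $g$. The first step is to apply Corollary~\ref{negativity of U(a)}, which gives
\[ U_{L_n/\Q}\bigl(g^{(n)}\bigr)=\frac{1}{N_{L_n}}\sum_{\chi\in\Irr(G_n)}\chi\bigl(g^{(n)}\bigr)\log A(\chi)+o(1)\qquad (n\to\infty). \]
The second step is to bound the main term using Lemma~\ref{UbllG} applied to the inclusion $L_{n_0}\subset L_n$ with $K=L_{n_0}$, $L=L_n$, and $b=g^{(n)}$, whose restriction to $G_{n_0}$ is the non-trivial element $g$. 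This yields
\[ \Bigl|\sum_{\chi\in\Irr(G_n)}\chi\bigl(g^{(n)}\bigr)\log A(\chi)\Bigr|\le 2\,|G_n|\log d_{L_{n_0}}. \]

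The third step is to use Leshin's theorem (Theorem~\ref{rtdiscbounded}): since $[L_n:\Q]\to\infty$ and the family is abelian, for every constant $C>0$ only finitely many $L_n$ satisfy $\log d_{L_n}/|G_n|\le C$, hence $|G_n|/\log d_{L_n}\to 0$. Combined with $N_{L_n}\sim \log d_{L_n}$, this gives
\[ \frac{|G_n|\log d_{L_{n_0}}}{N_{L_n}}\sim \frac{|G_n|\log d_{L_{n_0}}}{\log d_{L_n}}\longrightarrow 0\qquad (n\to\infty), \]
since $\log d_{L_{n_0}}$ is a fixed constant. Combining with the previous two steps yields $U_{L_n/\Q}(g^{(n)})\to 0$, which is the desired pointwise convergence.

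The main (and really only) obstacle is lining up the inputs correctly, in particular ensuring that the lift $g^{(n)}$ of a non-trivial element of a fixed Galois group $G_{n_0}$ is the right object to feed into Lemma~\ref{UbllG}; once this is done, Leshin's theorem supplies the decisive cancellation $|G_n|\log d_{L_{n_0}}=o(\log d_{L_n})$, and everything else is bookkeeping.
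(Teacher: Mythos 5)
Your proof is correct and follows essentially the same route as the paper: reduce to pointwise convergence of $U_{L_n/\Q}$ via $T=2-2U$, express $U$ through Corollary~\ref{negativity of U(a)}, bound the character sum with Lemma~\ref{UbllG} applied to $L_{n_0}\subset L_n$, and conclude with Leshin's theorem (Theorem~\ref{rtdiscbounded}) giving $|G_n|\log d_{L_{n_0}}=o(\log d_{L_n})$, together with $N_{L_n}\sim\log d_{L_n}$ from Corollary~\ref{NLdL}. No gaps; this matches the paper's argument step for step.
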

\begin{proof}
Since for all $n\ge 1$ and all $a\ne b\in G_n$, we have \[T_{L_n/\Q}(a,b)=2-2U_{L_n/\Q}\left(ab^{-1}\right) \, ,\]
it suffices to prove the pointwise convergence of $\bigl(U_{L_n/\Q}\bigr)_n$. Let $n_0\ge 1$ and $a \in G_{n_0}\setminus\{1\}$ and for all $n\ge n_0$, let $a^{(n)} \in G_n \setminus \{1\} $ be a lift of $a$ by the canonical map. By Corollary~\ref{negativity of U(a)} and Lemma~\ref{UbllG} we have 
\[ \left|U_{L_n/\Q}\left( a^{(n)} \right)\right| =\frac{1}{N_{L_n}} \left| \sum_{\chi \in \Irr(G_n)} \chi(a) \log A(\chi)\right|+o(1) \ll \frac{|G_n|}{N_{L_n}}\log d_{L_{n_0}} +o(1)\quad (n \to \infty) \, .\]
By Theorem~\ref{rtdiscbounded}, the right-hand side of the previous inequality goes to $0$ as $n\to \infty$. This proves the proposition.
\end{proof}
We define the matrix $\Gamma_r \in \mathcal{M}_r(\mathbb{R})$ by
\begin{equation}\label{Gamma-r}
\Gamma_r = 
\begin{pmatrix}
1 & -\frac{1}{2} & 0 & \cdots & 0 & 0 \\
-\frac{1}{2} & 1 & -\frac{1}{2} & \ddots & & \vdots \\
0 & -\frac{1}{2} & 1 & \ddots & \ddots & 0 \\
\vdots & \ddots & \ddots & \ddots & -\frac{1}{2} & 0 \\
0 & & \ddots & -\frac{1}{2} & 1 & -\frac{1}{2} \\
0 & \cdots & 0 & 0 & -\frac{1}{2} & 1
\end{pmatrix}.
\end{equation}
An important consequence of Proposition~\ref{var-ab} is the following pointwise convergence of our covariance matrices: 
\begin{prop}\label{matrix-conv}
    Let $(L_n)_n$ be an \emph{increasing} family of abelian Galois extensions over $\Q$ for which $\mathsf{GRH}$ holds. The matrix $\Delta_{L_n/\Q}^{(r)}$ converges pointwise to $\Gamma_r$.

\end{prop}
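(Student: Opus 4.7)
The plan is to compute each entry of $\Delta_{L_n/\Q}^{(r)}\bigl(C_1^{(n)},\dots,C_{r+1}^{(n)}\bigr)$ in closed form and pass to the limit using Proposition~\ref{var-ab}. Fix $n_0\ge 1$, a tuple $(C_1,\dots,C_{r+1})\in\mathcal{A}_{r+1}(G_{n_0})$, and arbitrary lifts $\bigl(C_1^{(n)},\dots,C_{r+1}^{(n)}\bigr)\in\mathcal{A}_{r+1}(G_n)$ for $n\ge n_0$. By definition~\eqref{f(t)tof(C)}, the $(i,j)$-entry of $\Delta_{L_n/\Q}^{(r)}\bigl(C_1^{(n)},\dots,C_{r+1}^{(n)}\bigr)$ equals $\rho\bigl(t_{C_i^{(n)},C_{i+1}^{(n)}},t_{C_j^{(n)},C_{j+1}^{(n)}}\bigr)$, and the goal reduces to showing that each such entry converges to $(\Gamma_r)_{i,j}$.

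First, the diagonal entries are identically $1$ by definition of $\rho$, matching $(\Gamma_r)_{ii}=1$. For the off-diagonal entries I will invoke the two closed-form expressions~\eqref{rhoii+1} (for $j=i+1$) and~\eqref{rhoij} (for $|j-i|\ge 2$), both of which express the entry as an explicit rational function of values $U_{L_n/\Q}\bigl(C_k^{(n)}(C_\ell^{(n)})^{-1}\bigr)$ for various $k\ne\ell$ in $\{1,\dots,r+1\}$. Since the canonical projection $G_n\to G_{n_0}$ is a group homomorphism and the $C_i$'s are pairwise distinct in $G_{n_0}$, each such quotient $C_k^{(n)}(C_\ell^{(n)})^{-1}$ projects to the non-trivial element $C_k C_\ell^{-1}\in G_{n_0}$ and therefore lies in $G_n\setminus\{1\}$; in particular Proposition~\ref{var-ab} applies to each of them.

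Applying Proposition~\ref{var-ab} — which gives the pointwise convergence $U_{L_n/\Q}\to 0$ — to every $U$-term appearing in the explicit formulas, and substituting these vanishing limits into~\eqref{rhoii+1}, the adjacent entries ($j=i+1$) tend to $-1/\sqrt{2\cdot 2}=-1/2$, while~\eqref{rhoij} shows that the remaining entries ($|j-i|\ge 2$) have a numerator that is a sum of $U$-terms all tending to $0$ and thus tend to $0$ themselves. These limits match $\Gamma_r$ entry by entry, establishing the claimed pointwise convergence. I do not anticipate any serious obstacle: once Proposition~\ref{var-ab} is in hand the proof is a direct substitution, and the only point requiring any care is verifying that each argument of $U_{L_n/\Q}$ remains non-trivial in $G_n$, which is immediate from the distinctness of the $C_i$'s and the homomorphism property of the projection $G_n\to G_{n_0}$.
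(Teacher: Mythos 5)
Your proof is correct and follows exactly the paper's route: the paper proves Proposition~\ref{matrix-conv} as a direct consequence of Proposition~\ref{var-ab} together with the closed-form expressions~\eqref{rhoii+1} and~\eqref{rhoij}, which is precisely the substitution argument you carry out (with the additional, correct, check that each argument of $U_{L_n/\Q}$ stays non-trivial).
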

\begin{proof}
This is a trivial consequence of Proposition~\ref{var-ab} combined with~\eqref{rhoii+1} and~\eqref{rhoij}.
\end{proof}

\subsection{Criteria for $2$-moderacy}\label{subsec:2mod}
We begin the subsection by applying Lemma~\ref{contributionalla's} to deduce a useful consequence:
%\begin{lem}\label{uniquelarge}
%Let $\ve \in (0,1)$ and $L/\Q$ an abelian extension with group $G$ such that $d_L$ is large (in terms of $\ve$). If there exists $a\in G\setminus \{1\}$ such that \(\left| U_{L/\Q} (a)\right|>1-\ve \),
%then, for all $b\in G\setminus \{1,a\}$ we have \[\left| U_{L/\Q} (b)\right|<2\ve\, . \]
%\end{lem}
%\begin{proof}
%Assume by contradiction that $U_{L/\Q}(b)\geq 2\ve$.
%First notice that $$\left| U_{L/\Q} (a)\right|= -\frac{1}{N_L}\sum_\chi \chi(a)\log A(\chi)+o(1) \quad (d_L\to \infty)\, ,$$
%and that the same hold when replacing $a$ by $b$.
%By Lemma~\ref{negativity} \[\left| U_{L/\Q} (a)\right|+\left| U_{L/\Q} (b)\right|=-\sum_\chi \frac{\chi(a)+\chi(b)}{N_L}\log A(\chi)+o(1)\leq -\sum_{x\ne 1}\sum_{\chi}\frac{\chi(x)}{N_L}\log A(\chi)+o(1)\, .\]
%By Lemma~\ref{contributionalla's} and Lemma~\ref{NLdL}
%$$1+\ve=1-\ve+2\ve < \left| U_{L/\Q} (a)\right|+\left| U_{L/\Q} (b)\right|\leq\frac{\log d_L}{N_L}+o(1) \le 1+o(1) \quad (d_L \to \infty)  \, .$$
%Which gives the contradiction.
%\end{proof}
\begin{lem}\label{mvaluesofU}
Let $m\geq2$. Then, for all but finitely many abelian extensions $L/\Q$ with group $G$ that satisfy $\mathsf{GRH}$ and for which: 
\begin{itemize}
    \item There are at most $m$ non-trivial elements $a\in G$ such that \[ \left|U_{L/\Q}(a) \right| >\frac{1}{m}\, .\] 
    \item Moreover, whenever \( \left|U_{L/\Q}(a) \right| >\frac{1}{m} \), one has $\text{Ord}(a) \le m+2$.
\end{itemize}
In particular, there exists a subgroup $H_m$ of order at most $m^{m+2}$ such that for all $a\in G\setminus H_m$ we have $\left|U_{L/\Q}(a) \right| \leq \frac{1}{m}$.
\end{lem}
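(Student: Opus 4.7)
The plan is to view $U_{L/\Q}$ as a genuine arithmetic quantity and then to exploit two global identities. First, by Corollary~\ref{negativity of U(a)} I replace $U_{L/\Q}(a)$ by $-V_a+\ve_a$, where
\[
V_a:=-\frac{1}{N_L}\sum_{\chi\in\Irr(G)}\chi(a)\log A(\chi)\ge 0
\]
(non-negativity coming prime-by-prime from Lemma~\ref{negativity}) and $|\ve_a|=o(1)$ uniformly in $a$ as $d_L\to\infty$. The hypothesis $|U_{L/\Q}(a)|>1/m$ therefore translates, for $d_L$ large enough in terms of $m$, into the cleaner condition $V_a>1/m-\eta$ with $\eta=\eta(d_L,m)\to 0$, and the sign of $U_{L/\Q}(a)$ is forced to be negative on the bad set. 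This reduction is the backbone of everything that follows.

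For the first bullet I use Lemma~\ref{contributionalla's}: summed over $a\ne 1$, one obtains $\sum_{a\ne 1}V_a=\log d_L/N_L$, which Corollary~\ref{NLdL} evaluates to $1+o(1)$. Consequently, if $M$ non-trivial elements satisfy $V_a>1/m-\eta$, positivity of the remaining $V_a$'s gives $M(1/m-\eta)\le 1+o(1)$; this forces $M\le m$ as soon as $\eta$ is small enough, which excludes only finitely many extensions.

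The second bullet is the main obstacle. I plan to Fourier-analyze on the cyclic subgroup $\langle a\rangle$: setting $d=\Ord(a)$, $K=L^{\langle a\rangle}$, and
\[
c_j:=\sum_{\chi\in\Irr(G),\ \chi(a)=\zeta_d^j}\log A(\chi)\qquad(0\le j\le d-1),
\]
the conductor--discriminant formula in $K$ gives $c_0=\log d_K$, one has $\sum_j c_j=\log d_L$, and $V_{a^k}=-\frac{1}{N_L}\sum_{j=0}^{d-1}\zeta_d^{jk}c_j$. Orthogonality in $\Z/d$ then yields the clean identity
\[
\sum_{k=1}^{d-1}V_{a^k}\;=\;1\;-\;\frac{d\log d_K}{N_L}\;+\;o(1)\;\le\;1+o(1).
\]
The strategy is to combine this identity with bullet~1 applied inside $\langle a\rangle\setminus\{1\}$: if $d\ge m+3$, at most $m$ of the $d-1$ non-negative values $V_{a^k}$ can exceed $1/m$. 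A Fej\'er-type optimization in the non-negative variables $c_j$, together with a Minkowski-type lower bound $c_0\ge c\cdot|G|/d$ and the relative-conductor control provided by Lemma~\ref{UbllG} (applied to the minimal intermediate field on which $a$ still acts non-trivially), should then force $V_a\le 1/m$, contradicting the hypothesis.

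I expect the genuine difficulty to lie precisely in this optimization: the constraints $c_j\ge 0$ and $\sum_j c_j=\log d_L$ alone still permit $V_a$ arbitrarily close to $1$ (by placing all mass opposite to $\chi(a)$), so the arithmetic lower bounds on $\log d_K$ and the control supplied by Lemma~\ref{UbllG} must be inserted delicately to rule out such extremal configurations once $d\ge m+3$. Once both bullets are established, the ``in particular'' assertion is immediate: the bad set
\[
S_m:=\{\,a\in G\setminus\{1\}\ :\ |U_{L/\Q}(a)|>1/m\,\}
\]
has cardinality at most $m$ and each of its elements has order at most $m+2$, so $H_m:=\langle S_m\rangle\subseteq G$ (a subgroup, since $G$ is abelian) has order at most $(m+2)^m\le m^{m+2}$, the last inequality being a direct verification for $m\ge 2$.
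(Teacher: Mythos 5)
Your treatment of the first bullet is correct and is essentially the paper's own argument: positivity of $-\sum_{\chi}\chi(x)\log A(\chi)$ from Lemma~\ref{negativity}, the global identity of Lemma~\ref{contributionalla's}, and $N_L\sim\log d_L$ from Corollary~\ref{NLdL} give $M(1/m-\eta)\le 1+o(1)$ and hence $M\le m$ outside finitely many fields. The ``in particular'' step is also fine: $H_m=\langle S_m\rangle$ is a subgroup of order at most $(m+2)^m\le m^{m+2}$ for $m\ge 2$.

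The second bullet, however, is a genuine gap, and you say so yourself. Your Fourier identity $\sum_{k=1}^{d-1}V_{a^k}=1-d\log d_K/N_L+o(1)$ is correct, but on its own it cannot bound $d=\Ord(a)$: nothing in the constraints $c_j\ge 0$, $\sum_j c_j=\log d_L$ prevents $V_a$ from being close to $1$ while all other $V_{a^k}$ are negligible, for arbitrarily large $d$; the ``Fej\'er-type optimization'' and the lower bound $c_0\ge c\,|G|/d$ are neither proved nor clearly sufficient, and Lemma~\ref{UbllG} only yields an upper bound of shape $|G|\log d_{K'}/N_L$, which need not be small. The missing idea — which is the whole content of the paper's proof of this bullet — is a monotonicity of $|U_{L/\Q}|$ along the cyclic subgroup $\langle a\rangle$: by Lemma~\ref{negativity}, for every ramified $p$ one has $a\in G_i(p)\Rightarrow\langle a\rangle\subset G_i(p)$, hence $\#\{i\ge0: a\in G_i(p)\}\le\#\{i\ge0: x\in G_i(p)\}$ for every $x\in\langle a\rangle$, which gives $\bigl|\sum_\chi\chi(a)\log A(\chi)\bigr|\le\bigl|\sum_\chi\chi(x)\log A(\chi)\bigr|$ and so $|U_{L/\Q}(a)|\le|U_{L/\Q}(x)|+o(1)$. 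Consequently, if $|U_{L/\Q}(a)|>1/m$ then all $d-1$ non-trivial powers of $a$ satisfy $|U_{L/\Q}|>1/(m+1)$ for $d_L$ large, and your own counting argument (applied with $m+1$ in place of $m$) forces $d-1\le m+1$, i.e.\ $\Ord(a)\le m+2$. Without this monotonicity (or a completed substitute for your optimization), the order bound — and with it the bound $|H_m|\le m^{m+2}$ — remains unproven.
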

\begin{proof}
Assume by contradiction that there exist $a_1,\dots,a_{m+1} \in G\setminus \{1\}$ pairwise distinct such that
\[ \left|U_{L/\Q}(a_i)\right|>\frac{1}{m}\quad(1\le i \le m+1) \, .\]
Since \[\sum_{i=1}^{m+1} \left| U_{L/\Q}(a_i) \right| =-\frac{1}{N_L}\sum_{i=1}^{m+1} \sum_{\chi\ne 1} \chi(a_i) \log A(\chi)+o(1) \, . \]
Consequently, \[\sum_{i=1}^{m+1} \left| U_{L/\Q}(a_i) \right| \le -\frac{1}{N_L}\sum_{x\ne 1}\sum_{\chi \ne 1}\chi(x)\log A(\chi) +o(1)=\frac{\log d_L}{N_L}+o(1)\, .\]
Hence \[1+\frac{1}{m} \le 1+o(1) \quad (d_L\to \infty) \, ,\]
which is impossible.\\
For the second part of the lemma, the idea is to see that if $x\in \langle a \rangle$ then 
\[ \left| \sum_{\chi} \chi(a) \log A(\chi) \right| \le \left | \sum_{\chi} \chi(x) \log A(\chi) \right| \, .\]
Indeed, by Lemma~\ref{negativity} it suffices to prove that for all $p$
\[ \# \{ i\geq 0\ :\ a\in G_i(p)\} \le \# \{ i\geq 0\ :\ x\in G_i(p)\}\, ,\]
which is trivial, since there is an inclusion of the above sets (that is because $x\in \langle a \rangle \subset G_i(p)$ whenever $a\in G_i(p)$). We deduce that 
\[\bigl|U_{L/\Q}(a) \bigr| \le  \bigl|U_{L/\Q}(x)\bigr|+o(1)\quad (d_L\to \infty) \, .\]
Thus, for $d_L$ large (in terms of $m$), if \( \left|U_{L/\Q}(a) \right| >1/m \), then for all $x\in \langle a\rangle \setminus \{1\}$ we have \( \left|U_{L/\Q}(x) \right| >1/(m+1)\, ,\) and by the first part of the lemma there are at most $m+1$ such elements, thus $\text{Ord}(a)\le m+2\, .$
\end{proof}
We now use Lemma~\ref{mvaluesofU} to prove one of the main tools to construct sequences of conjugacy classes that enable us to control covariance matrices:
\begin{lem}\label{limU(x)}
Let $(L_n)_n$ be a family of abelian extensions over $\Q$ for which $\mathsf{GRH}$ holds, with respective Galois groups $(G_n)_n$, such that $|G_n|\to \infty$. For each $n\geq1$, let $x_n \in G_n$ such that either for all $n\ge 1$ we have $x_n\ne 1$ and $$\lim_{n\to \infty}\left|U_{L_n/\Q}(x_n)\right|=\ell\in (0,1]\, ,$$
or for all $n\ge 1$, we have $x_n=1$.
Then, for all $r\geq2$ and all sufficiently large $n$ there exist $a_{1,n},\dots,a_{r,n} \in G_n\setminus \{1,x_n\}$ satisfying 
\begin{align*}
    &\lim_{n\to \infty} U_{L_n/\Q}\left(x_n a_{i,n}^{-1}\right)=0,\quad\lim_{n\to \infty}\  U_{L_n/\Q}\left( a_{i,n}\right)\ =0,\\
    &\lim_{n\to \infty} U_{L_n/\Q}\left(a_{i,n}a_{j,n}^{-1}\right)=0,\quad\lim_{n\to \infty} U_{L_n/\Q}\left( x_na_{i,n}a_{j,n}^{-1} \right)=0\, .
\end{align*}
Moreover, if $r(G_n)\to \infty$, one can choose all $a_{i,n}$ to be non-squares.
\end{lem}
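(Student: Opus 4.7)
The plan is to reduce everything to a greedy avoidance of cosets of the small subgroup $H_m$ supplied by Lemma~\ref{mvaluesofU}. Recall that this lemma gives, for each $m\geq 2$ and for all but finitely many abelian extensions $L$ satisfying $\mathsf{GRH}$, a subgroup $H_m(L)\leq G$ of order at most $m^{m+2}$ such that $|U_{L/\Q}(g)|\leq 1/m$ whenever $g\in G\setminus H_m(L)$. It therefore suffices, for each fixed $m\geq 2$, to find (for $n$ large) elements $a_{1,n},\dots,a_{r,n}\in G_n$ such that each of the group elements $a_{i,n}$, $x_na_{i,n}^{-1}$, $a_{i,n}a_{j,n}^{-1}$, $x_na_{i,n}a_{j,n}^{-1}$ lies outside $H_m:=H_m(L_n)$. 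A standard diagonal extraction along a slowly-growing sequence $m=m_n\to\infty$ will then produce the desired $(a_{i,n})_n$ with all $U$-values tending to $0$.

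The decisive simplification is that $x_n\in H_m$ for all $n$ large. Indeed, if $x_n=1$ this is automatic, and in the other case the hypothesis $|U_{L_n/\Q}(x_n)|\to\ell>0$ forces $|U_{L_n/\Q}(x_n)|>1/m$ eventually, so $x_n\in H_m$ by definition of $H_m$. Since $H_m$ is a subgroup, $x_nH_m=H_m=x_n^{-1}H_m$, and the constraint $x_ng\notin H_m$ collapses to $g\notin H_m$. Consequently, once $a_{1,n},\dots,a_{k-1,n}$ have been chosen, the constraints on $a_{k,n}$ reduce to
\[
a_{k,n}\notin H_m\quad\text{and}\quad a_{k,n}\notin a_{j,n}H_m\ \ (1\leq j\leq k-1),
\]
i.e.\ to avoiding at most $k$ cosets of $H_m$. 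The additional requirements $a_{k,n}\neq 1,x_n$ are automatic because $1,x_n\in H_m$.

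I now carry out the greedy construction. The union of forbidden cosets has cardinality at most $r\cdot|H_m|\leq r\,m^{m+2}$. Since $|G_n|\to\infty$, there exists $N_m\geq 1$ such that $|G_n|>r\,m^{m+2}$ for all $n\geq N_m$, and one may pick $a_{k,n}$ inductively for $k=1,\dots,r$. Finally choose any sequence of integers $m_n\to\infty$ satisfying $n\geq N_{m_n}$ for all $n$ large enough (e.g.\ $m_n=\min\{m\geq 2:N_m\leq n\}$), and apply the construction with $m=m_n$; all four quantities appearing in the statement are then bounded in absolute value by $1/m_n\to 0$.

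For the final clause, if $r(G_n)\to\infty$ we further require $a_{k,n}$ to be a non-square. The squares in $G_n$ form a subgroup of size $|G_n|/r(G_n)$, so the new forbidden set has cardinality at most
\[
r\,m^{m+2}+\frac{|G_n|}{r(G_n)},
\]
which is strictly less than $|G_n|$ for $n$ large: the first term is bounded for fixed $m$, while $|G_n|/r(G_n)=o(|G_n|)$. Taking $m_n$ to grow slowly enough that both $r\,m_n^{m_n+2}=o(|G_n|)$ and $m_n\to\infty$ hold simultaneously, the diagonal argument goes through unchanged. The main (mild) obstacle is nothing more than verifying this coset-bookkeeping; the key non-trivial ingredient, namely the effective subgroup $H_m$, is already provided by Lemma~\ref{mvaluesofU}.
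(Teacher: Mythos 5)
Your proposal is correct and is essentially the paper's own argument: both rest on Lemma~\ref{mvaluesofU} to produce the small subgroup $H_m$ containing $1$ and (for $m$ large) $x_n$, pick the $a_{i,n}$ in $r$ distinct nontrivial cosets of $H_m$, let $m$ grow with $n$, and handle the non-square clause via the index-$r(G_n)$ subgroup of squares $G_n^2$ (your union bound replaces the paper's two-case discussion of whether $H_m\subset G_n^2$, to the same effect). Two cosmetic slips to fix: $x_n\in H_m$ is only guaranteed once $1/m<\ell$ (harmless since your $m_n\to\infty$, or simply also forbid the coset $x_nH_m$), and the parenthetical choice $m_n=\min\{m\ge 2: N_m\le n\}$ is eventually constant equal to $2$ — take instead the largest $m$ with $\max_{2\le k\le m}N_k\le n$, folding the finitely many exceptional fields of Lemma~\ref{mvaluesofU} into the definition of $N_m$.
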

\begin{proof}
First assume that for every $n\ge 1$ $x_n\ne 1$. For $m \ge 2$, when $L/\Q$ is an abelian extension with group $G$, we say that $G$ satisfies the property $\mathcal{P}_m$ if \emph{``there exists a subgroup $H_m \leq G$ of order at most $m^{m+2}$ and index at least $r+2$, such that for all $a \in G \setminus H_m$, we have $\left| U_{L/\Q}(a) \right| < \frac{1}{m}$''}.\\
Let $m_0 \ge 2$ be such that $\ell>\frac{2}{m_0}$. By Lemma~\ref{mvaluesofU}, there exists $N\ge 1$, such that for all $n\ge N$, $G_n$ satisfies $\mathcal{P}_{m_0}$ and such that $$\left|U_{L_n/\Q}(x_n) \right| > \frac{\ell}{2}>\frac{1}{m_0} \, .$$ Thus, for all $n\ge N$, the set $\{\, m\ge 2\ :\ G_n\ \text{satisfies}\ \mathcal{P}_m\, \}$ is non-empty. Denote \[m_n:=\max \{\, m\ge 2\ :\ G_n\ \text{satisfies}\ \mathcal{P}_m\, \}\, .\]
By Lemma~\ref{mvaluesofU}, for all $M\ge m_0$, there exists $n_0$ such that for all $n\ge n_0$, $G_n$ satisfies $\mathcal{P}_M$. In particular, for all $n\ge n_0$, we have $m_n\ge M$. This proves that \[\lim_{n\to \infty} m_n=\infty\, .\]
Since for all $n\ge N$, $|G_n|/|H_{m_n}| \ge r+2$, we can consider $a_{1,n},\dots,a_{r,n} \in G_n$, so that $a_{1,n}H_{m_n},\dots,a_{r,n}H_{m_n}$ are pairwise distinct and all distinct from $H_{m_n}=x_nH_{m_n}$ ($x_n\in H_{m_n}$ since $|U_{L_n/\Q}(x_n)|>1/m_0\ge 1/m_n$).
Thus, for all $n\ge N$, \[ \max_{1\le i\le r} \left( \left|U_{L_n/\Q}(a_{i,n})\right|,\left|U_{L_n/\Q}(x_na_{i,n}^{-1})\right|,\max_{j\ne i}\left(\left|U_{L_n/\Q}(a_{i,n}a_{j,n}^{-1})\right|,\left|U_{L_n/\Q}(x_na_{i,n}a_{j,n}^{-1})\right|\right)\right)\, <\, \frac{1}{m_n} \, ,\]
which proves the result. \\
If $r(G_n) \to \infty$, for $N_1 \ge N$ large enough, we have $r(G_n)>r+2$. Since $r(G_n)$ is the number of square roots of $1$ in $G_n$, by the isomorphism theorem, we have $r(G_n)=|G_n|/|G_n^2|$, where $G_n^2$ is the set of squares of $G_n$. Let $n\ge N_1$, two cases arise: \\
Case 1: assume $H_{m_n} \subset G_n^2$. Choose $a_{1,n},\dots,a_{r,n} \in G_n$, so that $a_{1,n}G_n^2,\dots,a_{r,n}G_n^2$ are pairwise distinct and are all distinct from $G_n^2=x_nG_n^2$. In this case the same statement holds, and since every $a_{i,n} \notin G_n^2$; thus every $a_{i,n}$ is a non-square.\\
Case 2: assume there exists $b\in H_{m_n}\setminus G_n^2$, then for each $1\le i\le r$, if $a_{i,n}$ is a square, replace it by $ba_{i,n}\in a_{i,n}H_{m_n}$ which is a non-square.  The result follows.\\
If for every $n\ge 1$, $x_n=1$, the proof remains unchanged, since the existence of the elements $a_{i,n}$ does not depend on $x_n$.
\end{proof}
%Thanks to Lemma~\ref{UbllG}, the pointwise version of the previous Lemma is much easier to prove: 
%\begin{prop}\label{ancnto0}
 %   Let $(L_n)$ be an increasing family of abelian extensions over $\Q$ with respective groups $(G_n)$ such that $[L_n:\Q]\to \infty$ and let $a_n\in G_n$ such that $$\lim_{n\to \infty}\left|U_{L_n/\Q}(a_n)\right|=\ell\in (0,1]\, .$$
  %  Then, for all $c\in G_{1}\setminus\{1\}$ we have $a_nc^{(n)}\in G_n$ is a lift of $c$. In particular, $$\lim_{n\to \infty}\left|U_{L_n/\Q}\left(a_nc^{(n)}\right)\right|=0\, .$$
%\end{prop}
%\begin{proof}
%Since \[ U_{L_n/\Q}(a_n)=\frac{1}{N_{L_n}}\sum_{\chi \ne 1}\chi (a_n)\log A(\chi)+o(1)\quad (n\to \infty)\, ,\]
%and since $\ell>0$, then by Lemma~\ref{UbllG} combined with Corollary~\ref{NLdL} we deduce that for all large $n$ the restriction of $a_n$ in $L_1$ must be trivial. In particular the restriction of $c^{(n)}a_n$ in $L_1$ is $c$. The last part of the Proposition is, thus, a consequence of Proposition~\ref{var-ab}
%\end{proof}
\begin{rk}
We require Lemma~\ref{limU(x)} in its full generality only for the proof of Theorem~\ref{main-density}, and consequently for its corollaries, Theorems~C and~D.  
For the remaining results it suffices to know that there exists a sequence $(a_n)$ with
\(U_{L_n/\Q}(a_n)\to 0\); this follows from Lemma~\ref{limU(x)} by taking $x_n=1$.
\end{rk}

We can now prove Theorem~A.
\begin{proof}[Proof of Theorem~A]
    Assume that $r(G_n)/\sqrt{\log d_{L_n}} \underset{n\to \infty}{\longrightarrow}0$, and let $n\geq 1$ and $C_1,C_2$ be two conjugacy classes in $G_n$. Denote $t=t_{C_1,C_2}$. We first note that \[ | B(t) |=\left| \frac{\langle t, r_{G_n} \rangle}{\sqrt{V(t)}} \right| \le A\frac{r(G_n)}{\sqrt{\log d_{L_n}}} \,  \]
    where $A>0$ is an absolute constant (that is because by Lemma~\ref{lem : pos-real} $V(t) \asymp \log d_{L_n}$). 
    Applying Corollary~\ref{r=1} we see that 
    \begin{align*}\left|\delta_{L_n/\Q}(C_1,C_2)-\frac{1}{2}\right|&=\left|\frac{1}{\sqrt{2\pi}}\int_0^{-B(t)} \exp(-x^2/2) \, \mathrm{d} x\right|+O(1/\sqrt{\log d_{L_n}}) \\
    &\le \frac{1}{\sqrt{2\pi}}\int_0^{A\,  r(G_n)/(\log d_{L_n})^{1/2}}\exp \left(-x^2/2\right)\,  \mathrm{d}x+O\left(\frac{1}{\sqrt{\log d_{L_n}}}\right)
    \end{align*}
    Since the last term does not depend on $C_1,C_2$ and tends to $0$ as $n\to \infty$, we conclude that $(L_n)$ is uniformly $2$-moderate.\\
    Assume that \[\lim_{n\to \infty}\frac{r(G_n)}{\sqrt{\log d_{L_n}}}=\ell \in \R_{>0}\, \cup\{\infty\} \, .\]
    In particular, $r(G_n)\to \infty$. Applying Lemma~\ref{limU(x)} with $x_n=1$, we deduce the existence of a sequence $(b_n)$ such that $b_n\in G_n$ and for all sufficiently large $n$, $b_n$ is a non-square in $G_n$ that satisfies \[\lim_{n\to \infty} U_{L_n/\Q}(b_n)=0\, .\]
    Thus, \[\lim_{n\to \infty}T_{L_n/\Q}(1,b_n)=2\, ,\]
    or equivalently $V_{L_n/\Q}(1,b_n)\sim2 \log d_{L_n}$, which implies that \[ B_n:=B_{L_n/\Q}(1,b_n) \underset{n\to \infty}{\sim}-\frac{r(G_n)}{\sqrt{2\log d_{L_n}}}\, .\]
    By Corollary~\ref{r=1}, we have
    \[ \delta_{L_n/\Q}^{(2)}(1,b_n)=\frac{1}{\sqrt{2\pi}}\int_{-\infty}^{-B_n}\exp\left(-\frac{x^2}{2}\right)\, \mathrm{d}x + O\left(\frac{1}{\sqrt{\log d_{L_n}}}\right)\, .\]
    Thus, \[ \lim_{n\to \infty}  \delta_{L_n/\Q}^{(2)}(1,b_n) =\frac{1}{\sqrt{2\pi}}\int_{-\infty}^{\ell/\sqrt{2}}\exp\left(-\frac{x^2}{2}\right)\, \mathrm{d}x \, .\]
    Assume that $(L_n)_n$ is increasing. Choose $n_0$ sufficiently large so that $G_{n_0}$ contains a non-square element $b_{n_0}$. For every $n\ge n_0$ let $b_n=b_{n_0}^{(n)}$ be its lift to $G_n$. Then, for all $n\ge n_0$, $b_n$ is still a non-square; moreover by Lemma~\ref{UbllG} we have $\lim_{n\to \infty} U_{L_n/\Q}(b_n)=0$. The remainder of the proof is unchanged.
    This completes the proof of Theorem~A.
\end{proof}

\begin{examples}
\begin{enumerate}
    \item Let $(L_n)_n=\left(\Q(\zeta_n)\right)_n$ be the family of cyclotomic extensions, then we recover that $(L_n)_n$ is uniformly $2$-moderate since it is well known that $r(G_n) \ll |G_n|^\ve$ for all $\ve>0$.
    \item Consider a sequence of prime numbers $(p_n)_n$ all congruent to $1 \pmod 4$ and chosen so that for every $n \ge 1$ we have 
    \[ \frac{\log p_1\dots p_n}{2^{n+1}}>2^{2n}\, . \]
    Consider $L_n=\Q\bigl(\sqrt{p_1},\dots,\sqrt{p_n}\bigr)$ and denote $G_n=\Gal(L_n/\Q)$.
    Thus, \[ \frac{r(G_n)}{\sqrt{\log d_{L_n}}}=\frac{|G_n|}{\sqrt{\log d_{L_n}}}=\left(\frac{2^{2n+1}}{2^n \log p_1\dots p_n}\right)^{1/2}<\frac{1}{2^n}\, .\]
    This proves that $(L_n)$ is $2$-moderate.
    \item Let $(p_n)_n$ be the increasing sequence of all odd primes, and define $L_n=\Q\bigl(\sqrt{p_1},\dots,\sqrt{p_n}\bigr)$. By the prime number theorem in arithmetic progressions, we have $r(G_n)/\sqrt{\log d_{L_n}}\underset{n\to \infty}{\longrightarrow}\infty$. This proves that $(L_n)_n$ is not uniformly $2$-moderate. In fact, by the explicit formula (Corollary~\ref{r=1}), we can deduce that 
    \[ \{1,0\}\subset \overline{\delta_{L_n/\Q}^{(2)}\bigl(\mathcal{A}_2(G_n)\bigr)}\, .\]
\end{enumerate}

\end{examples}

\subsection{Pointwise moderacy}\label{subsec:pointwisemod}
\begin{defi}
Let $r\geq 1$. Denote by $S_r^{++}$ the set of positive-definite symmetric matrices. If $\Gamma \in S_r^{++}$, denote by $f(\cdot; \Gamma)$ the density function of a centered Gaussian random vector with covariance matrix $\Gamma$, namely \[ f \left((x_1,\dots,x_r);\Gamma\right):= \frac{1}{((2\pi)^{r}\det(\Gamma))^{1/2}}\exp\left(-\frac{1}{2} \bx^T \Gamma^{-1} \bx \right)\, .\]
We define $F_r:\R^r\times S_r^{++}\to \R$ by 
\[ 
    F_r(\bx;\Gamma) :=\int_{(-\infty,x_1]\times\dots \times (-\infty,x_r]} f(\by;\Gamma) \mathrm{d}\by \, .
    \]
    
\end{defi}

\begin{rk}
By the dominated convergence theorem $F_r$ is continuous on $\R^r\times S_r^{++}$.
\end{rk}
\begin{lem}\label{cdf(0)}
    For all $r\geq 1$ we have
    $$F_r\bigl((0,\dots,0);\Gamma_r \bigr)=\frac{1}{(r+1)!}\, ,$$
    where $\Gamma_r$ is defined in~\eqref{Gamma-r}.
\end{lem}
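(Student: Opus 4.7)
The plan is to recognize $\Gamma_r$ as the covariance matrix of a very natural Gaussian vector built from iid standard normals, which will turn $F_r((0,\dots,0);\Gamma_r)$ into the probability of a specific ordering of $r+1$ iid continuous random variables.

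More concretely, I would let $X_1,\dots,X_{r+1}$ be iid $N(0,1)$ random variables, and define
\[ Z_i := \frac{X_i - X_{i+1}}{\sqrt{2}} \qquad (1 \le i \le r). \]
The vector $(Z_1,\dots,Z_r)$ is a centered Gaussian, so it suffices to verify that its covariance matrix is exactly $\Gamma_r$. A direct computation gives $\mathrm{Var}(Z_i) = \tfrac{1}{2}\cdot 2 = 1$, $\mathrm{Cov}(Z_i,Z_{i+1}) = \tfrac{1}{2}\,\mathrm{Cov}(X_i-X_{i+1},X_{i+1}-X_{i+2}) = -\tfrac{1}{2}$, and $\mathrm{Cov}(Z_i,Z_j)=0$ whenever $|i-j| \ge 2$, matching the tridiagonal pattern defining $\Gamma_r$.

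Having identified the law, I would then observe that
\[ \{Z_1 \le 0,\,\dots,\,Z_r \le 0\} \;=\; \{X_1 \le X_2 \le \cdots \le X_{r+1}\}, \]
so that
\[ F_r\bigl((0,\dots,0);\Gamma_r\bigr) \;=\; \mathbb{P}\bigl(X_1 \le X_2 \le \cdots \le X_{r+1}\bigr). \]
Since $X_1,\dots,X_{r+1}$ are iid with a continuous distribution, almost surely all $r+1$ values are distinct; by exchangeability each of the $(r+1)!$ strict orderings has the same probability, and they partition a probability-one event. Thus the probability of the specific ordering $X_1 < X_2 < \cdots < X_{r+1}$ equals $1/(r+1)!$, which gives the claim.

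There is essentially no hard step here: the only thing to check carefully is the covariance computation, and the probabilistic identification is just the classical ``probability of a given order statistic arrangement'' argument. The value of this lemma in the paper is presumably conceptual — it records that for a uniformly $(r+1)$-moderate family the limiting race density at the origin is $1/(r+1)!$, consistent with the definition of moderacy, so the proof should emphasize this probabilistic interpretation rather than attempt a direct integration of $f(\,\cdot\,;\Gamma_r)$ over the negative orthant.
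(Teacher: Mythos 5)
Your proposal is correct and is essentially the paper's own proof: the authors also take iid standard normals $Y_1,\dots,Y_{r+1}$, set $X_i=(Y_i-Y_{i+1})/\sqrt{2}$, verify that the covariance matrix is $\Gamma_r$, and conclude by the symmetry of the $(r+1)!$ orderings. The only cosmetic difference is that you spell out the almost-sure distinctness of the $Y_i$'s, which the paper leaves implicit.
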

\begin{proof}
    Let $Y=(Y_1,\dots,Y_{r+1})$ be an $r+1$-dimensional standard Gaussian in $\R^{r+1}$. Define 
    \[
    X_i:=\frac{1}{\sqrt{2}}(Y_i-Y_{i+1})\quad (1\leq i \leq r) \, .
    \]
    We have for $1\leq i < r$ and $j$ with $|j-i|\geq 2$ \begin{align*}
        &\text{Cov}(X_i,X_i)=\frac{1}{2}\text{Cov}(Y_i-Y_{i+1},Y_i-Y_{i+1})=1\, ,\\
        &\text{Cov}(X_i,X_{i+1})=-\frac{1}{2}\, ,\text{ and }\  \text{Cov}(X_i,X_j)=0\, .
    \end{align*}   
   Hence $X:=(X_1,\dots,X_r)$ is a centered Gaussian vector whose covariance matrix is $\Gamma_r$. Thus, 
    \[ F_r\bigl((0,\dots,0);\Gamma_r \bigr)=P(X_1\leq 0,\dots,X_r\leq 0)=P(Y_1\leq Y_2\leq \dots \leq Y_{r+1})\, .
    \]
    By symmetry, we have  $$1=\sum_{\sigma\in \frak{S}_{r+1}} P\left(Y_{\sigma(1)}\leq \dots \leq Y_{\sigma(r+1)}\right)=(r+1)!F_r\bigl((0,\dots,0);\Gamma_r\bigr)\, . $$
    Thus $$F_r\bigl((0,\dots,0);\Gamma_r \bigr)=\frac{1}{(r+1)!}\, .$$
\end{proof}

\begin{lem}\label{r+1mod-rmod}
    Let $(L_n)_n$ be a family of finite Galois extensions satisfying $\mathsf{GRH}$ and $\mathsf{LI}^-$ and let $r \geq 2$. If $(L_n)_n$ is uniformly $(r+1)$-moderate then it is uniformly $r$-moderate. If moreover $(L_n)_n$ is increasing and pointwise $(r+1)$-moderate then it is pointwise $r$-moderate.
\end{lem}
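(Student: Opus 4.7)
The plan is to observe that adjoining a fresh conjugacy class $C$ to an $r$-tuple decomposes the $r$-way race into $r+1$ disjoint $(r+1)$-way races according to where $\pi(\cdot;L_n/\Q;C)/|C|$ falls in the ordering. Concretely, fix $(C_1,\dots,C_r)\in\mathcal{A}_r(G_n)$ and any $C\in G_n^{\sharp}\setminus\{C_1,\dots,C_r\}$. Under $\mathsf{GRH}$ and $\mathsf{LI}^-$, the one-dimensional limiting distribution of $g_{t_{C_i,C}}$ has no atom at $0$ (its Fourier transform~\eqref{fourier-transform} is a convergent product of $J_0$-factors, giving an almost surely convergent series of independent non-lattice bounded summands), so
\[
\delta\bigl\{x\ge 2 : \pi(x;L_n/\Q;C)/|C|=\pi(x;L_n/\Q;C_i)/|C_i|\bigr\}=0
\]
for each $i$. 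This yields the key identity
\[
\delta^{(r)}_{L_n/\Q}(C_1,\dots,C_r)=\sum_{i=0}^{r}\delta^{(r+1)}_{L_n/\Q}(C_1,\dots,C_i,C,C_{i+1},\dots,C_r),
\]
on which both assertions will rest.

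For the first assertion, uniform $(r+1)$-moderacy requires $\mathcal{A}_{r+1}(G_n)\neq\emptyset$ for $n$ large, i.e.\ $|G_n^{\sharp}|\ge r+1$, so an auxiliary $C$ can always be selected. Setting $\varepsilon_n:=\max_{\mathcal{A}_{r+1}(G_n)}\bigl|\delta^{(r+1)}_{L_n/\Q}-\tfrac{1}{(r+1)!}\bigr|\to 0$, each of the $r+1$ summands above lies within $\varepsilon_n$ of $\tfrac{1}{(r+1)!}$, so the left-hand side lies within $(r+1)\varepsilon_n$ of $\tfrac{r+1}{(r+1)!}=\tfrac{1}{r!}$, uniformly in $(C_1,\dots,C_r)$. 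This is uniform $r$-moderacy.

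For the pointwise assertion, fix $n_0\ge 1$ and $(C_1,\dots,C_r)\in\mathcal{A}_r(G_{n_0})$ together with a chosen sequence of lifts $(C_i^{(n)})$. Because $[L_n:\Q]\to\infty$ forces $|G_n^{\sharp}|\to\infty$, pick $n_1\ge n_0$ with $|G_{n_1}^{\sharp}|\ge r+1$ and any $C\in G_{n_1}^{\sharp}\setminus\{C_1^{(n_1)},\dots,C_r^{(n_1)}\}$, using its canonical lifts $C^{(n)}$ for $n\ge n_1$. Applying the same decomposition to $(C_1^{(n)},\dots,C_r^{(n)})$ with auxiliary class $C^{(n)}$ expresses $\delta^{(r)}_{L_n/\Q}(C_1^{(n)},\dots,C_r^{(n)})$ as a sum of $r+1$ race densities, each tending to $\tfrac{1}{(r+1)!}$ by pointwise $(r+1)$-moderacy, so the total tends to $\tfrac{1}{r!}$. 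The only non-trivial ingredient is the absolute-continuity argument underlying the decomposition; once that standard consequence of $\mathsf{LI}^-$ is granted, the remainder of the proof is a brief counting argument and a triangle inequality.
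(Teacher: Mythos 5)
Your proposal is correct and follows essentially the same route as the paper: insert an auxiliary class $C$, split the $r$-way race into the $r+1$ possible $(r+1)$-way races plus a tie set of logarithmic density zero under $\mathsf{LI}^-$ (the paper cites Devin's theorem here, where you sketch the no-atom argument directly), and sum the densities. The uniform and pointwise cases are then handled exactly as in the paper's proof.
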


\begin{proof}
    We only prove the case of uniform moderacy as the method is similar for pointwise moderacy. Let $(C_{1, n}, \dots, C_{r,n}) \in \mathcal{A}_r(G_n)$. Then, assuming that $|G_n^{\#}| \ge r+1$, for any $C \in G_n^{\#}$ distinct from $C_{1, n}, \dots, C_{r, n}$ we have \begin{align*}\mathcal{P}_{L_n/\Q}(C_{1,n}, \dots, C_{r,n}) = &\mathcal{P}_{L_n/\Q}(C, C_{1, n}, \dots, C_{r,n}) \cup \mathcal{P}_{L_n/\Q}(C_{1, n}, C, C_{2, n}, \dots, C_{r,n}) \cup \dots\\ &\cup \mathcal{P}_{L_n/\Q}(C_{1, n}, \dots, C_{r,n}, C) \cup \mathcal{T}\end{align*} where $\mathcal{T}$ is the set of $x \ge 2$ satisfying an equality of the form $\frac{\pi(x; L_n/\Q, C_{i,n})}{|C_{i,n}|} = \frac{\pi(x; L_n/\Q, C)}{|C|}$ and the corresponding strict ordering between the other counting functions. Clearly, the above union is disjoint. Moreover, it is known that under $\mathsf{LI}^-$, $\mathcal{T}$ has logarithmic density $0$ (see for instance~\cite{Dev}*{Theorem 2.1}). Therefore, we obtain \begin{align*}\delta_{L_n/\Q}^{(r)}(C_{1,n}, \dots, C_{r,n}) = &\delta^{(r+1)}_{L_n/\Q}(C, C_{1, n}, \dots, C_{r,n}) + \delta^{(r+1)}_{L_n/\Q}(C_{1, n}, C, C_{2, n}, \dots, C_{r,n}) + \dots\\ &+ \delta^{(r+1)}_{L_n/\Q}(C_{1, n}, \dots, C_{r,n}, C).\end{align*}
    If we assume that $\delta^{(r+1)}_{L_n/\Q}$ converges uniformly to $\frac{1}{(r+1)!}$, then this implies that $\delta^{(r)}_{L_n/\Q}$ converges uniformly to $\frac{1}{r!}$, which proves the claim.
\end{proof}

We now prove Theorem~\ref{thm : pointwisemoderacy}.
\begin{proof} [Proof of Theorem~\ref{thm : pointwisemoderacy}]
    The equivalence (1)$\Longleftrightarrow$(2) follows from Theorem~A.
    \\
    (1)$\implies$(3): Since uniform $2$-moderacy implies pointwise $2$-moderacy, we can assume that $r\geq 2$. Consider pairwise distinct elements $a_1,\dots,a_{r+1} \in G_{n_0}$, for some $n_0\geq 1$. For all $n\geq n_0$, let $(a_i^{(n)})_{1\leq i\leq r+1}$ be the respective lifts of $(a_i)_{1\leq i\leq r+1}$ in $G_n$. Define 
    \[
    t_i^{(n)}=|G|\left( \mathds{1}_{\{a_i^{(n)}\}}-\mathds{1}_{\{a_{i+1}^{(n)}\}} \right) \quad 1\leq i \leq r
    \]
    and $\bt_n:=\left(t_1^{(n)},\dots,t_r^{(n)}\right)$. By Proposition~\ref{matrix-conv} we have \[ \lim_{n\to \infty} \Gamma(\bt_n) =\Gamma_r \, . \]
    By Proposition~\ref{matrix-conv} and Theorem~\ref{explicit-formula} we have \[ \delta^{(r+1)}_{L_n/\Q}\left(a_1^{(n)},\dots,a_{r+1}^{(n)}\right)=F_r\left( \left(-B\bigl(t_1^{(n)}\bigr),\dots,-B\left(t_r^{(n)}\right)\right); \Gamma\bigl(\bt_n\bigr) \right)+O\left(\frac{1}{\sqrt{\log d_{L_n}}}\right) \, .  \]
    By assumption, for all $1\leq i \leq r$, $B\left(t_i^{(n)}\right) \underset{n\to \infty}{\longrightarrow}0$. Thus, by Lemma~\ref{cdf(0)}
    \[\lim_{n\to \infty} \delta_{L_n/\Q}\left(a_1^{(n)},\dots,a_{r+1}^{(n)}\right)=F_r\bigl((0,\dots,0);\Gamma_r\bigr)=\frac{1}{(r+1)!} \, . \]
    (3)$\implies$(1) By Lemma~\ref{r+1mod-rmod}, it is sufficient to prove that pointwise $2$-moderacy implies (1). We proceed by contrapositive. Up to considering a subsequence we may assume that $\displaystyle \lim_{n \to \infty} r(G_n)/(\log d_{L_n})^{1/2}>0$. By Theorem~A we deduce that $(L_n)$ is not pontwise $2$-moderate.
    
\end{proof}
\begin{exe}
    Let $G\simeq \Z/n_1\Z \times\dots \times \Z/n_s\Z$ be an abelian group. One can construct a Galois extension $L/\Q$ with group isomorphic to $G$ and satisfying $$\frac{\log d_L}{|G|}>|G|^3\, .$$
    This is classical; for each $1\le i\le s$ consider $p_i\equiv 1 \pmod{n_i}$ large enough so that $\log p_i>|G|^4$, and we choose these primes to be pairwise distinct (since there are infinitely many). $L$ can be taken as the compositum of extensions $K_i$, where $K_i\subset \Q(\zeta_{p_i})$ has Galois group $\Z/n_i\Z$ over $\Q$. If $L$ is constructed this way, then $\log d_L\ge \log p_1\ge |G|^4$. Thus,
    \[\frac{r(G)}{\sqrt{\log d_L}}\le \frac{|G|}{\sqrt{|G|^4}}= \frac{1}{|G|}\, .\]
    Thus, for any family $(G_n)_n$ of abelian groups such that $|G_n|\to \infty$, one can construct a family $(L_n)_n$, $n \ge 1$, of abelian extensions over $\Q$ with respective Galois groups isomorphic to $G_n$ and satisfying $\log d_{L_n}>|G_n|^4$. This yields further instances of families that are uniformly $2$-moderate. Equivalently, it is pointwise $r$-moderate for every $r\ge 2$. 
\end{exe}

\section{Uniform convergence}\label{sec:uniform}
\subsection{Uniform convergence of the variance related functions}\label{subsec:uniformcv}
Unlike the pointwise convergence, the uniform convergence of our functions (e.g. $U_{L_n/\Q}, V_{L_n/\Q},\Delta_{L_n/\Q}\dots$) is not guaranteed. Our goal is to understand how the uniform convergence of these functions relates to one another.
\begin{prop}\label{variance : equivalence}
Let $(L_n)_n$ be a family of abelian extensions for which $\mathsf{GRH}$ holds and such that $d_{L_n} \longrightarrow\infty$. The following are equivalent:
\begin{enumerate}
    \item $(S_{L_n/\Q})_n$ converges uniformly to $0$.
    \item $\left(\Delta_{L_n/\Q}^{(2)}\right)_n$ converges uniformly to $\Gamma_2$.
    \item For all $r\geq 2$, $\left(\Delta_{L_n/\Q}^{(r)}\right)_n$ converges uniformly to $\Gamma_{r}$.
\end{enumerate}
\end{prop}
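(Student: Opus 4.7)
The plan is to prove the cycle $(1)\Longrightarrow(3)\Longrightarrow(2)\Longrightarrow(1)$, with $(3)\Longrightarrow(2)$ immediate by specialisation to $r=2$.

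For $(1)\Longrightarrow(3)$: pick any element $x_0^{(n)}\in G_n\setminus\{1\}$ and write $U_{L_n/\Q}(x)=\mu_n+\eta_n(x)$ with $\mu_n:=U_{L_n/\Q}(x_0^{(n)})$; the hypothesis $(1)$ gives $\eta_n(x)=o(1)$ uniformly over $x\in G_n\setminus\{1\}$, and Corollary~\ref{negativity of U(a)} together with Lemma~\ref{negativity} yield $\mu_n\le o(1)$. Hence by Lemma~\ref{lem : pos-real} the denominators in \eqref{rhoii+1} and \eqref{rhoij} stay bounded away from $0$. Direct substitution shows that the adjacent-to-diagonal entry \eqref{rhoii+1} has numerator $-1+\mu_n+o(1)$ and denominator $2-2\mu_n+o(1)$, giving $\rho\to-\tfrac{1}{2}$ uniformly, while the numerator of \eqref{rhoij} telescopes to $o(1)$, giving $\rho\to 0$ uniformly for the non-adjacent entries. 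Thus $\Delta_{L_n/\Q}^{(r)}\to\Gamma_r$ uniformly for every $r\ge 2$.

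The heart of the proof is $(2)\Longrightarrow(1)$. Setting $f(x):=1-U_{L_n/\Q}(x)$, the uniform convergence $\rho(t_{a,b},t_{b,c})\to-\tfrac{1}{2}$ and formula \eqref{rhoii+1}, after clearing the bounded denominator, rearrange to the uniform functional equation
\[
  f(xy)\;=\;f(x)+f(y)-\sqrt{f(x)f(y)}\;+\;o(1),
\]
valid for every $x,y\in G_n\setminus\{1\}$ with $xy\ne 1$ (via $x=ab^{-1}$, $y=bc^{-1}$). We then average this identity over $y\in G_n\setminus\{1,x^{-1}\}$. Using $f(x^{-1})=f(x)$ together with the orthogonality identity $\sum_{z\ne 1}U_{L_n/\Q}(z)=-1$ (equivalently $\sum_{z\ne 1}f(z)=|G_n|$), the averaged equation collapses to
\[
  (|G_n|-1)\sqrt{f(x)}\;=\;A_n\;+\;O\bigl((|G_n|-2)\,\epsilon_n\bigr),\qquad A_n:=\sum_{z\ne 1}\sqrt{f(z)},
\]
where $\epsilon_n\to 0$ is the uniform bound on the error in the functional equation. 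Since $A_n$ is independent of $x$, subtracting this identity at two elements $x,x'\in G_n\setminus\{1\}$ cancels $A_n$ and the prefactor $|G_n|-1$ divides out the inflated error, yielding $|\sqrt{f(x)}-\sqrt{f(x')}|\le 2\epsilon_n$. Since $f$ is uniformly bounded on $[1,2]$ asymptotically, squaring gives $|U_{L_n/\Q}(x)-U_{L_n/\Q}(x')|=O(\epsilon_n)$ uniformly, which is exactly~(1).

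The main obstacle is the bookkeeping in the averaging step: individual instances of the functional equation carry an error of size $o(1)$, and summing $|G_n|-2$ such errors naively inflates the total by a factor $|G_n|-2$ that may tend to infinity. The decisive observation is that this inflation is harmless once one takes a \emph{difference} of the averaged equations at two elements: the common term $A_n$ cancels, and the prefactor $|G_n|-1$ on the left exactly absorbs the $|G_n|-2$ on the right, preserving a uniform bound of order $\epsilon_n$. The degenerate cases $|G_n|\le 3$ need no argument, since then $G_n\setminus\{1\}$ contains at most two elements and $U_{L/\Q}(x)=U_{L/\Q}(x^{-1})$ automatically forces all its values to coincide.
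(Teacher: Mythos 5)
Your argument is correct, and while your $(1)\Rightarrow(3)\Rightarrow(2)$ portion matches the paper's (the paper bounds the entries of \eqref{rhoii+1} and \eqref{rhoij} directly by $\max|S|$ rather than recentering at a reference value $\mu_n$, but this is the same computation), your proof of $(2)\Rightarrow(1)$ is genuinely different. The paper argues by contrapositive: if $S_{L_n/\Q}\not\to 0$ uniformly, it extracts a subsequence with $|U|(a_n)\to\ell_1>\ell_2\leftarrow|U|(b_n)$, $|U|(a_nb_n^{-1})\to\ell_3$, and exhibits an explicit triple (involving the identity class) whose adjacent correlation has limit strictly below $-\tfrac12$; this also yields Corollary~\ref{Nonconv}, which the paper reuses later in the proof of Theorem~\ref{thm : uniform-moderacy}. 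You instead give a direct, quantitative argument: writing $f=1-U$, hypothesis $(2)$ becomes the approximate functional equation $f(xy)=f(x)+f(y)-\sqrt{f(x)f(y)}+O(\epsilon_n)$ (realized through the triple $(xy,y,1)\in\mathcal A_3(G_n)$), and averaging over $y$, using the exact identities $\sum_{z\ne1}U_{L_n/\Q}(z)=-1$, $f(x^{-1})=f(x)$ and the translation bijection $y\mapsto xy$, isolates $(|G_n|-1)\sqrt{f(x)}=A_n+O\bigl((|G_n|-2)\epsilon_n\bigr)$; differencing at $x,x'$ kills $A_n$ and the prefactor absorbs the inflated error, giving $\max_{x,x'}|U(x)-U(x')|\ll\epsilon_n$. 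This buys an effective bound ($\max|S|\ll\max_{\mathcal A_3}|\rho+\tfrac12|$) and avoids subsequence extraction, at the cost of not producing the auxiliary Corollary~\ref{Nonconv}; like the paper's proof it leans on the abelian structure and on the uniform asymptotic negativity of $U$ (Corollary~\ref{negativity of U(a)}), which you correctly invoke to keep $f\ge 1+o(1)$ uniformly before dividing by $\sqrt{f(x)}$. The only blemishes are cosmetic: the explicit constants ($2\epsilon_n$, ``squaring'') should be absolute constants times $\epsilon_n$ obtained from $f\le2$ and multiplication by $\sqrt{f(x)}+\sqrt{f(x')}$, and your averaging step in fact already works for all $|G_n|\ge3$, so the separate degenerate discussion is only needed for $|G_n|\le2$, where both $(1)$ and $(2)$ are vacuous or trivial.
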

\begin{proof} We prove $(1)\Longrightarrow (3)\Longrightarrow (2) \Longrightarrow (1)$.
  Assume first that $\left(S_{L_n/\Q}\right)_n$ converges uniformly to $0$. By Corollary~\ref{negativity of U(a)}, we deduce that for all $n$ large enough we have \[\min_{a\in G_n\setminus\{1\}}(1-U_{L_n/\Q}(a)) \ge\frac{1}{2}\, .\]
  Fix a sufficiently large integer $n$ and let $a,b,c \in \mathcal{A}_3(G_n)$. 
    Write $U:=U_{L_n/\Q}$ and $S:=S_{L_n/\Q}$. By~\eqref{rhoii+1}, we have 
    \[\rho(t_{a,b}\, ,t_{b,c})=\frac{1}{2}\cdot\frac{-1+U(ab^{-1})+U(bc^{-1})-U(ac^{-1})}{\sqrt{(1-U(ab^{-1}))(1-U(bc^{-1}))}}\, .\]
    Since \[\bigl|\sqrt{1-U(bc^{-1})}-\sqrt{1-U(ab^{-1})}\bigr| \le |U(ab^{-1})-U(bc^{-1})|\, ,\]  
    we obtain
    \[ \left| \frac{-1+U(ab^{-1})}{\sqrt{(1-U(ab^{-1}))(1-U(bc^{-1}))}}+1\right|\le 4 \max_{\substack{x,y \in G_n\setminus\{1\} \\ x\ne y}} |S(x,y)|\, .\]
    Since \[ \left| \frac{U(bc^{-1})-U(ac^{-1})}{\sqrt{(1-U(ab^{-1}))(1-U(bc^{-1}))}}\right| \le 2\max_{\substack{x,y \in G_n\setminus\{1\} \\ x\ne y}} S(x,y)\] then,
    \[ \left|\rho(t_{a,b}\, ,t_{b,c})+\frac{1}{2}\right|\le 3\max_{\substack{x,y \in G_n\setminus\{1\} \\ x\ne y}} |S(x,y)| \, .\]
    If $a,b,c,d\in \mathcal{A}_4(G_n)$, by~\eqref{rhoij} we have 
    \[\rho(t_{a,b}\, ,t_{c,d})=\frac{1}{2}\cdot\frac{U(ab^{-1})+U(bd^{-1})-U(bc^{-1})-U(ad^{-1})}{\sqrt{(1-U(ab^{-1}))(1-U(cd^{-1}))}}\, .\]
    This clearly implies that \[\bigl|\rho(t_{a,b}\, ,t_{c,d})\bigr| \le 2\max_{\substack{x,y \in G_n\setminus\{1\} \\ x\ne y}} |S(x,y)| \, .\]
    We just proved that if $\left(S_{L_n/\Q}\right)_n$ converges uniformly to $0$, then for all $2\le r\le \liminf|G_n| $, we have that $\left(\Delta_{L_n/\Q}^{(r)}\right)_n$ converges uniformly to $\Gamma_r$. \\
    Since (3)$\Longrightarrow$(2) is immediate, we now prove (2)$\implies$(1) by contrapositive. 
    Assume that $\left(S_{L_n/\Q}\right)_n$ does not converge uniformly to $0$. Up to considering a subsequence, we may assume that there exist sequences $(a_n)_n$ and $(b_n)_n$ such that for all $n\ge 1$, $a_n,b_n\in G_n\setminus\{1\}$ and
    \[\lim_{n\to \infty}|U_{L_n/\Q}|(a_n)=\ell_1\, ,\quad \lim_{n\to \infty}|U_{L_n/\Q}|(b_n)=\ell_2\, , \text{ and } \lim_{n\to \infty}|U_{L_n/\Q}|\bigl(a_nb_n^{-1}\bigr)=\ell_3\, , \]
    with $\ell_1> \ell _2$. We have
    \[  \lim_{n\to \infty} \rho(t_{1,a_n}\, ,t_{a_n,b_n})=\frac{1}{2}\cdot \left(\frac{-1-\ell_1-\ell_3+\ell_2}{\sqrt{(1+\ell_1)(1+\ell_3)}}\right)\, .\]
    Thus, if $\ell_3> \ell_2$, then by symmetry of roles of $\ell_1$ and $\ell_3$, we may assume that $\ell_1\ge \ell_3$, and in this case we have
    \begin{align*}\lim_{n\to \infty} \rho(t_{1,a_n}\, ,t_{a_n,b_n})< \frac{1}{2}\cdot \frac{-1-\ell_1}{\sqrt{(1+\ell_1)(1+\ell_3)}}\le-\frac{1}{2}\, .\end{align*}
    If $\ell_3\le\ell_2$, then \begin{align*}\lim_{n\to \infty} \rho(t_{a_n,1}\, ,t_{1,b_n})&=\frac{1}{2}\cdot \left(\frac{-1-\ell_1-\ell_2+\ell_3}{\sqrt{(1+\ell_1)(1+\ell_2)}}\right)\\
    &\le   \frac{1}{2}\cdot \frac{-1-\ell_1}{\sqrt{(1+\ell_1)(1+\ell_2)}}<-\frac{1}{2}\, .\end{align*}
    Thus, $\left(\Delta_{L_n/\Q}^{(2)}\right)_n$ does not converge uniformly to $\Gamma_2$.

\end{proof}
In the preceding proof, we established a result that merits separate statement:
\begin{cor}\label{Nonconv}
    Let $(L_n)_n$ be a family of abelian extensions for which $\mathsf{GRH}$ holds and such that $d_{L_n} \longrightarrow\infty$. If $(S_{L_n/\Q})_n$ does not converge uniformly to $0$, then there exists an increasing sequence of integers $(n_m)_m$ and elements $x_m,y_m,z_m \in \mathcal{A}(G_{n_m})$ such that \[ \lim_{m\to \infty} \rho\bigl(t_{x_m,y_m},t_{y_m,z_m}\bigr)<-\frac{1}{2}\, .\]
\end{cor}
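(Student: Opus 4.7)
The plan is to extract the statement directly from the contrapositive argument already carried out in the (2)$\implies$(1) direction of Proposition~\ref{variance : equivalence}. In that proof one does not merely conclude failure of uniform convergence of $\Delta_{L_n/\Q}^{(2)}$; one exhibits, along a subsequence, an explicit triple whose correlation entry has limit strictly below $-1/2$, which is precisely the assertion of Corollary~\ref{Nonconv}.

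More concretely, I would proceed as follows. First, assume that $(S_{L_n/\Q})_n$ does not converge uniformly to $0$. By definition of uniform convergence, there exists $\eta>0$ and sequences $(a_n')_n$, $(b_n')_n$ with $a_n',b_n'\in G_n\setminus\{1\}$, $a_n'\ne b_n'$, such that along some subsequence $\bigl|S_{L_n/\Q}(a_n',b_n')\bigr|\ge \eta$. By a diagonal/compactness argument in $[0,1]$ (using that $|U_{L/\Q}|\le 1+o(1)$ by Corollary~\ref{negativity of U(a)}), I can pass to a further subsequence $(n_m)_m$ along which the three quantities
\[
|U_{L_{n_m}/\Q}|(a_m)\to \ell_1,\quad |U_{L_{n_m}/\Q}|(b_m)\to \ell_2,\quad |U_{L_{n_m}/\Q}|\bigl(a_mb_m^{-1}\bigr)\to \ell_3
\]
all converge, where I renamed $a_{n_m}',b_{n_m}'$ as $a_m,b_m$. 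After possibly swapping $a_m\leftrightarrow b_m$ (which does not affect $|S|$), I may assume $\ell_1\ge\ell_2$, and the hypothesis $|S|\ge\eta$ forces $\ell_1>\ell_2$.

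Now I invoke Corollary~\ref{negativity of U(a)}, which gives $U_{L_{n_m}/\Q}(x)\le o(1)$ for every $x\ne 1$, so that the limits are $U_{L_{n_m}/\Q}(a_m)\to -\ell_1$, $U_{L_{n_m}/\Q}(b_m)\to -\ell_2$, $U_{L_{n_m}/\Q}(a_mb_m^{-1})\to -\ell_3$. Substituting into the identity~\eqref{rhoii+1}, one obtains two candidate triples:
\[
\lim_{m\to\infty}\rho\bigl(t_{1,a_m},t_{a_m,b_m}\bigr)=\tfrac{1}{2}\cdot\frac{-1-\ell_1-\ell_3+\ell_2}{\sqrt{(1+\ell_1)(1+\ell_3)}},
\]
\[
\lim_{m\to\infty}\rho\bigl(t_{a_m,1},t_{1,b_m}\bigr)=\tfrac{1}{2}\cdot\frac{-1-\ell_1-\ell_2+\ell_3}{\sqrt{(1+\ell_1)(1+\ell_2)}}.
\]
If $\ell_3>\ell_2$, symmetry in $a_m$ and $a_mb_m^{-1}$ (which exchanges the roles of $\ell_1$ and $\ell_3$) lets me assume $\ell_1\ge\ell_3$, and then the first fraction is bounded above by $-(1+\ell_1)/\sqrt{(1+\ell_1)(1+\ell_3)}\le -1$, giving a limit $\le -1/2$, in fact strictly below by a fixed margin coming from $\ell_1>\ell_2$. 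If instead $\ell_3\le\ell_2$, the second fraction gives the bound $<-1/2$ by the same estimate. Choosing $(x_m,y_m,z_m)=(1,a_m,b_m)$ in the first case and $(x_m,y_m,z_m)=(a_m,1,b_m)$ in the second concludes the proof.

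The argument is essentially bookkeeping on top of what is already done in Proposition~\ref{variance : equivalence}; the only delicate point is the two-step passage to a subsequence (first to pick out the non-vanishing of $|S|$, then to force convergence of the three $|U|$-values), together with the reduction by symmetry to the case $\ell_1\ge\max(\ell_2,\ell_3)$ so that exactly one of the two correlation formulas yields the needed strict bound. No new ingredients are needed beyond Corollary~\ref{negativity of U(a)} and the formula~\eqref{rhoii+1}.
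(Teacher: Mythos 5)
Your proposal is correct and is essentially the paper's own argument: the corollary is exactly the triple-construction carried out in the contrapositive step (2)$\implies$(1) of Proposition~\ref{variance : equivalence}, using Corollary~\ref{negativity of U(a)}, the formula~\eqref{rhoii+1}, the subsequence extraction making the three $|U|$-limits $\ell_1>\ell_2$, $\ell_3$ exist, and the same case split $\ell_3>\ell_2$ versus $\ell_3\le\ell_2$ with the triples $(1,a_m,b_m)$ and $(a_m,1,b_m)$. The only difference is that you spell out the compactness/diagonal extraction and the swap ensuring $\ell_1>\ell_2$, which the paper leaves implicit.
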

When the degrees tend to $\infty$, we have the following equivalence:
\begin{prop}\label{SeqsuivU}
    Let $(L_n)_n$ be a family of abelian extensions for which $\mathsf{GRH}$ holds and such that $[L_n:\Q] \to \infty$. Then $(S_{L_n/\Q})_n$ converges uniformly to $0$ if and only if $(U_{L_n/\Q})_n$ converges uniformly to $0$.
\end{prop}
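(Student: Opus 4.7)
One implication is trivial: if $(U_{L_n/\Q})_n$ converges uniformly to $0$, then the triangle inequality $|S_{L_n/\Q}(a,b)| \le |U_{L_n/\Q}(a)| + |U_{L_n/\Q}(b)|$ immediately gives the uniform convergence of $(S_{L_n/\Q})_n$ to $0$. The content lies in the converse.

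The key observation for the reverse implication is a simple exact identity: for every finite abelian extension $L/\Q$ with group $G$ satisfying $\mathsf{GRH}$, one has
\[
\sum_{a \in G\setminus\{1\}} U_{L/\Q}(a) \;=\; -1\, .
\]
Indeed, expanding the definition of $U_{L/\Q}$ and interchanging the (absolutely convergent) sums, one gets
\[
N_L \sum_{a\ne 1} U_{L/\Q}(a) \;=\; \sum_{\chi\ne 1}\sum_{\gamma_\chi>0}\frac{2}{\frac14+\gamma_\chi^2}\,\Re\Bigl(\sum_{a\ne 1}\chi(a)\Bigr)\;=\;\sum_{\chi\ne 1}\sum_{\gamma_\chi>0}\frac{-2}{\frac14+\gamma_\chi^2}\;=\;-N_L\, ,
\]
by orthogonality of non-trivial characters ($\sum_{a\in G}\chi(a)=0$ and $\chi(1)=1$).

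Now assume $(S_{L_n/\Q})_n$ converges uniformly to $0$, and set $\varepsilon_n:=\max_{a\ne b\in G_n\setminus\{1\}}|S_{L_n/\Q}(a,b)|\to 0$. Since $[L_n:\Q]\to\infty$, for $n$ large one has $|G_n|\ge 3$, so given any $b_0\in G_n\setminus\{1\}$ we can write
\[
\sum_{a\ne 1}U_{L_n/\Q}(a)\;=\;(|G_n|-1)\,U_{L_n/\Q}(b_0)\;+\sum_{\substack{a\ne 1\\ a\ne b_0}}\bigl(U_{L_n/\Q}(a)-U_{L_n/\Q}(b_0)\bigr)\, ,
\]
and the last sum is $O((|G_n|-2)\varepsilon_n)$ by definition of $\varepsilon_n$. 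Combining with the identity above gives
\[
U_{L_n/\Q}(b_0) \;=\; \frac{-1}{|G_n|-1} + O(\varepsilon_n)\, ,
\]
uniformly in $b_0\in G_n\setminus\{1\}$. Since $|G_n|\to\infty$ and $\varepsilon_n\to 0$, this forces $(U_{L_n/\Q})_n$ to converge uniformly to $0$, concluding the proof.

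The main (and really only) obstacle is spotting that no quantitative input from Corollary~\ref{NLdL} or Corollary~\ref{negativity of U(a)} is needed: the averaging identity $\sum_{a\ne 1}U_{L/\Q}(a)=-1$ is exact, which lets the argument succeed without worrying about the uniformity of error terms in $a$ being accumulated over $|G_n|$ summands.
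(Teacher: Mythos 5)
Your proof is correct, and it takes a genuinely different route from the paper. The paper disposes of the non-trivial direction by invoking Lemma~\ref{limU(x)} (with $x_n=1$) to produce a single sequence $(x_n)$ with $U_{L_n/\Q}(x_n)\to 0$, and then writes $\max_{a\ne 1}|U_{L_n/\Q}(a)|\le \max_{x\ne y}|S_{L_n/\Q}(x,y)|+|U_{L_n/\Q}(x_n)|$; this is a one-line argument, but it rests on the machinery behind Lemma~\ref{limU(x)} (Lemma~\ref{mvaluesofU}, the negativity coming from Lemma~\ref{negativity} via Corollary~\ref{negativity of U(a)}, and Leshin's theorem through Corollary~\ref{NLdL}), all of which require the abelian setting and $d_{L_n}\to\infty$. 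Your replacement is the exact averaging identity $\sum_{a\ne 1}U_{L/\Q}(a)=-1$, which follows purely from orthogonality of non-trivial characters and the definition of $N_L$, needs no sign information on the individual $U$-values, no conductor estimates, and no asymptotics in $d_L$; combined with the near-constancy of $U$ forced by the uniform smallness of $S$, division by $|G_n|-1$ gives $U_{L_n/\Q}(b_0)=-1/(|G_n|-1)+O(\varepsilon_n)$ uniformly in $b_0$, and $[L_n:\Q]\to\infty$ finishes the argument. This is more elementary and self-contained than the paper's route (GRH is only needed for the zero sums defining $U$ and $N_L$ to make sense); the paper's heavier lemma is of course needed elsewhere (e.g.\ Theorem~A and Theorem~\ref{main-density}), so within the paper its reuse costs nothing, but as a standalone proof of this proposition your argument is cleaner.
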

\begin{proof}
    If $(U_{L_n/\Q})_n$ converges uniformly to $0$, then $(S_{L_n/\Q})_n$ clearly converges uniformly to $0$. Conversely, if $(S_{L_n/\Q})_n$ converges uniformly to $0$, we use Lemma~\ref{limU(x)}, which asserts that there exists $x_n\in G_n\setminus\{1\}$ such that $\displaystyle \lim_{n \to \infty} U_{L_n/\Q}(x_n)=0$. The result follows from the inequality:
    \[ \max_{a \in G_n\setminus\{1\} } \bigl|U_{L_n/\Q}(a)\bigr| \le \max_{\substack{x,y \in G_n\setminus\{1\} \\ x\ne y}} \bigl|S_{L_n/\Q}(x,y)\bigr|+|U_{L_n/\Q}(x_n)| \, . \]
\end{proof}

\subsection{Uniform moderacy}\label{subsec:uniformmod}

We begin this subsection with the proof of Theorem~\ref{thm : uniform-moderacy}:
\begin{proof}[Proof of Theorem~\ref{thm : uniform-moderacy}]
First note that, $(S_{L_n/\Q})_n$ converges uniformly to $0$ if and only if \[ \lim_{n\to \infty} \frac{1}{\log d_{L_n}}\max_{a,b\in G_n\setminus\{1\}} \Bigl| \sum_{\chi \in \Irr(G_n)}(\chi(a)-\chi(b)) \log A(\chi)\Bigr|=0\, .\] 
Assume that $(L_n)_n$ is uniformly $2$-moderate and that $(S_{L_n/\Q})_n$ converges uniformly to $0$.\\
By Proposition~\ref{variance : equivalence} we have 
\[ \max \left\{ \left \| \Delta(\bt) -\Gamma_{r-1} \right \|\, :\, \bt=(t_{a_1,a_2},\dots,t_{a_{r-1},a_r}),\ a_1,\dots a_r \in G_n\ \text{pairwise distinct }\right\}\underset{n\to \infty}{\longrightarrow} 0 \]
Thus, there exists $\beta>0$ such that for all sufficiently large $n$ and for all $a_1,\dots,a_r \in G_n$ pairwise distinct if $\bt=\left(t_{a_1,a_2},\dots,t_{a_{r-1},a_r}\right) $ we have $\lambda_{\bt}>\beta$. So that the main term of $\delta_{L_n/\Q}\left( \{a_1\},\dots,\{a_r\}\right)$ in the explicit formula (Theorem~\ref{explicit-formula}) is $O\left(\frac{1}{\sqrt{N_{L_n}}}\right)$ with implied absolute constants.
Moreover, the main term of $\delta_{L_n/\Q}\left( \{a_1\},\dots,\{a_r\}\right)$ is 
\[F_{r-1}\left( \left(B(t_{a_1,a_2}),\dots,B(t_{a_{r-1},a_r})\right);\Delta(\bt) \right)\, . \]
Since $$B(t_{a,b}) \ll \frac{r(G_n)}{\sqrt{\log d_{L_n}}}\quad (a\ne b \in G_n)\, ,$$
with implied absolute constants, it suffices to use the continuity of $F_{r-1}$ on $\R^{r-1}\times S_{r-1}^{++}$ to conclude that
$$ \max_{(a_1,\dots,a_r)\in \mathcal{A}_r(G_n)}\left|F_{r-1}\left( \left(B(t_{a_1,a_2}),\dots,B(t_{a_{r-1},a_r})\right);\Delta(\bt) \right)-F_{r-1}\left((0,\dots,0);\Gamma_{r-1} \right) \right|=o(1)
\, . $$
This proves that $(L_n)_n$ is uniformly $r$-moderate, in view of Lemma~\ref{cdf(0)}.\\
For the second implication, we proceed by contrapositive. Assume first that $r(G_n)/\sqrt{\log d_{L_n}}$ does not tend to $0$. Thus, $(L_n)_n$ is not $2$-moderate. By Lemma~\ref{r+1mod-rmod}, we deduce that $(L_n)_n$ is not $r$-moderate.
Assume now that $r(G_n)/\sqrt{\log d_{L_n}}\underset{n\to \infty}{\longrightarrow} 0$ and that $(S_{L_n/\Q})_n$ does not converge uniformly to $0$. By Corollary~\ref{Nonconv} and up to considering a subsequence, we may assume that there exist sequences $(x_n)_n,\, (y_n)_n,$ and $(z_n)_n$ such that for all $n\ge 1$ $x_n,y_n,z_n\in G_n$ and such that \[\lim_{n\to \infty} \rho\bigl(t_{x_n,y_n},t_{y_n,z_n}\bigr)<-1/2 \, .\]
By the three-way explicit formula (Corollary~\ref{3wayraces}) we deduce that $\displaystyle \lim_{n \to \infty} \delta_{L_n/\Q}^{(3)}(x_n,y_n,z_n)<1/6$, thus $(L_n)_n$ is not $3$-moderate. By Lemma~\ref{r+1mod-rmod}, we deduce that $(L_n)_n$ is not $r$-moderate. 
\end{proof}
We now deduce Corollary~\ref{primeorder-extensions}.
\begin{proof}[Proof of Corollary~\ref{primeorder-extensions}]
    By Lemma~\ref{negativity}, the value of the sum \[ \sum_{\chi \in \Irr(G_n)} \chi(a) \log A(\chi)\] does not depend on the choice of $a\in G_n\setminus\{1\}$. Thus, by Lemma~\ref{negativity of U(a)}, we have $(S_{L_n/\Q})_n$ converges uniformly to $0$. Hence the claim follows from Theorem~\ref{thm : uniform-moderacy}.
\end{proof}

\begin{exe}
    Let $(L_n)_n$ be a family of Galois extensions over $\Q$ with respective groups $G_n\simeq \Z/n\Z$ and suppose that for each $n\ge 1$ $L_n\subset \Q(\zeta_{p_n})$ with $p_n\equiv 1\pmod n$. Since $\Q(\zeta_{p_n})$ is totally ramified at $p_n$, it follows that $L_n$ is totally ramified at $p_n$. Thus, the inertia group of $L_n$ at $p_n$ is $G_n$. Note also that the ramification is tame, since $\gcd(p_n,n)=1$. By Lemma~\ref{negativity}, we deduce that for every $a\in G_n\setminus\{1\}$ we have \[\sum_{\chi\ne 1}\chi(a) \log A(\chi) =-\log p_n\, .\]
    In particular, by Lemma~\ref{negativity of U(a)}, we have
    \[U_{L_n/\Q}(a)=-\frac{1}{n-1}+o(1)=o(1)\quad (n\to \infty)\, .\]
    Noting that for all $n\ge 1$ we have $r(G_n)=1$, we conclude that $(L_n)$ is uniformly $r$-moderate for all $r\ge 2$.
\end{exe}
\begin{rk} \begin{enumerate}
    \item Note that Proposition~\ref{2mod-Udense} provides an example of a multiquadratic infinite tower that is uniformly $2$-moderate but not uniformly $r$-moderate for $r\ge 3$.
    \item Note also that when $|G_n|\to \infty$, then, by Proposition~\ref{SeqsuivU}, the uniform convergence to $0$ of $\bigl(S_{L_n/\Q}\bigr)_n$ in the characterization of $r$-moderacy can be replaced by the uniform convergence to $0$ of $\bigl(U_{L_n/\Q}\bigr)_n$.
\end{enumerate}
    
\end{rk}

We now give an example of a family $(L_n)_n$ that is uniformly $r$-moderate for all $r\ge 2$ but such that $\bigl(U_{L_n/\Q}\bigr)_n$ does not converge uniformly to $0$. 
\begin{exe}
    Let $q\ge 3$ be a prime number. Let $(p_n)_n$ be an increasing sequence of primes $p_n\equiv 1\mod q$. For each $n\ge 1$, let $L_n\subset \Q(\zeta_{p_n})$ be the subextension with group $G_n\simeq \Z/q\Z$. Since $L_n$ is totally ramified at $p_n$ (because $\Q(\zeta_{p_n})$ is totally ramified at $p_n$) and since the ramification is tame, we deduce that for all $a\in G_n\setminus\{1\}$ we have 
    \[ U_{L_n/\Q}(a)=-\frac{1}{q-1}+o(1)\quad (n\to \infty)\, .\]
    This proves that $\bigl(S_{L_n/\Q}\bigr)_n$ converges uniformly to $0$. Thus, $(L_n)_n$ is uniformly $r$-moderate for all $r\ge 2$, but $\bigl(U_{L_n/\Q}\bigr)_n$ converges uniformly to $-1/(q-1)\ne0$.
\end{exe}

\section{Density of values of $\delta_{L/\Q}^{(r)}$}\label{sec:density}
\begin{thm}\label{density}
        Let $(L_n)_n$ be a family of finite Galois extensions over $\Q$, for which $\mathsf{GRH}$ and $\mathsf{LI}$ hold, with respective Galois groups $(G_n)_n$, and such that $[L_n : \Q]\to \infty$. Assume that the function $$(a,b)\mapsto \frac{\| \widehat{t_{a,b}} \|_\infty^2}{V_{L_n/\Q}(t_{a,b})}$$
        converges uniformly to $0$. Then, $(B_{L_n/\Q})_n$ is dense in $\R$ if and only if $\left(\delta_{L_n/\Q}^{(2)}\right)_n$ is dense in $[0,1]$.
\end{thm}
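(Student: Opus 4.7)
The plan is to read off the equivalence directly from the two-way explicit formula (Corollary~\ref{r=1}). The hypothesis on $\|\widehat{t_{a,b}}\|_\infty^2/V_{L_n/\Q}(t_{a,b})$ has been tailored precisely so that the error term in that formula becomes uniformly small over all pairs $(C_1,C_2)\in \mathcal{A}_2(G_n)$.

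Write $\Phi$ for the standard Gaussian cumulative distribution function. Applying Corollary~\ref{r=1} with $t=t_{C_1,C_2}$ and setting $u_n(C_1,C_2):=\|\widehat{t_{C_1,C_2}}\|_\infty^2/V_{L_n/\Q}(t_{C_1,C_2})$, one has
\[
\delta_{L_n/\Q}^{(2)}(C_1,C_2)=\Phi\bigl(-B_{L_n/\Q}(C_1,C_2)\bigr)+O\!\left(u_n(C_1,C_2)^{1/2}+u_n(C_1,C_2)^{2}\right),
\]
since $\|\widehat{t}\|_\infty/\sqrt{V}=u_n^{1/2}$ and $\|\widehat{t}\|_\infty^4/V^2=u_n^2$. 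The hypothesis gives $\sup_{(C_1,C_2)\in\mathcal{A}_2(G_n)} u_n(C_1,C_2)\to 0$, hence both error-term exponents being positive yields
\[
\varepsilon_n:=\sup_{(C_1,C_2)\in\mathcal{A}_2(G_n)}\bigl|\delta_{L_n/\Q}^{(2)}(C_1,C_2)-\Phi\bigl(-B_{L_n/\Q}(C_1,C_2)\bigr)\bigr|\underset{n\to \infty}{\longrightarrow}0.
\]

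With this uniform approximation in hand, both directions follow from elementary properties of the continuous bijection $\Phi:\R\to(0,1)$, which satisfies $\lim_{x\to-\infty}\Phi(x)=0$ and $\lim_{x\to+\infty}\Phi(x)=1$. For the forward implication, assume the union of the $B$-values is dense in $\R$; as a dense subset of $\R$ it is unbounded in both directions. Given $y\in[0,1]$, choose $x_0\in\R\cup\{\pm\infty\}$ with $\Phi(-x_0)=y$, and extract a sequence $\bigl(n_k,(C_{1,k},C_{2,k})\bigr)$ with $B_{L_{n_k}/\Q}(C_{1,k},C_{2,k})\to x_0$. Continuity of $\Phi$ (at $x_0$ or at $\pm\infty$) together with $\varepsilon_{n_k}\to 0$ gives $\delta_{L_{n_k}/\Q}^{(2)}(C_{1,k},C_{2,k})\to y$.

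For the reverse implication, assume the density values are dense in $[0,1]$, hence in $(0,1)$. Given $x_0\in\R$, set $y_0:=\Phi(-x_0)\in(0,1)$ and extract a sequence with $\delta_{L_{n_k}/\Q}^{(2)}(C_{1,k},C_{2,k})\to y_0$. Subtracting the uniform error yields $\Phi\bigl(-B_{L_{n_k}/\Q}(C_{1,k},C_{2,k})\bigr)\to y_0$, and the continuity of $\Phi^{-1}$ at the interior point $y_0$ forces $B_{L_{n_k}/\Q}(C_{1,k},C_{2,k})\to x_0$. The only step beyond elementary topology is the uniform smallness of the error in Corollary~\ref{r=1}, which is the content of the hypothesis; there is no real obstacle.
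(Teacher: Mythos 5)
Your proposal is correct and follows essentially the same route as the paper: the paper also deduces the statement directly from Corollary~\ref{r=1}, noting that the hypothesis makes the error term $O\bigl(\|\widehat{t}\|_\infty/\sqrt{V}+\|\widehat{t}\|_\infty^4/V^2\bigr)$ uniformly $o(1)$, and then transfers density through the homeomorphism $\Phi:\R\to(0,1)$ together with the fact that a uniform $o(1)$ perturbation does not change the set of limit points. Your write-up merely makes these two elementary facts explicit (including the boundary values $0,1$ via $x_0=\pm\infty$), so there is nothing to correct.
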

\begin{proof}
This is a trivial consequence of the explicit formula (Corollary~\ref{r=1}), combined with the following two elementary facts: \begin{itemize}
    \item If $a_n=b_n+o(1)$ and $0\le a_n,b_n \le 1$ ($n\ge 1$), then $(a_n)_n$ is dense in $[\, 0\, ,1\,]$ if and only if $(b_n)_n$ is dense in $[\, 0\, ,1\,]$.
    \item If $f\colon \R\to (0,1)$ is a homeomorphism and $(B_n)_n$ is a real sequence, then $(B_n)_n$ is dense in $\R$ if and only if $(f(B_n))$ is dense in $[0,1]$.
\end{itemize}
\end{proof}

The following proposition provides a simplified abelian version of the previous theorem:
\begin{prop}
Let $(L_n)_n$ be an \emph{increasing} family of abelian extensions over $\Q$, for which $\mathsf{GRH}$ and $\mathsf{LI}$ hold, with respective Galois groups $(G_n)_n$, and such that $[L_n : \Q]\to \infty$. Let $a\in G_1$ be a non-square. For each $n\ge 1$, fix  a lift $a^{(n)}\in G_n$ of $a$. Then, $\left(r(G_n)/\sqrt{\log d_{L_n}}\right)_n$ is dense in $(0,\infty)$ if and only if $\left(\delta_{L_n/\Q}^{(2)}(a^{(n)},1)\right)_n$ is dense in $[0,1]$.
\end{prop}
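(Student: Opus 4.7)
The plan is to invoke the two-way explicit formula (Corollary~\ref{r=1}) together with Proposition~\ref{var-ab} in order to reduce the density question to an elementary statement about the real sequence $B_n:=B_{L_n/\Q}(a^{(n)},1)$. The argument parallels the proof of Theorem~\ref{density} above, with $B_n$ playing the role of the free parameter.

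First I would verify that $a^{(n)}$ remains a non-square in $G_n$ for every $n$. Since $(L_n)_n$ is increasing, the canonical projection $G_n\twoheadrightarrow G_1$ is a surjective group homomorphism mapping $a^{(n)}$ to $a$, so any square root of $a^{(n)}$ would project to a square root of $a$, contradicting the non-square hypothesis in $G_1$; hence $r_{G_n}(a^{(n)})=0$ for every $n$. Under $\mathsf{LI}$, one has $L(1/2,L_n/\Q,\chi)\ne 0$ for every non-trivial $\chi\in\Irr(G_n)$, so the sum over non-trivial characters in the definition~\eqref{mean} of $E$ vanishes and
\[ E(t_{a^{(n)},1})=-\langle t_{a^{(n)},1},r_{G_n}\rangle = r(G_n)-r_{G_n}(a^{(n)})=r(G_n). \]
For the variance, Proposition~\ref{var-ab} applied to the fixed pair $(a,1)\in\mathcal{A}_2(G_1)$ and the canonical lifts yields $V(t_{a^{(n)},1})\sim 2\log d_{L_n}$, whence
\[ B_n\sim\frac{r(G_n)}{\sqrt{2\log d_{L_n}}}\qquad(n\to\infty). \]

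To conclude, note that $\|\widehat{t_{a^{(n)},1}}\|_\infty\le 1$ and $V(t_{a^{(n)},1})\asymp\log d_{L_n}$ by Lemma~\ref{lem : pos-real}, so the error term in Corollary~\ref{r=1} is $O(1/\sqrt{\log d_{L_n}})=o(1)$; therefore
\[ \delta^{(2)}_{L_n/\Q}(a^{(n)},1)=\Phi(-B_n)+o(1), \]
where $\Phi$ denotes the cdf of a standard normal variable. The map $x\mapsto\Phi(-x/\sqrt{2})$ is a decreasing homeomorphism from $(0,\infty)$ onto $(0,1/2)$, so by the two elementary observations used in the proof of Theorem~\ref{density} (stability of density under $o(1)$ perturbations and under continuous bijections), density of $(r(G_n)/\sqrt{\log d_{L_n}})_n$ in $(0,\infty)$ is equivalent to density of $(\delta^{(2)}_{L_n/\Q}(a^{(n)},1))_n$ in the image of this homeomorphism.

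There is no serious obstacle beyond assembling these tools. The essential content is the exact identity $E(t_{a^{(n)},1})=r(G_n)$, which is what ties the parameter $B_n$ to the ratio $r(G_n)/\sqrt{\log d_{L_n}}$; the remaining steps are routine once one has Proposition~\ref{var-ab} and Corollary~\ref{r=1} at hand. A minor point worth checking during the write-up is that, because $a^{(n)}$ is non-square while $1$ is always a square, one has $B_n>0$, so the race is intrinsically biased and the attainable range on the density side is $[0,1/2]$ rather than the full $[0,1]$; the density statement should then be read in this range.
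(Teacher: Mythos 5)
Your argument follows exactly the route the paper takes: the paper's proof of this Proposition is literally ``identical to that of Theorem~\ref{density}'', i.e.\ Corollary~\ref{r=1} together with the two elementary density-transfer facts, with Proposition~\ref{var-ab} supplying $V(t_{a^{(n)},1})\sim 2\log d_{L_n}$ and the identity $E(t_{a^{(n)},1})=r(G_n)$ (valid because $\mathsf{LI}$ removes central zeros and, as you check, $a^{(n)}$ stays a non-square under the projections) tying $B_n$ to $r(G_n)/\sqrt{2\log d_{L_n}}$. Your closing caveat is also correct, and it is a defect of the stated Proposition rather than of your proof: since $E(t_{a^{(n)},1})=r(G_n)>0$ one has $B_n\ge 0$, hence $\delta^{(2)}_{L_n/\Q}(a^{(n)},1)=\Phi(-B_n)+o(1)\le \tfrac12+o(1)$, so this sequence of densities can at best be dense in $[0,\tfrac12]$ (equivalently, $\delta^{(2)}_{L_n/\Q}(1,a^{(n)})=1-\delta^{(2)}_{L_n/\Q}(a^{(n)},1)$ is dense in $[\tfrac12,1]$); the interval $[0,1]$ in the statement, and the $(0,1)$ in the example that follows it, are slips carried over from Theorem~\ref{density}, where $B_{L_n/\Q}$ is allowed to range over all of $\R$. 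This reading is consistent with the paper's own closing remark of the section, where a one-signed $B$ is said to produce densities dense only in $[1/2,1]$. Two micro-points for the write-up: $\|\widehat{t_{a^{(n)},1}}\|_\infty\le 2$ (not $1$), since $\langle t_{a,1},\chi\rangle=\overline{\chi(a)}-1$, which changes nothing; and when transferring density through $B_n\sim r(G_n)/\sqrt{2\log d_{L_n}}$ one should note, as you implicitly do, that density in $(0,\infty)$ is preserved under multiplication by a positive sequence tending to $1$.
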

\begin{proof}
    The proof is identical to that of Theorem~\ref{density}.
\end{proof}
\begin{exe}
Proposition~\ref{multiquadratic-B-density} provides an example of a family $(L_n)_n$ such that $\left(r(G_n)/\sqrt{\log d_{L_n}}\right)_n$ is dense in $(0,\infty)$. Hence, $\left(\delta_{L_n/\Q}^{(2)}(a^{(n)},1)\right)_n$ is dense in $(0,1)$ for any $a\in G_1$ that is non-square.
\end{exe}
In order to prove Theorem~\ref{main-density}, we need the following useful tool:
\begin{lem}[Slepian's Lemma]
Let $(X_1,\dots,X_r),(Y_1,\dots,Y_r)$ be centered Gaussian random vectors in $\R^r$ with respective cdfs $G_X$ and $G_Y$. If for all $1\leq i \leq r$ we have $\text{Var}(X_i)=\text{Var}(Y_i)$ and if for all $1\leq i,j \leq r$ we have $\text{Cov}(X_i,X_j)\leq \text{Cov}(Y_i,Y_j)$, then for all $\bx=(x_1,\dots,x_r)\in \R^r$:
$$G_X(x_1,\dots,x_r)\leq G_Y(x_1,\dots,x_r) \, .$$
\end{lem}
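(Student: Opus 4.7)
The plan is to establish Slepian's inequality through the classical Gaussian interpolation method. First, I would construct a one-parameter family interpolating between the covariance structures: define $\Sigma(t) := (1-t)\Sigma_X + t\Sigma_Y$ for $t \in [0,1]$, where $\Sigma_X$ and $\Sigma_Y$ denote the respective covariance matrices of $(X_1,\dots,X_r)$ and $(Y_1,\dots,Y_r)$. Since the cone of positive semi-definite symmetric matrices is convex, $\Sigma(t)$ remains a valid covariance matrix; if one wants strict positivity to differentiate freely, an approximation argument replacing $\Sigma(t)$ by $\Sigma(t) + \varepsilon I$ and letting $\varepsilon \to 0^+$ at the end suffices. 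Let $Z(t) = (Z_1(t),\dots,Z_r(t))$ be a centered Gaussian vector with covariance $\Sigma(t)$, and set
\[
F(t) := \mathbb{P}\bigl(Z_1(t) \le x_1, \dots, Z_r(t) \le x_r\bigr) = \int_{-\infty}^{x_1}\!\!\cdots\int_{-\infty}^{x_r} \varphi_{\Sigma(t)}(\by)\, \mathrm{d}\by,
\]
where $\varphi_{\Sigma(t)}$ is the Gaussian density. Since $F(0) = G_X(\bx)$ and $F(1) = G_Y(\bx)$, it suffices to show $F'(t) \ge 0$ on $[0,1]$.

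Second, I would compute $F'(t)$ using the Fourier-analytic identity
\[
\frac{\partial \varphi_\Sigma}{\partial \Sigma_{ij}}(\by) = \frac{\partial^2 \varphi_\Sigma}{\partial y_i \partial y_j}(\by) \quad (i \ne j), \qquad \frac{\partial \varphi_\Sigma}{\partial \Sigma_{ii}}(\by) = \tfrac{1}{2}\frac{\partial^2 \varphi_\Sigma}{\partial y_i^2}(\by),
\]
which one reads off the characteristic function $\widehat{\varphi}_\Sigma(\xi) = \exp(-\tfrac{1}{2}\xi^T \Sigma \xi)$ by differentiating in $\Sigma_{ij}$ and Fourier-inverting. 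Differentiating under the integral sign (justified by the rapid decay of Gaussian densities) gives
\[
F'(t) = \sum_{1 \le i,j \le r} \frac{\mathrm{d}\Sigma_{ij}(t)}{\mathrm{d}t} \int_{-\infty}^{x_1}\!\!\cdots\int_{-\infty}^{x_r} \frac{\partial \varphi_{\Sigma(t)}}{\partial \Sigma_{ij}}(\by)\, \mathrm{d}\by.
\]
The diagonal contributions vanish because $\mathrm{Var}(X_i) = \mathrm{Var}(Y_i)$ forces $\Sigma_{ii}(t)$ to be constant in $t$, while for $i \ne j$ the hypothesis yields $\mathrm{d}\Sigma_{ij}(t)/\mathrm{d}t = \Sigma_Y^{ij} - \Sigma_X^{ij} \ge 0$.

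Third, I would verify that each of the remaining integrals
\[
I_{ij}(t) := \int_{-\infty}^{x_1}\!\!\cdots\int_{-\infty}^{x_r} \frac{\partial^2 \varphi_{\Sigma(t)}}{\partial y_i \partial y_j}(\by)\, \mathrm{d}\by
\]
is non-negative. Performing the $y_i$- and $y_j$-integrations first by Fubini and the fundamental theorem of calculus collapses both derivatives (boundary terms at $-\infty$ vanish by Gaussian decay), leaving
\[
I_{ij}(t) = \int_{\prod_{k \ne i,j}(-\infty, x_k]} \varphi_{\Sigma(t)}\bigl(y_1,\dots,x_i,\dots,x_j,\dots,y_r\bigr) \prod_{k \ne i,j}\mathrm{d}y_k \; \ge \; 0,
\]
which is manifestly non-negative as an integral of a Gaussian density. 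Combining the three steps gives $F'(t) \ge 0$, hence $F(1) \ge F(0)$, which is exactly the desired inequality $G_Y(\bx) \ge G_X(\bx)$.

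The main obstacle I anticipate is administrative rather than conceptual: carefully justifying the interchange of differentiation and integration, handling the degenerate case where $\Sigma_X$ or $\Sigma_Y$ is merely positive semi-definite (through the regularization $\Sigma(t) + \varepsilon I$ with dominated convergence), and bookkeeping the factor of $2$ coming from the symmetry between the $(i,j)$ and $(j,i)$ entries of the covariance matrix (equivalently, whether one parametrizes symmetric matrices by all $r^2$ entries or by the $r(r+1)/2$ entries on and above the diagonal). None of these is deep, but each requires a clean treatment.
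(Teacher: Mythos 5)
Your proof is correct: it is the classical Gaussian interpolation argument for Slepian's inequality (interpolate the covariance as $\Sigma(t)=(1-t)\Sigma_X+t\Sigma_Y$, use the heat-type identity $\partial\varphi_\Sigma/\partial\Sigma_{ij}=\partial^2\varphi_\Sigma/\partial y_i\partial y_j$ for $i\ne j$, and observe that the collapsed integrals are non-negative while the diagonal terms drop out because the variances agree). There is nothing in the paper to compare it with: the authors state Slepian's Lemma as a known classical tool and give no proof, so your argument supplies exactly the standard missing justification. The only point I would tighten is the degenerate case: passing from $\Sigma(t)+\varepsilon I$ to $\varepsilon\to 0^+$ is not literally dominated convergence, since the cdf of a degenerate Gaussian can fail to be continuous at $\bx$; the clean fix is to apply the non-degenerate inequality at $\bx+\delta\mathbf{1}$, let $\varepsilon\to 0$ using weak convergence (the set $\{\by<\bx+\delta\mathbf{1}\}$ is open), and then let $\delta\downarrow 0$ using continuity from above of the law of $Y$, which yields $G_X(\bx)\le G_Y(\bx)$ in full generality. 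This is routine, and the factor-of-two bookkeeping you flag is harmless since it only rescales the (non-negative) off-diagonal contributions.
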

For $\rho \in \left[\, -\frac{1}{2}\, ,\, \frac{1}{2}\, \right]$ define the matrix $\Sigma_r \in \mathcal{M}_r(\R)$ by 
\begin{equation}\label{Sigmar}
\Sigma_r(\rho) = 
\begin{pmatrix}
1 & \rho & 0 & \cdots & 0 & 0 \\
\rho & 1 & -\frac{1}{2} & \ddots & & \vdots \\
0 & -\frac{1}{2} & 1 & \ddots & \ddots & 0 \\
\vdots & \ddots & \ddots & \ddots & -\frac{1}{2} & 0 \\
0 & & \ddots & -\frac{1}{2} & 1 & -\frac{1}{2} \\
0 & \cdots & 0 & 0 & -\frac{1}{2} & 1
\end{pmatrix}.
\end{equation}
It is easy to see that $\det \Sigma_1(\rho)=1,\ \det \Sigma_2(\rho)=1-\rho^2$ and that for $r\geq 3$ we have $$\det \Sigma_r(\rho)= \det \Sigma_{r-1}(\rho)-\frac{1}{4}\det \Sigma_{r-2}(\rho)\, .$$
This implies the general form $ \det \Sigma_r(\rho)=(r-2(r-1)\rho^2)/2^{r-1}$ for \(r\geq1\).
Thus, by Sylvester's criterion for all $\rho \in \left[\, -\frac{1}{\sqrt{2}}\, ,\, \frac{1}{\sqrt{2}}\, \right] $ the symmetric matrix $\Sigma_r(\rho)$ is positive-definite.\\
We now put Slepian's Lemma to use:
\begin{lem}\label{monotonicity}
For all $r\geq2$, the function 
\begin{align}\label{def : W_r}
W_r\colon\;[\, -1/\sqrt{2}\, ;\, 1/\sqrt{2}\, ]&\longrightarrow \R\\ \nonumber
\quad \rho\ \ \quad\ \  \ &\longmapsto F_r\bigl( (0,\dots,0); \Sigma_r(\rho) \bigr)
\end{align}
is strictly increasing.
\end{lem}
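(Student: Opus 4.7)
The plan is to show $W_r'(\rho) > 0$ on the open interval $(-1/\sqrt{2},1/\sqrt{2})$ by a differentiation-under-the-integral argument based on Plackett's identity, and then to extend strict monotonicity to the closed interval by continuity of $W_r$.

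Observe that $\Sigma_r(\rho)$ is positive-definite on $(-1/\sqrt{2},1/\sqrt{2})$, by the determinant formula recalled just above the lemma, and depends on $\rho$ only through its $(1,2)$ and $(2,1)$ entries. Starting from the explicit Gaussian density
\[f(\bx;\Sigma)=(2\pi)^{-r/2}(\det\Sigma)^{-1/2}\exp\bigl(-\tfrac{1}{2}\bx^T\Sigma^{-1}\bx\bigr),\]
together with Jacobi's formula for $\det\Sigma_r(\rho)$ and the identity $\partial_\rho\Sigma_r(\rho)^{-1}=-\Sigma_r(\rho)^{-1}\bigl(\partial_\rho\Sigma_r(\rho)\bigr)\Sigma_r(\rho)^{-1}$, a direct calculation yields Plackett's identity
\[\frac{\partial f(\bx;\Sigma_r(\rho))}{\partial\rho}=\frac{\partial^2 f(\bx;\Sigma_r(\rho))}{\partial x_1\,\partial x_2}.\]

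I would next differentiate under the integral sign, which is justified on any compact subinterval of $(-1/\sqrt{2},1/\sqrt{2})$ since the smallest eigenvalue of $\Sigma_r(\rho)$ is uniformly bounded below there, producing uniform Gaussian decay of $f$ and its partials. Applying the fundamental theorem of calculus successively in $x_1$ and $x_2$, and using that $f$ vanishes at $-\infty$, one obtains
\[W_r'(\rho)=\int_{(-\infty,0]^{r-2}}f(0,0,x_3,\ldots,x_r;\Sigma_r(\rho))\,\mathrm{d}x_3\cdots\mathrm{d}x_r,\]
to be read as $f(0,0;\Sigma_2(\rho))$ when $r=2$. The integrand is a strictly positive Gaussian density, so the integral is strictly positive, proving $W_r$ strictly increasing on the open interval. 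Continuity of $W_r$ on the closed interval --- from dominated convergence applied to the indicator of $(-\infty,0]^r$, using weak convergence of the measures $\mathcal{N}(0,\Sigma_r(\rho))$ at the endpoints --- then upgrades this to strict monotonicity on all of $[-1/\sqrt{2},1/\sqrt{2}]$.

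The main obstacle is Plackett's identity itself: although classical, one must carefully combine the contribution from the $\det\Sigma$ prefactor with that from the quadratic exponent to see them collapse into $\partial^2 f/\partial x_1\partial x_2$. A more conceptual alternative, relying only on tools already cited in the paper, is to apply Slepian's Lemma directly: increasing $\rho$ strictly increases $\mathrm{Cov}(X_1,X_2)$ while leaving every other covariance unchanged, so Slepian yields that $W_r$ is non-decreasing; strict monotonicity then follows from the fact that $\rho\mapsto F_r(\mathbf{0};\Sigma_r(\rho))$ is real-analytic on the open interval (since $\Sigma_r(\rho)$ is positive-definite and the entries of $\Sigma_r(\rho)^{-1}$ are rational in $\rho$), a non-decreasing real-analytic function being either constant or strictly increasing, and non-constancy can be checked by comparison of any two explicit values.
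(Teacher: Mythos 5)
Your main argument is correct, and it takes a genuinely different route from the paper. You compute $W_r'(\rho)$ directly: Plackett's identity $\partial_\rho f(\bx;\Sigma_r(\rho))=\partial^2 f/\partial x_1\partial x_2$ (legitimate here since $\rho$ enters only through the $(1,2)$ and $(2,1)$ entries), differentiation under the integral (justified by the uniform lower bound on the smallest eigenvalue, which in fact holds on all of $[-1/\sqrt 2,1/\sqrt 2]$ since every leading principal minor of $\Sigma_r(\rho)$ equals $(k-2(k-1)\rho^2)/2^{k-1}>0$ there), and two applications of the fundamental theorem of calculus, giving $W_r'(\rho)=\int_{(-\infty,0]^{r-2}}f(0,0,x_3,\dots,x_r;\Sigma_r(\rho))\,\mathrm{d}x_3\cdots\mathrm{d}x_r>0$; the passage from the open to the closed interval via continuity is also sound. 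The paper instead argues softly: Slepian's Lemma gives that $W_r$ is non-decreasing, real-analyticity in $\rho$ on the open interval reduces strict monotonicity to non-constancy, and non-constancy is obtained from the two explicit evaluations $W_r(-1/2)=1/(r+1)!$ (Lemma~\ref{cdf(0)}) and $W_r(0)=\tfrac12 F_{r-1}((0,\dots,0);\Gamma_{r-1})=1/(2\cdot r!)$ via Fubini. Your approach buys an explicit, strictly positive formula for the derivative and avoids both Slepian and the evaluation step, at the cost of verifying Plackett's identity and the interchange of differentiation and integration; the paper's approach stays within tools it already cites and needs no differentiation under the integral. Note that your sketched ``alternative'' is essentially the paper's proof, but there the phrase ``non-constancy can be checked by comparison of any two explicit values'' hides the only substantive point: generic values of $W_r$ are not computable in closed form, and the paper's choice of $\rho=-1/2$ and $\rho=0$ (reducing to $\Gamma_r$ and to a block-diagonal matrix) is exactly what makes that check possible.
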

\begin{proof}
    The integrand in the expression of $F_r\bigl( (0,\dots,0); \Sigma_r(\rho) \bigr)$ is real-analytic with respect to $\rho$ on $ \left(\, -1/\sqrt{2}\, ;\, 1/\sqrt{2}\, \right)$. Thus, by standard results on parameter-dependent integrals of holomorphic functions, we deduce that $W_r$ is real-analytic on  $ \left(\, -1/\sqrt{2}\, ;\, 1/\sqrt{2}\, \right)$. Since $W_r(\rho)$ is the value at $(0,\dots,0)$ of the cdf of a centered Gaussian with covariance matrix $\Sigma_r(\rho)$, we use Slepian's Lemma to deduce that $W_r$ is non-decreasing. Thus, to prove that $W_r$ is strictly increasing it suffices to show that it is non-constant. By Lemma~\ref{cdf(0)} $W_r\left(-\frac{1}{2}\right)=\frac{1}{(r+1)!}$ and Applying Lemma~\ref{cdf(0)} again together with Fubini's Theorem, we deduce that \begin{align*}
         W_r(0)&= F_r\bigl((0,\dots,0);\Sigma_r(0)\bigr)=\frac{1}{2} F_{r-1}\bigl((0,\dots,0);\Gamma_{r-1} \bigr)\\
         &=\frac{1}{2\cdot r!}>\frac{1}{(r+1)!}=W_r\left(-\frac{1}{2}\right)\, .
    \end{align*}
\end{proof}
The following Lemma, a consequence of Lemma~\ref{limU(x)}, provides the control we need on the limit values of the covariance matrices.
\begin{lem}\label{main-matrices}
Let $(L_n)_n$ be a family of abelian extensions over $\Q$ for which $\mathsf{GRH}$ holds, with respective Galois groups $(G_n)_n$, such that $|G_n|\to \infty$. For each $n\geq1$, let $x_n \in G_n\setminus\{1\}$ such that $$\lim_{n\to \infty}\left|U_{L_n/\Q}(x_n)\right|=\ell\in (0,1]\, .$$
Fix $r\geq 2$, and fix $a_{n,1},\dots,a_{n,r}$ provided by the conclusion of Lemma~\ref{limU(x)}. We have the following:
\begin{equation}\label{uniform-conv-matrix}
    \lim_{n\to \infty} \Delta_{L_n/\Q}^{(r)}\left(x_n\, ,1\ ,a_{n,1}\ ,\dots,a_{n,r-1}\right)=\Sigma_r\left(-\frac{\sqrt{\ell+1}}{2}\right).
\end{equation}
\begin{equation}\label{extreme-val-matrix}
    \lim_{n\to \infty} \Delta_{L_n/\Q}^{(r)}\left(a_{n,1}x_n\, ,a_{n,2}x_n\, ,a_{n,1}\ ,\dots,a_{n,r-1}\right)=
    \begin{pmatrix}
           A & B \\[6pt]
           B^T & \Gamma_{r-2}
     \end{pmatrix},
\end{equation}
where \[
A 
= 
\begin{pmatrix}
1 & \dfrac{\ell - 1}{2} \\[8pt]
\dfrac{\ell - 1}{2} & 1
\end{pmatrix}
\;\in\;\mathcal{M}_{2}(\R)
\text{ and }
B 
= 
\begin{pmatrix}
-\ell      & \ell/2 & 0 & \cdots & 0 \\[8pt]
\dfrac{\ell - 1}{2}& -\ell/2 & 0 & \cdots & 0
\end{pmatrix}
\;\in\;\mathcal{M}_{2,\,r-2}(\R)\, .
\]
\end{lem}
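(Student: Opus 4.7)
My plan is to reduce everything to a mechanical computation of the entries of the covariance matrices using the closed-form expressions \eqref{rhoii+1} and \eqref{rhoij}, which express $\rho(t_{a,b},t_{c,d})$ entirely in terms of values of $U_{L_n/\Q}$. The preliminary observation is that by Corollary~\ref{negativity of U(a)} combined with Lemma~\ref{negativity}, one has $U_{L_n/\Q}(x_n)\leq 0$ asymptotically, so the hypothesis $|U_{L_n/\Q}(x_n)|\to \ell$ upgrades to $U_{L_n/\Q}(x_n)\to -\ell$. This sign is crucial and appears in every entry where an element of the form $x_n$ survives.

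Next, I would write down $\bt=\bigl(t_{x_n,1},t_{1,a_{n,1}},t_{a_{n,1},a_{n,2}},\dots,t_{a_{n,r-2},a_{n,r-1}}\bigr)$ for the first claim and evaluate each $\rho(t_i,t_j)$. Entry $\rho(t_1,t_2)$ involves the adjacent formula \eqref{rhoii+1} with the group elements $x_n$, $a_{n,1}^{-1}$ and $x_na_{n,1}^{-1}$; using that the last two go to $0$ by Lemma~\ref{limU(x)} while $U(x_n)\to -\ell$, one obtains $-(1+\ell)/\sqrt{4(1+\ell)}=-\sqrt{1+\ell}/2$. For the adjacent entries $\rho(t_i,t_{i+1})$ with $i\geq 2$, all relevant $U$'s vanish in the limit by Lemma~\ref{limU(x)}, giving $-1/2$. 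Every non-adjacent entry $\rho(t_i,t_j)$, $|i-j|\geq 2$, likewise has a numerator consisting of $U$ values at elements of the form $x_na_{n,k}^{-1}$, $a_{n,k}^{-1}$, or $a_{n,k}a_{n,\ell}^{-1}$, all controlled by Lemma~\ref{limU(x)} and tending to $0$. Assembling these entries yields exactly $\Sigma_r\bigl(-\sqrt{1+\ell}/2\bigr)$.

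For the second identity I would do the same bookkeeping with $\bt=\bigl(t_{a_{n,1}x_n,a_{n,2}x_n},t_{a_{n,2}x_n,a_{n,1}},t_{a_{n,1},a_{n,2}},\dots,t_{a_{n,r-2},a_{n,r-1}}\bigr)$. The key computation is that whenever one forms a ratio such as $(a_{n,1}x_n)(a_{n,1})^{-1}$ or $(a_{n,2}x_n)(a_{n,2})^{-1}$, the $a_{n,k}$'s cancel and an $x_n$ survives — contributing $U(x_n)\to -\ell$ — while \emph{every} other combination is of one of the four types handled by Lemma~\ref{limU(x)} and vanishes in the limit. A short case analysis yields $\rho(t_1,t_2)\to (\ell-1)/2$ (from the surviving $x_n$ in the term $ac^{-1}$), the four entries $(\rho(t_i,t_j))_{i=1,2;\;j=3,4}\to(-\ell,\ell/2,(\ell-1)/2,-\ell/2)$, all other cross-entries between the first two coordinates and the last $r-2$ vanish, and the lower-right $(r-2)\times(r-2)$ block reproduces $\Gamma_{r-2}$ exactly as in the first claim. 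These are precisely the entries of the stated block matrix with blocks $A$, $B$, $B^T$, $\Gamma_{r-2}$.

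There is no real obstacle beyond careful indexing: the nontrivial content is entirely packaged in Lemma~\ref{limU(x)} and in the sign given by Corollary~\ref{negativity of U(a)}. The only spot requiring attention is ensuring that products like $x_n a_{n,i}^{\pm 1}a_{n,j}^{\mp 1}$ (not literally one of the four types in Lemma~\ref{limU(x)}, e.g.\ $x_n a_{n,i}^{-1}a_{n,j}$) can still be reduced to one of those four types via the abelianness of $G_n$ and the symmetry $U(g)=U(g^{-1})$ coming from $\chi(g^{-1})=\overline{\chi(g)}$ and the real-part in the definition of $U$. With that remark the computation closes cleanly.
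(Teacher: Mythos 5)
Your proposal is correct and takes essentially the same approach as the paper: one first upgrades $|U_{L_n/\Q}(x_n)|\to\ell$ to $U_{L_n/\Q}(x_n)\to-\ell$ via the negativity of $U$, and then evaluates every entry of $\Delta^{(r)}_{L_n/\Q}$ through \eqref{rhoii+1} and \eqref{rhoij} combined with the vanishing limits of Lemma~\ref{limU(x)} (using $U(g)=U(g^{-1})$ and commutativity to reduce the stray combinations, exactly as the paper does with $U_{L_n/\Q}(a_{n,1}^{-1})=U_{L_n/\Q}(a_{n,1})$). Your limiting entries agree with $\Sigma_r\bigl(-\sqrt{\ell+1}/2\bigr)$ and with the blocks $A$, $B$, $\Gamma_{r-2}$, and you in fact carry out in detail the second computation that the paper only indicates as ``deduced in the same way.''
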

\begin{proof}
Let $n\ge 1$. Denote $\bt_n=(t_{1,n},\dots,t_{r,n})$ where
\[ t_{1,n}=t_{x_n,1},\ t_{2,n}=t_{1,a_{n,1}},\ t_{i,n}=t_{a_{i-2,n},a_{i-1,n}}\quad (3\le i \le r)\, .\]
Since $\displaystyle \lim_{n \to \infty} |U_{L_n/\Q}(x_n)|=\ell>0$ and $U_{L_n/\Q}(x_n)=-|U_{L_n/\Q}(x_n)|+o(1)$, it follows that for all sufficiently large $n$, $U_{L_n/\Q}(x_n)<0$. Thus $\displaystyle \lim_{n \to \infty} U_{L_n/\Q}(x_n)=-\ell$. Since $U_{L_n/\Q}(a_{n,1}^{-1})=U_{L_n/\Q}(a_{n,1})$, it suffices to use Lemma~\ref{limU(x)} combined with~\eqref{rhoii+1} to deduce that 
\[ \lim_{n\to \infty}\rho( t_{1,n},t_{2,n} )=-\frac{\sqrt{1+\ell}}{2},\ \ \lim_{n\to \infty}\rho( t_{2,n},t_{3,n} )=-\frac{1}{2}\ \ \lim_{n\to \infty}\rho( t_{i+2,n},t_{i+3,n} )=-\frac{1}{2}\, .\]
Similarly, we apply Lemma~\ref{limU(x)} combined with~\eqref{rhoij} to deduce that \[ \lim_{n\to \infty} \rho(t_{1,n},t_{j+2,n})=\lim_{n\to \infty} \rho(t_{2,n},t_{j+3,n})=\lim_{n\to \infty}\rho(t_{i+2,n},t_{j+2,n})=0 \quad (j\ge 1, |i-j|\ge 2)\, .\]
This proves the first limit. The second one is also deduced from \eqref{rhoii+1} and \eqref{rhoij} and Lemma~\ref{limU(x)} in the same way.
\end{proof}

We are now ready to prove our main density result.
\begin{proof}[Proof of Theorem~\ref{main-density}]
Assume that the set of limit points of $(r(G_n)/\sqrt{\log d_{L_n}})_n$ has non-empty interior. Note that, since $(L_n)_n$ is increasing, then $(r(G_n))_n$ is an increasing sequence since the Galois groups are abelian. This implies that $r(G_n)\to \infty$. Let $n_0$ be sufficiently large, and let $a_2,\dots,a_r\in G_{n_0}$ non-squares. For all $n\ge n_0$, denote $a_1^{(n)}=1\in G_{n_0}$ and $a_{2}^{(n)},\dots,a_r^{(n)} \in G_n$ be respective lifts of $a_2,\dots,a_r$. Using the explicit formula (Theorem~\ref{explicit-formula}) combined with Proposition~\ref{matrix-conv}, we deduce
\[\delta_{L_n/\Q}^{(r)}\left( a_1^{(n)},\dots,a_r^{(n)} \right)=F_{r-1}\left(\bigl(-B\bigl(t_1^{(n)}\bigr),0,\dots,0\bigr);\Delta(\bt_n)\right)+O\left(\frac{1}{\sqrt{\log d_{L_n}}}\right) \, ,\]
where $\bt_n=\bigl(t_1^{(n)},\dots,t_{r-1}^{(n)}\bigr)$ and $t_i^{(n)}=t_{a_i^{(n)},a_{i+1}^{(n)}}$.
By Lemma~\ref{var-ab} we have \[ -B(t_1^{(n)})\underset{n\to \infty}{\sim} \frac{r(G_n)}{\sqrt{\log d_{L_n}}} \, .\]
Thus, the set of limit points of $ (-B(t_1^{(n)}))_n$ has non-empty interior. Moreover, by Proposition~\ref{matrix-conv} we have $\lim_{n\to \infty}\Delta(\bt_n)=\Gamma_{r-1}$. Let $z\in \R$ and $\mu>0$ such that all points of $[z-\mu,z+\mu]$ are limit points of $ (-B(t_1^{(n)}))_n $. By positivity of the integrand, we have 
\[  F_{r-1}\bigl((z-\mu,0,\dots,0);\Gamma_{r-1}\bigr)<F_{r-1}\bigl((z+\mu,0,\dots,0);\Gamma_{r-1}\bigr)\, .\]
Moreover, by continuity of $F$, for all $y\in [F_{r-1}\bigl((z-\mu,0,\dots,0);\Gamma_{r-1}\bigr),F_{r-1}\bigl((z+\mu,0,\dots,0);\Gamma_{r-1}\bigr)]$ there exist $x\in [z-\mu,z+\mu]$ such that 
\[F_{r-1}\bigl((x,0,\dots,0);\Gamma_{r-1}\bigr)=y\, ,   \]
Since $x$ is a limit point of $ (-B(t_1^{(n)}))_n $, then $y$ is a limit point of $\left(F\left(\bigl(-B\bigl(t_1^{(n)}\bigr),0,\dots,0\bigr);\Delta(\bt_n)\right)\right)_n$.
Thus, the set of limit points of the sequence $\left(F\left(\bigl(-B\bigl(t_1^{(n)}\bigr),0,\dots,0\bigr);\Delta(\bt_n)\right)\right)_n$ has non-empty interior, and since a $o(1)$ term does not change the set of limit points, thus the set of limit points of the sequence $\left( \delta_{L_n/\Q}^{(r)}\left( a_1^{(n)},\dots,a_r^{(n)} \right)  \right)_n$ has non-empty interior.\\
Assume that $r(G_n)/\sqrt{\log d_{L_n}} \longrightarrow 0$ and that there exists $0<\ell_1<\ell_2<1$ such that \[ [\, \ell_1\, , \ell_2\, ] \subset \overline{ \bigcup_{n\geq1} \left|U_{L_n/\Q}\right|(G_n\setminus\{1\})} \, .\]
Let $\ell \in [\, \ell_1\, , \ell_2\, ]$. Then, for all $n\ge 1$ there exists $x_n\in G_n$ such that
\[\lim_{n\to \infty} \left| U_{L_n/\Q}(x_n) \right|=\ell \, .\]
By Lemmas~\ref{limU(x)} and~\ref{main-matrices}, we deduce that there exists $a_{n,1},\dots,a_{n,r}$ such that
\[ \lim_{n\to \infty} \Delta_{L_n/\Q}^{(r-1)}(x_n,1,a_{n,1},\dots,a_{n,r-2} )=\Sigma_r\left(-\frac{\sqrt{\ell+1}}{2}\right) \, .\]
Thus we can take the limit in the explicit formula to obtain: 
\[\lim_{n\to \infty}\delta_{L_n/\Q}^{(r)}\left(x_n,1,a_{n,1},\dots,a_{n,r-2}\right)=F_{r-1}\left((0,\dots,0);\Sigma_r\bigl(-\sqrt{\ell+1}/2\bigr)\right)=W_r\left(-\frac{\sqrt{\ell+1}}{2}\right)\, .\]
By Lemma~\ref{monotonicity}, the function $W_r$ is strictly increasing. Thus, 
\[  \left[ W_r\bigl(-\sqrt{\ell_2+1}/2\bigr)\, ,\, W_r\bigl(-\sqrt{\ell_1+1}/2\bigr)\, \right] \subset \overline{\bigcup_{n\geq 1} \delta_{L_n/\Q}^{(r)}(\mathcal{A}_r(G_n))}\, , \]
which proves that $\overline{\bigcup_{n\geq 1} \delta_{L_n/\Q}^{(r)}(\mathcal{A}_r(G_n))} $ has non-empty interior.\\
Assume that $r(G_n)/\sqrt{\log d_{L_n}} \longrightarrow 0$ and that there exists $\ve>0$ such that $(\, 1-\ve,\, 1\, )\subset \overline{ \bigcup_{n\geq1} \left|U_{L_n/\Q}\right|(G_n\setminus\{1\})}$. Denote 
\begin{equation*}
   A_\ell=
    \begin{pmatrix}
           A & B \\[6pt]
           B^T & \Gamma_{r-3}
     \end{pmatrix},
\end{equation*}
where $A$ and $B$ are given by the statement of Lemma~\ref{main-matrices}.
Let $x_n\in G_n$, $n\ge 1$, such that \[\lim_{n \to \infty} |U_{L_n/\Q}(x_n)|=\ell \, ,\]
and let $a_{n,1},\dots,a_{n,r-2} \in G_n$ given by Lemma~\ref{limU(x)}. By Lemma~\ref{main-matrices} we deduce that 
\[\lim_{n\to \infty}\Delta_{L_n/\Q}^{(r)}\left(a_{n,1}x_n\, ,a_{n,2}x_n\, ,a_{n,1}\ ,\dots,a_{n,r-2}\right)=A_\ell \, .\]
Using the fact that $A_\ell \in S_{r-1}^{++}$ (that is because $\ell \in (0,1)$, it can be proved by checking that the Schur complement of the matrix $\Gamma_{r-3}$ in $A_\ell$ is positive definite). The explicit formula implies that 
\[ \lim_{n\to \infty} \delta_{L_n/\Q}^{(r)}\bigl(a_{n,1}x_n;a_{n,2}x_n,a_{n,1},\dots,a_{n,r-2} \bigr)=F_{r-1}\left((0,\dots,0);A_\ell\right) \, .\]
Let $\bX=\left(X_1,\dots,X_{r-1} \right)$ be a random centered Gaussian with covariance matrix $A_\ell$. We have \[ P\bigl(X_1\le 0,\dots,X_{r-1}\le 0\bigr) \le P(X_1\le 0,X_3\le 0)\, .\]
Thus, \begin{equation*}  \lim_{n\to \infty} \delta_{L_n/\Q}^{(r)}\bigl(a_{n,1}x_n;a_{n,2}x_n,a_{n,1},\dots,a_{n,r-2} \bigr) \le \frac{1}{4}-\frac{1}{2\pi}\arcsin(\ell)\, , \end{equation*}
which proves that when $\ell$ is sufficiently close to $1$, $  \lim_{n\to \infty} \delta_{L_n/\Q}^{(r)}\bigl(a_{n,1}x_n;a_{n,2}x_n,a_{n,1},\dots,a_{n,r-2} \bigr)$ is sufficiently close to $0$.
\end{proof}
\begin{proof}[Proof of Theorems~C and D] The bound of $\delta_{L_n/\Q}^{(3)}$ in Theorem~D is a consequence of Corollary~\ref{abelian-3wayraces} combined with Lemma~\ref{encadrementrho}. The rest of the statements are consequences of
Proposition~\ref{2mod-Udense} which establishes the existence a sequence of primes whose corresponding multiquadratic extensions satisfy the assumptions of Theorem~\ref{main-density}.

\end{proof}
\begin{rk}
Using the same ideas, one can recover Fiorilli's result~\cite{Fiorilli}*{Theorem 1.1} which assumes $\mathsf{GRH}$ and $\mathsf{LI}$. Indeed, let $p_1,\dots,p_n$ be an increasing sequence of primes congruent to $1\pmod 4$, set $L_n=\Q(\sqrt{p_1},\dots,\sqrt{p_n})\subset \Q\bigl(\zeta_{p_1\dots p_n}\bigr)=L_n^+$ (the last inclusion is due to the classical fact that $p_i$ is a square in $\Q\bigl(\zeta_{p_i}\bigr)$). Using properties of Artin $L$-functions in towers, one easily shows that, if $\pi_n\colon\Gal(L_n^+/\Q)\longrightarrow\Gal(L_n/\Q)$ is the natural projection, then for any class function $t:G_n\rightarrow\C$ we have \begin{equation}\label{Artin-tower}
    \pi(x;L_n/\Q;t)=\pi(x;L_n^+/\Q;t\circ \pi_n)\, .
\end{equation}
Now let $$t_n=|G_n|\left(\mathds{1}_{\{1\}}-\frac{1}{2^n-1}\sum_{a\in G_n\setminus\{1\} }\mathds{1}_{\{a\}}\right)\, .$$
Thus, $\mathcal{P}_{t_n}=\mathcal{P}_{t\circ \pi_n}$. Noting that $\pi_n^{-1}\bigl(\{1\}\bigr)$ is exactly the set of squares of $G_n^+$, we deduce that the logarithmic density of $\mathcal{P}_{t_n\circ \pi_n}$, is the same as that studied by Fiorilli for races between quadratic residues and non-residues. Thus, in order to study this logarithmic density, it suffices to study the logarithmic density of $\mathcal{P}_{t_n}$, with the same arguments as above one can show that $B_{L_n/\Q}(t_n)$ is dense in $\R_{<0}$ which leads to the density of logarithmic densities in $[1/2,1]$.
\end{rk}

\section*{Appendix}
\begin{proof}[Proof of Lemma~\ref{PA}]
    First notice that for all $i\in \{1,\dots,r\}$ we have 
    \[
    \frac{\partial |f|^{2n}}{\partial x_i}=(a_i \overline{f}+\overline{a_i}f)|f|^{2(n-1)}\, ,
    \]
    and if we have 
    \[
    \frac{\partial^\alpha |f|^{2n}}{\prod_{i\in A}\partial x_i}(\ba,\bx)=P_A(\ba,\overline{\ba},\bx) |f|^{2(n-\alpha)}(\ba,\bx)\, ,
    \]
    then, for $j\in L\setminus A$, denoting $B=A\cup \{j\}$, we have
     \[
    \frac{\partial^{\alpha+1} |f|^{2n}}{\prod_{i\in B}\partial x_i}(\ba,\bx)=P_B(\ba,\overline{\ba},\bx) |f|^{2(n-\alpha-1)}(\ba,\bx)\, ,
    \]
    where 
    \begin{equation}
        P_B(\ba,\overline{\ba},\bx)=\frac{\partial P_A}{\partial X_j}(\ba,\overline{\ba},\bx)f(\ba,\bx)f(\overline{\ba},\bx)+(n-\alpha)P_A(\ba,\overline{\ba},\bx)(a_j f(\overline{\ba},\bx)+\overline{a_j}f(\ba,\bx))\, .
    \end{equation}
    Thus, we can write: 
    \begin{equation}\label{PB}
P_B(\bS,\bT,\bX) = \frac{\partial P_A}{\partial X_j}(\bS,\bT,\bX)\cdot (\bS \cdot \bX)(\bT \cdot \bX)
+ (n-\alpha) P_A(\bS,\bT,\bX) \cdot \left(S_j (\bT \cdot \bX) + T_j (\bS \cdot \bX)\right)\, ,
\end{equation}
where the inner product $\cdot$ is defined by $\bS \cdot \bX:=\sum_{i=1}^rS_iX_i$. The existence is thus proved by induction. We now prove the properties when $n\le \alpha$. Note that when $A=\emptyset$ we have $P_A=1$ which satisfies all the properties stated. We assume by induction that each property holds for $\alpha<n$ and let $j\in L\setminus A$ and denote $B=A\cup \{j\}$, we want to prove that each property hold for $B$: 
(1) The equation \eqref{PB} shows that $P_B$ is the sum of products of homogeneous polynomials in each tuple of indeterminates $\bS,\, \bT$ and $\bX$ (this is thanks to the induction hypothesis). Moreover, since all coefficients of $P_A$ are non-negative integers, then so are those of $P_B$. This completes the induction for the first property when $n\le \alpha$.\\
(2) Using the fact that $P_B$ is homogeneous in each tuple of indeterminates, it is enough to consider one of its monomials, and since $P_A$ has non-negative coefficients it is in fact enough to compute the degree of $S_jT_1X_1P_A(\bS,\bT,\bX)$ at each tuple of indeterminates. Using the induction hypothesis we obtain the result.\\
(3) By equation \eqref{PB} the coefficient of a monomial of $P_B$ is $\le \alpha c_A+(n-\alpha)c_A=nc_A$, where $c_A$ is the coefficient of some monomial of $P_A$. Thus, by the induction hypothesis the coefficient of a monomial of $P_B$ is $\le n^{\alpha+1}$.\\
This completes the proof of each property when $\alpha \le n$. Let $A\subset \{\, 1,\dots,r\}$ of cardinality $\alpha$, where $n<\alpha\leq 2n$, and let $C\subset A$ with cardinality $n$. Thus, 
\[
P_A(\bS,\bT,\bX)=\frac{\partial^{\alpha-n}P_C}{\prod_{i\in A\setminus C}X_i}(\bS, \bT,\bX).
\]
All the properties, for $n<\alpha \le 2n$, follow easily from the previous equality in addition to the fact that $P_C$ satisfies the previous properties.
\end{proof}

\noindent\textbf{Acknowledgments.} We warmly thank Florent Jouve for introducing us to the subject and for his valuable suggestions and remarks that helped improve the paper, as well as Daniel Fiorilli for several insightful discussions that contributed to the development of our ideas.

\begin{bibdiv}
\begin{biblist}
    \bib{Bail}{article}{
   author={Bailleul, Alexandre},
   title={Chebyshev's bias in dihedral and generalized quaternion Galois
   groups},
   journal={Algebra Number Theory},
   volume={15},
   date={2021},
   number={4},
   pages={999--1041},
   %issn={1937-0652},
   %review={\MR{4265352}},
   %doi={10.2140/ant.2021.15.999},
}

%\bib{Chebyshev}{article}{
 %  author={Chebyshev, Pafnouti},
  % title={Lettre de M. le Professeur Tchébychev à M. Fuss sur un nouveau théorème relatif aux nombres premiers contenus dans les formes 4n + 1 et 4n + 3},
  % journal={Bull. Classe Phys. Acad. Imp. Sci. St. Petersburg},
  % pages={208},
  % year={1853},
%}

\bib{Dev}{article}{
   author={Devin, Lucile},
   title={Chebyshev's bias for analytic $L$-functions},
   journal={Math. Proc. Cambridge Philos. Soc.},
   volume={169},
   date={2020},
   number={1},
   pages={103--140},
}

\bib{Fiorilli}{article}{
   author={Fiorilli, Daniel},
   title={Highly biased prime number races},
   journal={Algebra Number Theory},
   volume={8},
   date={2014},
   number={7},
   pages={1733--1767},
   issn={1937-0652},
   review={\MR{3272280}},
   doi={10.2140/ant.2014.8.1733},
}

\bib{FJ}{article}{
   author={Fiorilli, Daniel},
   author={Jouve, Florent},
   title={Distribution of Frobenius elements in families of Galois
   extensions},
   journal={J. Inst. Math. Jussieu},
   volume={23},
   date={2024},
   number={3},
   pages={1169--1258},
   %issn={1474-7480},
   %review={\MR{4742716}},
   %doi={10.1017/S1474748023000154},
}

\bib{FJ2}{article}{
   author={Fiorilli, Daniel},
   author={Jouve, Florent},
   title={Unconditional Chebyshev biases in number fields},
   %language={English, with English and French summaries},
   journal={J. \'{E}c. polytech. Math.},
   volume={9},
   date={2022},
   pages={671--679},
   %issn={2429-7100},
   %review={\MR{4400872}},
   %doi={10.5802/jep.19},
}

\bib{FiM}{article}{
   author={Fiorilli, Daniel},
   author={Martin, Greg},
   title={Inequities in the Shanks-R\'enyi prime number race: an asymptotic
   formula for the densities},
   journal={J. Reine Angew. Math.},
   volume={676},
   date={2013},
   pages={121--212},
   issn={0075-4102},
   review={\MR{3028758}},
   doi={10.1515/crelle.2012.004},
}

\bib{HL}{article}{
   author={Harper, Adam J.},
   author={Lamzouri, Youness},
   title={Orderings of weakly correlated random variables, and prime number
   races with many contestants},
   journal={Probab. Theory Related Fields},
   volume={170},
   date={2018},
   number={3-4},
   pages={961--1010},
   issn={0178-8051},
   review={\MR{3773805}},
   doi={10.1007/s00440-017-0800-2},
}

\bib{hayani}{article}{
  title        = {On the influence of the Galois group structure on the Chebyshev bias in number fields},
  author       = {Hayani, Mounir},
  note         = {forthcoming, Bulletin de la SMF, available at \url{https://inria.hal.science/hal-04538383}},
  year         = {2024},
  month        = {4},
  keywords     = {Number theory, Chebyshev's bias},
  url          = {https://inria.hal.science/hal-04538383},
  pdf          = {https://inria.hal.science/hal-04538383/file/Chebyshev-bias-prep.pdf},
}

\bib{HK}{article}{
   author={Heuberger, Clemens},
   author={Kropf, Sara},
   title={Higher dimensional quasi-power theorem and Berry-Esseen
   inequality},
   journal={Monatsh. Math.},
   volume={187},
   date={2018},
   number={2},
   pages={293--314},
   issn={0026-9255},
   review={\MR{3850313}},
   doi={10.1007/s00605-018-1215-6},
}

\bib{Kotz}{book}{
   author={Kotz, Samuel},
   author={Balakrishnan, N.},
   author={Johnson, Norman L.},
   title={Continuous multivariate distributions. Vol. 1},
   series={Wiley Series in Probability and Statistics: Applied Probability
   and Statistics},
   edition={2},
   note={Models and applications},
   publisher={Wiley-Interscience, New York},
   date={2000},
   pages={xxii+722},
   isbn={0-471-18387-3},
   review={\MR{1788152}},
   doi={10.1002/0471722065},
}

\bib{LO}{article}{
   author={Lagarias, J. C.},
   author={Odlyzko, A. M.},
   title={Effective versions of the Chebotarev density theorem},
   conference={
      title={Algebraic number fields: $L$-functions and Galois properties},
      address={Proc. Sympos., Univ. Durham, Durham},
      date={1975},
   },
   book={
      publisher={Academic Press, London-New York},
   },
   date={1977},
   pages={409--464},
   review={\MR{0447191}},
}

\bib{Lam}{article}{
   author={Lamzouri, Youness},
   title={Prime number races with three or more competitors},
   journal={Math. Ann.},
   volume={356},
   date={2013},
   number={3},
   pages={1117--1162},
   issn={0025-5831},
   review={\MR{3063909}},
   doi={10.1007/s00208-012-0874-1},
}

\bib{Leshin}{article}{
   author={Leshin, Jonah},
   title={Solvable number field extensions of bounded root discriminant},
   journal={Proc. Amer. Math. Soc.},
   volume={141},
   date={2013},
   number={10},
   pages={3341--3352},
   issn={0002-9939},
   review={\MR{3080157}},
   doi={10.1090/S0002-9939-2013-12015-0},
}

\bib{LM}{article}{
   author={Lin, Jiawei},
   author={Martin, Greg},
   title={Densities in certain three-way prime number races},
   journal={Canad. J. Math.},
   volume={74},
   date={2022},
   number={1},
   pages={232--265},
   issn={0008-414X},
   review={\MR{4379402}},
   doi={10.4153/S0008414X20000747},
}

\bib{Littlewood}{article}{
   author={Littlewood, J. E.},
   title={On the Class-Number of the Corpus $P({\surd}-k)$},
   journal={Proc. London Math. Soc. (2)},
   volume={27},
   date={1928},
   number={5},
   pages={358--372},
   issn={0024-6115},
   review={\MR{1575396}},
   doi={10.1112/plms/s2-27.1.358},
}

%\bib{Murty}{article}{
 %  author={Ram Murty, M.},
 %  title={An analogue of Artin's conjecture for abelian extensions},
 %  journal={J. Number Theory},
 %  volume={18},
 %  date={1984},
 %  number={3},
 %  pages={241--248},
 %  issn={0022-314X},
 %  review={\MR{0746861}},
 %  doi={10.1016/0022-314X(84)90059-3},
%}

\bib{Ng}{article}{
   author={Ng, Nathan},
   title={Limiting distributions and zeros of Artin L-functions},
   type={PhD Thesis},
   year={2000},
   eprint={https://www.cs.uleth.ca/~nathanng/RESEARCH/phd.thesis.pdf},
   publisher= {University of British Columbia},
}

\bib{Ost}{article}{
  title={{\"U}ber dirichletsche reihen und algebraische differentialgleichungen},
  author={Ostrowski, Alexander},
  journal={Mathematische Zeitschrift},
  volume={8},
  number={3},
  pages={241--298},
  year={1920},
  publisher={Springer}
}

\bib{RS94}{article}{
   author={Rubinstein, Michael},
   author={Sarnak, Peter},
   title={Chebyshev's bias},
   journal={Experiment. Math.},
   volume={3},
   date={1994},
   number={3},
   pages={173--197},
   %issn={1058-6458},
   %review={\MR{1329368}},
}

\end{biblist}
\end{bibdiv}

\end{document}